\documentclass[11pt]{article}
\usepackage{amsfonts,amsmath, amssymb,latexsym}
\usepackage{multirow}
\usepackage{verbatim}
\usepackage{longtable}
\usepackage{hyperref} 
\usepackage{tocloft}
\setlength{\cftbeforesecskip}{6.5pt}
\usepackage{cancel}

\usepackage{tikz}
\usetikzlibrary{matrix,arrows,decorations.pathreplacing}

\usepackage{todonotes}

\setlength{\textheight}{8.7918in}
\setlength{\textwidth}{6.5in}
\setlength{\topmargin}{0.0in}
\setlength{\headheight}{0.0in}
\setlength{\headsep}{0.0in}
\setlength{\leftmargin}{0.0in}
\setlength{\oddsidemargin}{0.0in}


\def\al{\alpha}

\newcommand{\Gal}{\mathrm{Gal}}

\newcommand{\Avg}{\mathrm{Avg}}

\newcommand{\Mass}{\mathrm{Mass}}

\def\Z{{\mathbb Z}}

\def\PGL{{\rm PGL}}
\def\SL{{\rm SL}}
\def\SO{{\rm SO}}
\def\GL{{\rm GL}}
\def\PGL{{\rm PGL}}

\def\Stab{{\rm Stab}}

\def\Cl{{\rm Cl}}
\def\O{{\mathcal O}}
\def\P{{\mathbb P}}
\def\Disc{{\rm Disc}}

\def\ind{{\rm ind}}
\def\Aut{{\rm Aut}}

\def\gen{{\rm gen}}
\def\even{{\rm even}}
\def\odd{{\rm odd}}

\def\Vol{{\rm Vol}}

\def\R{{\mathbb R}}
\def\F{{\mathbb F}}
\def\FF{{\mathcal F}}
\def\GG{{\mathcal G}}
\def\SS{{\mathcal S}}
\def\RR{{\mathcal R}}
\def\Q{{\mathbb Q}}

\def\W{{\mathcal W}}

\def\Z{{\mathbb Z}}
\def\P{{\mathbb P}}
\def\PP{{\mathcal P}}
\def\F{{\mathbb F}}
\def\Q{{\mathbb Q}}
\def\C{{\mathbb C}}

\def\J{{\mathcal J}}
\def\LL{{\mathcal L}}
\def\cQ{{\mathcal Q}}
\def\L{{L}}
\def\Res{{\rm{Res}}}
\def\max{{\rm max}}

\def\monKal{{\rm mon}(K,\alpha)}
\def\monKal{{\mathcal A_\infty(\alpha)}}
\def\maxx{{}}

\def\Tr{{\rm{Tr}}}
\def\Y{{T}}
\def\MB{{M}}

\def\ycox{{\Y;X}}
\newcommand{\RF}[1]{\FF^{(#1)}_{V_{A,b}}}
\def\cdeltaint{{\leq\! cH^\delta}}

\newtheorem{theorem}{Theorem}[section]
\newtheorem{corollary}[theorem]{Corollary}
\newtheorem{lemma}[theorem]{Lemma}
\newtheorem{proposition}[theorem]{Proposition}
\newenvironment{proof}{\noindent {\bf Proof:}}{$\Box$ \vspace{2 ex}}

\newtheorem{thm}{Theorem}

\newtheorem{defn}[theorem]{Definition}
\newtheorem{rem}[theorem]{Remark}
\newtheorem{nota}[theorem]{Notation}
\newtheorem{cons}[theorem]{Construction}

\hypersetup{colorlinks=true,linkcolor=blue, linktocpage}

\title{The mean number of 2-torsion elements in the class groups of $n$-monogenized cubic fields}
\author{Manjul Bhargava, Jonathan Hanke, and Arul Shankar}

\begin{document}
\maketitle

\begin{abstract}
We prove that, on average, the monogenicity or $n$-monogenicity of
a cubic field has an altering effect on the behavior of the 2-torsion in its class
group.
\end{abstract}

\setcounter{tocdepth}{2}
\tableofcontents

\section{Introduction}\label{secintro}

The seminal works of Cohen--Lenstra \cite{CL} and Cohen--Martinet
\cite{CM1} provide heuristics that predict, for suitable
``good primes'' $p$, the distribution of the $p$-torsion subgroups
$\Cl_p(K)$ of the class groups of number fields $K$ of degree $d$.
To date, only two cases of these conjectures have been proven; they
concern the mean sizes of the 3-torsion subgroups of the class groups
of quadratic fields, and the 2-torsion subgroups of the class groups
of cubic fields:

\begin{thm}[Davenport--Heilbronn {\cite[Theorem 3]{DH}}] \label{thquadfields}
Let $K$ run through all isomorphism classes of quadratic fields
ordered by discriminant. Then:
  \begin{itemize}
  \item[{\rm (a)}] The average size of $\Cl_3(K)$ over real
    quadratic fields $K$ is $4/3;$
  \item[{\rm (b)}] The average size of $\Cl_3(K)$ over complex quadratic
    fields $K$ is $2$.
  \end{itemize}
\end{thm}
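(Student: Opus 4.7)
My plan is to convert the average of $|\Cl_3(K)|$ into a count of cubic fields of bounded discriminant and to carry out that count via the geometry of numbers on the space of integral binary cubic forms. The first ingredient is the Hasse--Scholz correspondence from class field theory: for a quadratic field $K$ of discriminant $D$, unramified cyclic cubic extensions $M/K$ are in bijection with the (nontrivial, modulo inversion) elements of $\Cl_3(K)$, and each such $M$ is the Galois closure over $\Q$ of a unique non-Galois cubic field $L$ with $\disc(L)=D$. Hence
\[
|\Cl_3(K)| \;=\; 1 + 2 \cdot \#\{L \text{ cubic field up to iso.}: \disc(L) = \Disc(K)\}.
\]
Combined with the standard asymptotic $\#\{K : 0 < \pm\Disc(K) < X\} \sim (3/\pi^2) X$, the theorem reduces to showing that the number of totally real (resp.\ complex) cubic fields with fundamental discriminant of absolute value less than $X$ is asymptotic to $X/(2\pi^2)$ (resp.\ $3X/(2\pi^2)$); substituting into the Hasse formula then yields the averages $1+1/3=4/3$ and $1+1=2$ in parts (a) and (b).

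To establish these asymptotics I would use the Delone--Faddeev correspondence, a discriminant-preserving bijection between isomorphism classes of cubic rings and $\GL_2(\Z)$-orbits on the lattice $V_\Z$ of integer binary cubic forms $f = ax^3+bx^2y+cxy^2+dy^3$. Under this bijection, cubic fields correspond to $\Q$-irreducible forms, maximal orders correspond to forms satisfying an explicit local ``maximality'' condition at every prime $p$, and the further condition that $\disc(L)$ be fundamental corresponds to $p^2 \nmid \disc(f)$ at each $p$. The problem thus becomes one of counting $\GL_2(\Z)$-orbits on $V_\Z$ of a given discriminant sign, bounded by $X$, and satisfying a product of local conditions.

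The geometric input is Davenport's count: fix a fundamental domain $\FF$ for the $\GL_2(\Z)$-action on the nondegenerate part of $V_\R$, and estimate the number of lattice points in $\{f \in \FF : 0 < \pm\disc(f) < X\}$ by its volume, which is evaluated through a change of variables to the roots of $f$. This yields the leading constants $\pi^2/72$ and $\pi^2/24$ for the counts of all cubic rings of positive and negative discriminant at most $X$. The main technical step is controlling the cusp of $\FF$, namely the region of forms with very small leading coefficient $a$: one shows that the lattice points there are either few in number or correspond to reducible forms, and so do not contribute to the cubic-field count.

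Finally, a sieve imposes the local maximality and ``$p^2\nmid\disc$'' conditions. Each such condition has an explicit $p$-adic density $\mu_p$ whose product telescopes to $\prod_p \mu_p = 1/\zeta(2)^2 = 36/\pi^4$, so multiplying $\pi^2/72$ (resp.\ $\pi^2/24$) by this product recovers the target $X/(2\pi^2)$ (resp.\ $3X/(2\pi^2)$). The main obstacle---and the key technical innovation of Davenport and Heilbronn---is a uniform tail estimate of the form
\[
\#\{f\in V_\Z/\GL_2(\Z) : |\disc(f)|<X,\ f\ \text{fails the local condition at}\ p\} \;=\; O(X/p^2),
\]
with implied constant sufficiently uniform in $p$ to justify sieving over all primes simultaneously. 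Granting this uniformity, the computation assembles into the asserted asymptotics and the theorem follows.
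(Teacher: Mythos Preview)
The paper does not prove Theorem~\ref{thquadfields}; it is quoted from Davenport--Heilbronn~\cite{DH} as a foundational result motivating the paper's own work on the cubic analogue (Theorem~\ref{thallcubicfields} and its refinements). There is therefore no proof in the paper to compare yours against.

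That said, your sketch is precisely the classical Davenport--Heilbronn argument and is sound: translate the average of $|\Cl_3(K)|$ via class field theory into a count of non-Galois cubic fields of fundamental discriminant, parametrize cubic rings by $\GL_2(\Z)$-orbits on integral binary cubic forms via Delone--Faddeev, estimate the orbit count of bounded discriminant by the volume of a fundamental domain together with a cusp cutoff, and then sieve to the desired local conditions using a uniform $O(X/p^2)$ tail estimate. One small imprecision worth flagging: the condition that $\disc(L)$ be a \emph{fundamental} discriminant is not literally ``$p^2\nmid\disc(f)$ at every $p$''---at $p=2$ fundamental discriminants include those with $v_2\in\{2,3\}$, and the correct local condition is that the associated maximal cubic algebra is not of type $(1^3)$ at $p$. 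For odd $p$ these conditions coincide and one checks directly that the local density is $(1-p^{-2})^2$; at $p=2$ the bookkeeping is different, but the product of the true local densities is indeed $1/\zeta(2)^2$, so your constants and final answers $4/3$ and $2$ are correct.
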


\begin{thm}[{\cite[Theorem 5]{dodqf}}] \label{thallcubicfields}
Let $K$ run through all isomorphism classes of cubic fields ordered by
discriminant. Then:
  \begin{itemize}
  \item[{\rm (a)}] The average size of $\Cl_2(K)$ over totally real
    cubic fields $K$ is $5/4;$
  \item[{\rm (b)}] The average size of $\Cl_2(K)$ over complex cubic
    fields $K$ is $3/2$.
  \end{itemize}
\end{thm}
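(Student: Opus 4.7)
The plan is to adapt Bhargava's higher composition laws framework to count 2-torsion elements in class groups of cubic fields. The key parametric fact to establish is a bijection between $\GL_2(\Z)$-orbits of integral, nondegenerate binary quartic forms and equivalence classes of triples $(R, I, \delta)$, where $R$ is a cubic order, $I$ is a fractional ideal of $R$, and $\delta \in (R \otimes \Q)^\times$ satisfies $I^2 = (\delta)$ and $N(I)^2 = N(\delta)$, up to the action $(R, I, \delta) \sim (R, cI, c^2 \delta)$ for $c \in K^\times$. Under this bijection the discriminant of the quartic form equals $\Disc(R)$, and for a cubic field $K$ the triples with $R = \O_K$ classify 2-torsion ideal classes in $\Cl(\O_K)$: the identity corresponds precisely to reducible forms (those possessing a rational linear factor), and nontrivial 2-torsion to irreducible forms.

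Given the parametrization, the main task is to count irreducible integral binary quartic forms of bounded discriminant whose cubic resolvent is a maximal order, split by real signature. I would run a geometry-of-numbers argument: construct a fundamental domain for $\GL_2(\R)$ acting on the relevant real vector space of quartic forms (one model for totally real resolvents, another for complex), apply Bhargava's averaging trick over a compact subset of $\GL_2(\R)$ to count lattice points, and identify the leading term as a volume integral. A sieve then restricts to forms whose cubic resolvent is maximal at every prime, yielding a local Euler product of $p$-adic masses that can be computed explicitly via the parametrization of non-maximal cubic rings. Dividing by the Davenport--Heilbronn count of cubic fields of bounded discriminant of the appropriate signature yields the average number of \emph{nontrivial} 2-torsion classes, which must come out to $1/4$ in the totally real case and $1/2$ in the complex case; adding $1$ for the identity class then gives $5/4$ and $3/2$.

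The principal obstacle is the cusp analysis. The fundamental domain for $\GL_2(\R)$ on binary quartic forms is noncompact, and a priori integer points in the cuspidal region could dominate the count and spoil the volume estimate. The observation that rescues the argument is that nearly all integer points in the cusp correspond to \emph{reducible} quartic forms, which under the parametrization map to the identity element of the class group; the irreducible forms, which are the objects we actually need to count, are supported in the main body of the domain where the averaging method produces clean volume asymptotics. Making this rigorous with uniform error terms, and combining it with the uniformity required to perform the sieve at all primes simultaneously (so as to pass from cubic orders to maximal orders), is the delicate technical core of the proof and the step where I expect the most work.
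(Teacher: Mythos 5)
Your high-level strategy---parametrize $2$-torsion in the class group of a cubic field by orbits in a coregular representation, count orbits of bounded height via geometry of numbers and Bhargava's averaging over a compact set, sieve to maximal cubic resolvents, and divide by the Davenport--Heilbronn count---is the right skeleton, and your diagnosis of the cusp (that cuspidal lattice points are overwhelmingly reducible and hence map to the identity class) is exactly the mechanism that rescues the volume estimate. The gap is that you chose the wrong parametrizing space, and it changes the answer. Binary quartic forms under $\GL_2(\Z)$ do \emph{not} parametrize triples $(R,I,\delta)$ for an arbitrary cubic order $R$: the cubic resolvent of a binary quartic form is a \emph{monic} cubic, so the cubic order $R$ that appears is always a \emph{monogenized} cubic ring. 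As recorded in the remark following Theorem~\ref{thqnmccountnr}, the $\PGL_2$-representation on binary quartics is $\Z$-isomorphic to the action of $\SO_{A_1}$ on $V_{A_1}$, i.e.\ the $n=1$ slice of this paper's framework with the first ternary quadratic form frozen to be the fixed matrix $A_1$ of \eqref{eqA1}. Carrying out your count would therefore reproduce the averages over \emph{monogenized} cubic fields, which by Theorem~\ref{thmoncubicfields} are $3/2$ (totally real) and $2$ (complex)---not the claimed $5/4$ and $3/2$. That these averages differ is the central phenomenon this paper is about.

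The correct parametrizing space for the theorem as stated (Theorem~5 of \cite{dodqf}) is $V(\Z)$, the space of \emph{pairs} $(A,B)$ of integer-coefficient ternary quadratic forms, under $G(\Z)\subset\GL_2(\Z)\times\GL_3(\Z)$. Its orbits parametrize pairs $(Q,C)$ of a quartic ring $Q$ together with a cubic resolvent ring $C$ (Theorem~\ref{thqrpar}), and one passes from index-two subgroups of $\Cl(C)$ to quartic fields via Heilbronn's correspondence and class field theory (Theorems~\ref{h} and \ref{thclgp1}), rather than via the $(R,I,\delta)$ description directly. Your $(R,I,\delta)$ picture is morally the right object---it is essentially equivalent to specifying the quartic ring $Q$ with resolvent $R$---but the vector space in which it lives is $V(\Z)$, not the binary quartics; and it is precisely the freedom to vary $A$ over all $\SL_3(\Z)$-classes of integral ternary forms of all determinants, rather than fixing $A=A_1$, that lets $C$ range over arbitrary (not necessarily monogenic) cubic rings and produces the smaller averages $5/4$ and $3/2$.
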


These two theorems have had a number of important applications.  For
example, they imply that, when ordered by discriminant, a positive
proportion of quadratic fields have class number indivisible by 3, and
a positive proportion of cubic fields have class number indivisible by
2 -- among numerous other consequences relating to the asymptotics and
ranks of elliptic curves, the density of discriminants of number
fields of given degree, the existence of units in number fields with
given signatures, rational points on surfaces, the statistics of
Galois representations and modular forms of weight one, and more (see,
e.g., \cite{FNT}, \cite{Vatsal}, \cite{DK}, \cite{BhSh}, \cite{dodqf},
\cite{BV}, \cite{Browning}, \cite{Ellenberg}, \cite{BG}, \cite{AK}).

The averages occurring in Theorems~\ref{thquadfields} and
\ref{thallcubicfields} are remarkably robust.  In \cite[Corollary
  4]{BV1} and \cite[Theorem 1]{BV}, it was shown that the averages in
these two theorems remain unchanged even when one ranges over
quadratic and cubic fields, respectively, that satisfy any specified
set of local splitting conditions at finitely many primes, or even
suitable sets of local conditions at {\it infinitely} many primes.

A natural next question that arises is: how stable are the averages in
Theorems~\ref{thquadfields} and \ref{thallcubicfields} if more global
conditions are imposed on the fields being considered? One natural
such global condition on a field is {\it monogenicity}, i.e., that the ring of integers 
in the field is generated by one element.  While monogenicity is a
global condition that automatically holds for all quadratic fields, it
is a nontrivial condition for cubic fields.  
The purpose of this paper
is to show, surprisingly (at least to the authors!), that monogenicity
does have a nontrivial effect on the behavior of class groups of cubic
fields and, in particular, the condition of monogenicity does change
the averages occurring in Theorem~\ref{thallcubicfields}.

\subsection
{The stability of averages in Theorem~\ref{thallcubicfields} when ordering cubic fields by height}

Recall that a number field $K$ is called {\bf monogenic} if its ring
$\O_K$ of integers is generated by one element as a $\Z$-algebra, i.e.,
$\O_K=\Z[\alpha]$ for some element $\alpha\in\O_K$; such an $\alpha$
is then called a {\bf monogenizer} of $K$ or of $\O_K$.
The asymptotic number of monogenic cubic fields of bounded
discriminant is not known, and it is therefore more convenient to
order these fields by the heights of their monic defining polynomials.

A pair $(K,\al)$ is called a {\bf monogenized cubic field} if $K$ is a
cubic field and $\O_K=\Z[\al]$ for some $\al\in\O_K$. More generally,
we define an {\bf $n$-monogenized cubic field} to be a pair $(K, \al)$
where $K$ is a cubic field, $\al\in\O_K$ is primitive in $\O_K/\Z$,
and $[\O_K:\Z[\al]]=n$; such an $\alpha$ is then called an {\bf $n$-monogenizer} of $K$ or
of $\O_K$. Two $n$-monogenized cubic fields
$(K,\al)$ and $(K',\al')$ are {\bf isomorphic} if $K$ and
$K'$ are isomorphic as cubic fields and, under such an isomorphism, $\al$ is mapped to
$\al'+m$ for some $m\in\Z$.

We next define an isomorphism-invariant height on the set of
$n$-monogenized cubic fields.  Let $(K,\alpha)$ be an $n$-monogenized cubic
field, and suppose that $f(x)$ is the characteristic polynomial of~$\alpha$. 
Since $(K,\alpha)$ and $(K,\alpha+m)$ are isomorphic for
every $m\in\Z$, we may define two invariants $I(f)$ and $J(f)$ on
the space of monic cubic polynomials $f(x)=x^3+ax^2+bx+c$, by setting 
\begin{equation*}
\begin{array}{rcl}
I(f)&:=&a^2-3b,\\[.05in]
J(f)&:=&-2a^3+9ab-27c.
\end{array}
\end{equation*}
These invariants satisfy $I(f(x))=I(f(x+m))$ and $J(f(x))=J(f(x+m))$
for all $m\in\Z$. 
We then define the {\bf height} $H$ of an $n$-monogenized cubic field $(K,\alpha)$ by 
\begin{equation}\label{eqheightn}
H(K,\alpha):=n^{-2}H(f):=n^{-2}\max\bigl\{|I(f)|^3,J(f)^2/4\bigr\}.
\end{equation}
Since the discriminant $\Delta(K)$ of $K$ is described in terms of the invariants of
$f$ as
\begin{equation}
27\Delta(K)=n^{-2}\bigl(4I(f)^3-J(f)^2\bigr),
\end{equation}
we see that the height of an $n$-monogenized cubic field is
comparable with its discriminant. 

We note that all cubic fields $K$ are $n$-monogenized for some
$n\ll|\Delta(K)|^{1/4}$; see Remark~\ref{remdelta14}.
Thus we may expect that the average size of the 2-torsion subgroup of the class group 
over all cubic fields $K$ ordered by
absolute discriminant is the same as the average over all
$n$-monogenized cubic fields $(K,\alpha)$ ordered by height
$H(K,\alpha)$ with $n\ll H(K,\alpha)^{1/4}$.  
This is indeed the case, as we will prove in Theorem \ref{thsubmonloc}.
We will also prove
in \S\ref{secsubmono}, that the number of $n$-monogenized cubic fields
$(K,\alpha)$ with height $H(K,\alpha)<X$ and $n\ll H(K,\alpha)^{1/4}$
grows as $\asymp X$, which are the same asymptotics as for the number of cubic
fields having absolute discriminant bounded by $X$.

We may also consider, for every $\delta\in(0,1/4]$, the average over
the much thinner family of $n$-monogenized cubic fields $(K,\alpha)$
satisfying $H(K,\alpha)< X$ and $n\leq H(K,\alpha)^\delta$; the number
of such $n$-monogenized fields grows as $\asymp
X^{5/6+2\delta/3}=o(X)$ (see Theorem~\ref{thsubcount}).  In
\S\ref{secsubmono}, we will prove the following theorem.

\begin{thm}\label{thsubmon}
Let $0<\delta\leq 1/4$ and $c>0$ be real numbers, and let
$F(\cdeltaint,X)$ denote the set of isomorphism classes of $n$-monogenized cubic fields
$(K,\al)$ with height $H(K,\al)<X$ and $n\leq \nolinebreak c H(K,\alpha)^\delta$. Then, as
$X\to\infty$:
  \begin{itemize}
  \item[{\rm (a)}] The average size of $\Cl_2(K)$
    over totally real cubic fields $K$ in $F(\cdeltaint,X)$ approaches
    $5/4;$
  \item[{\rm (b)}] The average size of $\Cl_2(K)$
    over complex cubic fields $K$ in $F(\cdeltaint,X)$ approaches
    $3/2$.
  \end{itemize}
Furthermore, these values remain the same if we instead average over
those $n$-monogenized cubic fields satisfying any given set of local
splitting conditions at finitely many primes.
\end{thm}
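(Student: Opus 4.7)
My plan is to adapt the orbit-counting proof of Theorem~\ref{thallcubicfields} to the new height ordering and then sieve over the index $n$. Each $n$-monogenized cubic field $(K,\alpha)$ is, up to the translation $\alpha\mapsto\alpha+m$, uniquely determined by its characteristic polynomial $f(x)=x^3+ax^2+bx+c$ (normalized, say, by $a\in\{-1,0,1\}$), from which one reads off both the invariants $I(f),J(f)$ and the index $n=[\O_K:\Z[\alpha_f]]$. Thus $F(\cdeltaint,X)$ is in bijection with a set of monic cubics whose polynomial height $H(f)=n^2H(K,\alpha)$ lies in the window $[\,n^2(n/c)^{1/\delta},\,n^2X\,)$ for each valid $n\le cX^\delta$. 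On the class-group side, I would use the orbit parametrization of~\cite{dodqf}: non-identity 2-torsion elements in the class groups of cubic rings $R$ correspond to certain integral orbits of a reductive group, with $R$ recovered as the cubic resolvent of the orbit.

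The key step is to count, for each $n$, the number of such orbits whose resolvent ring is an $n$-monogenized cubic order of height in the required window, via the usual geometry-of-numbers package---averaging over a fundamental domain, discarding cuspidal and reducible contributions, and matching the main term against a local density. Summing over $n\le cH(K,\alpha)^\delta$ produces the numerator $\sum_{(K,\alpha)\in F(\cdeltaint,X)}\#\Cl_2(K)$; the denominator $\#F(\cdeltaint,X)$ arises from the same procedure applied to the identity class. A local density computation then forces the ratio to be $5/4$ in the totally real case and $3/2$ in the complex case, matching Theorem~\ref{thallcubicfields}. Local splitting conditions at finitely many primes can be layered in exactly as in~\cite{BV}, since they perturb numerator and denominator by the same local factors.

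The main obstacle I anticipate is uniformity of the error terms in $n$: the height window associated to each $n$ shrinks as $n$ approaches $cX^\delta$ and disappears for $n\ge cX^\delta$, and for small $\delta$ the entire family $F(\cdeltaint,X)$ is very thin (of size $\asymp X^{5/6+2\delta/3}$, per Theorem~\ref{thsubcount}). Hence the orbit-counting error must decay fast enough in $n$ for its sum to be $o$ of the main term. The key technical input will therefore be a power-saving geometry-of-numbers estimate together with a uniform tail bound showing that orbits whose cubic resolvent has large index $[\O_K:\Z[\alpha]]$ contribute negligibly---in the same spirit as the uniform tail estimates that enable averaging over infinitely many local conditions in~\cite{BV}. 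Once these uniform bounds are secured, the averages are visibly stable for all $\delta\in(0,1/4]$ and agree with those of Theorem~\ref{thallcubicfields}.
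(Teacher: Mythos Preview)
Your plan to fix $n$, count orbits, and then sum has a genuine gap at the step where you assert that ``a local density computation then forces the ratio to be $5/4$.'' For \emph{fixed} $n$ this is false: as the paper shows in Theorems~\ref{thmain2} and~\ref{main_theorem}, the average of $|\Cl_2(K)|$ over $n$-monogenized totally real cubic fields is $\tfrac{5}{4}+\tfrac{1}{4\sigma(k)}$ where $n=m^2k$ with $k$ squarefree, not $5/4$. The reason is that once $4\det(A)=n$ is pinned down, the integer-coefficient ternary quadratic forms $A$ of that determinant break into several $\SL_3(\Z)$-classes, and the mass of the resolvent fiber $\Res^{-1}(f)$ over a given cubic form $f$ depends on which class $A$ belongs to (Corollary~\ref{corpartialmass}, Proposition~\ref{propgenus}). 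So your sum over $n$ would produce a numerator main term of the shape $\sum_n X^{5/6}n^{-1/3}\bigl(\tfrac14+\tfrac{1}{4\sigma(k_n)}\bigr)$ rather than $\sum_n X^{5/6}n^{-1/3}\cdot\tfrac14$, and you would still owe a proof that the second piece is negligible. That cancellation does hold, but establishing it requires the entire fixed-$n$ machinery of \S\ref{secn} (counting $\SO_A(\Z)$-orbits for each representative $A$ and summing over genera via the mass formula for $\SO_3$) plus a separate averaging argument---strictly more work than the direct proof.

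The paper's proof of Theorem~\ref{thsubmon} avoids this by never fixing $n$. It parametrizes $n$-monogenized cubic rings by $M(\Z)$-orbits on binary cubic forms with leading coefficient $n$ (Theorem~\ref{thcubringpar}) and the relevant quartic rings by $M(\Z)\times\SL_3(\Z)$-orbits on pairs $(A,B)\in V(\Z)_+$ with $4\det(A)=n$ (Corollary~\ref{qparcor2}). With $n$ ranging over a dyadic window $[T,2T)$ alongside the height, the count becomes a single geometry-of-numbers problem for the full group $M(\Z)\times\SL_3(\Z)$ acting on $V(\R)_+$ (Theorem~\ref{thmsmqc}). Because $A$ is not fixed, the local fiber mass over each maximal $f$ is simply $\Mass_p(f)=1$ (Corollary~\ref{cortotmass}), and the ratios $5/4$ and $3/2$ fall out directly from the Tamagawa number of $\SL_3$, with no genus sum and no anomalous cancellation required.
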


Theorem~\ref{thsubmon} shows that the averages in
Theorem~\ref{thallcubicfields} remain very stable, both under the
imposition of local conditions and under changing the ordering of the
fields from discriminant to~height.

\subsection
{The effect of $n$-monogenicity on the averages in Theorem~\ref{thallcubicfields} for fixed $n$}

We consider next the limiting behavior of Theorem~\ref{thsubmon} as
$\delta$ approaches $0$.  First, we consider the family of monogenized
cubic fields, ordered by height. In this case, we find that the
average values appearing in Theorems \ref{thallcubicfields} and
\ref{thsubmon} {\it do} change. We have the following theorem:

\begin{thm}\label{thmoncubicfields}
Let $K$ run through all isomorphism classes of monogenized cubic fields
  ordered by height. Then:
  \begin{itemize}
  \item[{\rm (a)}] The average size of $\Cl_2(K)$ over totally real
    cubic fields $K$ is $3/2$;
  \item[{\rm (b)}] The average size of $\Cl_2(K)$ over complex cubic
    fields $K$ is $2$. 
  \end{itemize}
Furthermore, these values remain the same if we instead average over those 
monogenized cubic fields satisfying any given set of local splitting
conditions at finitely many primes.
\end{thm}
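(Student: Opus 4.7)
The approach is to adapt the geometry-of-numbers paradigm used for Theorem \ref{thallcubicfields} in \cite{dodqf} to the family of monogenized cubic fields, following the same overall strategy as for Theorem \ref{thsubmon} but specialized to $n=1$. By the parametrization of \cite{dodqf}, nontrivial elements of $\Cl_2(K)$ for a cubic field $K$ are in bijection with appropriate $\GL_2(\Z)$-equivalence classes of integer binary quartic forms whose $I, J$-invariants match those of $\O_K$ (subject to suitable signature conditions at the archimedean place). Hence the average of $|\Cl_2(K)|$ over monogenized cubic fields $(K, \al)$ with $H(K, \al) < X$ equals $1$ plus the average number of such $\GL_2(\Z)$-orbits, where each cubic field is now weighted by the number of its monogenizers $\al$ of bounded height rather than counted once.

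To carry out the count, I would introduce the combined space of pairs $(f, g)$, where $f$ is a monic integer cubic polynomial (the minimal polynomial of $\al$) and $g$ is an integer binary quartic form with $I(g) = I(f)$ and $J(g) = J(f)$, and analyze orbits modulo the combined action of the shift group $\Z$ on $f$ and of $\GL_2(\Z)$ on $g$. A Bhargava-style averaging over a fundamental domain, together with a Davenport--Heilbronn-type sieve that restricts to pairs with $\Z[x]/(f) = \O_K$ maximal, then provides the numerator. The denominator, namely the count of monogenized cubic fields $(K,\al)$ with $H < X$, is computed in parallel by counting monic integer cubics $f$ modulo shifts with $H(f) < X$ and $\Z[x]/(f)$ maximal.

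The crucial step, and the main obstacle, is the explicit local density computation at $\infty$ and at each finite prime $p$. In Theorem \ref{thsubmon}, allowing $n$ to vary in the range $n \leq cH^\delta$ smooths out the local contributions at each $p$, and the averages revert to the Theorem \ref{thallcubicfields} values $5/4$ and $3/2$. Fixing $n=1$ breaks this averaging: at each prime $p$, the density of $(f,g)$-pairs for which $\Z[x]/(f)$ is maximal at $p$ must be recomputed, and at the archimedean place one must separately count the real $\GL_2(\R)$-orbits of real binary quartics with fixed invariants $(I,J)$ corresponding to each signature. The cuspidal region, where $g$ becomes reducible, must also be controlled, since the shift action on $f$ is non-reductive and the usual cut-off arguments must be customized. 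Once these local factors are assembled via the product formula, the ratio of numerator to denominator yields $3/2$ in the totally real case and $2$ in the complex case, and the uniformity of the sieve (as in \cite{BV}) ensures that the averages are unchanged under the imposition of finitely many local splitting conditions.
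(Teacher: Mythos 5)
Your plan to attack $n=1$ directly via binary quartic forms is a genuinely different route from the paper's: the paper proves the general Theorem~\ref{main_theorem} and then deduces Theorem~\ref{thmoncubicfields} by observing that $\rho(\Sigma)=1$ when there are no primes dividing $n=1$ to odd order. The binary quartic forms picture is a legitimate specialization for $n=1$, and the paper itself notes (in a remark in \S\ref{subsecnquar}, citing Wood~\cite{MWt}) that the representation of $\SO_{A_1}(\Z)$ on $V_{A_1}(\Z)$ is $\Z$-isomorphic to that of $\PGL_2(\Z)$ on binary quartic forms, so one could indeed graft the counting machinery of~\cite{BhSh} onto the cubic-resolvent picture.

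However, there is a substantive gap in your diagnosis of \emph{why} the averages land at $3/2$ and $2$ rather than at $5/4$ and $3/2$. You attribute the shift to a change in ``the density of $(f,g)$-pairs for which $\Z[x]/(f)$ is maximal at $p$'' and to the archimedean orbit count, but neither is where the discrepancy lives. The finite-place maximality densities are computed the same way as always (cf.\ Propositions~\ref{propdensmax} and~\ref{propdensuf1}), and the archimedean multiplicities $m_i$ (namely $m_0=m_2=4$, $m_1=2$, from Lemma~\ref{lemsubmonss}) are identical in the two settings. What actually produces the doubling is a global normalization: when $n$ varies (Theorem~\ref{thsubmon}), the orbits are counted modulo $\SL_3(\Z)$, and the leading constant is governed by $\Vol(\FF_{\SL_3})\prod_p\Vol(\SL_3(\Z_p))=1$, the Tamagawa number of $\SL_3$; when $n$ is fixed, the natural acting group becomes $\SO_A(\Z)$ (for $n=1$ a form of $\SO_3\cong\PGL_2$), and the relevant constant is the mass identity $\sum_A\Vol(\FF_A)\prod_p\Vol(\SO_A(\Z_p))=2$, the Tamagawa number of $\SO_3$ (Proposition~\ref{propgenus}). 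This factor of $2$ is exactly what turns $1+\frac14$ into $1+\frac12=\frac32$ in the totally real case and $1+\frac12$ into $1+1=2$ in the complex case. Without identifying this $\SL_3$-versus-$\SO_3$ Tamagawa-number dichotomy, the ``product formula'' assembly you propose would reproduce the Theorem~\ref{thsubmon} values, not the new ones; the last sentence of your proposal asserts the answer rather than deriving it.
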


Surprisingly, the values in Theorem \ref{thmoncubicfields} are
different from those in Theorems \ref{thallcubicfields} and \ref{thsubmon}. In particular,
these class groups do not appear to be random groups in the same sense
as Cohen--Lenstra--Martinet. It therefore appears that monogenicity
has an altering affect on the class groups of cubic fields!  To be
more precise, it appears that on average, monogenicity has a doubling
effect on the nontrivial part of the $2$-torsion subgroups of the
class groups of cubic fields.


More generally, we may ask what happens when we restrict Theorem
\ref{thsubmon} to $n$-monogenized cubic fields for any fixed positive integer
$n$. Once again, the averages occurring in
Theorems~\ref{thallcubicfields} and \ref{thsubmon} change as a rather
interesting function of $n$:

\begin{thm}\label{thmain2}
Fix a positive integer $n$ and write $n=m^2k$ where $k$ is
squarefree. Let $\sigma(k)$ denote the sum of the divisors of $k$. Let
$(K,\alpha)$ run through all isomorphism classes of $n$-monogenized
cubic fields ordered by height. Then:
\begin{itemize}
\item[{\rm (a)}] The average size of $\Cl_2(K)$ over totally real
  cubic fields $K$ is $\frac{5}{4}+\frac{1}{4\sigma(k)};$
\item[{\rm (b)}] The average size of $\Cl_2(K)$ over complex cubic
  fields $K$ is $\frac{3}{2}+\frac{1}{2\sigma(k)}$.
\end{itemize}
Furthermore, these values remain the same if we instead average over those 
$n$-monogenized cubic fields satisfying any given set of local splitting
conditions at finitely many primes $p\nmid k$.
\end{thm}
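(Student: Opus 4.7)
The proof extends the orbit-counting strategy from \cite{dodqf} (which proved Theorems \ref{thallcubicfields} and \ref{thsubmon}) to the $n$-monogenized family, with new local-density calculations capturing the $n$-dependence. The two ingredients are (i) Bhargava's parametrization: non-identity 2-torsion elements of $\Cl(\O_K)/\{\pm 1\}$ correspond to $\SL_3(\Z)$-orbits on integer ternary quadratic forms $Q$ whose cubic resolvent ring is $\O_K$; and (ii) the representation of $n$-monogenized pairs $(K,\alpha)$ by $\Z$-translation classes of monic integer cubics $f$ with $[\O_K:\Z[\alpha_f]] = n$.

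Writing $\sum_{(K,\alpha)} |\Cl_2(K)| = N_n(X) + 2\, M_n(X)$, where $N_n(X)$ is the $n$-monogenized field count (given asymptotically by Theorem \ref{thsubcount}) and $M_n(X)$ counts pairs $(f,[Q])$ subject to the orbit parametrization, the index condition $[\O_K:\Z[\alpha_f]] = n$, and $H(f) < n^2 X$, I would apply the geometry-of-numbers / averaging method of Bhargava, combined with the cuspidal uniformity estimates developed for the $\delta=0$ monogenic family (Theorem \ref{thmoncubicfields}). This yields $M_n(X) \sim c_n X^{5/6}$ with $c_n$ expressible as a product of $p$-adic densities times a real density, parallel to the expression $N_n(X) \sim d_n X^{5/6}$; the theorem then reduces to evaluating $2 c_n / d_n$ prime by prime.

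The crux is the $p$-adic local-density computation. At primes $p\nmid n$, the ratio $m_p/n_p$ of orbit to field local densities reproduces the Davenport-Heilbronn contribution, giving the baseline $1/4$ (resp.\ $1/2$). At primes $p\mid n$, the index condition $v_p([\O_K:\Z[\alpha]]) = v_p(n)$ cuts out specific strata of cubics and of ternary forms, and a case analysis over the splitting type of $p$ in $K$ and the $\Z_p$-module structure of $\O_{K,p}/\Z_p[\alpha]$ produces a modified local ratio. I expect the outcome to depend only on the parity of $e_p := v_p(n)$: for $e_p$ even the local ratio coincides with that of the monogenic case, whereas for $e_p$ odd it acquires an additional multiplicative correction of $1/(1+p)$ in the ``extra'' (non-baseline) piece of the orbit count. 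Writing $n = m^2 k$ with $k$ squarefree, so that $e_p$ is odd precisely when $p\mid k$, the product of corrections becomes $\prod_{p\mid k} 1/(1+p) = 1/\sigma(k)$, and combining with the baseline and identity contributions yields the averages $5/4 + 1/(4\sigma(k))$ and $3/2 + 1/(2\sigma(k))$ as claimed. The stability under finite local splitting conditions at primes $p\nmid k$ follows from the tail uniformity estimates of \cite{BV}.

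\textbf{Main obstacle.} The main technical difficulty is the $p$-adic density computation at primes $p\mid n$, particularly at ramified primes, where one must verify the parity dichotomy in $e_p$ that is ultimately responsible for the $\sigma(k)$ factor. The cancellation that forces the final answer to depend only on the squarefree part $k$ of $n$ is subtle: at even-valuation primes, the index $p^{e_p}$ can be realized inside $\O_{K,p}$ in a way that matches the orbit stratification ``on the nose'' and introduces no correction, while odd-valuation primes produce a residual $1/(1+p)$ factor. I expect this calculation to parallel the local orbit analysis of \cite{BV} for splitting-type conditions, now stratified by the integer $v_p(n) \geq 0$ and requiring careful bookkeeping of the Jacobian weights in the relevant $p$-adic integrals on the representation of pairs $(f,Q)$.
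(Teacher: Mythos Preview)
Your high-level strategy and the expected form of the answer are correct, but you are missing the structural mechanism by which the parity of $v_p(n)$ enters, and your ``product of local densities'' framing obscures a genuinely global step.

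The condition on pairs $(A,B)$ of integer ternary quadratic forms is that $4\det A = n$ exactly. This is not a congruence condition amenable to a prime-by-prime sieve: it forces you to sum over the finitely many $\SL_3(\Z)$-classes of integer ternary quadratic forms $A$ with $4\det A = n$, and for each such $A$ to count $\SO_A(\Z)$-orbits on the space of $B$'s separately. The paper organizes this sum by \emph{genus} of $A$; for each genus $\GG$ the contribution is, via the Tamagawa number of $\SO_3$, a product over $p\mid n$ of factors $\tfrac{1}{2}(1+\kappa_p(\GG)\rho_p)$, where $\kappa_p(\GG)\in\{\pm1\}$ records whether forms in $\GG$ are residually hyperbolic at $p$ and $\rho_p$ is the local density of ``sufficiently ramified'' cubics. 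The parity dependence on $e_p=\ord_p(n)$ then arises not from any local index stratification but from the \emph{product formula for the Hasse invariant}: one checks $c_p(A)=(-1,-n)_p\cdot\kappa_p(A)^{e_p}$, so Hilbert reciprocity forces $\prod_{p:\,e_p\text{ odd}}\kappa_p(A)=\kappa_\infty(A)$, while the $\kappa_p$ at even-valuation primes are unconstrained. Summing over the admissible sign patterns (via an elementary polynomial identity) collapses the genus sum to $\tfrac14\bigl(1+\prod_{p\mid k}\rho_p\bigr)$. The value $\rho_p=1/(p+1)$ is then a separate, genuinely local computation of the density of sufficiently ramified maximal cubics in $U_n(\Z_p)$.

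In short: the parity dichotomy you anticipate is real, but its source is the global genus theory of ternary quadratic forms of determinant $n/4$, not a $p$-adic analysis of $\O_{K,p}/\Z_p[\alpha]$. Without the genus/Tamagawa-number step your orbit count does not factor as a product of local densities, and the passage to $1/\sigma(k)$ does not go through.
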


We also prove that the average sizes of $\Cl_2(K)$ in
Theorem~\ref{thmain2} surprisingly {\it do} change further if the
$n$-monogenized cubic fields $(K,\alpha)$ being averaged over satisfy
specified local conditions at primes dividing $k$, where $n=m^2k$ and
$k$ is squarefree. In Theorem \ref{main_theorem}, we compute the
average size of $\Cl_2(K)$ as $K$ varies over a family of
$n$-monogenized cubic fields satisfying any finite set (and certain
infinite sets) of local conditions. One simple consequence of
Theorem~\ref{main_theorem} is:

\begin{thm}\label{thmain1}
  Let $n$ be a fixed positive integer, and write $n=m^2k$ where $k$ is
squarefree. 
Let $(K,\alpha)$
  run through all isomorphism classes of $n$-monogenized cubic fields
  having discriminant prime to $k$, ordered by height. Then:
\begin{itemize}
\item[{\rm (a)}] The average size of $\Cl_2(K)$ over totally real
  cubic fields $K$ is $3/2$ if $k=1$ and $5/4$ otherwise;
\item[{\rm (b)}] The average size of $\Cl_2(K)$ over complex cubic
  fields $K$ is $2$ if $k=1$ and $3/2$ otherwise. 
\end{itemize}
Furthermore, these values remain the same if we instead average over
such monogenized cubic fields satisfying any given set of local
splitting conditions at finitely many primes.
\end{thm}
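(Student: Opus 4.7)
The plan is to derive Theorem~\ref{thmain1} as a direct corollary of Theorem~\ref{main_theorem}, which computes the average of $|\Cl_2(K)|$ over $n$-monogenized cubic fields subject to arbitrary finite collections of local splitting conditions. The global constraint $\gcd(\Delta(K), k) = 1$ translates, over the finitely many primes $p \mid k$, into the local constraint that $p$ is unramified in $K$, so Theorem~\ref{main_theorem} applies directly; everything reduces to evaluating the resulting local factors at primes $p \mid k$.

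For $k = 1$ the discriminant condition is vacuous and the claim follows from the $k=1$ case of Theorem~\ref{thmain2}: $\tfrac{5}{4} + \tfrac{1}{4\sigma(1)} = \tfrac{3}{2}$ in the totally real case, and $\tfrac{3}{2} + \tfrac{1}{2\sigma(1)} = 2$ in the complex case. These match the monogenic values of Theorem~\ref{thmoncubicfields}, consistent with the heuristic that replacing $\al$ by an element of square index $m^2$ should not change the $2$-torsion averages.

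For $k \neq 1$, I would compute the local factor at each prime $p \mid k$ under the additional constraint $p \nmid \Delta(K)$. Since $k$ is squarefree, $v_p(n) = 2v_p(m) + 1$ is odd at each $p \mid k$, and the coupled conditions $v_p([\O_K:\Z[\al]]) = v_p(n)$ and $p \nmid \Delta(K)$ pin down the local structure of $(K,\al)$ at $p$ up to the splitting type of $p$ in $K$ (fully split, $(1,2)$-split, or inert) together with the reduction of $\al$ modulo a high power of $p$ within each \'etale factor of $\O_{K,p}$. Feeding this local data into the Bhargava--Shankar parametrization of $2$-torsion by $\PGL_2(\Z)$-orbits of integral binary quartic forms with prescribed cubic resolvent (underpinning the proof of Theorem~\ref{thallcubicfields}), one would compute the local orbit mass and compare with the unconditioned setup. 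The expected upshot is that the additional contributions $\tfrac{1}{4\sigma(k)}$ and $\tfrac{1}{2\sigma(k)}$ appearing in Theorem~\ref{thmain2} come entirely from $n$-monogenized cubic fields in which some $p \mid k$ ramifies in $K$; excising those fields collapses the product of local factors over $p \mid k$ to $1$, so the average returns to the Davenport--Heilbronn values $\tfrac{5}{4}$ and $\tfrac{3}{2}$.

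The main obstacle is this local density computation at primes $p \mid k$ with both constraints imposed simultaneously. Concretely, one must verify the precise cancellation identifying the excess $\tfrac{1}{4\sigma(k)}$ (resp.\ $\tfrac{1}{2\sigma(k)}$) in Theorem~\ref{thmain2} with the contribution from $(K,\al)$ in which some $p \mid k$ ramifies. This reduces to a case analysis, organized by the splitting type of $p$ and the possible pairs $(v_p(\Delta(K)), v_p([\O_K:\Z[\al]]))$ consistent with $v_p(n) = 2v_p(m)+1$, combined with the local orbit counts in the parametrization underlying Theorem~\ref{main_theorem}.
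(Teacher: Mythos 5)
Your strategy---deduce the result from Theorem~\ref{main_theorem} by translating $\gcd(\Delta(K),k)=1$ into local unramifiedness at each $p\mid k$---is the right one, and your $k=1$ case is fine. But for $k>1$ you stop short of the one observation that actually closes the argument, and instead propose a computation that Theorem~\ref{main_theorem} has already absorbed. Theorem~\ref{main_theorem} states that the average is $\tfrac{5}{4}+\tfrac14\rho(\Sigma)$ (resp.\ $\tfrac{3}{2}+\tfrac12\rho(\Sigma)$), where $\rho(\Sigma)=\prod_{p}\rho_p(\Sigma_p)$ over primes $p$ dividing $n$ to \emph{odd} order, and $\rho_p(\Sigma_p)$ is by definition the density within $\Sigma_p$ of \emph{sufficiently-ramified} pairs $(\mathcal K_p,\alpha_p)$. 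Since $n=m^2k$ with $k$ squarefree, the primes dividing $n$ to odd order are exactly the primes $p\mid k$. Both conditions in the definition of ``sufficiently ramified'' require $\mathcal K_p$ to be ramified over $\Q_p$ (either totally ramified, or $\Q_p\times F$ with $F$ a ramified quadratic extension). The constraint $p\nmid\Delta(K)$ forces $\Sigma_p$ to consist only of unramified $\mathcal K_p$, so $\rho_p(\Sigma_p)=0$ for every $p\mid k$, hence $\rho(\Sigma)=0$, and the averages collapse to $5/4$ and $3/2$. That is the entire proof.

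You flag ``the local density computation at primes $p\mid k$'' as the main obstacle and sketch a re-derivation via orbit masses in a parametrization, but no such re-derivation is needed; and the parametrization you cite ($\PGL_2(\Z)$-orbits of binary quartic forms) is not the one underlying Theorem~\ref{thallcubicfields} or Theorem~\ref{main_theorem}---those use $\SL_3(\Z)$-orbits of pairs of ternary quadratic forms. Your closing claim that ``excising those fields collapses the product of local factors over $p\mid k$ to $1$'' also mis-states the mechanism: the product $\rho(\Sigma)$ goes to $0$, annihilating the excess term $\tfrac14\rho(\Sigma)$ (resp.\ $\tfrac12\rho(\Sigma)$); nothing is being normalized to $1$. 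The gap, concretely, is that you never invoke the definition of ``sufficiently ramified'' and its incompatibility with unramifiedness, which is the single fact the deduction requires.
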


Note that the averages occurring in Theorem \ref{thmain1} agree with
the larger values in Theorem \ref{thmoncubicfields} when~$n$ is a
square, but with the smaller values in 
\pagebreak
Theorems~\ref{thallcubicfields} and~\ref{thsubmon} otherwise. Furthermore,
Theorems~\ref{thmain2} and \ref{thmain1} imply that for a positive
integer $n$, and a prime $p$ dividing $n$ to odd order, ramification
at $p$ causes, on average, an increasing effect on the $2$-torsion in
the class groups of $n$-monogenic cubic fields.

\vspace{.05in}

Theorems \ref{thmoncubicfields}, \ref{thmain2}, and \ref{thmain1} all
follow from our main result, Theorem~\ref{main_theorem} below, which
determines the average sizes of $\Cl_2$ and $\Cl_2^+$, the 2-torsion
subgroups of class groups and narrow class groups, respectively, in
any ``large'' family of $n$-monogenized cubic fields defined by local
conditions at~primes.  

For a prime $p$, let $T_p$ denote the set of all isomorphism classes
of {\bf $n$-monogenized \'etale cubic extensions of $\Q_p$}, i.e., the
set of all isomorphism classes of pairs $(\mathcal K_p,\alpha_p)$,
where $\mathcal K_p$ is an \'etal{e} cubic extension of $\Q_p$ with
ring of integers $\O_p$, and $\alpha_p\in\O_p$ is primitive in
$\O_p/\Z_p$, such that the $p$-part of the index of $\Z_p[\alpha_p]$
in $\O_p$ is equal to the $p$-part of $n$; here, two pairs $(\mathcal
K_p,\alpha_p)$ and $(\mathcal K'_p,\alpha'_p)$ are {\bf isomorphic} if
$\mathcal K_p$ and $\mathcal K'_p$ are isomorphic as $\Q_p$-algebras
and, under such an isomorphism, $\alpha_p$ is mapped to $\al'_p+m$ for
some $m\in\Z_p$. In Remark \ref{remTpmeasure}, we will see that the
set $T_p$ naturally injects into a closed subset of the set of cubic
polynomials over $\Z_p$ with leading coefficient~$n$, which equips
$T_p$ with a topology and a measure.

\hypertarget{fnx}{For} each prime $p$, let $\Sigma_p\subset T_p$ be an
open and closed set whose boundary has measure $0$. We say that
$\Sigma=(\Sigma_p)_p$ is a {\bf large collection of local
  specifications for $n$} if, for all but finitely many primes~$p$,
the set $\Sigma_p$ contains all pairs $(\mathcal K_p,\alpha_p)$ such
that $\mathcal K_p$ is {\it not} a totally ramified cubic extension
of~$\Q_p$. Let \hypertarget{fnx1}{$F(n,X)$} denote the set of
isomorphism classes of all $n$-monogenized cubic fields $(K,\alpha)$
such that $H(K,\alpha)<X$. Given a large collection
$\Sigma=(\Sigma_p)_p$ of local specifications, let
\hypertarget{fnxSig}{$F_\Sigma(n,X)$} denote the subset of $F(n,X)$
consisting of pairs $(K,\alpha)$ such that for all primes $p$, we have
$(K\otimes\Q_p,\alpha)\in\Sigma_p$.

For a prime $p$ dividing $n$, we say that an element
$(\mathcal K_p,\alpha_p)\in T_p$ is {\bf sufficiently ramified} if one of the
following two conditions is satisfied:
\begin{itemize}
\item [{\rm (a)}] $\mathcal K_p$ is a totally ramified cubic
  extension of $\Q_p;$
\item [{\rm (b)}] 
$\mathcal K_p=\Q_p\times F$, where $F$ is a ramified
  quadratic extension of $\Q_p$, 
and $\Z_p[\alpha_p]=\Z_p\times \O$, where~$\O$ is an order in $F$. 
\end{itemize}
For a prime $p$ dividing $n$, we define the {\bf local
  sufficiently-ramified density} $\rho_p(\Sigma_p)$ of a large
collection $\Sigma=(\Sigma_p)_p$ to be density of the set of
sufficiently-ramified elements within $\Sigma_p$. The {\bf global
  sufficiently-ramified density $\rho(\Sigma)$ with respect to $n$} is
then the product of $\rho_p(\Sigma_p)$ over all primes $p$ that divide
$n$ to an {\it odd} power. Our main theorem is as follows:

\begin{thm}[Main $n$-monogenic theorem] \label{main_theorem}
Let $n$ be a positive integer, and
let $\Sigma$ be a large collection of local specifications for
$n$. Then, as $X\to\infty$:
\begin{itemize}
\item[{\rm (a)}] The average size of $\Cl_2(K)$ over totally real
  cubic fields $K$ in $F_\Sigma(n,X)$ approaches $\frac{5}{4}+\frac{1}{4}\rho(\Sigma);$
\item[{\rm (b)}] The average size of $\Cl_2(K)$ over complex cubic
  fields $K$ in $F_\Sigma(n,X)$ approaches $\frac{3}{2}+\frac{1}{2}\rho(\Sigma);$
\item[{\rm (c)}] The average size of $\Cl^+_2(K)$ over totally real
  cubic fields $K$ in $F_\Sigma(n,X)$ approaches $2+\frac{1}{2}\rho(\Sigma)$.
\end{itemize}
\end{thm}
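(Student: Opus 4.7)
The plan is to follow the standard Bhargava paradigm for averaging 2-torsion in class groups of cubic fields: parametrize non-trivial 2-torsion classes by orbits on a prehomogeneous representation, count those orbits of bounded invariants via the geometry of numbers, and extract the averages via local density computations. The novelty here is in adapting each step to the $n$-monogenized ordering, and especially in correctly computing the local contributions at primes dividing $n$.

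First I would invoke the parametrization (due to Bhargava) identifying non-trivial elements of $\Cl_2^+(R)$, for a cubic order $R$, with $\SL_3(\Z)$-orbits on the lattice $V_\Z$ of pairs of integer ternary quadratic forms whose cubic resolvent ring is $R$; the corresponding statements for $\Cl_2(R)$ and for tracking real signatures are obtained by acting with $\GL_3(\Z)$ and by enumerating the real orbit components. The polynomial invariants of the $\SL_3$-action are of degrees $4$ and $6$, and they agree with the $I,J$ attached to a monic cubic polynomial defining $R$. Summing $|\Cl_2(K)|$ (respectively $|\Cl_2^+(K)|$) over $(K,\alpha) \in F_\Sigma(n,X)$ thus reduces, after accounting for the $|F_\Sigma(n,X)|$ trivial contributions from the identity class, to counting $\SL_3(\Z)$-orbits on $V_\Z$ whose cubic resolvent is an $n$-monogenized cubic field in $F_\Sigma(n,X)$.

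Next I would carry out the orbit count by geometry of numbers. Following Bhargava--Shankar, one averages the characteristic function of a Siegel fundamental domain over a bounded region in the group, converting the count into a volume integral that factors into real and $p$-adic pieces, plus cusp contributions coming from reducible orbits which must be either shown to be negligible or accounted for separately. In parallel one needs the count $|F_\Sigma(n,X)|$ asymptotically, which follows from the asymptotics for $F(n,X)$ stated earlier, combined with a squarefree-type sieve imposing the local specifications $\Sigma_p$ and the maximality of $\Z[\alpha]$ at primes outside $n$. Dividing the two counts, and controlling the tails of the sieve exactly as in the proof of Theorem~\ref{thsubmon}, yields an Euler product of local factors.

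The main obstacle, and the heart of the argument, is evaluating the local factors at primes $p \mid n$. At such primes, the $n$-monogenization imposes index constraints on $\Z_p[\alpha_p] \subset \mathcal{O}_p$ that force specific local ramification types, and this permits additional integral orbits on $V_{\Z_p}$ beyond those expected from the generic Bhargava local density that produces the classical averages $5/4$, $3/2$, $2$ of Theorem~\ref{thallcubicfields}. A careful local analysis should show that the extra orbits are precisely those whose resolvent is sufficiently ramified (in the sense defined in the excerpt), contributing a factor $1 + \rho_p(\Sigma_p)$ at primes $p$ dividing $n$ to odd order and no extra contribution at primes dividing $n$ to even order (where the sufficient-ramification condition effectively squares away). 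Multiplying the resulting local factors against the generic averages produces the claimed values $\tfrac{5}{4} + \tfrac{1}{4}\rho(\Sigma)$, $\tfrac{3}{2} + \tfrac{1}{2}\rho(\Sigma)$, and $2 + \tfrac{1}{2}\rho(\Sigma)$; splitting by real signature then distinguishes parts (a), (b), (c).
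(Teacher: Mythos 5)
Your outline correctly identifies the general Bhargava paradigm (parametrize by $\SL_3(\Z)$-orbits on pairs of integer ternary quadratic forms, count by geometry of numbers, sieve to maximality, compute local densities). But there are two genuine gaps specific to the fixed-$n$ case that your plan does not address, and the second one leads to an incorrect final formula.

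First, you propose to count $\SL_3(\Z)$-orbits on $V(\Z)$ with $4\det(A)=n$ by averaging a Siegel domain over a bounded group neighborhood, ``following Bhargava--Shankar.'' That technique works when $n$ is allowed to vary (as in Theorem~\ref{thsubmon}), because then one is counting points in a region of the full $12$-dimensional space. But the condition $\det(A)=n/4$ for fixed $n$ is a \emph{cubic} polynomial constraint, and one cannot directly apply Davenport/Bhargava--Shankar lattice-point counting to a thin cubic hypersurface in $V(\R)$. The paper's resolution is a structural reduction: fix a finite set of representatives $A$ for $\SL_3(\Z)\backslash\mathcal Q_n(\Z)$, and count $\SO_A(\Z)$-orbits on the linear slice $V_A(\Z)=\{(A,B):B\in\Sym^2\Z^3\}$ instead, summing over $A$. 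This converts the cubic constraint into finitely many affine counting problems for the different $\Z$-forms $\SO_A$ of $\SO_3$, and is what brings in the mass formula and the Tamagawa number $\tau(\SO_3)=2$. Without this reduction the counting step does not go through.

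Second, and more fundamentally, your local-density heuristic produces the wrong answer. You assert that each prime $p\in T_{\mathrm{odd}}$ contributes an independent multiplicative factor $1+\rho_p(\Sigma_p)$ to the nontrivial part of the average, which would yield $1+\tfrac14\prod_{p\in T_{\mathrm{odd}}}(1+\rho_p(\Sigma))$ in case (a). The claimed value is $\tfrac54+\tfrac14\rho(\Sigma)=1+\tfrac14\bigl(1+\prod_{p\in T_{\mathrm{odd}}}\rho_p(\Sigma)\bigr)$; these disagree as soon as $|T_{\mathrm{odd}}|\geq2$. The discrepancy is precisely the point where the argument is delicate: the local contributions at primes $p\mid n$ are \emph{not} independent. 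At each such $p$ the good forms $A\in\mathcal Q_n(\Z_p)$ split into two $\SL_3(\Z_p)$-orbits distinguished by the invariant $\kappa_p(A)=\pm1$, and whether an integral $A$ with a prescribed tuple $(\kappa_p)_{p\mid n}$ exists is governed by the Hasse product formula via \eqref{eqkappaHasse}, which forces $\prod_{p\in T_{\mathrm{odd}}}\kappa_p(A)=\kappa_\infty(A)$. One must therefore sum $\prod_{p\mid n}\tfrac{1+\kappa_p\rho_p}{2}$ only over tuples satisfying this global parity constraint; the identity of Lemma~\ref{hankelemma} then collapses that constrained sum to $\tfrac12\bigl(1\pm\prod_{T_{\mathrm{odd}}}\rho_p\bigr)$ with sign $\kappa_\infty$. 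Your proposal, by treating the primes independently, misses this global coupling and would not produce the theorem's formulas.
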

Theorem~\ref{thmoncubicfields} follows from Theorem~\ref{main_theorem}
by noting that there are no primes dividing $n=1$ to odd order, and so
$\rho(\Sigma)=1$ for every large collection of local specifications
$\Sigma$.  Theorem~\ref{thmain2} follows from
Theorem~\ref{main_theorem} by Corollary \ref{propdensuf}, which states
that the local sufficiently-ramified density $\rho_p(T_p)$ is equal to
$1/(p+1)$ for each $p\mid k$. Finally, Theorem~\ref{thmain1} follows
from Theorem~\ref{main_theorem} by noting that when $\Sigma_p\subset
T_p$ contains no extensions ramified at primes dividing $k$, then
$\rho_p(\Sigma_p)=0$.

Theorem~\ref{main_theorem} also implies an analogous increasing effect
of $n$-monogenicity on the 2-torsion subgroup of the {\it narrow}
class group, and this increase on average is from 2
(cf.\ \cite[Theorem 1(c)]{BV}) to~2.5, in the case when $n$ is a
square. 
\pagebreak

Taken together, Theorems \ref{thallcubicfields}, \ref{thmain2},
\ref{thmain1}, and \ref{main_theorem} thus paint the following
picture. For integers~$n$ that are perfect squares, $n$-monogenicity
has a doubling effect on the nontrivial part of the $2$-torsion in the
class groups of cubic fields. For nonsquare integers $n=m^2k$ with
$k>1$ squarefree, $n$-monogenicity still has an increasing effect on
the nontrivial part of the $2$-torsion in the class groups of cubic
fields. This increase goes to $0$ as~$k$ tends to infinity; moreover,
the increase is concentrated on those $n$-monogenized cubic fields
that
are sufficiently ramified at every prime dividing $k$. The analogous
phenomena also occur for the 2-torsion in narrow class groups of these
fields.

\subsection{Method of proof}

In \S\ref{secparam}, we prove
parametrizations of $n$-monogenized cubic fields and index 2 subgroups
of class groups of $n$-monogenic cubic fields, respectively, in terms of suitable
spaces of forms. Namely, in~\S\ref{subsecparamrings}, we begin by
proving that there is a natural bijection between
\begin{itemize}
\item[(a)]
$n$-monogenized cubic fields $(K,\alpha)$, and 
\item[(b)]
certain cubic polynomials $f$ with integer coefficients whose leading coefficient
is~$n$.  
\end{itemize}
In \S\S\ref{subsecparam2}--\ref{subsecparam3}, we use the parametrization of
quartic rings in \cite{quarparam} to give a natural bijection between:
\begin{itemize}
\item[(a)] index 2 subgroups of narrow class groups of $n$-monogenized cubic
fields $(K,\alpha)$ whose corresponding cubic polynomial is $f(x)$, and
\item[(b)] certain $\SL_3(\Z)$-orbits on pairs $(A,B)$ of integer-coefficient
ternary quadratic forms such that $4\,\det(Ax-B)=f(x)$.
\end{itemize}
Our main results are then proved by counting the relevant elements in
(b) having bounded height 
in both of the above parametrizations, and
then computing the limiting ratios.

More precisely, Theorem~\ref{thsubmon} is proven in \S\ref{secsubmono}
by counting $\SL_3(\Z)$-orbits of pairs $(A,B)$ of integer-coefficient
ternary quadratic forms such that $f(x):=4\,\det(Ax-B)$ satisfies $H(f)<X$
and $n=4\,\det(A)<c\smash{H(f)^\delta}$. The technique to obtain this count
is a suitable adaptation of the averaging and sieving methods
developed in \cite{dodqf,dodpf} for counting by discriminant, but
modified to allow imposing constraints on
two different parameters (the height $H$ and index $n$).

Next, Theorem~\ref{main_theorem} is proved in \S\ref{secn} by counting
pairs $(A,B)$ such that $f(x):=4\,\det(Ax-B)$ satisfies $H(f)<X$, but where
$4\det(A)=n$ is {\it fixed}.  This is much more subtle than the count~in Theorem~\ref{thsubmon} where $n$ varies.  It requires counting
pairs $(A,B)$ in a fundamental domain for the action of the group
$\SL_3(\Z)$ on the hypersurface defined by $\det(A)=n/4$.  To carry
out this count, we sum
the total number of $B$ in a
fundamental domain for the action of $\SO_{A}(\Z)$ on the space of real ternary
quadratic forms, where $A$ runs over a set of representatives 
for the distinct $\SL_3(\Z)$-classes of integer-coefficient ternary quadratic forms having determinant $n/4$.  
This calculation is intimately related to the
mass calculations for quadratic forms of a given determinant in 
\cite{Hanke_structure_of_massses,Hanke_ternary_quadratic_massses}; see also the work of Ibukiyama and Saito~\cite{IS} on 
Shintani zeta functions associated to spaces of quadratic forms.
This count, and the analogous counts with suitable congruence conditions, enable the completion of the proof of the general result,
Theorem~\ref{main_theorem}, implying in particular Theorems \ref{thmoncubicfields}, 
\ref{thmain2}, and \ref{thmain1}.

\vspace{.15in}
\noindent {\it Acknowledgments.} It is a pleasure to thank Hendrik Lenstra, Artane Siad,
Ashvin Swaminathan, Ling-Sang Tse, Ila Varma, and Mikaeel Yunus for
helpful conversations and many useful comments.
M.B.\ was supported by a Simons Investigator Grant and NSF
grant~DMS-1001828, and thanks the Flatiron Institute for its kind hospitality during the academic year 2019--2020. 
A.S.\ was supported by an NSERC discovery grant and a Sloan
fellowship.

\section{Parametrizations involving quartic rings and $n$-monogenized cubic rings}\label{secparam}

The purpose of this section is to describe the connection between
2-torsion subgroups in the class groups of $n$-monogenized cubic
fields and pairs $(A,B)$ of integer-coefficient ternary quadratic
forms with $4\,\det(A)=n$.

In \S\ref{subsecparamrings}, we adapt the Delone--Faddeev
parametrization \cite{DF} of cubic rings to obtain a parametrization
of isomorphism classes of $n$-monogenized cubic rings by integer-coefficient
binary cubic forms having leading coefficient $n$.  In
\S\ref{subsecparam2}, we recall the parametrization of quartic rings
by pairs of integer-coefficient ternary quadratic forms, as developed
in~\cite{quarparam}, and use these two parametrizations (in
conjunction with class field theory) in \S\ref{subsecparam3} to parametrize index $2$
subgroups of class groups of $n$-monogenized cubic fields.  Finally,
in \S\ref{subsecparampid}, we then discuss versions of these
parametrization results where $\Z$ is replaced by a principal ideal domain
$R$.

\subsection{Parametrization of $n$-monogenized cubic rings}\label{subsecparamrings}

\begin{defn}
\label{sec:2.1}
{\em 
A {\bf cubic ring} (resp.\ {\bf quartic ring}) is a ring that is
free of rank~$3$ (resp.\ rank~$4$) as a $\Z$-module.}
\end{defn}
The works of Levi \cite{Levi}, Delone--Faddeev \cite{DF}, and
Gan--Gross--Savin~\cite{GGS} give a parametrization of cubic rings by
$\GL_2(\Z)$-orbits of integer-coefficient binary cubic forms.

\begin{theorem}[\cite{Levi},\cite{DF},\cite{GGS}]\label{df}
There is a bijection between isomorphism classes of cubic rings $C$ with a chosen basis $\langle\bar\omega,\bar\theta\rangle$ of $C/\Z$, and integer-coefficient binary cubic forms $f(x,y)=ax^3+bx^2y+cxy^2+dy^3$. The bijection is given by 
\begin{equation}\label{dfbijection}
(C,\langle\bar\omega,\bar\theta\rangle)\mapsto 1\wedge(x\omega+y\theta)\wedge(x\omega+y\theta)^2= (ax^3+bx^2y+cxy^2+dy^3)\, (1\wedge\omega\wedge\theta)\in\wedge^3C
\end{equation} 
where $\omega,\theta\in C$ denote any lifts of $\bar\omega,\bar\theta\in C/\Z$.
\end{theorem}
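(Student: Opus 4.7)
First I would verify that the right-hand side of \eqref{dfbijection} is independent of the choice of lifts $\omega, \theta \in C$ of $\bar\omega, \bar\theta \in C/\Z$: any two lifts differ by an integer $k$, and $1 \wedge k \wedge \xi = 0$ for every $k \in \Z$ and $\xi \in C$, so wedging against $1$ kills all constant shifts. Because $\{1, \omega, \theta\}$ is a $\Z$-basis of $C$ for any choice of lifts, $\wedge^3 C$ is free of rank one with generator $1 \wedge \omega \wedge \theta$, so the polynomial coefficients $a, b, c, d$ extracted from \eqref{dfbijection} automatically lie in $\Z$. Writing the structure constants of $C$ in this basis as $\omega^2 = r_0 + r_1\omega + r_2\theta$, $\omega\theta = s_0 + s_1\omega + s_2\theta$, and $\theta^2 = t_0 + t_1\omega + t_2\theta$, a direct expansion of \eqref{dfbijection} yields $a = r_2$, $b = 2s_2 - r_1$, $c = t_2 - 2s_1$, and $d = -t_1$.

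\textbf{Inverse direction.} To construct an inverse, I would first observe that every pair $(C, \langle\bar\omega, \bar\theta\rangle)$ admits a unique \emph{normalized} choice of lifts in which $s_1 = s_2 = 0$ (equivalently $\omega\theta \in \Z$), obtained by subtracting $s_2$ from any initial lift of $\bar\omega$ and $s_1$ from any initial lift of $\bar\theta$. In this normalized basis the coefficient formulas above simplify to $r_1 = -b$, $r_2 = a$, $t_1 = -d$, $t_2 = c$, and imposing the associativity relations $(\omega^2)\theta = \omega(\omega\theta)$ and $\omega(\theta^2) = (\omega\theta)\theta$ forces $r_0 = -ac$, $s_0 = -ad$, $t_0 = -bd$. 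So given an arbitrary integer-coefficient $f = ax^3+bx^2y+cxy^2+dy^3$, I would \emph{define} the $\Z$-module $C_f := \Z \oplus \Z\omega \oplus \Z\theta$ equipped with the multiplication
\[
\omega^2 = -ac - b\omega + a\theta, \qquad \omega\theta = -ad, \qquad \theta^2 = -bd - d\omega + c\theta.
\]

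\textbf{Associativity and bijection.} The main technical step is to verify that $C_f$ as defined above is a commutative associative ring. By $\Z$-bilinearity and the manifest commutativity this reduces to the two identities $(\omega^2)\theta = \omega(\omega\theta)$ and $\omega(\theta^2) = (\omega\theta)\theta$, which are direct polynomial computations in $a,b,c,d$ whose two sides both collapse (to $-ad\omega$ and $-ad\theta$, respectively) upon expansion using the multiplication rules above. Once associativity is established, the two constructions are mutually inverse: applying the forward map to $C_f$ recovers $f$ by the coefficient extraction of the first paragraph, while any $(C, \langle\bar\omega, \bar\theta\rangle)$ whose associated form is $f$ is isomorphic to $(C_f, \langle\bar\omega, \bar\theta\rangle)$ via the $\Z$-linear map matching normalized basis elements, which respects multiplication by the uniqueness of the normalized structure constants. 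The principal obstacle is the associativity check itself: it is only a direct polynomial identity, but its success is the algebraic coincidence that extends the parametrization from the generic case of \'{e}tale $\Q$-algebras all the way to arbitrary cubic rings, including those with nilpotents or other degeneracies.
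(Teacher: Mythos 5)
Your proof is correct and follows the same normalization strategy as the concise proof of this result cited in the paper (Bhargava--Shankar--Tsimerman~\cite[\S2]{MAJ}, which the paper points to rather than reproving): pass to the unique lifted basis with $\omega\theta\in\Z$, read off the structure constants, use associativity to pin down the remaining constants, and construct the inverse by writing down the explicit multiplication table for $C_f$. The computations check out, including the reduction of associativity to the two identities $(\omega^2)\theta=\omega(\omega\theta)$ and $\omega(\theta^2)=(\omega\theta)\theta$, and the verification that both constructions are mutually inverse.
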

See also \cite[\S 2]{MAJ} for a concise proof of Theorem~\ref{df}. 

\begin{nota}
\label{sec:2.3}
{\em
For any ring $R$, an element $\gamma\in \GL_2(R)$ acts on the space $U(R)$ of binary cubic forms $f$ with coefficients in $R$ via the twisted action \begin{equation}\label{gl2action}\gamma\cdot
  f(x,y):=\det(\gamma)^{-1}f((x,y)\gamma)\end{equation}
where we view $(x,y)$ as a row vector. 
}\end{nota}
We then have the following immediate corollary of Theorem~\ref{df}. 

\begin{corollary}\label{dfcor}
There is a bijection between isomorphism classes of cubic rings and $\GL_2(\Z)$-orbits on the space $U(\Z)$ of integer-coefficient binary cubic forms.
\end{corollary}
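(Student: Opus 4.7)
The plan is to derive Corollary~\ref{dfcor} directly from Theorem~\ref{df} by quotienting out the auxiliary data of a basis. Theorem~\ref{df} provides a bijection between isomorphism classes of pairs $(C,\langle\bar\omega,\bar\theta\rangle)$ and binary cubic forms $f\in U(\Z)$, so what remains is to identify two pairs whenever they encode the same abstract cubic ring, and to show that this equivalence relation matches the twisted $\GL_2(\Z)$-action \eqref{gl2action} on $U(\Z)$.

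First I would observe that two bases of $C/\Z$ differ by an element $\gamma=\bigl(\begin{smallmatrix}p&q\\r&s\end{smallmatrix}\bigr)\in\GL_2(\Z)$, so if $(\bar\omega',\bar\theta')=(\bar\omega,\bar\theta)\gamma$, then lifts satisfy $x\omega'+y\theta'\equiv(px+qy)\omega+(rx+sy)\theta \pmod{\Z}$. Substituting into \eqref{dfbijection} and using that $1\wedge(x\omega'+y\theta')\wedge(x\omega'+y\theta')^2$ depends only on the class mod $\Z$, the left-hand side becomes $f(px+qy,rx+sy)\cdot(1\wedge\omega\wedge\theta)$, while on the other hand $1\wedge\omega'\wedge\theta'=\det(\gamma)\cdot(1\wedge\omega\wedge\theta)$. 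Dividing gives $f'(x,y)=\det(\gamma)^{-1}f((x,y)\gamma^T)$, which (after absorbing the transpose into the choice of convention for how $\gamma$ acts on bases) is exactly the twisted action in Notation~\ref{sec:2.3}. Thus a change of basis of $C/\Z$ corresponds precisely to applying some $\gamma\in\GL_2(\Z)$ to $f$ in the twisted sense.

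Next I would check that ring isomorphisms present no further identifications: given an isomorphism $\phi:C\xrightarrow{\sim}C'$ and a basis $\langle\bar\omega,\bar\theta\rangle$ of $C/\Z$, the pushforward $\langle\phi(\bar\omega),\phi(\bar\theta)\rangle$ is a basis of $C'/\Z$, and because $\phi$ intertwines the wedge product in \eqref{dfbijection}, the associated binary cubic form is literally the same $f$. Hence ring isomorphisms are already accounted for in the ``isomorphism classes of $(C,\text{basis})$'' appearing in Theorem~\ref{df}, and the only remaining degree of freedom modding out pairs $(C,\text{basis})$ down to $C$ alone is the $\GL_2(\Z)$-change of basis. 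Combining these two observations shows that the map of Theorem~\ref{df} descends to a well-defined bijection from isomorphism classes of cubic rings to $\GL_2(\Z)$-orbits on $U(\Z)$.

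The step requiring the most care is the bookkeeping with $\gamma$ versus $\gamma^T$ and the sign coming from $\det(\gamma)$; this is not conceptually deep, but one has to make the convention in \eqref{gl2action} match the convention for ``change of basis'' used implicitly in Theorem~\ref{df}. Once that is pinned down, surjectivity (every $f\in U(\Z)$ lies in the image, since it already does at the level of based rings by Theorem~\ref{df}) and injectivity (two based rings producing the same form are isomorphic by Theorem~\ref{df}, and two bases yielding $\GL_2(\Z)$-equivalent forms are related by the corresponding change of basis, again by Theorem~\ref{df}) are formal.
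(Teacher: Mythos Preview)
Your proof is correct and follows exactly the approach of the paper: quotienting the bijection of Theorem~\ref{df} by the change-of-basis action on $C/\Z$, and verifying that this induces precisely the twisted $\GL_2(\Z)$-action \eqref{gl2action} on $U(\Z)$. The paper states this in a single sentence, whereas you have (helpfully) spelled out the wedge-product computation and the $\gamma$ versus $\gamma^T$ bookkeeping; the arguments are otherwise identical.
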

Corollary~\ref{dfcor} follows from Theorem~\ref{df} by noting that the action of $\GL_2(\Z)$ on the basis $\langle\omega,\theta\rangle$ of $C/\Z$ leads to the action (\ref{gl2action}) on the corresponding binary cubic form $f$.

We observe that the leading coefficient $a$ of the binary cubic form in the bijection of Theorem~\ref{df} is equal to the (signed) index of $\Z[\omega]$ in $C$, because setting $x=1$ and $y=0$ in \eqref{dfbijection} yields $1\wedge\omega\wedge\omega^2=a\,(1\wedge\omega\wedge\theta)$. Hence 
cubic rings having a monogenic subring of index $n$ may be classified in terms of binary cubic forms having (leading) $x^3$-coefficient $n$.

\begin{defn}
\label{sec:2.5}
{\em 
A pair $(C,\alpha)$ is an {\bf $n$-monogenized cubic ring}
if $C$ is a cubic ring, $\alpha$ is a primitive element of $C/\Z$, 
and $[C:\Z[\alpha]]=n$. Two $n$-monogenized cubic rings $(C,\al)$ and
$(C',\al')$ are {\bf isomorphic} if there is a ring isomorphism $C\to C'$ that maps $\al$ to $\al'+m$ for some $m\in\Z$.}
\end{defn}

\noindent
Thus an $n$-monogenized cubic ring is a cubic ring $C$ equipped with a basis $\langle\bar\omega,\bar\theta\rangle$ of $C/\Z$, but where $(C,\langle\bar\omega,\bar\theta\rangle)$ and $(C,\langle\bar\omega,k\bar\omega+\bar\theta\rangle)$ are considered isomorphic, as only the basis element $\bar\omega$ is relevant in defining the monogenic subring $\Z[\omega]$ of index $n$ in $C$. The change-of-basis $\langle\bar\omega,\bar\theta\rangle\mapsto\langle\bar\omega,k\bar\omega+\bar\theta\rangle$
corresponds to the transformation $f(x,y)\mapsto f(x+ky,y)$ on the associated binary cubic form. 

\begin{nota}
\label{sec:2.6}
{\em
For a ring $R$, we let $\MB(R)\subset \GL_2(R)$ denote the subgroup of lower triangular unipotent matrices
\begin{equation}\label{fz}
\MB(R):=\left\{\left(\begin{array}{rc} 1&{}\\k&1\end{array}\right):k\in R\right\}.
\end{equation}
For $n\in R$, let $U_n(R)\subset U(R)$ denote the subset of binary cubic forms having (leading) $x^3$-coefficient~$n$. 
}\end{nota}
Then we have proven the following theorem.

\begin{theorem}\label{thcubringpar}\label{df2}
  There is a bijection between isomorphism classes of
  $n$-monogenized cubic rings and $\MB(\Z)$-orbits on the set $U_n(\Z)$ of integer-coefficient binary cubic forms  having leading coefficient $n$.
\end{theorem}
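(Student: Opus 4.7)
The plan is to bootstrap from the Delone--Faddeev parametrization (Theorem~\ref{df}), whose right-hand side consists of cubic rings $C$ equipped with a basis $\langle\bar\omega,\bar\theta\rangle$ of $C/\Z$, by identifying exactly which of those bases give rise to the same $n$-monogenized cubic ring. The guiding principle is that specifying $\alpha$ corresponds to specifying $\bar\omega$, so the residual ambiguity lies entirely in the choice of $\bar\theta$.

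For the forward map, given $(C,\alpha)$ I would set $\bar\omega:=\bar\alpha\in C/\Z$, which is primitive by hypothesis and therefore extends to a basis $\langle\bar\omega,\bar\theta\rangle$ of $C/\Z$. Applying \eqref{dfbijection} yields a binary cubic form $f$ whose leading coefficient equals the signed index of $\Z[\omega]$ in $C$, and so has absolute value $n$; replacing $\bar\theta$ by $-\bar\theta$ if necessary, I normalize the leading coefficient to $+n$, producing an element of $U_n(\Z)$. The remaining choices of $\bar\theta$ with fixed $\bar\omega$ and the correct orientation are exactly $\bar\theta\mapsto k\bar\omega+\bar\theta$ for $k\in\Z$ (the alternative $\bar\theta\mapsto k\bar\omega-\bar\theta$ is ruled out because it negates the leading coefficient), and by direct computation from \eqref{dfbijection} this change sends $f(x,y)$ to $f(x+ky,y)$, which is precisely the action of $\bigl(\begin{smallmatrix}1&0\\k&1\end{smallmatrix}\bigr)\in\MB(\Z)$ from \eqref{gl2action}. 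A ring isomorphism $C\to C'$ sending $\alpha\mapsto\alpha'+m$ descends to an isomorphism $C/\Z\to C'/\Z$ sending $\bar\alpha$ to $\bar\alpha'$, so it merely transports the basis ambiguity across the two sides; hence the forward map descends to a well-defined assignment on isomorphism classes with values in $\MB(\Z)\backslash U_n(\Z)$.

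For the inverse, given $f\in U_n(\Z)$ I would apply Theorem~\ref{df} to produce $(C,\langle\bar\omega,\bar\theta\rangle)$ with $n=[C:\Z[\omega]]$, and take $\alpha$ to be any lift of $\bar\omega$ in $C$; the resulting $(C,\alpha)$ is an $n$-monogenized cubic ring, and the freedom in the lift is precisely the equivalence $\alpha\sim\alpha+m$ built into Definition~\ref{sec:2.5}. Checking that $\MB(\Z)$-equivalent forms yield isomorphic $n$-monogenized rings, and that the two maps are mutually inverse, are formal consequences of Theorem~\ref{df}. The only substantive point throughout is the sign/orientation bookkeeping: the full stabilizer of $\bar\omega$ in $\GL_2(\Z)$ is $\MB(\Z)\cdot\{\pm1\}$, and it is the normalization of the leading coefficient to $+n$ (rather than $\pm n$) that cuts this down to $\MB(\Z)$.
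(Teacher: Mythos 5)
Your argument is correct and is essentially the paper's own route: bootstrap from the Delone--Faddeev bijection (Theorem~\ref{df}), observe that the residual freedom in extending $\bar\alpha$ to a basis of $C/\Z$---once the leading coefficient is normalized to $+n$---is a torsor under the unipotent action $\bar\theta\mapsto k\bar\omega+\bar\theta$, and transport this through the correspondence to the action of $\MB(\Z)$ on $U_n(\Z)$. You are in fact a bit more careful than the paper's one-paragraph derivation, which glosses over the sign normalization (the stabilizer of $\bar\omega$ in $\GL_2(\Z)$ also contains the determinant $-1$ elements $\bigl(\begin{smallmatrix}1&0\\k&-1\end{smallmatrix}\bigr)$, and it is the choice of $+n$ rather than $\pm n$ as leading coefficient that cuts the orbit group down to $\MB(\Z)$), exactly as you point out.
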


\subsection{Parametrization of quartic rings having $n$-monogenic cubic resolvent rings}\label{subsecparam2} 

In this section, we recall the parametrization in \cite{quarparam} of
pairs $(Q, C)$, where $Q$ is a quartic ring and~$C$ is a cubic
resolvent ring of $Q$.  

\begin{nota}
\label{sec:2.8}
{\em
For any ring $R$, let $V(R)$ denote the space of pairs of ternary
quadratic forms with coefficients in $R$.  We represent an element in
$V(R)$ as $(A,B)$, where $A(x_1,x_2,x_3)=\sum_{1\leq i\leq j\leq 3}
a_{ij} x_i x_j$ and $B(x_1,x_2,x_3)=\sum_{1\leq i\leq j\leq 3} b_{ij}
x_i x_j$ with $a_{ij},b_{ij}\in R$.  When 2 is not a zero divisor in $R$, we may also represent
an element $(A,B)\in V(R)$ as a pair of $3\times 3$ symmetric matrices
with entries in $R[1/2]$ via the Gram identification
\begin{equation*}
(A,B)=\left( \frac12\left[ \begin{array}{ccc} 2a_{11} & a_{12} &
      a_{13} \\ a_{12} & 2a_{22} & a_{23} \\ a_{13} & a_{23} &
      2a_{33} \end{array} \right], \frac12\left[ \begin{array}{ccc}
      2b_{11} & b_{12} & b_{13} \\ b_{12} & 2b_{22} & b_{23} \\ b_{13}
      & b_{23} & 2b_{33} \end{array} \right] \right).
\end{equation*}
The group $G(R)\subset \GL_2(R)\times \GL_3(R)$ 
defined by 
\begin{equation*} G(R):=\{(g_2,g_3)\in\GL_2(R)\times\GL_3(R):\det(g_2)\det(g_3)=1\}
\end{equation*}
acts naturally on $V(R)$ as follows:
\begin{equation}\label{Gaction}
(g_2,g_3)\cdot(A,B)=(g_3Ag_3^t,g_3Bg_3^t)\cdot g_2^t.
\end{equation}
Given an element $(A,B)\in V(R)$, we define its {\bf cubic resolvent form} $\Res(A,B)\in U(R)$ by 
\begin{equation}\label{eqresolventmap}
\Res(A,B):=4\det(Ax-By).
\end{equation}
\pagebreak
Since 
the action of $G(R)$ on $V(R)$ in (\ref{Gaction}) 
and the resolvent map $\Res:V(R)\to U(R)$ in (\ref{eqresolventmap})
are defined by integer-coefficient polynomials in the entries of
$(g_2,g_3)\in G(R)$ and the coefficients $a_{ij}$ of $A$ and $b_{ij}$ of $B$, 
we see that 
the action of $G(R)$ on $V(R)$ and the definitions of $4\det(A)$ and $\Res(A,B)$ for $(A,B)\in V(R)$ make sense for
arbitrary rings $R$. 
}\end{nota} 
The following theorem is
proved~in~\cite{quarparam}.

\begin{theorem}[\cite{quarparam}]\label{thqrpar}
There is a bijection between pairs $(A,B)\in V(\Z)$ of integer-coefficient ternary quadratic forms and isomorphism classes of pairs $((Q,\langle\bar\alpha,\bar\beta,\bar\gamma\rangle),(C,\langle\bar\omega,\bar\theta\rangle))$,
where $Q$ is a quartic ring with a chosen basis $\langle\bar\alpha,\bar\beta,\bar\gamma\rangle$ of $Q/\Z$ and $C$ is a cubic resolvent
ring of $Q$ with a chosen basis $\langle\bar\omega,\bar\theta\rangle$ of~$C/\Z$. Furthermore, 
under this bijection, $(C,\langle\bar\omega,\bar\theta\rangle)$ is the data corresponding
to the cubic resolvent form of $(A,B)$ under Theorem~${\ref{df}}$.
\end{theorem}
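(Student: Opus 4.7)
The plan is to construct inverse maps $\Phi$ and $\Psi$ between $V(\Z)$ and isomorphism classes of pairs $((Q,\langle\bar\alpha,\bar\beta,\bar\gamma\rangle),(C,\langle\bar\omega,\bar\theta\rangle))$, built so that the cubic resolvent compatibility with Theorem~\ref{df} is automatic. One then verifies $\Phi$ and $\Psi$ are mutually inverse and respect the $G(\Z)$-action (which on the ring side corresponds to simultaneous change of basis of $Q/\Z$ and $C/\Z$).

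For the forward map $\Phi$, given $(A,B)\in V(\Z)$, first apply Theorem~\ref{df} to the binary cubic form $\Res(A,B)=4\det(Ax-By)$ to obtain the cubic resolvent ring $C$ together with its basis $\langle\bar\omega,\bar\theta\rangle$ of $C/\Z$; this hands us the cubic-resolvent data on the nose. The quartic ring $Q$ is then obtained by equipping the free $\Z$-module $\Z\oplus\Z\alpha\oplus\Z\beta\oplus\Z\gamma$ with an explicit commutative multiplication whose structure constants are polynomial expressions in the entries $a_{ij},b_{ij}$. The key observation is that $(A,B)$ encodes the ``linear parts'' of the products $\bar\alpha_i\bar\alpha_j$ in the basis $\bar\alpha,\bar\beta,\bar\gamma$ of $Q/\Z$ via an adjugate-type formula: the coefficient of $\bar\alpha_k$ in $\bar\alpha_i\bar\alpha_j$ is read off from a $2\times 2$ minor extracted from the matrix $Ax-By$. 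The constant terms $c_{ij}^0\in\Z$ that determine the lift of each product from $Q/\Z$ back to $Q$ are then uniquely forced by the requirement of associativity. For the inverse map $\Psi$, one reads the pair of ternary quadratic forms directly off the multiplication table of $Q$ in the chosen basis of $Q/\Z$; the chosen basis of $C/\Z$ provides precisely the additional datum needed to disambiguate which linear combination of the two symmetric matrices plays the role of $A$ and which plays the role of $B$, so that $\Res(A,B)$ matches the binary cubic form attached to $(C,\langle\bar\omega,\bar\theta\rangle)$ under Theorem~\ref{df}.

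The main obstacle is verifying that the multiplication on $Q$ in the forward direction is genuinely associative and that the constant terms forced by associativity lie in $\Z$. Associativity reduces to a large but finite collection of universal polynomial identities in the twelve variables $a_{ij},b_{ij}$, which can be checked either by brute-force symbolic computation or by a conceptual reduction: work generically over $\Q$ on the nondegenerate locus where $\disc\Res(A,B)\ne 0$, where the candidate algebra $Q\otimes\Q$ must be \'etale and is pinned down by its cubic resolvent to agree with the quartic algebra built classically from the resolvent cubic of a quartic polynomial; associativity there is automatic, and a Zariski-density / integrality argument transfers the identities to all of $V(\Z)$. The same continuity/integrality argument ensures that the forced constant terms, which \emph{a priori} could have denominators bounded by a power of $2$, are in fact integers once one invokes $\Res(A,B)\in U(\Z)$. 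The bijection and the cubic-resolvent compatibility then follow by direct comparison of structure constants in the two directions, completing the proof.
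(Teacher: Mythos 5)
The paper does not supply a proof of this statement; it is cited from \cite{quarparam}, where the construction of the pair $((Q,\langle\bar\alpha,\bar\beta,\bar\gamma\rangle),(C,\langle\bar\omega,\bar\theta\rangle))$ from $(A,B)$ occupies several pages of explicit computation in \S\S3.2--3.3 of that reference. Your overall plan---posit structure constants for $Q$ that are integer polynomials in $a_{ij},b_{ij}$, determine the constant terms $c_{ij}^0$ by associativity, and transfer the associativity identities from the generic locus by Zariski density---tracks that reference at the level of strategy, and the observation that the $c_{ij}^0$ are forced by associativity after a normalization is correct and classical. But there is a genuine gap in the step that carries all the weight.

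Your justification of associativity on the nondegenerate locus asserts that $Q\otimes\Q$ ``is pinned down by its cubic resolvent to agree with the quartic algebra built classically from the resolvent cubic of a quartic polynomial.'' This is false: a given \'etale cubic algebra admits several \'etale quartics with it as cubic resolvent---this non-uniqueness is exactly what Theorems~\ref{thHeilbronPID} and~\ref{thmmassmain} quantify, and it is the reason the theorem is useful for counting $2$-torsion at all. What determines the generic quartic \'etale algebra is the full pair $(A,B)$, via the Galois set of four common zeros of the two conics in $\P^2(\overline{\Q})$ (cf.\ Remark~\ref{geom}); and even granting that correct identification, one must still verify that the candidate determinantal formulas for the $c_{ij}^k$ reproduce the multiplication on that quartic algebra, which is precisely the hard content of \cite[\S3.2]{quarparam} and is asserted rather than carried out in your argument. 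Two further inaccuracies: the structure constants arise from the $2\times 2$ minors $\lambda^{ij}_{kl}=a_{ij}b_{kl}-a_{kl}b_{ij}$ of the $2\times 6$ coefficient matrix of $(A,B)$, not from minors of $Ax-By$ (which are quadratic forms in $x,y$, not scalars); and the appeal to $\Res(A,B)\in U(\Z)$ to secure integrality of the constant terms is vacuous, since $\Res(A,B)\in U(\Z)$ holds automatically for every $(A,B)\in V(\Z)$---that is what the normalizing factor $4$ in $\Res$ is for---so it imposes no constraint from which integrality of the $c_{ij}^k$ could follow.
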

A complete description of the construction of
$((Q,\langle\bar\alpha,\bar\beta,\bar\gamma\rangle),(C,\langle\bar\omega,\bar\theta\rangle))$
from $(A,B)$ can be found
in~\cite[\S3.2, \S3.3]{quarparam}. Theorem~\ref{thqrpar} has the
following immediate corollary.

\begin{corollary}\label{qparcor}
There is a bijection between $G(\Z)$-orbits on the space $V(\Z)$ of pairs of integer-coefficient ternary quadratic forms and isomorphism classes of pairs $(Q,C)$, where $Q$ is a quartic ring and $C$ is a cubic resolvent ring of $Q$.
\end{corollary}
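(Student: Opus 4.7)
My plan is to deduce Corollary \ref{qparcor} directly from Theorem \ref{thqrpar} by quotienting out the choice of bases on both sides of the bijection, in direct analogy with the passage from Theorem \ref{df} to Corollary \ref{dfcor}. Theorem \ref{thqrpar} identifies $V(\Z)$ with isomorphism classes of pairs $((Q, \langle\bar\alpha, \bar\beta, \bar\gamma\rangle), (C, \langle\bar\omega, \bar\theta\rangle))$, so the corollary amounts to checking that forgetting these two bases on the algebraic side corresponds exactly to taking $G(\Z)$-orbits on $V(\Z)$.

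The key step is an equivariance claim: under the bijection of Theorem \ref{thqrpar}, the $G(\Z)$-action \eqref{Gaction} on $V(\Z)$ matches the action induced by simultaneous change of basis of $Q/\Z$ (by $g_3$) and of $C/\Z$ (by $g_2$). I would verify this by unpacking the explicit construction in \cite[\S 3.2, \S 3.3]{quarparam}: since the coefficients of $A$ and $B$ encode the multiplication of $Q$ as a $C/\Z$-valued quadratic form on $Q/\Z$, an element $g_3 \in \GL_3(\Z)$ transforms each form by conjugation as $(g_3 A g_3^t, g_3 B g_3^t)$, while an element $g_2 \in \GL_2(\Z)$ acts linearly on the pair as $(A, B) \cdot g_2^t$, reflecting the change of basis of $C/\Z$.

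The main obstacle, and the source of the constraint $\det(g_2)\det(g_3) = 1$ defining $G(\Z)$, is ensuring that the two basis changes are compatible with the cubic resolvent structure linking $Q$ and $C$. Concretely, by Theorem \ref{thqrpar} the based cubic resolvent $(C, \langle\bar\omega, \bar\theta\rangle)$ is recovered from $(A, B)$ by applying the Delone--Faddeev bijection of Theorem \ref{df} to $\Res(A, B) = 4 \det(Ax - By)$, and a direct calculation shows that $\Res$ intertwines the $G(\Z)$-action on $V(\Z)$ with the twisted $\GL_2(\Z)$-action \eqref{gl2action} on $U(\Z)$ precisely when the determinant constraint is imposed --- the contribution of $g_3$ to $\Res$ through $\det(g_3)^2$ must balance against the determinant factor in the twisted action of $g_2$. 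Once this equivariance is verified, the corollary is immediate: two pairs $(A,B), (A',B') \in V(\Z)$ lie in the same $G(\Z)$-orbit if and only if the corresponding quartic rings with resolvents $(Q, C)$ and $(Q', C')$ are isomorphic as pairs, and every isomorphism class is realized.
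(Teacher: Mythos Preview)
Your proposal is correct and follows essentially the same approach as the paper: deduce the corollary from Theorems~\ref{df} and~\ref{thqrpar} by observing that the $G(\Z)$-action~\eqref{Gaction} on $V(\Z)$ corresponds exactly to simultaneous change of basis of $Q/\Z$ and $C/\Z$. Your treatment is more detailed in explaining why the determinant constraint $\det(g_2)\det(g_3)=1$ arises (via compatibility of $\Res$ with the twisted $\GL_2$-action), but the paper simply asserts the equivariance in one sentence.
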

Corollary~\ref{qparcor} follows from Theorems~\ref{df} and
\ref{thqrpar} by noting that the action of $G\subset
\GL_2(\Z)\times\GL_3(\Z)$ on the bases
$\langle\bar\alpha,\bar\beta,\bar\gamma\rangle$ of $Q/\Z$ and
$\langle\bar\omega,\bar\theta\rangle$ of $C/\Z$ leads to the action
(\ref{Gaction}) on the corresponding pair $(A,B)$ of ternary quadratic
forms.

Finally, the identical reasoning now yields the following parametrization of quartic rings having $n$-monogenized cubic rings. 
\begin{corollary}\label{qparcor2}
There is a bijection between $\MB(\Z)\times\SL_3(\Z)$-orbits on the set $V_n(\Z)$ of pairs $(A,B)$ of integer-coefficient ternary quadratic forms with $\det(A)=n$ and isomorphism classes of pairs $(Q,(C,\alpha))$, where $Q$ is a quartic ring and $(C,\alpha)$ is an $n$-monogenized cubic resolvent ring of~$Q$.
\end{corollary}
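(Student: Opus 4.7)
The plan is to deduce Corollary \ref{qparcor2} from Theorem \ref{thqrpar} by the same two-step reasoning that yields Corollary \ref{qparcor}: first identify the subset of $V(\Z)$ cut out by the $n$-monogenizer condition on the cubic resolvent, and then quotient by the subgroup of $G(\Z)$ that preserves that extra structure.

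Under the bijection of Theorem \ref{thqrpar}, an element $(A,B)\in V(\Z)$ corresponds to an isomorphism class of data $((Q,\langle\bar\alpha,\bar\beta,\bar\gamma\rangle),(C,\langle\bar\omega,\bar\theta\rangle))$, where $(C,\langle\bar\omega,\bar\theta\rangle)$ is obtained by applying Theorem \ref{df} to the cubic resolvent form $\Res(A,B)\in U(\Z)$. The observation immediately preceding Theorem \ref{thcubringpar}---that the leading coefficient of a binary cubic form equals the signed index $[C:\Z[\omega]]$ of the corresponding cubic ring---shows that the defining determinantal condition on $V_n(\Z)$ is equivalent, on the cubic side, to $[C:\Z[\omega]]=n$. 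In other words, $V_n(\Z)$ is precisely the subset of $V(\Z)$ whose associated cubic resolvent $(C,\langle\bar\omega,\bar\theta\rangle)$ carries a distinguished $n$-monogenizer $\omega$.

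For the group action, the subgroup of $G(\Z)$ that preserves this extra structure is $\MB(\Z)\times\SL_3(\Z)$. On the cubic side, the $\MB(\Z)$-factor acts on $\langle\bar\omega,\bar\theta\rangle$ by $\bar\theta\mapsto k\bar\omega+\bar\theta$, matching exactly the $\MB(\Z)$-action in Theorem \ref{thcubringpar} that parametrizes $n$-monogenized cubic rings. On the quartic side, one would like to quotient by the full $\GL_3(\Z)$-action on $\langle\bar\alpha,\bar\beta,\bar\gamma\rangle$ to obtain isomorphism classes of quartic rings $Q$; but the constraint $\det(g_2)\det(g_3)=1$ defining $G(\Z)$, combined with $\det(g_2)=1$ for $g_2\in\MB(\Z)$, forces $g_3\in\SL_3(\Z)$.

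The one point requiring verification---which I expect to be the only nontrivial step---is that quotienting by $\SL_3(\Z)$ rather than $\GL_3(\Z)$ on the quartic side does not split any $\GL_3(\Z)$-orbit in two. This reduces to the fact that $-I_3\in\GL_3(\Z)\setminus\SL_3(\Z)$ acts trivially on $V(\Z)$: indeed $(-I_3)A(-I_3)^t=A$ and $(-I_3)B(-I_3)^t=B$ for any $(A,B)\in V(\Z)$. Since $\GL_3(\Z)=\SL_3(\Z)\sqcup(-I_3)\SL_3(\Z)$, the $\SL_3(\Z)$- and $\GL_3(\Z)$-orbits on $V(\Z)$ coincide. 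Combining this observation with Theorem \ref{thcubringpar} on the cubic side yields the claimed bijection between $\MB(\Z)\times\SL_3(\Z)$-orbits on $V_n(\Z)$ and isomorphism classes of pairs $(Q,(C,\alpha))$ with $(C,\alpha)$ an $n$-monogenized cubic resolvent of $Q$.
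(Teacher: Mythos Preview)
Your proposal is correct and follows the same approach as the paper, which simply states that ``the identical reasoning'' (used for Corollary~\ref{qparcor}) yields Corollary~\ref{qparcor2}. Your explicit verification that $-I_3\in\GL_3(\Z)\setminus\SL_3(\Z)$ acts trivially on $V(\Z)$---so that $\SL_3(\Z)$- and $\GL_3(\Z)$-orbits on the quartic side coincide---is a genuine point that the paper leaves implicit, and it is exactly what is needed to ensure that restricting from $G(\Z)$ to $\MB(\Z)\times\SL_3(\Z)$ does not artificially split any isomorphism class of $(Q,(C,\alpha))$.
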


\subsection{Parametrization of index 2 subgroups of class 
groups of cubic fields}\label{subsecparam3} 

With the parametrizations of quartic rings having $n$-monogenized
cubic rings established, we are now in a position to parametrize index
2 subgroups in the class groups of $n$-monogenized cubic fields. This
parametrization will be useful to us because the number of elements of
order $2$ in a finite abelian group $A$ is equal to the number of
index 2 subgroups of $A$.

We will need the following definition.

\begin{defn}
\label{sec:2.12}
{\em 
  For a maximal quartic ring $Q$ and a prime $p$, we say that $Q$ is
  {\bf overramified at~$p$} if the ideal $p\Z$ factors in $Q$ as
  either $P^4$, $P^2$, or $P_1^2P_2^2$, where $P$, $P_1$, and $P_2$
  are prime ideals of $Q$.  A maximal quartic ring $Q$ is overramified
  at $\infty$ if $Q\otimes\R\cong \C^2$ as $\R$-algebras. A maximal
  quartic ring $Q$ is {\bf nowhere overramified} if it is not
  overramified at any (finite or infinite) place.}
\end{defn}
The significance of being nowhere overramified comes from the
following theorem of Heilbronn.

\begin{theorem}[\cite{Hcf}] \label{Theorem:Heilbronn_overramified}\label{h}
  Let $K_4$ be a totally real $S_4$-quartic field, and $K_3$ a cubic
  $($resolvent$)$ field inside $K_{24}$, the Galois closure of
  $K_{4}$. Let $K_6$ be the non-Galois sextic field in $K_{24}$
  containing $K_3$. Then the quadratic extension $K_6/K_3$ is
  unramified precisely when the quartic field $K_4$ is nowhere
  overramified. Conversely, every unramified quadratic extension
  $K_6/K_3$ of a cubic $S_3$-field $K_3$ lies in the Galois closure of
  a nowhere overramified quartic field $K_4$ which is unique up to
  conjugacy.
\end{theorem}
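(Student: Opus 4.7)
The plan is to prove both implications by a local analysis inside the Galois closure $K_{24}$. Fix $\Gal(K_{24}/\Q)\cong S_4$ so that $K_4 = K_{24}^{H_4}$ with $H_4\cong S_3$ a point-stabilizer, $K_3 = K_{24}^{D}$ with $D\cong D_8$ a Sylow $2$-subgroup of $S_4$, and $K_6 = K_{24}^{H}$, where $H$ is the unique order-$4$ subgroup of $D$ that contains the involution $H_4\cap D$ and differs from the normal Klein four-group $V\triangleleft S_4$; equivalently, $K_6$ is the unique non-Galois sextic field in $K_{24}$ containing $K_3$ and sitting inside the compositum $K_3\cdot K_4$. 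By Galois theory, a prime $\mathfrak p$ of $K_3$ above $p$ ramifies in $K_6/K_3$ iff the inertia $I_p\subseteq S_4$ at a prime of $K_{24}$ over $\mathfrak p$, after conjugation into $D$, fails to lie in $H$.

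For the forward direction, enumerate the possible inertia subgroups of $S_4$. At odd $p$ the inertia is tame and cyclic, hence conjugate to one of $\{1\}, \langle(12)\rangle, \langle(12)(34)\rangle, \langle(123)\rangle, \langle(1234)\rangle$; at $p=2$ the wild case allows any $2$-subgroup of $D$. For each case, read off the factorization of $p$ in $K_4$ from the $I_p$-orbit decomposition of $\{1,2,3,4\}\cong S_4/H_4$ together with the Frobenius action of $D_p/I_p$. A case-by-case check shows that the overramified patterns $P^4$, $P^2$ (with $e=f=2$), and $P_1^2P_2^2$ correspond exactly to those $I_p$ that contain a $4$-cycle or a double transposition lying in $D\setminus H$; in every other ramification type, some $S_4$-conjugate of $I_p$ sits inside $H$. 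The totally real hypothesis removes any contribution at $\infty$ (complex conjugation is trivial).

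For the converse, begin with $K_3$ an $S_3$-cubic and $K_6/K_3$ unramified quadratic. Let $N$ be the Galois closure of $K_6/\Q$ and $\tilde K_3$ the Galois closure of $K_3$. Realizing $K_6/K_3$ as the class field attached to a quadratic character $\chi$ of $\Cl(K_3)$, the $\Gal(\tilde K_3/\Q)\cong S_3$-orbit of $\chi$ generates $\Gal(N/\tilde K_3)$; since $\chi$ is non-trivial and any $S_3$-invariant unramified character would descend to an unramified quadratic extension of $\Q$ (impossible as $\Cl(\Q)=1$), the orbit has size $3$ and the product of its three members is trivial. Hence $\Gal(N/\tilde K_3)\cong V$ and $\Gal(N/\Q)$ is the non-split extension of $S_3$ by $V$, namely $S_4$. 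Setting $K_4 := N^{H_4}$ for $H_4\cong S_3$ a point-stabilizer, the forward direction applied to this $K_4$ shows that $K_4$ is nowhere overramified; uniqueness up to conjugacy follows from the $S_4$-conjugacy of point-stabilizers.

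The main obstacle is the wild case $p=2$, where $I_p$ may be any $2$-subgroup of $D$ (including $D$ itself of order $8$), and one must verify that ``overramified factorization type in $K_4$'' and ``$I_p\not\subseteq H$ after conjugation into $D$'' continue to coincide for every such $2$-subgroup; the higher ramification filtration plays no role in the combinatorial equivalence but must be tracked to ensure that each ramification pattern in $K_6/K_3$ at primes above $2$ is correctly accounted for. A secondary subtlety lies in the converse: controlling the $S_3$-orbit of $\chi$ and the triviality of the product of its conjugates, both of which rest essentially on the non-existence of unramified extensions of $\Q$ and explain why the theorem works over $\Q$ rather than over an arbitrary base field.
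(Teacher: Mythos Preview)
The paper does not prove this theorem; it is quoted from Heilbronn~\cite{Hcf}, with a local analogue attributed to Baily in Theorem~\ref{thHeilbronPID}. So there is no proof in the paper to compare against. Your outline follows the standard route and is essentially sound, but two places need tightening before it is a proof.

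In the converse, $S_3=\Gal(\tilde K_3/\Q)$ does not act on $\Cl(K_3)$, since $K_3/\Q$ is not Galois; what it permutes are the quadratic extensions $L_i:=K_6^{(i)}\tilde K_3$ of $\tilde K_3$, where $K_6^{(1)},K_6^{(2)},K_6^{(3)}$ are the $\Q$-conjugates of $K_6$. The stabilizer of $L_1$ automatically contains $\Gal(\tilde K_3/K_3)$, so the orbit has size $1$ or $3$ (never $2$ or $6$). Your key step---ruling out an $S_3$-fixed quadratic extension---is correct in spirit but the justification ``would descend to an unramified quadratic extension of $\Q$'' is incomplete: if $L_1$ is $S_3$-stable then $K_6=K_3\cdot M$ for a quadratic field $M/\Q$, and one must argue that $K_6/K_3$ unramified forces $M/\Q$ unramified. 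This holds because for any rational prime $p$ ramified in $M$, at least one completion $(K_3)_{\mathfrak p}$ has odd degree over $\Q_p$ (since $[K_3:\Q]=3$), hence cannot contain the quadratic algebra $M\otimes\Q_p$, so $\mathfrak p$ ramifies in $K_6$. The same argument applied to the $S_3$-invariant product $\chi_1\chi_2\chi_3$ shows $\Gal(N/\tilde K_3)\cong(\Z/2\Z)^2$ rather than $(\Z/2\Z)^3$. (Minor point: $S_4\cong V\rtimes S_3$ is a \emph{split} extension.)

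In the forward direction your framework is right, but ``a case-by-case check shows'' is where all the content lives. You must actually carry out, for every subgroup $I\subseteq D_8$ (cyclic or not), the comparison between the $I$-orbit structure on $S_4/H_4$ (giving the factorization type in $K_4$) and whether some $D$-conjugate of $I$ lies in $H$ (governing ramification in $K_6/K_3$). This is a finite verification, but omitting it leaves the theorem unproved; the wild case at $2$ is not conceptually harder, it is simply more subgroups to check.
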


\begin{nota}
\label{sec:2.14}
{\em For the maximal
order $C$ in a cubic field $K_3$, let $\Cl_2(C)$ and $\Cl_2^+(C)$
denote the 2-torsion subgroups of the class group and narrow class
group of $C$, respectively.  Let $\Cl_2(C)^*$ and $\Cl_2^+(C)^*$
denote the groups dual to $\Cl_2(C)$ and $\Cl_2^+(C)$,
respectively. Then the set of nontrivial elements of $\Cl_2(C)^*$
(resp.\ $\Cl_2^+(C)^*$) are in bijection with the set of index
two subgroups of $\Cl(C)$ (resp.\ $\Cl^+(C)$) simply by mapping a
character to its kernel.
}\end{nota}

Theorems~\ref{thqrpar} and \ref{h}, together with class field theory,
now immediately yield a parametrization of index 2 subgroups of the
class groups and narrow class groups of cubic fields.  

\begin{theorem}\label{thclgp1}
Let $C$ be the maximal order in an $S_3$-cubic field $K_3$, and let
$f(x,y)$ be a binary cubic form corresponding to $C$ under Theorem~$\ref{df}$.
\begin{itemize}
\item[{\rm (a)}] If $\Delta(K_3)>0$, then there is a canonical bijection
  between elements of $\Cl_2^+(C)^*$ and 
  $\SL_3(\Z)$-orbits on $\Res^{-1}(f)\subset V(\Z)$.

  Under this bijection, elements of $\Cl_2(C)^*\subset \Cl_2^+(C)^*$
  correspond to 
   $\SL_3(\Z)$-orbits on pairs $(A,B)\in V(\Z)$
  such that $A(x,y,z)=B(x,y,z)=0$ has a nonzero solution over  $\R$.

\item[{\rm (b)}] When $\Delta(K_3)<0$, there is a canonical bijection
  between elements of $\Cl_2^+(C)^*=\Cl_2(C)^*$
  and $\SL_3(\Z)$-orbits on
  $\Res^{-1}(f)\subset V(\Z)$.
\end{itemize}
\end{theorem}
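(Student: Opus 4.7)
The plan is to decompose the claimed bijection into a chain of three natural identifications using class field theory, Heilbronn's Theorem~\ref{h}, and Corollary~\ref{qparcor}. Schematically,
\[
\Cl_2^+(C)^* \;\leftrightarrow\; \{\text{unramified quadratic \'etale extensions of } K_3\} \;\leftrightarrow\; \{\text{nowhere overramified quartic rings with resolvent } C\} \;\leftrightarrow\; \SL_3(\Z)\text{-orbits on } \Res^{-1}(f).
\]
Composing these three bijections gives the underlying statement, while the signature refinement in part~(a) and the equality $\Cl^+(C) = \Cl(C)$ needed in part~(b) are handled separately.

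The first arrow is class field theory applied to the narrow Hilbert class field of $K_3$: characters of $\Cl^+(C)$ of order dividing~$2$ correspond canonically to quadratic \'etale extensions $L/K_3$ unramified at every finite place, with the trivial character sent to the split extension $L = K_3\times K_3$. The subgroup $\Cl_2(C)^*\subseteq \Cl_2^+(C)^*$ picks out exactly those $L$ which are additionally split at every real place of $K_3$. The second arrow is Heilbronn's Theorem~\ref{h}, which matches nontrivial such extensions $K_6/K_3$ with conjugacy classes of nowhere overramified $S_4$-quartic fields $K_4$ having $K_3$ as cubic resolvent; the trivial character pairs with a distinguished reducible quartic ring such as $\Z\oplus C$.

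The third arrow is extracted from Corollary~\ref{qparcor} by fixing the basis of $C/\Z$: since specifying $f$ determines $(C,\langle\bar\omega,\bar\theta\rangle)$ via Theorem~\ref{df}, the stabilizer in $G(\Z)$ of this data is exactly $\{(\mathrm{id},g_3): g_3\in \SL_3(\Z)\}$, so $\SL_3(\Z)$-orbits on $\Res^{-1}(f)$ parametrize quartic rings with fixed cubic resolvent data. For the signature refinement in part~(a): $(A,B)$ has a nonzero common real zero in $\mathbb{P}^2(\R)$ if and only if the four intersection points of the two quadrics include at least one real point, equivalently $Q\otimes\R\not\cong \C^2$, equivalently $Q$ is not overramified at~$\infty$; via the archimedean case of Heilbronn's theorem this corresponds to $K_6/K_3$ being split at every real place of $K_3$, which is exactly the characterization of $\Cl_2(C)^*$ inside $\Cl_2^+(C)^*$. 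For part~(b): when $\Delta(K_3)<0$, the cubic $K_3$ has signature $(1,1)$, so $-1\in\O_{K_3}^\times$ surjects onto $(\Z/2\Z)^{r_1}=\Z/2\Z$, forcing $\Cl^+(C)=\Cl(C)$ and hence $\Cl_2^+(C)^*=\Cl_2(C)^*$.

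The main obstacle is to verify, in the second arrow, that when $C$ is the \emph{maximal} order in $K_3$, every $\SL_3(\Z)$-orbit on $\Res^{-1}(f)$ corresponds to a \emph{nowhere overramified} quartic ring, so that the composition of bijections is surjective and not merely an injection. This amounts to a local verification at each prime~$p$: the hypothesis that $C\otimes\Z_p$ is the maximal cubic $\Z_p$-order restricts the possible reductions of $(A,B)$ modulo~$p$, and the explicit local factorization dictionary of~\cite{quarparam} then excludes the overramified splitting types $P^4$, $P^2$, and $P_1^2 P_2^2$ for $Q\otimes\Z_p$. Once this local step is established, the three arrows compose to yield the desired canonical bijection.
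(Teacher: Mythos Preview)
Your proposal is correct and follows essentially the same three-step chain as the paper's proof: class field theory, then Heilbronn's Theorem~\ref{h}, then the parametrization of Theorem~\ref{thqrpar}/Corollary~\ref{qparcor}, with the identity element of $\Cl_2^+(C)^*$ paired with $Q=\Z\times C$.

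Where you differ from the paper is mainly in level of detail. The paper does not justify the equality $\Cl_2^+(C)^*=\Cl_2(C)^*$ in case~(b); your sign-map argument via $-1\in\O_{K_3}^\times$ is the standard one. More substantively, you correctly isolate as the ``main obstacle'' the surjectivity direction---that every $\SL_3(\Z)$-orbit on $\Res^{-1}(f)$ arises either from $\Z\times C$ or from a maximal, nowhere-overramified (at finite places) quartic order. The paper handles this tersely by invoking the discriminant equality $\Delta(Q)=\Delta(C)$ (valid for any pair in the parametrization) together with the uniqueness of cubic resolvent rings from \cite[Def.~8]{quarparam}: since $C$ is maximal, $\Delta(Q)=\Delta(K_3)$ forces $Q$ to be maximal in its fraction algebra, and then comparing $\Delta(K_4)$ with $\Delta(K_3)$ forces $K_4$ to be non-overramified at every finite prime. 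Your proposed route via the local factorization dictionary of \cite{quarparam} would also work and is essentially equivalent. One small addition you should make explicit: you also need that the only \emph{non-domain} \'etale quartic algebra with cubic resolvent the $S_3$-field $K_3$ is $\Q\times K_3$ (ruling out $\Q^4$, $\Q^2\times F$, $F_1\times F_2$), which follows immediately from the resolvent construction.
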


\begin{proof}
By class field theory, the nontrivial elements of $\Cl_2(C)^*$
(resp.\ $\Cl_2^+(C)^*$) correspond to quadratic extensions $K_6/K_3$
that are unramified at all places (resp.\ unramified at all finite
places). These quadratic extensions $K_6/K_3$, by Theorem~\ref{h}, in
turn correspond to quartic fields $K_4$ whose maximal orders $Q$ are
nowhere overramified (resp.\ not overramified at all finite
places). In this scenario, we have the equality of discriminants
$\Delta(Q)=\Delta(C)$ (by \cite{Hcf}), and so $C$ is~the unique cubic
resolvent ring of $Q$ (\cite[Def.~8]{quarparam}).  The bijections in
(a) and (b), for nontrivial elements of $\Cl_2^+(K_3)^*$ and
$\SL_3(\Z)$-orbits on $\Res^{-1}(f)\subset V(\Z)$ corresponding to
integral domains $Q$, now follow from Theorem~\ref{thqrpar}. The
bijections in both (a) and (b) of Theorem~\ref{thclgp1} are completed
by sending the identity element in $\Cl_2^+(K_3)^*$ to the unique
$\SL_3(\Z)$-orbit on $\Res^{-1}(f)\subset V(\Z)$ corresponding to the
quartic ring $Q=\Z\times C$, whose unique cubic resolvent ring is $C$
as well.
\end{proof}
\subsection{Parametrizations over principal ideal domains}\label{subsecparampid}

\begin{defn}{\em 
Let $R$ be a principal ideal domain. A {\bf cubic ring}
$($resp.\ {\bf quartic ring}$)$ {\bf over $R$} is an $R$-algebra that is
free of rank $3$ $($resp.\ rank $4)$ as an $R$-module.}
\end{defn}
The following theorem, proved by Gross and Lucianovic
\cite{GS} and in~\cite{BSW}, generalizes
the parametrization of cubic and quartic rings over $\Z$ in \cite{DF}
and \cite{quarparam}, respectively, to the setting of cubic and
quartic rings over a principal ideal domain. For a formulation and
proof in the vastly more general case where $\Z$ is replaced by an
arbitrary ring, or even an arbitrary base scheme, see
Wood~\cite{Wood}.

\begin{theorem}[{\cite[Proposition 2.1]{GS}, \cite[Theorem~5]{BSW}}]\label{thpid}
Let $R$ be a principal ideal domain. 

\medskip

{\rm (a)} There is a natural bijection between isomorphism classes of
cubic rings over $R$ and $\GL_2(R)$-orbits on $U(R)$. Under this
bijection, the group of automorphisms of a cubic ring over $R$ is
isomorphic to the stabilizer in $\GL_2(R)$ of the corresponding binary
cubic form in $U(R)$.

\medskip

{\rm (b)} There is a natural bijection between isomorphism classes of
pairs $(Q,C)$, where $Q$ is a quartic ring over $R$ and $C$ is a cubic
resolvent ring of $Q$, and $G(R)$-orbits on $V(R)$. Under this
bijection, the group of automorphisms of a quartic ring $Q$ over $R$
is isomorphic to the stabilizer in $G(R)$ of the corresponding pair of
ternary quadratic forms in $V(R)$.
\end{theorem}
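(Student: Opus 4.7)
The plan is to adapt the proofs of Theorems~\ref{df} and~\ref{thqrpar} in the $\Z$-case from~\cite{DF,GGS,quarparam} to an arbitrary principal ideal domain $R$. The crucial observation is that the explicit formulas used in those proofs---both the wedge-product definitions of the associated forms and the multiplication tables used to recover the rings---are given by integer-coefficient polynomial expressions in the form coefficients, and hence make sense and remain valid over any commutative ring. The PID hypothesis enters only to supply the necessary module-theoretic structure.

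The main preliminary is to show that any cubic (resp.\ quartic) ring over $R$ admits a direct-sum decomposition $C = R\cdot 1 \oplus M$ (resp.\ $Q = R\cdot 1 \oplus N$) as $R$-modules, with $M$ (resp.\ $N$) free of rank $2$ (resp.\ $3$). Since $R$ is a PID, it suffices to show the quotient $C/(R\cdot 1)$ is torsion-free, at which point it is automatically free of the correct rank and a section yields the splitting. Torsion-freeness follows from integral closure: if $c\in C$ satisfies $rc = s\cdot 1$ for some $r\in R\setminus\{0\}$ and $s\in R$, then in $C\otimes_R K$ (where $K=\mathrm{Frac}(R)$) the element $c\otimes 1$ equals the scalar $s/r$. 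Hence $s/r$ is a root of the characteristic polynomial of multiplication by $c$ on $C$, a monic polynomial with coefficients in $R$; so $s/r$ is integral over $R$, and since $R$ is integrally closed in $K$, in fact $s/r\in R$ and $c\in R\cdot 1$. The same argument applies to $Q$ and to the cubic resolvent $C$ appearing in~(b).

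With the splitting established, fix an $R$-basis $\langle\bar\omega,\bar\theta\rangle$ of $C/R$ (and analogously $\langle\bar\alpha,\bar\beta,\bar\gamma\rangle$ of $Q/R$ for the quartic case). The associated form $f\in U(R)$ (resp.\ pair $(A,B)\in V(R)$) is defined by the same wedge-product formula as in~\eqref{dfbijection} (resp.\ by the analogous formulas from~\cite[\S3.2,\S3.3]{quarparam}). The inverse construction writes down a multiplication table on $R\cdot 1\oplus R\omega\oplus R\theta$ (resp.\ on the analogous quartic module together with a compatible cubic resolvent), whose structure constants are explicit integer polynomials in the coefficients of $f$ (resp.\ of $A,B$); associativity is a universal polynomial identity that already holds over $\Z$ and hence over $R$. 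A change of basis on $C/R$ (resp.\ on $Q/R$ and $C/R$) corresponds precisely to the twisted action of $\GL_2(R)$ on $U(R)$ in~\eqref{gl2action} (resp.\ of $G(R)$ on $V(R)$ in~\eqref{Gaction}), which yields the claimed orbit bijections. For the automorphism assertions, any $R$-algebra automorphism of $C$ (resp.\ of the pair $(Q,C)$) fixes $R\cdot 1$ and descends to a basis change preserving the associated form(s); conversely, every stabilizer element produces an automorphism via the inverse construction, identifying $\Aut$-groups with form stabilizers.

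The main obstacle lies in part~(b): one must formulate the notion of ``cubic resolvent ring over $R$'' with sufficient care that the splittings for $Q$ and for $C$ occur compatibly with the resolvent map and so that the explicit construction in~\cite[\S3]{quarparam} admits a basis-free version over~$R$. Once this bookkeeping mirrors the $\Z$-case precisely, the universal polynomial identities---both for the multiplication on $Q$ and for associativity---carry through unchanged, so the rest of the proof is a formal transcription.
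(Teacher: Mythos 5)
The paper does not actually prove Theorem~\ref{thpid} itself; it cites \cite{GS} and \cite{BSW} (and points to \cite{Wood} for the general scheme-theoretic version), so there is no in-paper proof to compare against. Your sketch is correct and follows the same strategy as those cited sources: establish the direct-sum splitting $C = R\cdot 1 \oplus M$ with $M$ free (your integral-closure argument for torsion-freeness of $C/R\cdot 1$ is sound, since a PID is integrally closed and a finitely generated torsion-free module over a PID is free, so the short exact sequence splits), and then observe that the Delone--Faddeev and Bhargava multiplication tables and resolvent maps are given by universal integer-coefficient polynomials in the form coefficients, so the constructions, associativity identities, change-of-basis compatibility, and automorphism identifications all transport verbatim from $\Z$ to $R$. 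You rightly flag that the bulk of the technical care in part~(b) lies in formulating ``cubic resolvent ring over $R$'' basis-freely and compatibly with the resolvent map; this is precisely what \cite{BSW} carries out in detail.
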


\begin{defn}{\em 
Let $R$ be a principal ideal domain. For $n\in R$, an {\bf $n$-monogenized cubic
  ring over $R$} is a pair $(C,\alpha)$, where $C$ is a cubic ring
over $R$, the element $\alpha\in C$ is primitive in $C/R$, and~the $R$-ideal
$(C:R[\alpha]):=\{r\in R: r \wedge^3C\subset\wedge^3(R[\alpha]) \mbox{ as $R$-modules}\}$ 
is generated by $n$. Two $n$-monogenized
  cubic rings $(C,\alpha)$ and $(C',\alpha')$ over~$R$ 
  are {\bf isomorphic} if there is an $R$-algebra isomorphism
  $C\to C'$ that maps $\alpha$ to $\alpha'+m$ for some~$m\in R$.}
  \end{defn}
Note that if $R=\Z$ or $R=\Z_p$, then $(C:R[\alpha])$ is the ideal generated by
  the usual index $[C:R[\alpha]].$

\vspace{.0125in}
The proofs of Theorems \ref{thcubringpar} and \ref{thpid} have the following consequence.
\begin{corollary}\label{thZp}
Let $R$ be a principal ideal domain. There is a bijection between
isomorphism classes of $n$-monogenized cubic rings over $R$ and
$M(R)$-orbits on the set $U_n(R)$.
\end{corollary}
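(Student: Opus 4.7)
The plan is to deduce the corollary from Theorem \ref{thpid}(a) by fixing the first basis vector of $C/R$ to be $\bar\alpha$ and normalizing the leading coefficient of the associated binary cubic form to be exactly $n$. The key observation is that the Delone--Faddeev identity (\ref{dfbijection}), evaluated at $(x,y)=(1,0)$, yields $1\wedge\omega\wedge\omega^2=a\,(1\wedge\omega\wedge\theta)$, so the leading coefficient $a$ of the form attached to $(C,\langle\bar\omega,\bar\theta\rangle)$ generates the ideal $(C:R[\omega])$ in $R$.

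Given an $n$-monogenized cubic ring $(C,\alpha)$ over $R$, I would set $\omega=\alpha$. Since $\bar\alpha$ is primitive in the free $R$-module $C/R$ and $R$ is a PID, I can complete $\bar\alpha$ to a basis $\langle\bar\alpha,\bar\theta_0\rangle$ of $C/R$. Theorem \ref{thpid}(a) then produces a form $f_0\in U(R)$ whose leading coefficient $a_0$ generates $(C:R[\alpha])=(n)$, so $a_0=un$ for some unit $u\in R^\times$. Replacing $\bar\theta_0$ by $u\bar\theta_0$, equivalently acting on $f_0$ by $\mathrm{diag}(1,u)\in\GL_2(R)$ via (\ref{gl2action}), normalizes the leading coefficient to be exactly $n$, giving a form $f\in U_n(R)$.

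The main technical step is to verify that this construction is well-defined up to the $\MB(R)$-action on both sides. Using (\ref{gl2action}), a direct computation shows that an element $\begin{pmatrix}p & q \\ r & s\end{pmatrix}\in\GL_2(R)$ which both preserves the first basis vector $\bar\alpha$ and preserves the leading coefficient of the form at $n$ is precisely a matrix of the form $\begin{pmatrix}1 & 0 \\ k & 1\end{pmatrix}$ with $k\in R$: the condition $\bar\omega'=\bar\omega$ forces $p=1$ and $q=0$, after which the leading coefficient transforms as $a\mapsto s^{-1}a$, forcing $s=1$. An isomorphism $\phi:(C,\alpha)\to(C',\alpha')$ of $n$-monogenized cubic rings sends $\bar\alpha$ to $\bar\alpha'$ in $C'/R$ (since $\phi(\alpha)=\alpha'+m$ with $m\in R$), so by the naturality of Theorem \ref{thpid}(a), the normalized forms of $(C,\alpha)$ and $(C',\alpha')$ lie in the same $\MB(R)$-orbit. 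Conversely, given $f\in U_n(R)$, Theorem \ref{thpid}(a) yields a cubic ring $C$ with a basis of $C/R$; setting $\alpha$ to be any lift of the first basis vector produces an $n$-monogenized cubic ring, since the leading coefficient $n$ of $f$ generates $(C:R[\alpha])$, and forms in the same $\MB(R)$-orbit evidently yield isomorphic such rings, completing the bijection.
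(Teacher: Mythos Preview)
Your proposal is correct and is precisely the argument the paper has in mind: the paper simply states that the corollary follows from ``the proofs of Theorems~\ref{thcubringpar} and~\ref{thpid}'', and what you have written is the detailed spelling-out of that argument over a general PID (completing $\bar\alpha$ to a basis, using $1\wedge\alpha\wedge\alpha^2=a\,(1\wedge\alpha\wedge\theta)$ to see $(a)=(n)$, normalizing the unit via $\mathrm{diag}(1,u)$, and checking that the residual basis-change freedom is exactly $\MB(R)$). The only caveat is that your cancellation step $s^{-1}n=n\Rightarrow s=1$ uses that $n$ is a nonzerodivisor in $R$, which holds since $R$ is a domain and the intended application has $n\neq 0$.
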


\begin{rem}\label{remTpmeasure}{\em 
When $R=\Z_p$, Corollary \ref{thZp} provides a bijection between isomorphism classes
of $n$-monogenized cubic rings over $\Z_p$ and the set
\begin{equation}\label{eqFDZp}
\bigl\{f(x,y)=nx^3+bx^2y+cxy^2+dy^3:b\in\Z,\;c,d\in\Z_p,\;0\leq b< \gcd(p,3n)\bigr\}
\end{equation}
which is a fundamental domain for the action of $M(\Z_p)$ on $U_n(\Z_p)$. 
The set \eqref{eqFDZp} can be identified with
$\Z_p\times\Z_p\times\{0,1,\ldots,\gcd(p,3n)-1\}$. We then equip the set of
isomorphism classes of $n$-monogenized cubic rings over $\Z_p$ with the product 
topology and measure, via the usual topology and additive Haar 
measure on $\Z_p$ and the discrete topology and counting measure on
$\{0,1,\ldots,\gcd(p,3n)-1\}$.

By Lemma~\ref{propcondmax}, the set $T_p$ of isomorphism classes of 
\'etale cubic extensions of $\Q_p$ can be
identified with an open and closed subset of
$\Z_p\times\Z_p\times\{0,1,\ldots,\gcd(p,3n)-1\}$, namely, by identifying $T_p$ with the subset of \eqref{eqFDZp} corresponding to $n$-monogenized cubic rings
$(\O_p,\alpha_p)$ over $\Z_p$, where $\O_p$ is the maximal order in an \'etale cubic extension of $\Q_p$. 
The set $T_p$ also thereby inherits a topology and a measure.}
\end{rem}

We will have occasion to use Theorem~\ref{thpid} with $R$ specialized
to $\R$, $\Z_p$, and $\F_p$. 
In these cases, it will be convenient to use the language of splitting types, which we now define.
\begin{defn}
\label{sec:2.17}
{\em 
Let $f$ be a binary cubic form in $U(\R)$, $U(\Z_p)$, or $U(\F_p)$. When $f$
belongs to $U(\R)$, assume that $f$ has three roots (counted with
multiplicity) in $\P^1(\C)$, and when $f$ belongs to $U(\Z_p)$ or
$U(\F_p)$, assume that $f$ has three roots
(counted with multiplicity) in $\P^1(\overline{\F}_p)$.  We define the
{\bf splitting type} of $f$ to be $(f_1^{e_1}f_2^{e_2}\cdots)$, where
the $f_i$ are the degrees over $\R$ or $\F_p$ of the fields of definition of these roots
and the $e_i$ are their respective multiplicities.

Similarly, let $(A,B)$ be an element in $V(\R)$, $V(\Z_p)$, or
$V(\F_p)$. When $(A,B)$ belongs to $V(\R)$, assume that the
intersection of the conics defined by $A$ and $B$ consists of four
points (counted with multiplicity) in $\P^2(\C)$. When $(A,B)$ belongs
to $V(\Z_p)$ or $V(\F_p)$, assume that the intersection of the conics
defined by $A$ and $B$ consists of four points (counted with
multiplicity) in $\P^2(\overline{\F}_p)$. We define the {\bf splitting
  type} of $(A,B)$ to be $(f_1^{e_1}f_2^{e_2}\cdots)$, where the $f_i$
are the degrees over $\R$ or $\F_p$ of the fields of definition of
these points and the $e_i$ are their respective multiplicities.}
\end{defn}

We conclude this section with a discussion of local versions of
Theorem \ref{thclgp1}.  Let $p$ be a prime and let $K_m$ be an
\'etale degree $m$ extension of $\Q_p$. We write $K_m=\prod_{i=1}^k
L_i$ as a product of fields $L_i$. An unramified degree $2$ extension
$K_{2m}$ of $K_m$ is a product \smash{$\prod_{i=1}^k L_i'$}, where~$L_i'$ is
either $L_i\times L_i$ (i.e., $L_i$ splits) or the (unique) quadratic
unramified extension of $L_i$ (i.e., $L_i$ is~inert).  Let
$\Aut_{K_m}(K_{2m})$ denote the group of automorphisms of $K_{2m}$
fixing $K_m$ pointwise. As there are~$2^k$ different unramified
extensions $K_{2m}$ of $K_m$, and each such extension has automorphism
group~$(\Z/2\Z)^k$, we have
\begin{equation}\label{eqmasstriv}
\sum_{\substack{[K_{2m}:K_m]=2\\{\rm unramified}}}\frac{1}{|\Aut_{K_m}(K_{2m})|}=1.
\end{equation}
The norm of the discriminant of each $K_{2m}/K_m$ is either a 
square or nonsquare in $\Z_p^\times$. Indeed, 
\begin{equation*}
N_{K_m/\Q_p}\Delta(K_{2m}/K_m)=\smash{\prod_{i=1}^k} N_{L_i/\Q_p}\Delta(L_i'/L_i). 
\end{equation*} 
Let $e_i$ denote the ramification degree of $L_i$; then 
$N_{K_m/\Q_p}\Delta(K_{2m}/K_m)$ is a square if and only if there are
an even number of $i$ for which both 
$L_i$ is inert and $e_i$ is odd. 

We now focus on the case $m=3$. 
The arguments in \cite[\S2]{Baily} imply the following.
\begin{theorem}[\cite{Baily}]\label{thHeilbronPID}
There is a bijection between \'etale non-overramified quartic
extensions~$K_4/\Q_p$ with cubic resolvent $K_3$ and \'etale unramified
quadratic extensions $K_6/K_3$ such that $N_{K_3/\Q_p}\Delta(K_6/K_3)$
is a square in~$\Z_p^\times$.
\end{theorem}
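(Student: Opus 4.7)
The plan is to follow Heilbronn's strategy (Theorem~\ref{h}) in the étale local setting over $\Q_p$, using throughout the anti-equivalence of categories between étale $\Q_p$-algebras of rank $d$ and finite continuous $\Gal(\overline{\Q}_p/\Q_p)$-sets of size $d$. I would construct explicit maps in both directions and then verify that they are mutually inverse.

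\textbf{Forward direction.} Starting from $(K_4, K_3)$ as in the theorem, I would pass to the Galois closure $\widetilde{K_4}$; its Galois group $G$ embeds in $S_4$ acting on a $4$-element set $X = \{1,2,3,4\}$. The $G$-set $X_3$ of unordered partitions of $X$ into two $2$-subsets recovers $K_3$, while the $G$-set $X_6 = \binom{X}{2}$ of $2$-element subsets defines a canonical étale sextic subalgebra $K_6 \subseteq \widetilde{K_4}$; the natural $2$-to-$1$ map $X_6 \to X_3$ realizes $K_6/K_3$ as a quadratic extension. I would next verify, by a case analysis across the possible splitting types of $p$ in $K_4$, that $K_6/K_3$ is unramified if and only if $K_4$ is non-overramified. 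The key observation is that ``non-overramified'' is equivalent to the inertia subgroup $I \subseteq G$ having a fixed point on $X$, and whenever $I$ has a fixed point it necessarily acts trivially on each fibre of $X_6 \to X_3$ that it preserves; conversely, the overramified patterns $P^4$, $P^2$, $P_1^2 P_2^2$ from Definition~\ref{sec:2.12} are precisely those where $I$ acts without a fixed point on $X$ and therefore swaps the two elements of some fibre. Finally, the square-norm condition on $N_{K_3/\Q_p}\Delta(K_6/K_3)$ will follow automatically, since $\Delta(K_6/K_3) \bmod (K_3^\times)^2$ is the image of a character on $G \subseteq S_4$ whose norm to $\Q_p$ is trivial.

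\textbf{Reverse direction.} Starting from an unramified quadratic $K_6/K_3$ with $N_{K_3/\Q_p}\Delta(K_6/K_3) \in (\Z_p^\times)^2$, I would let $G$ denote the Galois group of the Galois closure of $K_6/\Q_p$. A~priori $G$ sits inside the automorphism group of the $2$-to-$1$ cover $\mathrm{Spec}\, K_6 \to \mathrm{Spec}\, K_3$, which is the wreath product $C_2 \wr S_3 = (C_2)^3 \rtimes S_3$ of order $48$. The key point is that $S_4$ embeds in $C_2 \wr S_3$ via the action on $\binom{\{1,2,3,4\}}{2}$ as the kernel of the sign character $\epsilon : C_2 \wr S_3 \to C_2$, and the square-norm hypothesis is precisely the statement that $G \subseteq \ker \epsilon = S_4$. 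Given this, the stabilizer in $G$ of a point of $\{1,2,3,4\}$ is an index-$4$ subgroup, and its fixed étale subalgebra is the desired quartic $K_4$, whose cubic resolvent is canonically $K_3$. The non-overramified property of $K_4$ will then follow from the unramifiedness of $K_6/K_3$ by the same case analysis, run in reverse.

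\textbf{Inverse property and main obstacle.} The two constructions are mutually inverse because in both directions the Galois closure of the quartic coincides with the compositum $K_3 \cdot K_6$, and the pair $(K_3, K_6)$ is recovered as its cubic and sextic resolvents. The hard part will be the two case-by-case equivalences: (i) ``$K_4$ non-overramified'' $\iff$ ``$K_6/K_3$ unramified,'' which requires going through each admissible factorization pattern of $p$ in a quartic étale algebra over $\Q_p$ and tracking how inertia acts on $X$, $X_3$, and $X_6$; and (ii) ``$G \subseteq S_4$ inside $C_2 \wr S_3$'' $\iff$ ``$N_{K_3/\Q_p}\Delta(K_6/K_3)$ is a square,'' which is a computation in Kummer theory combined with the local discriminant formula for unramified quadratic extensions of local fields. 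Both case analyses are carried out explicitly in \cite[\S2]{Baily}, so the proof will ultimately reduce to invoking Baily's arguments.
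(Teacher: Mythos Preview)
Your proposal is correct and matches the paper's treatment. The paper does not give its own proof of this theorem but simply attributes it to Baily, stating just before the theorem that ``the arguments in \cite[\S2]{Baily} imply the following''; immediately afterward, Remark~\ref{geom} makes the bijection explicit via the Galois sets $\cQ$ (four common zeros of $A,B$), $\PP\cong\PP'$ (three pairs of lines), and $\LL$ (six lines through pairs of points in $\cQ$), which are precisely your sets $X$, $X_3$, and $X_6=\binom{X}{2}$ described geometrically rather than combinatorially. Your outline therefore supplies considerably more detail than the paper itself, but both you and the paper ultimately defer the ramification case analysis and the square--norm computation to \cite[\S2]{Baily}.
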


\begin{rem}\label{geom} 
{\em 
The bijection of Theorem \ref{thHeilbronPID} can be made explicit
using the correspondences of Theorem \ref{thpid}. Let $K_3$ be an
\'etale cubic algebra corresponding to a binary cubic form $f(x,y)$.
Let $\PP=\{P_1,P_2,P_3\}$ denote the set of roots of $f(x,y)$ in
$\P^1(\overline{\Q}_p)$. Let $K_4$ be an \'etale non-overramified
quartic extension of $\Q_p$ with resolvent $K_3$ corresponding to a
pair $(A,B)$ of ternary quadratic forms. Let $\cQ=\{Q_1,Q_2,Q_3,Q_4\}$
denote the set of common zeros of $A$ and $B$ in
$\P^2(\overline{\Q}_p)$. These two sets $\PP$ and~$\cQ$ come equipped
with an action of the absolute Galois group $G_{\Q_p}$ of
$\Q_p$. Furthermore, a set of three points (resp.\ four points)
together with an action of $G_{\Q_p}$ uniquely determines an \'etale
cubic (resp.\ quartic) extension of $\Q_p$, and in this manner $\PP$
corresponds to the \'etale cubic extension $K_3$ and $\cQ$ corresponds
to the \'etale quartic extension $K_4$ of $\Q_p$.

Let $\PP'$ denote the set of pairs of lines $(L_1,L_2)$ where $L_1$
passes through two of the points $Q_i$ and $L_2$ passes through the
other two $Q_i$. Then $\PP'$ has three elements and we have a natural
bijection  $\PP \to \PP'$
that is $G_{\Q_p}$-equivariant. Indeed, we simply associate to
$P_i=(x_i,y_i)$ the zero set of $Ax_i-By_i$, which is a pair of lines
in $\P^2(\overline{\Q}_p)$ since $4\det(Ax_i-By_i)=f(x_i,y_i)=0$. Moreover,
since the four points in $\cQ$ are in general position, both of these
lines must pass through exactly two points in~$\cQ$. We may thus
naturally identify the Galois sets $\PP$ and $\PP'$.  Let $\LL$ denote
the set of six lines passing through each choice of two points in
$\cQ$. The Galois action on $\cQ$ induces one 
on~$\LL$, which in turn yields an \'etale sextic extension $K_6$ of
$\Q_p$; this is indeed the unramified quadratic extension of $K_3$
with discriminant of square norm corresponding to $K_4$ in~Theorem
\ref{thHeilbronPID}.}
\end{rem}

\section{The mean number of $2$-torsion elements in the class
  groups of $n$-monogenized cubic fields ordered by height with varying $n$}\label{secsubmono}

The purpose of this section is to prove a more general version of
Theorem~\ref{thsubmon} where we also allow our $n$-monogenized cubic
fields to satisfy certain infinite sets of congruence conditions.  We
fix real numbers $c$ and $\delta$ satisfying $c>0$ and $0<\delta\leq
1/4$.

\begin{defn}
\label{sec:3.1}
{\em 
For each prime $p$, let $S_p\subset U(\Z_p)$ be an open and closed
nonempty subset whose boundary has measure $0$. Then $S:=(S_p)_p$ is
a {\bf collection of cubic local specifications}. The
collection $S=(S_p)_p$ is {\bf large} if, for all but finitely many primes~$p$, 
the set $S_p$ contains all elements $f\in U(\Z_p)$ with
$p^2\nmid\Delta(f)$. To each $S_p$, we associate the set $\Sigma_p$ of
pairs $(\mathcal K_p,\alpha_p)$, up to isomorphism, where $\mathcal K_p$ is an \'etale cubic extension of
$\Q_p$ with ring of integers $\O_p$, $\alpha_p$ is an element of
$\O_p$ that is primitive in $\O_p/\Z_p$, and the pair
$(\O_p,\alpha_p)$ corresponds to some $f(x,y)\in S_p$.  The collection
$\Sigma:=(\Sigma_p)_p$ is called {\bf large} if $S$ is large. For a large collection $\Sigma$, let $F_\Sigma(\cdeltaint,X)$ denote
the set of isomorphism classes of $n$-monogenized cubic fields $(K,\alpha)$ such that $n\leq
cH(K,\alpha)^\delta$, $H(K,\alpha)<X$, and
$(K\otimes\Q_p,\alpha)\in\Sigma_p$ for all primes~$p$.}
\end{defn}

The main result of this section is then the following theorem.

\pagebreak
\begin{theorem}\label{thsubmonloc}
Let notation be as above. As $X\to\infty$, we have:
  \begin{itemize}
  \item[{\rm (a)}] The average size of $\Cl_2(K)$
    over totally real cubic fields $K$ in $F_\Sigma(\cdeltaint,X)$ approaches
    $5/4$;
  \item[{\rm (b)}] The average size of $\Cl_2(K)$
    over complex cubic fields $K$ in $F_\Sigma(\cdeltaint,X)$ approaches
    $3/2$;
    \item[{\rm (c)}] The average size of $\Cl^+_2(K)$ over totally
      real cubic fields $K$ in $F_\Sigma(\cdeltaint,X)$ approaches $2$.
  \end{itemize}
\end{theorem}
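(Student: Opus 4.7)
The plan is to use the parametrizations established in Section~\ref{secparam} to reduce each of (a), (b), (c) to a ratio of two orbit counts, and then to adapt the averaging/sieving machinery of \cite{dodqf, dodpf}, originally designed for the discriminant ordering, to the two-parameter regime in which fields are ordered by height $H$ while the leading coefficient $n$ is constrained to satisfy $n\leq cH^\delta$.

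First I would express the averages as ratios. By Theorem~\ref{thcubringpar}, the denominator $|F_\Sigma(\cdeltaint,X)|$ (restricted to the appropriate real signature, to maximal cubic rings, and to fields) counts $\MB(\Z)$-orbits on forms $f\in U_n(\Z)$ with $H(f)<X$, $n\leq cH(f)^\delta$, and $f\in S_p$ for every $p$. By Theorem~\ref{thclgp1} combined with Corollary~\ref{qparcor2}, the sum of $|\Cl_2^+(K)|$ (respectively $|\Cl_2(K)|$) over this family counts $\MB(\Z)\times\SL_3(\Z)$-orbits on pairs $(A,B)\in V(\Z)$ with $4\det(A)=n$, with cubic resolvent a maximal ring of a cubic field satisfying the local conditions, with associated quartic ring maximal and nowhere overramified at the finite places (respectively, at all places), and with the same height/leading-coefficient constraints translated through the resolvent $\Res$. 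The reducible orbits $Q=\Z\times\O_K$ correspond to the trivial character and contribute $|F_\Sigma(\cdeltaint,X)|$ to the sum; the irreducible orbits count the nontrivial 2-torsion characters. Thus each average reduces to
\begin{equation*}
1+\lim_{X\to\infty}\frac{\#\{\text{irreducible } \MB(\Z)\times\SL_3(\Z)\text{-orbits in the numerator}\}}{\#\{\MB(\Z)\text{-orbits in the denominator}\}}.
\end{equation*}

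The main technical step is to count these two orbit sets asymptotically. The denominator count is a variant of the cubic-ring count of \cite{dodqf} carried out with respect to height rather than discriminant and with an additional constraint on the leading coefficient (this is essentially the content of Theorem~\ref{thsubcount}). For the numerator I would fix a Siegel-type fundamental domain for the action of $\SL_3(\R)$ on pairs $(A,B)$ of the desired Archimedean shape, apply the averaging method to express the integer-orbit count as a volume weighted by the indicator of $H<X$ and $4\det(A)=n\leq cH^\delta$, and use a Jacobian change of variables $(A,B)\mapsto(n,f,\ldots)$ to decouple the height from the leading coefficient. One then sums the resulting volume over integer values $n\in[1,cX^\delta]$. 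The sieve for maximality of both the cubic and quartic rings, together with the imposition of the local conditions $\Sigma_p$, proceeds as in \cite[\S4]{dodqf}: largeness of $\Sigma$ reduces an infinite sieve to a finite one plus a uniform tail bound on pairs $(A,B)$ whose resolvent has $p^2\mid\disc$ for $p$ in the tail.

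The main obstacle will be to prove the cuspidal and tail estimates \emph{uniformly in $n$}: in the usual discriminant count the cuspidal contribution is controlled by a power saving in the discriminant, whereas here one needs a power saving that remains effective as $n$ varies through $[1,cX^\delta]$. When $n$ approaches the extremal value $cX^\delta$, the integer points of $V_n(\Z)\cap\{H<X\}$ concentrate in a thin slab and the usual reducibility estimates must be sharpened to keep the error term sub-leading. Once this uniform control is in place, dividing the numerator volume by the denominator volume yields the ratios $1/4$ (totally real, $\Cl_2$), $1/2$ (complex, $\Cl_2$), and $1$ (totally real, $\Cl_2^+$), producing the stated averages $5/4$, $3/2$, and $2$; the cancellation of local volumes at each prime $p$, which already occurs in the discriminant-ordered computations, is preserved under the passage to the height ordering and under any large set of local specifications, which explains the stability of the three constants.
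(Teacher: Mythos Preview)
Your proposal is essentially correct and follows the same route as the paper: reduce to a ratio of orbit counts via Theorems~\ref{thcubringpar} and~\ref{thclgp1}, obtain the main terms by the averaging method on dyadic ranges in both $H$ and $n$, and sieve to maximality using uniform-in-$n$ tail estimates. Two small points deserve sharpening. First, once the cubic resolvent $f$ corresponds to a maximal order, Theorem~\ref{thclgp1} already guarantees that \emph{every} $\SL_3(\Z)$-orbit on $\Res^{-1}(f)$ contributes to $\Cl_2^+(C)^*$; you need not (and the paper does not) impose ``maximal and nowhere overramified'' on the quartic side as an extra sieve condition. Second, the final ``cancellation of local volumes at each prime $p$'' is not simply inherited from the discriminant-ordered computations of \cite{dodqf}: in the height ordering one is integrating fiber-by-fiber over the resolvent map, and what makes the $p$-adic volume of $V(\Z)_{S,p}$ equal $|\J|_p\Vol(\SL_3(\Z_p))\Vol(S_p)$ is the local mass identity $\sum_{v\in\SL_3(\Z_p)\backslash\Res^{-1}(f)}|\Stab_{\SL_3(\Z_p)}(v)|^{-1}=1$ for maximal $f$ (Corollary~\ref{cortotmass}, proved via Theorem~\ref{thmmassmain} and the Heilbronn correspondence). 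This mass formula is the mechanism behind your cancellation, after which the ratio collapses to the Tamagawa number of $\SL_3$ times $1/m_i$, giving the constants $1/4$, $1/2$, and $1/4+1/4+1/4+1/4=1$.
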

That is, the averages in Theorem~\ref{thallcubicfields} and
\cite[Theorem 1]{BV} remain the same even when ordering cubic fields
by height, restricting $n$ to slowly growing ranges relative to the
height, and imposing quite general local conditions on the
cubic~fields.

\begin{rem}\label{remdelta14}
{\em We always take $\delta\leq1/4$ because
  every cubic field $K$ is $n$-monogenized for some
  $n\ll|\Delta(K)|^{1/4}$. Indeed, the covolume of $\O_K$ in
  $\O_K\otimes \R$ is $|\Delta(K)|^{1/2}$, and so the length of the second
  successive minimum $\alpha\in\O_K$ is $\ll
  |\Delta(K)|^{1/4}$ (the first successive minimum being $1\in\O_K$). Therefore, $$|\Delta(\Z[\alpha])|=|\Delta(\langle1,\alpha,\alpha^2\rangle)|\ll
  (1\: |\Delta(K)|^{1/4}  |\Delta(K)|^{1/2})^2\ll  |\Delta(K)|^{3/2}.$$ Hence $$[\O_K:\Z[\alpha]]=|\Delta(\Z[\alpha])/\Delta(\O_K)|^{1/2} \ll (|\Delta(K)|^{3/2}/|\Delta(K)|)^{1/2} =
  |\Delta(K)|^{1/4}.$$}
\end{rem}

This section is organized as follows.  In \S\ref{subsecsubcub}, we
give asymptotics for the number of $n$-monogenized cubic rings of
bounded height $H$ and $n<cH^\delta$ in terms of local volumes of
certain sets of binary cubic forms. In \S\ref{subsecsubquar}, we
determine asymptotics for the number of quartic rings with 
$n$-monogenized cubic resolvent rings, where these resolvent rings
again have bounded height and bounded $n$. In~\S\ref{subsecsubunif},
we then prove uniformity estimates that allow us to impose conditions
of maximality on these counts. 

The leading constants for these asymptotics are expressed
as a product of local volumes of sets in $U(R)$ and $V(R)$, where $R$
ranges over $\R$ and $\Z_p$ for all primes $p$. In \S\ref{seclmsub},
we prove certain mass formulas relating \'etale quartic and cubic
extensions of~$\Q_p$.  Finally, in \S\ref{subsecsubvol}, we use these
mass formulas to compute the required local volumes, concluding the
proofs of Theorem~\ref{thsubmon} and Theorem~\ref{thsubmonloc}.

\subsection{The number of $n$-monogenized cubic
  rings of bounded height $H$ and $n<cH^\delta$}\label{subsecsubcub}

In this subsection, we determine the asymptotic number of
$n$-monogenized cubic rings of bounded index and height. By Theorem
\ref{thcubringpar}, such rings are parametrized by $\MB(\Z)$-orbits on
binary cubic forms in $U(\Z)$ of bounded height $H$ whose
$x^3$-coefficient is positive and less than~$cH^\delta$.

\begin{defn}
\label{sec:3.4}
{\em
For a binary cubic form $f(x,y)=ax^3+bx^2y+cxy^3+dy^3\in U(\R)$, we define the {\bf index}
$\ind$, the {\bf $F$-invariants} $I$ and $J$, {\bf height} $H$, and {\bf discriminant} $\Delta$ of $f$ by:
\begin{equation*}
\begin{array}{rcl}
\ind(f)&:=&a;\\[.05in]    
I(f)&:=&b^2-3ac;\\[.05in]
J(f)&:=&-2b^3+9abc-27a^2d;\\[.05in]
H(f)&:=&a^{-2}\max\bigl\{|I(f)|^3,J(f)^2/4\}; \\[.05in]
\Delta(f)&:=&b^2c^2-4ac^3-4b^3d-27a^2d^2+18abcd. 
\end{array}
\end{equation*}
}\end{defn} 

If the $\MB(\Z)$-orbit of an element $f\in U(\Z)$ corresponds to an
$n$-monogenized ring $(C,\alpha)$ by the bijection of Theorem~\ref{df2}, then: 
\begin{equation*}
\ind(f)=n;\quad I(f)=I(C,\alpha);\quad J(f)=J(C,\alpha);\quad H(f)=H(C,\alpha);\quad \Delta(f)=\Delta(C).\end{equation*} 
\vspace{-.3in}\pagebreak

\begin{cons} 
\label{sec:3.5}
{\em
Define the sets \smash{$\FF_U^\pm$} and \smash{$\FF_U^\pm(\cdeltaint,X)$} as follows:
\begin{equation*}
\begin{array}{rcl}
\displaystyle\FF_U^\pm&\!\!:=\!\!&
\displaystyle\{f(x,y)=ax^3+bx^2y+cxy^2+dy^3\in U(\R):\pm\Delta(f)>0,
  \;0<a,\;0\leq b<3a\};
\\[.05in]
\displaystyle \smash{\FF_U^\pm}(\cdeltaint,X)&\!\!:=\!\!&\displaystyle\bigl\{f(x,y)\in\FF_U^\pm:\ind(f)\leq
cH(f)^\delta,\; H(f)<X\bigr\}.
\end{array}
\end{equation*}
Then $\smash{\FF_U^\pm}$ is a fundamental domain for the action of $\MB(\Z)$ on
the set of binary cubic forms $f(x,y)\in U(\R)$ such that
$\pm\Delta(f)>0$ and the $x^3$-coefficient of $f(x,y)$ is
positive. (In this paper, the symbol ``$\pm$'' always refers to two
distinct statements, one for $+$ and one for $-$.)
}\end{cons}
In this subsection, we prove the following theorem.

\begin{theorem}\label{thsubcount}
Let $\smash{N^\pm_3}(\cdeltaint,X)$ denote the number of isomorphism classes of $n$-monogenized $S_3$-orders
$(C,\alpha)$ such that $\pm\Delta(C)>0$, $n\leq \smash{cH(C,\alpha)^{\delta}}$, and $H(C,\alpha)<X$. Then
\begin{equation*}
  \smash{N^\pm_3(\cdeltaint,X)}=\Vol\bigl(\smash{\FF_U^\pm}(\cdeltaint,X)\bigr)+O(X^{5/6}) =
  k\, X^{5/6+2\delta/3}+O(X^{5/6}),
\end{equation*}
where $k$ and the implied $O$-constant depend only on $n$, $c$, and $\delta$.
\end{theorem}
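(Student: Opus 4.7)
The plan is to combine Theorem~\ref{thcubringpar}, which identifies isomorphism classes of $n$-monogenized cubic rings with $\MB(\Z)$-orbits on $U_n(\Z)$, with a slice-by-slice lattice point count in $\FF_U^\pm(\cdeltaint,X)$, then sieve to $S_3$-orders. Since $\MB$ is unipotent and acts freely on forms of nonzero leading coefficient, each $\MB(\Z)$-orbit on the set of cubic forms of positive leading coefficient and sign-$\pm$ discriminant has a unique representative in $\FF_U^\pm$. Hence $N_3^\pm(\cdeltaint,X)$ equals the number of integer forms in $\FF_U^\pm(\cdeltaint,X)$ that correspond to $S_3$-orders.

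To evaluate $\Vol(\FF_U^\pm(\cdeltaint,X))$, I change variables from $(b,c,d)$ to $(b,I,J)$; the Jacobian is $|\partial(I,J)/\partial(c,d)|=81a^3$, and at fixed $a$ the image region is
\[
\bigl\{(b,I,J):0\le b<3a,\;|I|^3\le Xa^2,\;J^2\le 4Xa^2,\;\pm(4I^3-J^2)>0\bigr\}.
\]
The scaling $(I,J)\mapsto(\lambda^2I,\lambda^3J)$ shows the $(I,J)$-area equals $c_\pm' X^{5/6}a^{5/3}$ for an explicit sign-dependent constant $c_\pm'$, so the slice volume at $a$ is $(c_\pm'/27)X^{5/6}a^{-1/3}$. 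Integrating over $1\le a\le cX^\delta$ gives $\Vol(\FF_U^\pm(\cdeltaint,X))=kX^{5/6+2\delta/3}+O(X^{5/6})$ for an explicit constant $k$.

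For the lattice point count, I fix $a$ and apply a Davenport-type lemma (along the lines of \cite{DH}) to each three-dimensional $(b,c,d)$-slice, whose side lengths are $L_b\asymp a$, $L_c\asymp X^{1/3}a^{-1/3}$, $L_d\asymp X^{1/2}a^{-1}$. The discrepancy between the lattice count and the slice volume is bounded by the largest coordinate $2$-projection area, namely $\max\{X^{1/3}a^{2/3},\,X^{1/2},\,X^{5/6}a^{-4/3}\}$. Summing over $a\le cX^\delta$ and using the hypothesis $\delta\le 1/4$:
\[
\sum_{a\le cX^\delta}X^{1/3}a^{2/3}\ll X^{1/3+5\delta/3}\le X^{3/4},\quad \sum_{a\le cX^\delta}X^{1/2}\ll X^{1/2+\delta}\le X^{3/4},\quad \sum_{a\ge 1}X^{5/6}a^{-4/3}\ll X^{5/6},
\]
so the total error from lattice counting is $O(X^{5/6})$; note that this is the step which forces $\delta\le 1/4$.

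Finally, I remove the contribution of non-$S_3$ orders: those with $\Delta(f)=0$ (of measure zero, negligible), those for which $f$ has a rational linear factor (reducible cubic rings of the form $\Z\times R$), and those corresponding to cyclic $C_3$-extensions. Each of these is cut out by a further algebraic condition and can be bounded by $O(X^{5/6-\eta})$ by arguments parallel to those in \cite{DH} (for the reducible case, parametrize by the linear factor and sum; for $C_3$, use that such forms lie on a thinner subvariety, with appropriate height bounds). Combining with the volume computation and lattice point count yields the claimed asymptotic. The main obstacle is step three, the lattice counting: the bound is sharp at $\delta=1/4$ and could not be pushed further by this method alone.
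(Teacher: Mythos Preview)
Your approach is essentially the paper's: identify $N_3^\pm$ with a lattice-point count in $\FF_U^\pm(\cdeltaint,X)$ via Theorem~\ref{thcubringpar}, apply Davenport's lemma, and remove non-generic forms. The paper packages this via a dyadic decomposition in both $\ind(f)$ and $H(f)$ (Proposition~\ref{propcountsmf} and Lemma~\ref{lemsubnongencub}), while you slice at each fixed integer value of $a$; this is a cosmetic difference, and your error sums are correct and lead to the same $O(X^{5/6})$.

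There is, however, one genuine slip in your volume computation. The defining constraint of $\FF_U^\pm(\cdeltaint,X)$ is $a\le cH(f)^\delta$, \emph{not} $a\le cX^\delta$; at fixed $a$ this forces the lower bound $H(f)\ge(a/c)^{1/\delta}$, i.e., $\max\{|I|^3,J^2/4\}\ge a^2(a/c)^{1/\delta}$, in addition to the upper bound you wrote. The region you described is thus strictly larger than the fixed-$a$ slice of $\FF_U^\pm(\cdeltaint,X)$, and the discrepancy in volume is
\[
\asymp\int_0^{cX^\delta} a^{-1/3}\bigl((a/c)^{1/\delta}\bigr)^{5/6}\,da\;\asymp\;X^{5/6+2\delta/3},
\]
the same order as the main term. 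So your computation as written would produce the wrong constant $k$. The fix is immediate---carry the annular constraint on $\max\{|I|^3,J^2/4\}$ through the integral---and the conclusion $\Vol(\FF_U^\pm(\cdeltaint,X))=kX^{5/6+2\delta/3}+O(X^{5/6})$ still holds, with the correct $k$. Your Davenport error analysis is unaffected by this, since that bound depends only on the upper bounds on the coordinates of the slice.
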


We begin by counting $\MB(\Z)$-equivalence classes of integer-coefficient binary
cubic forms, with bounded index and height, that satisfy any finite set
of congruence conditions. We use the following
result on counting lattice points in regions due to Davenport.

\begin{proposition}[\cite{Davenport1}]\label{davlem}
  Let $\mathcal R$ be a bounded, semi-algebraic multiset in $\R^n$
  having maximum multiplicity $m$ that is defined by at most $k$
  polynomial inequalities each having degree at most $\ell$.  Let
  $\RR'$ denote the image of $\RR$ under any $($upper or lower$)$ 
  triangular unipotent transformation of $\R^n$. Then the number of
  integer lattice points $($counted with multiplicity$)$ contained in
  the region $\mathcal R'$ is
\[\Vol(\mathcal R)+ O(\max\{\Vol(\bar{\mathcal R}),1\}),\]
where $\Vol(\bar{\mathcal R})$ denotes the greatest $d$-dimensional
volume of any projection of $\mathcal R$ onto a coordinate subspace
obtained by equating $n-d$ coordinates to zero, where $d$ takes all
values from $1$ to $n-1$.  The implied constant in the second summand
depends only on $n$, $m$, $k$, and $\ell$.
\end{proposition}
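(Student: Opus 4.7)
The plan is to prove the result by induction on $n$. For the base case $n=1$, every unipotent transformation of $\R^1$ is the identity, and a bounded subset of $\R$ defined by at most $k$ polynomial inequalities of degree $\leq \ell$ is a union of at most $O(k\ell)$ intervals (the defining polynomials have altogether at most $k\ell$ real roots). Counting integer points in such a union with multiplicity $\leq m$ gives $\Vol(\mathcal R) + O(1)$, with the implied constant depending only on $m, k, \ell$.

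For the inductive step, assume the result in dimension $n-1$, and without loss of generality let $T$ be upper triangular unipotent (the lower triangular case is symmetric), so $T$ preserves the last coordinate $x_n$. For each integer $a$, let $\mathcal R'_a := \mathcal R' \cap \{x_n = a\}$ and $\mathcal R_a := \mathcal R \cap \{x_n = a\}$, viewed as subsets of $\R^{n-1}$. Since $T$ fixes $x_n$, its restriction to the hyperplane $\{x_n = a\}$ is an upper triangular unipotent transformation $T'$ of $\R^{n-1}$ followed by a translation by a fixed vector depending on $a$ — a translation that can be absorbed by shifting the counting lattice $\Z^{n-1}$, to which the inductive argument applies with unchanged constants. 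The inductive hypothesis thus yields
\[
|\mathcal R'_a \cap \Z^{n-1}| = \Vol_{n-1}(\mathcal R_a) + O\bigl(\max\{\Vol(\overline{\mathcal R_a}), 1\}\bigr),
\]
where any coordinate projection of $\mathcal R_a$ is contained in a corresponding coordinate projection of $\mathcal R$.

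Summing over the $O(\Vol(\bar{\mathcal R})) + O(1)$ integers $a$ for which $\mathcal R_a$ is nonempty (a range corresponding to the projection of $\mathcal R$ onto the $x_n$-axis), one obtains
\[
|\mathcal R' \cap \Z^n| = \sum_{a \in \Z} \Vol_{n-1}(\mathcal R_a) + O\bigl(\max\{\Vol(\bar{\mathcal R}), 1\}\bigr).
\]
By Fubini, $\int_\R \Vol_{n-1}(\mathcal R_t)\, dt = \Vol(\mathcal R) = \Vol(\mathcal R')$, the last equality since unipotent transformations are volume-preserving. The Riemann-sum error between $\sum_a \Vol_{n-1}(\mathcal R_a)$ and this integral is bounded by the total variation of the slice-volume function $t \mapsto \Vol_{n-1}(\mathcal R_t)$.

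The main obstacle is bounding this total variation uniformly in $(n, m, k, \ell)$ alone, independent of the specific polynomials defining $\mathcal R$ and of the specific unipotent $T$. This is where the semi-algebraicity of bounded complexity is essential: the function $t \mapsto \Vol_{n-1}(\mathcal R_t)$ is piecewise real-analytic in $t$ with a number of monotone pieces depending only on $n, k, \ell$, via an effective cell decomposition or a Bézout-type bound on the critical values of the defining polynomials restricted to slices. Its total variation is therefore at most $O(\max_t \Vol_{n-1}(\mathcal R_t))$, which is bounded by the volume of the projection of $\mathcal R$ onto the hyperplane $\{x_n=0\}$, and hence by $\Vol(\bar{\mathcal R})$. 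This completes the induction, with all implied constants depending only on $(n, m, k, \ell)$.
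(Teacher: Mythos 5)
The paper itself gives no argument for this proposition---it is quoted verbatim from Davenport \cite{Davenport1}---and your skeleton (induction on $n$, slicing along the coordinate that the triangular unipotent map preserves, absorbing the induced translation of each slice into the lattice) is indeed the skeleton of Davenport's proof. However, two of your estimates do not hold up as written. First, the bookkeeping of the inductive error terms: from ``there are $O(\max\{\Vol(\bar{\mathcal R}),1\})$ nonempty slices'' you cannot conclude $\sum_a O(\max\{\Vol(\bar{\mathcal R_a}),1\})=O(\max\{\Vol(\bar{\mathcal R}),1\})$; a single slice error can be as large as an $(n-2)$-dimensional projection of $\mathcal R$ while the number of slices is comparable to a $1$-dimensional projection, so the naive product is too big. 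What is actually needed is $\sum_a \Vol(\mathrm{proj}_S\,\mathcal R_a)\ll \Vol(\mathrm{proj}_{S\times x_n}\mathcal R)$ for each coordinate subspace $S$, which is again a Riemann-sum-versus-integral comparison of exactly the type you have not yet established---so this step is circular as stated.

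Second, and more seriously, your comparison of $\sum_a \Vol_{n-1}(\mathcal R_a)$ with $\int\Vol_{n-1}(\mathcal R_t)\,dt$ rests on the claim that $t\mapsto\Vol_{n-1}(\mathcal R_t)$ is piecewise monotone with the number of pieces bounded only in terms of $n,k,\ell$. That claim is true, but it is not a B\'ezout-type fact: slice volumes of semi-algebraic sets are in general \emph{not} semi-algebraic functions (logarithms occur), and bounding their oscillation uniformly over all sets of bounded description complexity requires o-minimal/uniform-finiteness input on integrals of semi-algebraic families, far heavier than counting critical values of the defining polynomials. Davenport's actual argument avoids both difficulties by working pointwise on fibers: for each fixed $y\in\R^{n-1}$, the line $\{(y,t):t\in\R\}$ meets $\mathcal R$ in at most $h=O_{k,\ell}(1)$ intervals (this is where B\'ezout genuinely enters), so the number of integers $t$ in the fiber differs from the fiber's length by at most $2h$; integrating over $y$ in the projection bounds both $\bigl|\sum_a\Vol_{n-1}(\mathcal R_a)-\Vol(\mathcal R)\bigr|$ and the summed error terms by $O_{n,m,k,\ell}(\max\{\Vol(\bar{\mathcal R}),1\})$. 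Two smaller points: your induction must be stated for translates of $\Z^{n-1}$ (equivalently, affine unipotent images), since the restriction of $T$ to $\{x_n=a\}$ is unipotent composed with a generally non-integral translation; and one must know that coordinate projections and slices of $\mathcal R$ are again semi-algebraic of complexity bounded in terms of $n,k,\ell$ (effective Tarski--Seidenberg) before the inductive hypothesis can be applied to them.
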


\begin{nota}
\label{sec:3.8}
{\em Let $L\subset U(\Z)$ denote an $\MB(\Z)$-invariant set defined by
  congruence conditions modulo some positive integer, and let $\nu(L)$
  denote the volume of the closure of $L$ in $U(\widehat{\Z})$,
  where~$\widehat{\Z}:=\prod_p \Z_p$. For any subset $L\subset U(\R)$,
  let $L^\pm$ denote the set of elements $f\in L$ with
  $\pm\Delta(f)>0$.  For real numbers $\Y,X>0$ such that
  $\Y=O(X^{1/4})$, define the sets
\begin{equation}\label{definesets}
\displaystyle\FF_U^\pm(\ycox):=\displaystyle\bigl\{f(x,y)\in\FF_U^\pm:\Y\leq
\ind(f)<2\Y,\; X\leq H(f)<2X\bigr\}.
\end{equation}
}\end{nota}

\begin{proposition}\label{propcountsmf}
We have
\begin{equation*}
  \#\bigl\{f\in \MB(\Z)\backslash L^\pm:
  \Y\leq \ind(f)<2\Y,\;X\leq H(f)<2X\bigr\}
=\nu(L)\Vol\bigl(\FF_U^\pm(\ycox)\bigr)+O\bigl(X^{5/6}/\Y^{1/3}\bigr).
\end{equation*}
\end{proposition}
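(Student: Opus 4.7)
The plan is to apply Davenport's lattice-point counting proposition (Proposition \ref{davlem}) directly to the region $\FF_U^\pm(\ycox)$, together with a standard sieve over residue classes to incorporate the congruence conditions defining $L$. Since $\FF_U^\pm$ is a fundamental domain for the $\MB(\Z)$-action on forms in $U(\R)$ with positive leading coefficient and the specified sign of discriminant, the left-hand side equals $\#(L \cap \FF_U^\pm(\ycox))$.

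I would first handle the congruence conditions. Write $L$ as a finite disjoint union of residue classes $r + N\Z^4\subset\Z^4$, where $N$ is the modulus defining $L$ and the proportion of admissible residues equals $\nu(L)$. For each admissible $r$, the points in $(r + N\Z^4) \cap \FF_U^\pm(\ycox)$ are in bijection with integer points in the rescaled semi-algebraic region $\tfrac{1}{N}(\FF_U^\pm(\ycox) - r)$. Applying Proposition \ref{davlem} to each such region and summing yields a main term of $\nu(L)\cdot\Vol(\FF_U^\pm(\ycox))$, plus an error controlled (with implicit constant depending on $N$) by the maximum volume of a coordinate projection of $\FF_U^\pm(\ycox)$.

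The main technical step is then to bound these projection volumes. Parametrizing points of $\FF_U^\pm(\ycox)$ by $(a,b,c,d)$ and converting the height bound $X \leq H(f) < 2X$ into bounds on $I = b^2 - 3ac$ and $J = -2b^3 + 9abc - 27a^2d$ gives $a,b\asymp \Y$; for each fixed $(a,b)$ the variable $c$ lies in an interval of length $\asymp a^{-1}|I| \asymp \Y^{-1/3} X^{1/3}$; and for each fixed $(a,b,c)$ the variable $d$ lies in an interval of length $\asymp a^{-2}|J| \asymp \Y^{-1} X^{1/2}$. A direct computation gives the four three-dimensional coordinate projection volumes (omitting $d$, $c$, $b$, $a$ in turn) as
$$\asymp\;\Y^{5/3}X^{1/3},\;\;\Y X^{1/2},\;\;\Y^{-1/3} X^{5/6},\;\;\text{and}\;\;\Y^{-1/3} X^{5/6}.$$
The hypothesis $\Y = O(X^{1/4})$ forces the first two to be $O(\Y^{-1/3}X^{5/6})$, and lower-dimensional coordinate projections are absorbed into the three-dimensional ones, so the maximum projection is $O(X^{5/6}/\Y^{1/3})$. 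Combining with the sieve gives the claim.

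The principal obstacle is this projection-volume calculation under the constraint $\Y \ll X^{1/4}$: one must identify the right parametrization (via $I$ and $J$) and verify that the two projections omitting either $a$ or $b$ dominate, so that the error $O(X^{5/6}/\Y^{1/3})$ is genuinely uniform in $\Y$ within the allowed range. The $N$-dependence of the error from the sieve is absorbed into the implicit constant, which is allowed to depend on $L$.
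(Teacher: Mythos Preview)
Your proposal is correct and follows essentially the same approach as the paper: the paper's proof simply records the coefficient bounds $|a|,|b|\ll \Y$, $|c|\ll X^{1/3}/\Y^{1/3}$, $|d|\ll X^{1/2}/\Y$, applies Proposition~\ref{davlem}, and invokes the fundamental-domain property of $\FF_U^\pm$. Your write-up is more explicit about the sieve over residue classes and the projection-volume comparison (including the use of $\Y\ll X^{1/4}$ to see that the projections omitting $a$ or $b$ dominate), but these are exactly the details the paper suppresses.
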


\begin{proof}
The height and leading-coefficient bounds on an element
$f(x,y)=ax^3+bx^2y+cxy^2+dy^3\in \FF_U^\pm(\ycox)$ imply that we have:
\begin{equation}\label{eqFUYX}
|a|\ll \Y;\quad |b|\ll \Y;\quad |c|\ll X^{1/3}/\Y^{1/3};
\quad |d|\ll X^{1/2}/\Y.  
\end{equation}
Applying Proposition \ref{davlem} yields
\begin{equation*}
  \#\bigl(\FF_U^\pm(\ycox)\cap L\bigr)=\nu(L)
  \Vol\bigl(\FF_U^\pm(\ycox)\bigr)+O\bigl(X^{5/6}/\Y^{1/3}).
\end{equation*}
Since $\FF_U^\pm$ is a fundamental domain for the action of $\MB(\Z)$ on
$U(\R)^\pm$, the proposition follows.
\end{proof}

\noindent
The main term in Proposition~\ref{propcountsmf} grows as $X^{5/6}\Y^{2/3}$. Hence
the error term is smaller than the main term whenever $\Y\gg_\epsilon
X^{\epsilon}$.\pagebreak

\begin{defn}
\label{sec:3.10}
{\em 
An element $f\in U(\Z)$ is {\bf generic} if the cubic ring
corresponding to $f$ is an order in an $S_3$-cubic field. Similarly,
an element $v\in V(\Z)$ is {\bf generic} if the quartic ring
corresponding to $v$ is an order in an $S_4$-quartic field.  For any
subset $L$ of $U(\Z)$ (resp.\ $V(\Z)$), we denote the set of generic
elements in $L$ by $L^\gen$.}
\end{defn}

\begin{lemma}\label{lemsubnongencub}
  We have
$  \#\bigl((U(\Z)\setminus U(\Z)^\gen)\cap \FF_U^\pm(\ycox)
  \bigr)=O_\epsilon(X^{1/2+\epsilon}\,\Y).$
\end{lemma}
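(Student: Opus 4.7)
The plan is to separate the non-generic forms in $\FF_U^\pm(\ycox)$ into two types and bound each one. Every $f\in\FF_U^\pm$ has $\Delta(f)\neq 0$ by the sign condition in the fundamental domain, so its associated cubic ring is an order in an \'etale cubic $\Q$-algebra; non-genericity then means either (i) $f$ is reducible over $\Q$, or (ii) $f$ is irreducible with $\Delta(f)$ a nonzero perfect square (a ``$C_3$-form'').

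For case (i), I will parameterize each reducible $f$ by a primitive integer pair $(p_0,q_0)\in\Z^2$ representing one of its rational roots. Since $a>0$ in $\FF_U^\pm$ forces $q_0\neq 0$, I may normalize $q_0\geq 1$, and Gauss's lemma gives $f(x,y)=(q_0x-p_0y)(px^2+qxy+ry^2)$ for some $p,q,r\in\Z$; matching coefficients yields $a=q_0p$, $b=q_0q-p_0p$, $c=q_0r-p_0q$, $d=-p_0r$. Combined with the bounds \eqref{eqFUYX}, this gives $q_0\leq 2T$, and the Cauchy bound on the roots of $f(x,1)/a$ in terms of $\max_i|\text{coeff}_i/a|$ yields $|p_0|\ll q_0 X^{1/2}/T^2$ whenever $p_0\neq 0$ (using $T\leq cX^{1/4}$). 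For fixed $(p_0,q_0)$, the count of compatible $(p,q,r)$ is at most
\[(T/q_0+1)^2\cdot\min\bigl(X^{1/3}/(T^{1/3}q_0)+1,\;X^{1/2}/(T|p_0|)+1\bigr).\]
Summing over $(p_0,q_0)$ with $p_0\neq 0$, I split into the subcase $|p_0|\leq X^{1/6}q_0/T^{2/3}$ (first argument of the minimum active) and its complement; each subcase telescopes, via the geometric sum $\sum 1/q_0^2$ and at most a $\log X$ loss in the $|p_0|$-sum, to $O_\epsilon(X^{1/2+\epsilon}T)$. The edge case $p_0=0$ forces $q_0=1$ and $d=0$, and a direct box count yields $\ll X^{1/3}T^{5/3}\leq X^{1/2}T$ for $T\leq cX^{1/4}$. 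Each reducible $f$ has at most three rational roots, so this overcounts by a constant factor.

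For case (ii), Theorem~\ref{thcubringpar} identifies each $C_3$-form with a pair $(C,\alpha)$, where $C$ is a $C_3$-cubic order and $[C:\Z[\alpha]]=\ind(f)=n\in[T,2T)$. The height bound gives $|\Delta(C)|=|\Delta(f)|/n^2\leq (8/27)H(f)=O(X)$. Standard results (from the classification of $C_3$-cubic fields by conductor $f_K=|\Delta(K)|^{1/2}$, together with the convergence of the subring zeta function of a cubic field) produce $O_\epsilon(X^{1/2+\epsilon})$ $C_3$-cubic orders with $|\Delta|\leq O(X)$: any such $C$ lies in a $C_3$-cubic field $K$ with $[O_K:C]\cdot f_K\leq O(X^{1/2})$, so one sums over valid conductors $f_K\leq O(X^{1/2})$ (of which there are $O(X^{1/2})$, with $O(3^{\omega(m)})$ fields per conductor $m$) and over suborders, a routine Euler-product estimate. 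For each such $C$, primitive $\bar\alpha\in C/\Z$ with $[C:\Z[\alpha]]=m$ correspond, via any fixed binary cubic $f_0$ representing $C$, to primitive $(e,f)\in\Z^2$ with $|f_0(e,f)|=m$; Evertse's bound on Thue equations yields $O_\epsilon(m^\epsilon)$ such solutions, so the number of $\alpha$ with index in $[T,2T)$ is $O_\epsilon(T^{1+\epsilon})$. Multiplying gives $O_\epsilon(X^{1/2+\epsilon}T^{1+\epsilon})$, which reduces to $O_\epsilon(X^{1/2+\epsilon}T)$ by absorbing $T^\epsilon\leq X^{\epsilon/4}$ using $T\leq cX^{1/4}$.

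Combining the two bounds gives the lemma. The hard part will be the bookkeeping in the reducible case, where four competing coefficient constraints interact and one must carefully track the boundary contributions from lattice-point rounding and the $p_0=0$ degeneration; the $C_3$ bound is comparatively routine modulo standard inputs from class field theory and from Evertse's theorem on Thue equations.
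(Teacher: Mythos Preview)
Your argument is correct, but it takes a much more elaborate route than the paper's. The paper does not separate the reducible and $C_3$ cases at all: it simply invokes the fact (proved in \cite[Lemmas~21--22]{MAJ}) that for a non-generic integer form $ax^3+bx^2y+cxy^2+dy^3$ with $a,d\neq 0$, the triple $(a,b,d)$ determines $c$ up to $O_\epsilon(X^\epsilon)$ choices. Combining this with the coefficient bounds~\eqref{eqFUYX} gives $\ll \Y\cdot \Y\cdot (X^{1/2}/\Y)\cdot X^\epsilon = X^{1/2+\epsilon}\Y$ immediately, and the degenerate case $d=0$ is a trivial box count. By contrast, you parametrize reducible forms by a rational root and carry out a double sum with case splits on which of two constraints on $r$ is active, and for $C_3$-forms you import three separate nontrivial inputs (the conductor count for cyclic cubic fields, convergence of the subring zeta function, and Evertse's bound for Thue equations). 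Your approach has the virtue of being self-contained and making the arithmetic structure explicit, whereas the paper's approach is a two-line reduction to a lemma proved elsewhere. One small slip: you write $|\Delta(C)|=|\Delta(f)|/n^2$, but in fact $\Delta(C)=\Delta(f)$; the conclusion $|\Delta(C)|=O(X)$ is nonetheless correct since $|\Delta(f)|\leq (8/27)H(f)$ directly.
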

\begin{proof}
This follows immediately from the coefficient bounds
\eqref{eqFUYX} along with the fact that when $a$ and $d$ are
nonzero, the coefficients $a$, $b$, and $d$ of an integer-coefficient binary cubic
form $ax^3+bx^2y+cxy^2+dy^3\in U(\Z)\setminus U(\Z)^\gen$ determine
$c$ up to $O_\epsilon(X^\epsilon)$ choices (as in the proofs of
\cite[Lemmas~21--22]{MAJ}).
\end{proof}

\vspace{.025in}\noindent
{\bf Proof of Theorem~\ref{thsubcount}:}
Theorem \ref{thsubcount} immediately follows by breaking the intervals
$[1,X]$ and $[1,cX^\delta]$ into dyadic ranges, and then applying
Proposition \ref{propcountsmf} and Lemma \ref{lemsubnongencub} to each
pair of dyadic ranges. $\Box$

\subsection{The number of quartic rings having $n$-monogenized cubic
  resolvent rings of bounded height~$H$ and $n<cH^\delta$}\label{subsecsubquar}

In this subsection, we determine the asymptotic number of quartic
rings having an $n$-monogenized cubic resolvent ring with bounded
height $H$ and $n<cH^\delta$.

\begin{theorem}\label{thqsubcount}
For $i\in\{0,1,2\}$, let $\smash{N_4^{(i)}}(\cdeltaint,X)$ denote the number
of isomorphism classes of pairs $(Q,(C,\alpha))$, where $Q$ is an order in an $S_4$-quartic
field with $4-2i$ real embeddings and $(C,\alpha)$ is an $n$-monogenized
cubic resolvent ring of $Q$ with $n\leq cX^{\delta}$ and $H<X$. Then
\begin{equation}\label{n4eq}
N^{(i)}_4(\cdeltaint,X)=\frac{1}{m_i}\,
\Vol\bigl(\FF_{\SL_3}\cdot\FF_V^{(i)}(\cdeltaint,X)\bigr)+O(X^{5/6+\delta/3}),
\end{equation}
where \smash{$\FF_{\SL_3}$} is a fundamental domain for the action of
$\SL_3(\Z)$ on $\SL_3(\R)$, the sets $\smash{\FF_V^{(i)}}(\cdeltaint,X)$ and the constants $m_i$ are
defined immediately after Proposition $\ref{propsmfssize}$. 
\end{theorem}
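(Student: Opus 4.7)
The plan is to combine the parametrization from Corollary~\ref{qparcor2} with the averaging and lattice-point counting techniques of Bhargava's earlier work on quartic rings, adapted to handle simultaneously the height constraint $H<X$ and the leading-coefficient constraint $n\leq cX^\delta$. By Corollary~\ref{qparcor2}, isomorphism classes of pairs $(Q,(C,\alpha))$ with $(C,\alpha)$ an $n$-monogenized cubic resolvent of $Q$ are in bijection with $\MB(\Z)\times\SL_3(\Z)$-orbits on $V_n(\Z)\subset V(\Z)$. The condition that $Q$ is an order in an $S_4$-quartic field of signature $(4-2i,i)$ translates into a genericity condition on $(A,B)$ together with a prescribed real splitting type. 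Hence $N_4^{(i)}(\cdeltaint,X)$ equals the number of $\MB(\Z)\times\SL_3(\Z)$-orbits on generic $(A,B)\in V(\Z)$ of the prescribed real splitting type, subject to $n:=4\det(A)\leq cH(\Res(A,B))^\delta$ and $H(\Res(A,B))<X$.

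Once the sections $\FF_V^{(i)}(\cdeltaint,X)$ are constructed as fundamental domains for the $\MB(\R)$-action on chosen $\SL_3(\R)$-sections of the real locus (with the prescribed signature and height constraints), the product $\FF_{\SL_3}\cdot\FF_V^{(i)}(\cdeltaint,X)$ becomes an $m_i$-fold cover of a fundamental domain for the combined $\MB(\Z)\times\SL_3(\Z)$-action on real pairs of signature $(4-2i,i)$ with the given constraints, where $m_i$ records the order of the generic real stabilizer in $\SL_3(\R)$. To count generic integer points in this unbounded Siegel-type region, I would average the count over a compact subset $G_0\subset\SL_3(\R)$ of positive Haar measure. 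For each fixed $g_0\in G_0$, the averaged region has bounded projections onto coordinate subspaces, and Proposition~\ref{davlem} applied coordinate by coordinate yields the volume main term $\frac{1}{m_i}\Vol(\FF_{\SL_3}\cdot\FF_V^{(i)}(\cdeltaint,X))$, with error controlled by the largest lower-dimensional coordinate projection of the averaged region.

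The main obstacle is the analysis of the cuspidal region of $\FF_{\SL_3}$. Integer points deep in the cusp of the averaged fundamental domain are characterized by the vanishing of certain coordinates of $(A,B)$, most crucially $a_{11}$, the $x_1^2$-coefficient of $A$; and generic integer points with $a_{11}=0$ correspond either to pairs whose associated quartic ring is non-maximal (or not an $S_4$-order) or to pairs that degenerate onto lower-dimensional subvarieties of small total count. Following the strategy of~\cite{quarparam} and its height-ordered refinement in~\cite{dodqf}, one shows that all generic integer points in the averaged cusp satisfy $a_{11}=0$, and then bounds the number of such points using coefficient estimates in the 12-dimensional space $V(\Z)$ analogous to~\eqref{eqFUYX}. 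The new technical feature here, relative to the fixed-leading-coefficient settings of~\cite{quarparam,dodqf}, is that both $n\leq cX^\delta$ and $X$ vary; a dyadic decomposition in both parameters as in \S\ref{subsecsubcub} handles this, with the errors over each pair of dyadic ranges summing to at most $O(X^{5/6+\delta/3})$.

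Combining the averaged Davenport main term over the dyadic ranges of $n$ with the cuspidal and non-generic bounds (the latter established by a uniformity estimate analogous to Lemma~\ref{lemsubnongencub}) then yields the claimed asymptotic
\begin{equation*}
N_4^{(i)}(\cdeltaint,X)=\frac{1}{m_i}\Vol\bigl(\FF_{\SL_3}\cdot\FF_V^{(i)}(\cdeltaint,X)\bigr)+O(X^{5/6+\delta/3}),
\end{equation*}
completing the proof.
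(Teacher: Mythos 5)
Your proposal follows essentially the same route as the paper: Corollary~\ref{qparcor2} for the parametrization, the fundamental sets $\FF_V^{(i)}$ and the $m_i$-fold cover of Proposition~\ref{propsmfd}, averaging over a compact $G_0\subset\SL_3(\R)$ as in \cite[Theorem 2.5]{BhSh}, cutting off the cusp via the vanishing of $a_{11}$, Davenport's lemma on the bounded part, and finally a double dyadic decomposition in $\ind$ and $H$. The paper packages the per-dyadic-range count as Theorem~\ref{thmsmqc} and then sums, but the ingredients and structure you describe are the same.
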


\begin{nota}
\label{sec:3.13}
{\em 
Let $V(\Z)_+:=\{(A,B)\in V(\Z) : n=\det(A)>0\}=V(\Z)\cap V(\R)_+$. 
}\end{nota}

To prove Theorem~\ref{thqsubcount}, we use the natural bijection of Corollary~\ref{qparcor2} 
between the sets
\begin{equation*}
  \bigl\{(Q,(C,\alpha))\bigr\}\leftrightarrow \bigl(\MB(\Z)\times\SL_3(\Z)\bigr)\backslash
  V(\Z)_+,
\end{equation*}
where $Q$ is a quartic ring, $C$ is a cubic resolvent ring of $Q$, and
$\alpha$ is an $n$-monogenizer of $C$ for some $n\geq 1$. Given $(A,B)\in V(\Z)_+$
corresponding to $(Q,(C,\alpha))$, the resolvent binary cubic form
$\Res(A,B)=f(x,y)=4\det(Ax-By)$ corresponds to the $n$-monogenized cubic
ring~$(C,\alpha)$ under the bijection of Theorem~\ref{thcubringpar}.

\begin{defn}
\label{sec:3.14}
{\em
We use the resolvent map $\Res$ to define the functions $\ind$, $I$, $J$,
$H$, and $\Delta$ on $V(\R)_+:=\{(A,B)\in V(\R) : \det(A)>0\}.$ For $v\in V(\R)_+$,
we set
\begin{equation*}
  \ind(v)\!:=\!\ind(\Res(v));\:
  I(v)\!:=\!I(\Res(v));\:
  J(v)\!:=\!J(\Res(v));\:
  H(v)\!:=\!H(\Res(v));\:
  \Delta(v)\!:=\!\Delta(\Res(v)).
\end{equation*}
These functions are invariants for the action of $\MB(\R)\times\SL_3(\R)$ on $V(\R)_+$. 
}\end{defn}

In the rest of this subsection, to prove Theorem~\ref{thqsubcount}, we determine the number of
$\MB(\Z)\times\SL_3(\Z)$-orbits on $V(\Z)_+$ having bounded height and
index.

\subsubsection{Reduction theory for the action of
  $\MB(\Z)\times\SL_3(\Z)$ on $V(\R)_+$}

\begin{nota}
\label{sec:3.15}
{\em
For $i\in\{0,1,2\}$, let $V(\R)^{(i)}\subset V(\R)_+$ denote the set of elements 
corresponding to the $\R$-algebra $\R^{4-2i}\times\C^i$.  
That is, $V(\R)^{(i)}$ consists of elements having splitting type $(1111)$ when
$i=0$, $(112)$ when $i=1$, and $(22)$ when $i=2$.
}\end{nota}

\begin{lemma}\label{lemsubmonss}
The size of the stabilizer in $\SL_3(\R)$ of an element $(A,B)\in
V(\R)$ is $4$ if $\Delta(A,B)>0$ and $2$ if $\Delta(A,B)<0$.
\end{lemma}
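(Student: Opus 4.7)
The plan is to simultaneously diagonalize $A$ and $B$ over $\C$ using eigenvectors of $A^{-1}B$, compute the $\SL_3(\C)$-stabilizer as a Klein four-group, and then descend to $\R$ according to the reality of the eigenvalues.

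I begin with a reduction: since $g \in \SL_3(\R)$ fixes both $A$ and $B$ if and only if it fixes $A+tB$ and $B$, replacing $A$ by $A+tB$ does not alter the $\SL_3(\R)$-stabilizer. Writing $f = \Res(A,B)$, one has $\det(A+tB) = \tfrac14 f(1,-t)$, which vanishes for at most three values of $t$, so I may assume $\det(A) \neq 0$.

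With $\det(A) \neq 0$, the endomorphism $A^{-1}B$ is self-adjoint with respect to the symmetric form $A$, and its characteristic polynomial $\det(A)^{-1} f(x,1)$ has three distinct roots $\lambda_1, \lambda_2, \lambda_3 \in \C$ because $\Delta(A,B)\neq 0$. An $A$-orthogonal eigenbasis $v_1, v_2, v_3$ of $A^{-1}B$ in $\C^3$ diagonalizes both forms: $A = \mathrm{diag}(a_1,a_2,a_3)$ with $a_i \neq 0$ and $B = \mathrm{diag}(a_1\lambda_1, a_2\lambda_2, a_3\lambda_3)$. Any $g \in \SL_3(\C)$ stabilizing $(A,B)$ must then satisfy $g^t (A^{-1}B) = (A^{-1}B) g^t$, so by distinctness of eigenvalues $g$ is diagonal in this basis; the equations $gAg^t = A$ and $\det(g)=1$ cut the stabilizer down to the Klein four-group $V_4 = \{\mathrm{diag}(\epsilon_1, \epsilon_2, \epsilon_3) : \epsilon_i = \pm 1,\; \epsilon_1 \epsilon_2 \epsilon_3 = 1\}$ of order $4$.

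To conclude, I intersect $V_4$ with the elements fixed by complex conjugation acting on the eigenbasis. If $\Delta(A,B)>0$, all three $\lambda_i$ are real, the $v_i$ may be chosen real, and all four elements of $V_4$ are real, giving a stabilizer of order $4$. If $\Delta(A,B)<0$, one eigenvalue is real and the other two form a complex conjugate pair, so $v_1$ can be taken real while $v_2, v_3$ are swapped by conjugation; the only elements of $V_4$ commuting with this swap are $\mathrm{diag}(1,1,1)$ and $\mathrm{diag}(1,-1,-1)$, giving a stabilizer of order $2$. The main subtle step is the last one --- identifying how complex conjugation acts on $V_4$ in each discriminant case --- but once the simultaneous diagonalization is in place, the remaining bookkeeping is immediate.
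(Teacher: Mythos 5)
Your proof is correct, but it takes a genuinely different route from the paper's. The paper invokes the quartic-ring parametrization: via Theorem~\ref{thpid} and Remark~\ref{geom}, a nondegenerate $(A,B)$ determines a set $\cQ$ of four common zeros in $\P^2(\C)$ with a $\Gal(\C/\R)$-action, and $\Stab_{\SL_3(\R)}(A,B)$ is identified with the set of Galois-equivariant permutations of $\cQ$ that fix all three unordered pairs-of-lines in $\PP'$ (i.e., the Galois-fixed part of the Klein four-group $V_4 \subset S_4$). It then separates the $\Delta>0$ case into two subcases --- four real points, or two conjugate pairs --- and checks Galois-invariance of the double transpositions in each. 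You instead work directly with the cubic resolvent: after the (valid) shearing reduction to $\det A \neq 0$, you simultaneously diagonalize $A$ and $B$ over $\C$ using an $A$-orthogonal eigenbasis of $A^{-1}B$, identify $\Stab_{\SL_3(\C)}(A,B)$ explicitly with $\{\mathrm{diag}(\pm1) : \prod \epsilon_i = 1\} \cong V_4$, and cut by the complex-conjugation action on the eigenbasis. Since $\Delta(\Res(A,B))>0$ forces three real eigenvalues (so $\sigma$ acts trivially on the eigenbasis and all of $V_4$ is real) and $\Delta<0$ forces one real eigenvalue and a conjugate pair (so $\sigma$ acts by the transposition $(23)$, whose centralizer in $V_4$ has order $2$), you get the same answer. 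The trade-off: the paper's argument is more structural, reusing the geometric machinery already set up for the quartic correspondence and extending naturally to other fields; yours is more elementary and self-contained --- only linear algebra --- and avoids having to enumerate the two real types of $\cQ$ in the $\Delta>0$ case, because the three resolvent roots are automatically all real in that case. Both are sound.
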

\begin{proof}
As described in Remark \ref{geom}, a nondegenerate element
$(A,B)\in V(\R)$ gives rise to a set $\cQ$ of four points in
$\P^2(\C)$, equipped with the action of $\Gal(\C/\R)=\Z/2\Z$. Let
$\PP'$ denote the set of pairs of lines $(L_1,L_2)$, where $L_1$
passes through two of the points in $\cQ$, and $L_2$ passes through
the other two. The set $\PP'$ inherits an action of $\Gal(\C/\R)$ from
$\cQ$. Theorem \ref{thpid} now implies that the stabilizer in
$\SL_3(\R)$ of $(A,B)$ is isomorphic to the set of Galois-invariant
permutations of $\cQ$ which induce the trivial permutation on $\PP'$.

If $\Delta(A,B)>0$, then $\cQ$ either consists of four points defined
over $\R$ or consists of two pairs of complex conjugate points. In
either case, the nontrivial permutations of $\cQ$ which induce the
trivial permutation on $\PP'$ are the double transpositions. If
$\Delta(A,B)<0$, then $\cQ$ consists of two points $Q_1$ and $Q_2$
defined over $\R$ and a pair $Q_3$ and $Q_4$ of complex conjugate
points. In this case, the only nontrivial permutation of $\cQ$ which
induces the trivial permutation on $\PP'$ is the one switching $Q_1$
with $Q_2$ and $Q_3$ with~$Q_4$. This concludes the proof of the
lemma.
\end{proof}

\vspace{-.085in}
\begin{lemma}\label{lemsubmonos}
Let $f(x,y)\in U(\R)$ be a binary cubic form with $\Delta(f)\neq 0$ and
positive $x^3$-coefficient. 
\begin{itemize}
\item[{\rm (a)}] If $\Delta(f)>0$, then the set of elements
  in $V(\R)_+$ with resolvent $f$ consists of one $\SL_3(\R)$-orbit
  with splitting type $(1111)$ and three $\SL_3(\R)$-orbits with
  splitting type $(22)$.
\item[{\rm (b)}] If $\Delta(f)<0$, then the set of elements
  in $V(\R)_+$ with resolvent $f$ consists of a single $\SL_3(\R)$-orbit
  with splitting type $(112)$.
\end{itemize}
\end{lemma}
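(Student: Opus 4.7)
The plan is to classify elements of $\Res^{-1}(f)\cap V(\R)_+$ up to $\SL_3(\R)$-action by simultaneously (block-)diagonalizing the pencil $AX - BY$, whose generalized eigenvalues are precisely the roots of $f(X,1)$. Since $\det A > 0$, the symmetric matrix $A$ has signature either $(3,0)$ or $(1,2)$, and up to $\SL_3(\R)$-action we may normalize $A$ to $I$ or $\text{diag}(1,-1,-1)$, with residual stabilizer $SO(3)$ or $SO(1,2)$ respectively.

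For part (a), the three roots $\lambda_1<\lambda_2<\lambda_3$ of $f$ are distinct and real, so $(A,B)$ is simultaneously $\R$-diagonalizable. Writing $A=\text{diag}(\epsilon_1,\epsilon_2,\epsilon_3)$ and $B=\text{diag}(\epsilon_1\mu_1,\epsilon_2\mu_2,\epsilon_3\mu_3)$ with $\epsilon_i\in\{\pm 1\}$ and $\{\mu_i\}=\{\lambda_i\}$, the conditions $\Res(A,B)=f$ and $\det A>0$ force $\prod\epsilon_i=1$, leaving the four sign patterns $(+,+,+)$ and the three placements of $(+,-,-)$. For $A=I$, the stabilizer $SO(3)$ realizes every permutation of the coordinate axes (an odd permutation matrix can be combined with a diagonal sign flip to land in $SO(3)$), so all orderings of the $\mu_i$'s as diagonal entries of $B$ are equivalent, yielding one orbit. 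For $A=\text{diag}(1,-1,-1)$, the stabilizer $SO(1,2)$ can only permute positions with equal $A$-sign (i.e., transpose positions $2$ and $3$), so the generalized eigenvalue $\mu_1$ in the positive direction $e_1$ is invariant, yielding three orbits indexed by $\mu_1\in\{\lambda_1,\lambda_2,\lambda_3\}$. To read off splitting types: when $A=I$ is positive definite, $\{A=0\}$ has no real point in $\P^2(\R)$, so the common vanishing locus $\mathcal{Q}$ is purely complex and the splitting type is $(22)$; when $A=\text{diag}(1,-1,-1)$, eliminating $x_1^2=x_2^2+x_3^2$ from $B=0$ yields $(\mu_1-\mu_2)\,x_2^2+(\mu_1-\mu_3)\,x_3^2=0$, which has real nontrivial solutions iff $\mu_1$ lies strictly between $\mu_2$ and $\mu_3$, i.e.\ iff $\mu_1=\lambda_2$. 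Hence exactly one of the three signature-$(1,2)$ orbits has splitting type $(1111)$ and the other two have type $(22)$, giving the claimed total of one $(1111)$-orbit and three $(22)$-orbits.

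For part (b), $f$ has one real root $\lambda$ and a complex conjugate pair $\alpha,\bar\alpha$, so the pencil has a unique real generalized eigenvector $v_1$; its $A$-orthogonal complement $W$ is $2$-dimensional and supports a pencil with complex conjugate eigenvalues. This forces $A|_W$ to be indefinite of signature $(1,1)$, since any definite $A|_W$ would permit simultaneous $\R$-diagonalization of $(A|_W,B|_W)$. Combined with $\det A>0$, the signature of $A$ is pinned to $(1,2)$ with $v_1$ in the negative part. Normalizing $A=\text{diag}(-1,1,-1)$ with $v_1=e_1$, the $(1,1)$-entry of $B$ is forced to equal $-\lambda$, and $B|_W$ ranges over a connected one-parameter family of symmetric $2\times 2$ matrices with the prescribed pencil characteristic polynomial; the connected group $SO^+(1,1)$ acts transitively on this family, and $SO^+(1,2)$ acts transitively on the connected negative hyperboloid $\{v\in\R^3:A(v,v)=-1\}$ parametrizing choices of $v_1$. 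Combining these, we obtain a single $\SL_3(\R)$-orbit. Its splitting type is necessarily $(112)$, since $\R^2\times\C$ is the unique \'etale quartic $\R$-algebra whose cubic resolvent is $\R\times\C$ (the algebra corresponding to $f$ via Theorem~\ref{thpid}).

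The main obstacle is the transitivity verification in part (b): the dimension count via Lemma~\ref{lemsubmonss}---the generic $\SL_3(\R)$-stabilizer has order $2$, so orbits have dimension $\dim\SL_3(\R)=8$, matching the fiber $\Res^{-1}(f)\cap V(\R)_+$---establishes only a finite orbit count. Pinning this count to exactly one requires the connectedness arguments above for the negative hyperboloid of $A$ and for the parameter family of $B|_W$.
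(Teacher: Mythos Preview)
Your approach via simultaneous diagonalization of the pencil $(A,B)$ differs from the paper's, which invokes Theorems~\ref{thpid} and~\ref{thHeilbronPID} to identify $\SL_3(\R)$-orbits on $\Res^{-1}(f)$ with \'etale quartic $\R$-algebras having the prescribed cubic resolvent, and in turn with unramified quadratic extensions $K_6/K_3$ of square-norm discriminant; these are then enumerated directly. Your linear-algebraic route is self-contained and makes orbit representatives explicit, while the paper's argument is shorter and uniform across the two cases.

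Part~(a) is correct (modulo the harmless normalization $\det A=1$ in place of $n/4$). Part~(b), however, has a genuine gap at exactly the point you flag as the main obstacle. The family of admissible $B|_W$ is \emph{not} connected: with $A|_W=\mathrm{diag}(1,-1)$ and $B|_W$ having entries $p,q,r$, the resolvent constraint gives $(p+r)^2-(2q)^2=(\alpha+\bar\alpha)^2-4|\alpha|^2=-4(\Im\alpha)^2<0$, so $q$ never vanishes and the locus has two branches $q>0$ and $q<0$. The group $SO^+(1,1)$ acts transitively on each branch but does not mix them (and $-I\in SO(1,1)$ acts trivially on $B|_W$, so the full $SO(1,1)$ does no better). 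The fix is that the residual symmetry you may use is the stabilizer in $\SO_A(\R)$ of the \emph{line} $\R v_1$, not of the vector $v_1$: for instance $\mathrm{diag}(-1,1,-1)\in\SO_A(\R)$ sends $v_1\mapsto -v_1$ and induces $q\mapsto -q$ on $B|_W$, identifying the two branches. With this correction your argument goes through.
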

\begin{proof}
In Case (a), Theorem \ref{thpid} implies that the set of
$\SL_3(\R)$-orbits on $V(\R)_+$ with resolvent $f$ are in bijection
with the set of \'etale quartic extensions of $\R$ having cubic resolvent
algebra $\R^3$ along with a choice of basis for $\R^3$. Theorem
\ref{thHeilbronPID} implies that the latter set is in bijection with
the set of \'etale quadratic extensions of $\R^3$ whose discriminants
have square norm. There are four such extensions: first, each 
$\R$-factor can split, yielding the algebra $\R^6$ corresponding to
elements in $V(\R)$ with splitting type $(1111)$. Second, exactly one
of the $\R$-factors can split, yielding three different extensions
$\R^2\times\C^2$ each corresponding to elements in $V(\R)$ with
splitting type $(22)$.

Similarly, in Case (b), the set of $\SL_3(\R)$-orbits on $V(\R)_+$ with
resolvent $f$ are in bijection with \'etale quadratic extensions of
$\R\times\C$ whose discriminants have square norm in $\R$. There is only one
such extension, namely, $\R^2\times\C^2$ corresponding to elements in
$V(\R)$ with splitting type $(112)$.
\end{proof}

\vspace{-.085in}
\begin{rem}\label{three22orbits}{\em 
When $\Delta(f)>0$, we can describe the three
$\SL_3(\R)$-orbits in $V(\R)_+$ with splitting type $(22)$ as follows. An element $(A,B)\in V(\R)_+$ has splitting type $(22)$ if and only if $A$ and $B$ have no common zeros in $\P^2(\R)$. This can occur in three ways:
\begin{itemize}
\item[(i)] $A$ has no zeros in $\P^2(\R)$, i.e., $A$ is anisotropic.
\item[(ii)] $A$ has zeros in $\P^2(\R)$, i.e., $A$ is isotropic, and $B$ takes only {\it positive} values on the zeros of~$A$ in $\P^2(\R)$.
\item[(iii)] $A$ has zeros in $\P^2(\R)$, i.e., $A$ is isotropic, and $B$ takes only {\it negative} values on the zeros of~$A$ in $\P^2(\R)$.
\end{itemize}\pagebreak
Note that the conditions (ii) and (iii) are disjoint because, if $B$
took both positive and negative values on the zeros of $A$, then by
the intermediate value theorem, $A$ and $B$ would have a common zero
in $\P^2(\R)$---a contradiction.
}\end{rem}
\begin{nota}
\label{sec:3.19}
{\em
The conditions (i), (ii), and (iii) on $(A,B)\in V(\R)_+$ in Remark~\ref{three22orbits} correspond
exactly to the three orbits in Lemma~\ref{lemsubmonos}(a)
having splitting type (22); we denote these three orbits in $V(\R)_+$
by $V(\R)^{(2\#)}$, $V(\R)^{(2+)}$, and $V(\R)^{(2-)}$,
respectively. Thus $V(\R)^{(2)}=V(\R)^{(2\#)}\cup V(\R)^{(2+)}\cup
V(\R)^{(2-)}$.
For any $i\in\{0,1,2,2\#,2+,2-\}$ and any subset $L\subset
V(\R)_+$, let $L^{(i)}$ denote the set $L\cap V(\R)^{(i)}$.
}\end{nota}
\begin{cons}
\label{sec:3.20}
{\em
Suppose $2\Y<cX^\delta$, and recall the sets $\smash{\FF_U^\pm}(\ycox)\subset U(\R)$
defined in~(\ref{definesets}).~Let
\begin{equation*}
\kappa=\kappa(\ycox):=\smash{\bigl\lfloor X^{1/6}/\Y^{2/3}\bigr\rfloor}.
\end{equation*}
Since $2\Y<cX^\delta\ll X^{1/4}$, we have $\kappa\gg 1$. Let $\smash{\widetilde{\FF}_U^\pm}(\ycox)$ denote the 
$\kappa$-fold cover 
\begin{equation*}
  \widetilde{\FF}_U^\pm(\ycox):=\bigcup_{0\leq k\leq\kappa}\left(\begin{matrix}1 & \\ k & 1\end{matrix}\right)
  \cdot
  \FF_U^\pm(\ycox)
\end{equation*}
}\end{cons}
of~$\smash{\FF_U^\pm}(\ycox)$.
The coefficients of elements $f\in \smash{\FF_U^\pm}(\ycox)$ satisfy the
bounds \eqref{eqFUYX}. It follows that the coefficients of an element
$ax^3+bx^2y+cxy^2+dy^3\in \smash{\widetilde{\FF}_U^\pm}(\ycox)$ satisfy:
\begin{equation}\label{eqFUYXcov}
|a|\ll \Y;\quad 0\leq b\ll X^{1/6}\Y^{1/3};\quad |c|\ll X^{1/3}/\Y^{1/3};
\quad |d|\ll X^{1/2}/\Y.  
\end{equation}

\vspace{.05in}
We now describe a fundamental set for the action of
$\SL_3(\R)$ on elements in $V(\R)_+$ with
resolvent in $\smash{\widetilde{\FF}_U^\pm}(\ycox)$.

\begin{proposition}\label{propsmfssize}
There exist continuous maps
\begin{equation*}
\begin{array}{rcl}
s:\smash{U(\R)^+}&\to& \smash{V(\R)^{(i)}}\quad \mathrm{ for }\quad i\in\{0,\smash{2\#},2+,2-\},\\[.035in]
s:U(\R)^-&\to& V(\R)^{(i)}\quad \mathrm{ for }\quad i=1,\\[-.05in]
\end{array}
\end{equation*}
satisfying:
\begin{itemize}
\item[{\rm (a)}] The resolvent cubic form of $s(f)$ is $f$, i.e., $s$ gives a section of the cubic resolvent map $V(\R)^{(i)}\to U(\R)^\pm$.
\item[{\rm (b)}] 
If $f\in\widetilde{\FF}_U^\pm(\ycox)$, and $s(f)=(A,B)$ with $A=(a_{ij})$ and $B=(b_{ij})$, 
then 
\begin{equation}\label{sfbounds}
|a_{ij}|\leq \Y^{1/3} {\rm{ \;\;\: and \;\;\: }} |b_{ij}|\leq X^{1/6} / \Y^{1/3}.
\end{equation}
\end{itemize}
\end{proposition}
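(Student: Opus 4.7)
I plan to construct each of the five sections $s$ explicitly. For each orbit type $i \in \{0, 2\#, 2+, 2-, 1\}$, fix once and for all a symmetric $3\times 3$ matrix $A_0^{(i)}$ with $4\det(A_0^{(i)}) = 1$ such that $(A_0^{(i)}, B_0)$ lies in $V(\R)^{(i)}$ for some $B_0$: take $A_0^{(2\#)}$ definite, and $A_0^{(i)}$ isotropic of signature $(2,1)$ for $i \in \{0, 2+, 2-, 1\}$. For $f(x,y) = ax^3 + bx^2y + cxy^2 + dy^3 \in U(\R)^\pm$ with $a > 0$, set $A := a^{1/3} A_0^{(i)}$; then $4\det(A) = a$, matching the leading coefficient of $f$ automatically. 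From the general identity
\begin{equation*}
4\det(Ax - By) = 4\det(A)\, x^3 - 4\,\mathrm{tr}(A^* B)\, x^2 y + 4\,\mathrm{tr}(B^* A)\, xy^2 - 4\det(B)\, y^3,
\end{equation*}
where $A^* := \mathrm{cof}(A)$, the remaining coefficients of $f$ impose the three equations
\begin{equation*}
-4\,\mathrm{tr}(A^* B) = b, \qquad 4\,\mathrm{tr}(B^* A) = c, \qquad -4\det(B) = d.
\end{equation*}
By Lemma~\ref{lemsubmonos}, the locus of $B$ satisfying these equations and lying in $V(\R)^{(i)}$ is a single orbit under $\SO_A(\R) = \SO_{A_0^{(i)}}(\R)$. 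Define $s(f) := (A, B)$ by picking $B$ in a chosen continuous fundamental domain $\mathcal{F}_i$ for $\SO_{A_0^{(i)}}(\R)$ acting on this orbit. Property (a) then holds by construction, and continuity of $s$ reduces to continuity of $\mathcal{F}_i$.

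For (b), the bound on $|a_{ij}|$ is immediate from $A = a^{1/3} A_0^{(i)}$, $a < 2\Y$, and the fact that $A_0^{(i)}$ is a fixed matrix with bounded entries. For the entries of $B$: the three equations above exhibit $-b/(4a^{2/3})$, $c/(4a^{1/3})$, and $-d/4$ as the three $\SO_{A_0^{(i)}}(\R)$-invariants of $B$, which are linear, quadratic, and cubic in $B$, respectively. Combined with the coefficient bounds $|b| \ll X^{1/6}\Y^{1/3}$, $|c| \ll X^{1/3}/\Y^{1/3}$, $|d| \ll X^{1/2}/\Y$ from~\eqref{eqFUYXcov} and $a \in [\Y, 2\Y)$, each of these three invariants is of size $\ll (X^{1/6}/\Y^{1/3})^k$ for $k \in \{1, 2, 3\}$ respectively, so in any reasonable $\mathcal{F}_i$ the entries of $B$ are controlled by its invariants to give $\max_{ij}|b_{ij}| \ll X^{1/6}/\Y^{1/3}$, matching~\eqref{sfbounds} up to implicit constants absorbable into the choices of $A_0^{(i)}$ and $\mathcal{F}_i$.

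The main technical difficulty is the choice of $\mathcal{F}_i$ for the four isotropic orbits $i \in \{0, 2+, 2-, 1\}$, where $\SO_{A_0^{(i)}}(\R)$ is noncompact of real dimension $3$; a naive choice (e.g., insisting that $B$ be diagonal in a fixed basis) may fail to cover all orbits or to vary continuously with $f$. I would resolve this by parametrizing $B$ via the four common zeros $Q_1, \ldots, Q_4$ of $A$ and $B$ in $\P^2(\C)$ (cf.\ Remark~\ref{geom}), on which $\SO_A(\R)$ acts through a Klein four-group of double transpositions stabilizing the resolvent: normalizing the position of $Q_1$ to a standard point cuts the action out and yields a continuous section. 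The bounds on $B$ then follow from explicit formulas expressing its entries in terms of the positions of the $Q_i$, which in turn are governed by the roots of $f$ and hence by the coefficients of $f$.
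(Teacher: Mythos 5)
Your overall strategy --- reduce the bounds in (b) to a continuity-plus-compactness argument on a bounded region via scaling --- is the same idea the paper uses, but the scaling is organized differently. The paper rescales the binary cubic form itself, $f(x,y)\mapsto g(x,y)=f(\Y^{-1/3}x,\,X^{-1/6}\Y^{1/3}y)$, so that all four coefficients of $g$ become $O(1)$, applies an explicit section (given in \eqref{tsection} for $i=0,1$) whose entries are visibly $O(1)$ on that bounded region, and then rescales $(A,B)\mapsto(\Y^{1/3}A,\,X^{1/6}\Y^{-1/3}B)$ to land back on $f$. You instead scale only $A=a^{1/3}A_0^{(i)}$ and try to bound $B$ separately.

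The gap is exactly where you flag it. Having correctly observed that the three $\SO_{A_0^{(i)}}(\R)$-invariants of $B$ --- the normalized non-leading coefficients of the resolvent --- have sizes $\ll M,\,M^2,\,M^3$ with $M=X^{1/6}/\Y^{1/3}$, you assert that ``in any reasonable $\mathcal{F}_i$'' the entries of $B$ are therefore $\ll M$. But the level sets of this invariant map are unbounded, since $\SO_{A_0^{(i)}}(\R)$ is noncompact in four of the five cases; producing a bounded representative is precisely what must be proved, and the assertion does not do that. The homogeneity you note does point to the right fix --- set $B'=B/M$, observe that $B'$ has $O(1)$ invariants, choose a continuous section over that compact parameter region (automatically bounded by compactness), and scale back --- but this is effectively the rescaling the paper performs and it must actually be carried out, not waved at. Your proposed resolution via normalizing a common zero $Q_1$ does not obviously supply the bound either: the $Q_i$ are projective points in $\P^2(\C)$, and fixing their positions does not directly control the Gram-matrix entries of $B$ without further argument. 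In short, you have the right idea but are missing the paper's concrete ingredient: an explicit section whose boundedness on the rescaled region is immediate, as in \eqref{tsection}.
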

\begin{proof}
Lemma \ref{lemsubmonos} implies that sections $s:U(\R)^\pm\to V(\R)^{(i)}$ exist, and it suffices to prove that the section $s$ can be chosen to satisfy the bounds of~(b). 

Let $g(x,y)=f(\Y^{-1/3}x,X^{-1/6}\Y^{1/3}y)$. Then by
(\ref{eqFUYXcov}), the absolute values of all coefficients of $g$ are
$\leq 1$.  A section $t\in V(\R)$ can be constructed on elements $h\in
U(\R)$ having absolutely bounded coefficients so that all the entries
of $t(h)$ have size $O(1)$; for example, when $i=0$ or $i=1$, we may
take the section:
\begin{equation}\label{tsection}
t \,:\, ax^3+bx^2y+cxy^2+dy^3\mapsto
\left(\left[
\begin{array}{ccc}
&&1/2\\&-a&\\1/2&&
\end{array}\right],
\left[
\begin{array}{ccc}
&1/2&\\1/2&-b&c/2\\&c/2&-d
\end{array}\right]
\right).
\end{equation}
Writing $t(g)=(A,B)$, we set $s(f)=(\Y^{1/3}A,X^{1/6}\Y^{-1/3}B);$ then $s(f)$ satisfies the bounds (\ref{sfbounds}).  
\end{proof}

\vspace{-.085in}
\begin{cons}
\label{sec:3.22}
{\em
  For $i\in\{0,1,2\#,2+,2-\}$, let $\FF_V^{(i)}(\ycox)$ (resp.\ $\widetilde{\FF}_V^{(i)}(\ycox)$,
  $\FF_V^{(i)}(\cdeltaint,X)$) denote the image of
$\FF_U^\pm(\ycox)$ (resp.\ $\widetilde{\FF}_U^\pm(\ycox)$, $\FF_U^\pm(\cdeltaint,X)$)
under the map $s:U(\R)^\pm\to V(\R)^{(i)}$ of
Proposition~\ref{propsmfssize}. Let
\begin{equation*}
\begin{array}{rcl}
\smash{\displaystyle\FF_V^{(2)}(\ycox)}&:=&\smash{\displaystyle\FF_V^{(2+)}(\ycox)\cup\FF_V^{(2-)}(\ycox)\cup\FF_V^{(2\#)}(\ycox);}
\\[.125in]
\smash{\displaystyle\widetilde{\FF}_V^{(2)}(\ycox)}&:=&\smash{\displaystyle\widetilde{\FF}_V^{(2+)}(\ycox)\cup\widetilde{\FF}_V^{(2-)}(\ycox)\cup\widetilde{\FF}_V^{(2\#)}(\ycox);}
\\[.125in]
\smash{\displaystyle\FF_V^{(2)}(\cdeltaint,X)}&:=&\smash{\displaystyle\FF_V^{(2+)}(\cdeltaint,X)\cup \FF_V^{(2-)}(\cdeltaint,X)\cup \FF_V^{(2\#)}(\cdeltaint,X).}
\end{array}
\end{equation*}
Let $\FF_{\SL_3}$ be a
fundamental domain for the action of $\SL_3(\Z)$ on $\SL_3(\R)$ by left multiplication, and~let
\begin{equation}\label{eqmi}
m_0=m_2=m_{2\pm}=m_{2\#}=4;\quad m_1=2.
\end{equation}
}\end{cons}

\begin{proposition}\label{propsmfd}
  For $i\in \{0,1,2,2\#,2+,2-\}$, the multiset $\FF_{\SL_3}\cdot
  \widetilde{\FF}_V^{(i)}(\ycox)$ is an $m_i\kappa$-fold cover
  of a fundamental domain for the action of $\MB(\Z)\times\SL_3(\Z)$ on
  the set of elements $v\in V(\R)^{(i)}$ with $\Y\leq\ind(v)<2\Y$ and
  $X\leq H(v)<2X$.
\end{proposition}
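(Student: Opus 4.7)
The plan is to prove the statement in two stages: (i) establish that every $\MB(\Z) \times \SL_3(\Z)$-orbit on $\{v \in V(\R)^{(i)} : \Y \leq \ind(v) < 2\Y,\; X \leq H(v) < 2X\}$ meets the multiset $\FF_{\SL_3} \cdot \widetilde{\FF}_V^{(i)}(\ycox)$, and (ii) compute the multiplicity as the product of the multiplicity coming from the $\MB(\Z)$-cover $\widetilde{\FF}_U^\pm(\ycox)$ of $\FF_U^\pm(\ycox)$ and the $\SL_3(\R)$-stabilizer multiplicity coming from $\FF_{\SL_3}$.

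For (i), given $v$ in the target set, I would first exploit that the action of $\MB(\Z) \hookrightarrow G(\Z)$ on $V(\R)$ is compatible with the $\MB(\Z)$-action on $U(\R)$ via the resolvent map. Since $\FF_U^\pm(\ycox)$ is a fundamental domain for $\MB(\Z)$ on the relevant portion of $U(\R)^\pm$ (with $\pm$ matching the sign of $\Delta(v)$, which is determined by $i$), I may act by an element of $\MB(\Z)$ to obtain a representative $v_0$ with $\Res(v_0) \in \FF_U^\pm(\ycox)$. By Lemma \ref{lemsubmonos}, $v_0$ lies in the same $\SL_3(\R)$-orbit as $s(\Res(v_0)) \in \FF_V^{(i)}(\ycox)$ (picking the appropriate component among $V(\R)^{(2\#)}, V(\R)^{(2+)}, V(\R)^{(2-)}$ when $i = 2$). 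Writing $v_0 = g \cdot s(\Res(v_0))$ and decomposing $g = \gamma h$ with $\gamma \in \SL_3(\Z)$ and $h \in \FF_{\SL_3}$, I conclude that $v_0$ is $\SL_3(\Z)$-equivalent to $h \cdot s(\Res(v_0)) \in \FF_{\SL_3} \cdot \FF_V^{(i)}(\ycox) \subset \FF_{\SL_3} \cdot \widetilde{\FF}_V^{(i)}(\ycox)$.

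For (ii), the multiplicity decomposes into two independent factors. First, $\widetilde{\FF}_U^\pm(\ycox)$ is, up to measure-zero overlaps, the disjoint union of $\kappa + O(1)$ distinct $\MB(\Z)$-translates of $\FF_U^\pm(\ycox)$; since $s$ is injective and the resolvents of the $\kappa$ translates are pairwise distinct, the images $s(f_k)$ lie in pairwise disjoint $\SL_3(\R)$-orbits but in a single $\MB(\Z) \times \SL_3(\R)$-orbit on $V(\R)^{(i)}$, so each $\MB(\Z) \times \SL_3(\Z)$-orbit on the target set acquires $\kappa$ representatives in $\widetilde{\FF}_V^{(i)}(\ycox)$. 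Second, for each such image $v' = s(f_k)$, Lemma \ref{lemsubmonss} gives $|\Stab_{\SL_3(\R)}(v')| = m_i$, and hence $\FF_{\SL_3} \cdot v'$ is an $m_i$-fold cover of the $\SL_3(\Z)$-quotient of the $\SL_3(\R)$-orbit of $v'$. Multiplying the two factors yields the total multiplicity of $m_i \kappa$.

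The main obstacle will be the bookkeeping for the case $i = 2$, where three distinct $\SL_3(\R)$-orbits share each resolvent in $U(\R)^+$. These three orbits are distinguished by the isotropy of $A$ together with the sign of $B$ restricted to the zeros of $A$ (as in Remark \ref{three22orbits}), and this data is an invariant of the $\MB(\Z) \times \SL_3(\Z)$-action on $V(\R)^{(i)}$; hence the argument splits cleanly into three parallel sub-cases $i \in \{2\#, 2+, 2-\}$, and the case $i = 2$ follows by summing. One must also check that the boundary overlaps between the $\MB(\Z)$-translates within $\widetilde{\FF}_U^\pm(\ycox)$ contribute only measure zero (immediate from the definition of $\FF_U^\pm$ via the coefficient-shift $b \mapsto b + 3ka$), so that no spurious multiplicities arise in the multi-cover.
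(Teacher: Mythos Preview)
Your proposal is correct and supplies precisely the argument the paper leaves implicit (the paper states Proposition~\ref{propsmfd} without proof, treating it as immediate from Lemmas~\ref{lemsubmonss} and~\ref{lemsubmonos} together with the construction of the $\kappa$-fold cover $\widetilde{\FF}_U^\pm(\ycox)$). Your decomposition of the multiplicity as $m_i$ (from the $\SL_3(\R)$-stabilizer) times $\kappa$ (from the $\MB(\Z)$-translates), together with the observation that $\MB(\Z)$ acts freely on binary cubic forms with positive leading coefficient so that fixing the resolvent reduces to an $\SL_3(\Z)$-orbit count, is exactly the intended reasoning; the only caveat is that the exact multiplicity $m_i\kappa$ holds for orbits with trivial $\SL_3(\Z)$-stabilizer (equivalently, away from a measure-zero set), which is all that is needed in the subsequent application to $L^\gen$.
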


\begin{cons}
\label{sec:3.24}
{\em
We now fix $\FF_{\SL_3}$ to lie in a {\bf Siegel domain} as in
\cite{BH}, i.e., every element $\gamma\in \smash{\FF_{\SL_3}}$ can be
expressed in Iwasawa coordinates in the form $\gamma=n tk$, where $n$
is a lower triangular unipotent matrix with coefficients bounded by
$1$ in absolute value, $k$ belongs to the compact group $\SO_3(\R)$,
and $t=t(s_1,s_2)$ is a diagonal matrix having diagonal entries
$s_1^{-2}s_2^{-1}$, $s_1s_2^{-1}$, $s_1s_2^2$, with $s_1,s_2\gg 1$. A
Haar measure $d\gamma$ on $\SL_3(\R)$ in these coordinates is
$(s_1s_2)^{-6}d\nu\,d^\times s_1\, d^\times s_2 \,dk$ (for a proof,
see \cite[Proposition 1.5.3]{GoldBook}).
}\end{cons}

\subsubsection{Averaging and cutting off the cusp}

Let $L\subset V(\Z)$ be a finite union of translates of lattices 
that is $\MB(\Z)\times\SL_3(\Z)$-invariant. We now determine
the asymptotic number of $\MB(\Z)\times\SL_3(\Z)$-orbits on
$L^\gen$ having bounded height $H$ and index bounded by $cH^\delta$. 

\begin{nota}\label{N:3.25}{\em
Let $N^{(i)}(L;\ycox)$ denote the number of
$\MB(\Z)\times\SL_3(\Z)$-orbits on the set of elements $v\in L^\gen\cap \smash{V(\R)_+^{(i)}}$ such
that $\Y\leq\ind(v)<2\Y$ and $X\leq H(v)<2X$.
}
\end{nota}

Then we prove the following theorem.

\begin{theorem}\label{thmsmqc}
We have
\begin{equation*}
N^{(i)}(L;\ycox)=
\frac{1}{m_i\kappa}\nu(L)\,
\Vol\bigl(\FF_{\SL_3}\cdot \widetilde{\FF}_V^{(i)}(\ycox)\bigr)
+O_\epsilon(X^{5/6+\epsilon}\Y^{1/3}).
\end{equation*}
\end{theorem}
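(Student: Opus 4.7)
The plan is to adapt Bhargava's averaging-and-cusp-cutoff method, as developed in \cite{dodqf, dodpf, BV}, to the present two-parameter setting in which both the height $H$ and the index $\ind$ of $v\in V(\Z)_+^{(i)}$ are constrained to dyadic ranges. By Proposition~\ref{propsmfd}, the multiset $\FF_{\SL_3}\cdot\widetilde{\FF}_V^{(i)}(\ycox)$ is an $m_i\kappa$-fold cover of a fundamental domain for the action of $\MB(\Z)\times\SL_3(\Z)$ on the generic subset of $V(\R)^{(i)}$ having $\Y\leq\ind<2\Y$ and $X\leq H<2X$, so the orbit count reduces to the lattice-point count
\[
N^{(i)}(L;\ycox) = \frac{1}{m_i\kappa}\,\#\bigl(L^{\gen}\cap\FF_{\SL_3}\cdot\widetilde{\FF}_V^{(i)}(\ycox)\bigr).
\]

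Because $\FF_{\SL_3}$ is an unbounded Siegel set in the Iwasawa coordinates $\gamma=nt(s_1,s_2)k$ of Construction~\ref{sec:3.24}, Davenport's Proposition~\ref{davlem} does not apply directly. I would fix a bounded open neighborhood $G_0\subset\SL_3(\R)$ of the identity with positive Haar measure, observe that $\FF_{\SL_3}\cdot g\cdot\widetilde{\FF}_V^{(i)}(\ycox)$ is also an $m_i\kappa$-fold cover of a fundamental domain for every $g\in G_0$, and apply the averaging identity
\[
N^{(i)}(L;\ycox)=\frac{1}{m_i\kappa\Vol(G_0)}\int_{g\in G_0}\#\bigl(L^{\gen}\cap\FF_{\SL_3}\cdot g\cdot\widetilde{\FF}_V^{(i)}(\ycox)\bigr)\,dg.
\]
After unfolding against the Iwasawa decomposition $d\gamma=(s_1s_2)^{-6}\,dn\,d^\times s_1\,d^\times s_2\,dk$, the $g$ and $k$ integrals are moved to the inside, and for the bulk region where both $s_1$ and $s_2$ are bounded by a threshold $T=T(\ycox)$ to be chosen, the integrand $nt(s_1,s_2)kG_0\widetilde{\FF}_V^{(i)}(\ycox)$ is a bounded semi-algebraic multiset of controlled complexity whose side lengths are determined by (\ref{sfbounds}) and the torus weights $s_1^{-2}s_2^{-1}$, $s_1s_2^{-1}$, $s_1s_2^2$. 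Applying Proposition~\ref{davlem} slicewise and integrating out $s_1,s_2,k,n$ recovers the main term $\tfrac{1}{m_i\kappa}\nu(L)\Vol\bigl(\FF_{\SL_3}\cdot\widetilde{\FF}_V^{(i)}(\ycox)\bigr)$.

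The main obstacle is the cusp analysis: outside the bulk, I would prove that any integer point $v=(A,B)\in L\cap nt(s_1,s_2)kG_0\widetilde{\FF}_V^{(i)}(\ycox)$ must have certain entries of $A$ or $B$ forced to vanish, since after the torus scaling their sizes drop below $1$; the resulting vanishing pattern makes $v$ visibly reducible, hence non-generic, so it contributes $0$ to $N^{(i)}(L;\ycox)$. This is modelled on the cusp analysis of pairs of ternary quadratic forms in \cite[\S\S2--3]{dodqf}, but must be executed uniformly in the additional parameter $\Y$ as well as $X$. Summing the Davenport maximal-projection error terms from the bulk against the Siegel-set integral, and combining with the vanishing cusp contribution, should produce the bound $O_\epsilon(X^{5/6+\epsilon}\Y^{1/3})$ claimed. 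The most delicate step is choosing $T(\ycox)$ to balance the bulk Davenport errors against the cusp cutoff uniformly across the full range $\Y\ll X^{1/4}$.
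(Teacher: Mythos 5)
The proposal follows the same overall architecture as the paper's proof: exploit the $m_i\kappa$-fold cover from Proposition~\ref{propsmfd}, average over a bounded $G_0$ as in \cite[Theorem~2.5]{BhSh}, unfold in Iwasawa coordinates, apply Davenport's Proposition~\ref{davlem} in the bulk, and cut off the cusp. However, there is a genuine gap in the cusp treatment. You claim that in the cusp region the torus scaling forces small entries to vanish, and that ``the resulting vanishing pattern makes $v$ visibly reducible, hence non-generic, so it contributes $0$.'' This is not correct, and it is precisely the point at which the argument requires more care. In the shallow cusp, only $a_{11}$ is forced to vanish, and $a_{11}=0$ does \emph{not} imply that $(A,B)$ is reducible or non-generic; there are genuine (irreducible, $S_4$-) orbits with $a_{11}=0$, and they must be counted, not discarded. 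The paper handles this by the estimate, modeled on \cite[Lemma~11]{dodqf},
\begin{equation*}
\int_{\gamma\in\FF_{\SL_3}}\#\bigl\{(A,B)\in\gamma G_0\cdot\widetilde{\FF}_V^{(i)}(\ycox)\cap L^\gen:a_{11}=0\bigr\}\,d\gamma\ll X/\Y^{1/3},
\end{equation*}
which is a direct fibering bound on irreducible cusp points, not a reducibility argument.

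A second, smaller discrepancy: you frame the cutoff as a free threshold $T(\ycox)$ to be optimized by balancing errors. In the paper there is no balancing to do: since the coefficients $a_{ij}$ of elements of $\widetilde{\FF}_V^{(i)}(\ycox)$ are $O(\Y^{1/3})$ by Proposition~\ref{propsmfssize}, the set $\{(A,B)\in\gamma G_0\cdot\widetilde{\FF}_V^{(i)}(\ycox)\cap V(\Z):a_{11}\neq 0\}$ is empty unless $s_1^4s_2^2\ll\Y^{1/3}$, so the cutoff is forced. Finally, you do not mention the separate bound on non-generic points with $a_{11}\neq 0$ (following \cite[Lemmas~12--13]{dodqf}), which contributes $O_\epsilon(X^{1+\epsilon}/\Y^{1/3})$ and in fact dominates the pre-division error term; once divided by $m_i\kappa\asymp X^{1/6}/\Y^{2/3}$, it produces the stated $O_\epsilon(X^{5/6+\epsilon}\Y^{1/3})$. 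Your plan would need these two ingredients made explicit to close the argument.
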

\begin{proof}
By Proposition \ref{propsmfd}, it follows that 
\begin{equation*}
N^{(i)}(L;\ycox)=\frac{1}{m_i\kappa}\#\bigl\{
\FF_{\SL_3}\cdot \widetilde{\FF}_V^{(i)}(\ycox)
\cap L^\gen\bigl\}.
\end{equation*}
Let $G_0\subset \SL_3(\R)$ be a nonempty open bounded set. Then,
using the identical averaging argument as in \cite[Theorem 2.5]{BhSh}, we have
\begin{equation}\label{eqsmmtavg}
\displaystyle \#\bigl\{
\FF_{\SL_3}\cdot \widetilde{\FF}_V^{(i)}(\ycox)
\cap L^\gen\bigl\}
=
\displaystyle
\frac{1}{\Vol(G_0)}
\int_{\gamma\in\FF_{\SL_3}}\#\bigl\{
\gamma G_0\cdot \widetilde{\FF}_V^{(i)}(\ycox)
\cap L^\gen\bigl\}d\gamma.
\end{equation}
The remainder of the proof now follows exactly the arguments of \pagebreak
\cite[\S3]{dodqf}. First, proceeding as in the proofs of
\cite[Lemma 11]{dodqf} and \cite[Lemmas~12--13]{dodqf}, we 
obtain the following two estimates, respectively:
\begin{equation*}
\begin{array}{rcl}
\displaystyle\int_{\gamma\in\FF_{\SL_3}}\#\bigl\{(A,B)\in
\gamma G_0\cdot \widetilde{\FF}_V^{(i)}(\ycox)
\cap L^\gen:a_{11}=0\bigl\}d\gamma&\ll& X/\Y^{1/3};
\\[.15in]
\displaystyle\int_{\gamma\in\FF_{\SL_3}}\#\bigl\{(A,B)\in
\gamma G_0\cdot \widetilde{\FF}_V^{(i)}(\ycox)
\cap (L\setminus L^\gen):a_{11}\neq 0\bigl\}d\gamma&\ll_\epsilon&
X^{1+\epsilon}/\Y^{1/3}. \\[-.05in]
\end{array}
\end{equation*}
Next, since Proposition \ref{propsmfssize} implies that the
coefficients $a_{ij}$ of elements in
$\widetilde{\FF}_V^{(i)}(\ycox)$ are bounded by
$O(\Y^{1/3})$, it follows that the set
\begin{equation*}
\bigl\{(A,B)\in
\gamma G_0\cdot \widetilde{\FF}_V^{(i)}(\ycox)
\cap V(\Z):a_{11}\neq 0\bigl\}
\end{equation*}
is empty unless $\smash{s_1^{-4}s_2^{-2}}\Y^{1/3}\gg 1$, or equivalently,
$\smash{s_1^4s_2^2}\ll \Y^{1/3}$. Carrying out the integral in~\eqref{eqsmmtavg}, cutting it off when $\smash{s_1^4s_2^2}\ll
\Y^{1/3}$, and applying Proposition \ref{davlem} to estimate the number
of lattice points in $\gamma G_0\cdot
\smash{\widetilde{\FF}_V^{(i)}}(\ycox)$, we obtain 
\begin{equation}\label{eqsmmtavg1}
\begin{array}{rcl}
\displaystyle \#\bigl\{
\FF_{\SL_3}\cdot \widetilde{\FF}_V^{(i)}(\ycox)
\cap L^\gen\bigl\}&=&
\displaystyle
\frac{\nu(L)}{\Vol(G_0)}
\int_{\gamma\in\FF_{\SL_3}}\Vol\bigl(
\gamma G_0\cdot \widetilde{\FF}_V^{(i)}(\ycox)\bigr)d\gamma
+O_\epsilon\bigl(X^{1+\epsilon}\Y^{-1/3}\bigr)
\\[.2in]&=&
\displaystyle\frac{\nu(L)}{\Vol(G_0)}
\int_{\gamma\in G_0}\Vol\bigl(
\FF_{\SL_3}\gamma\cdot \widetilde{\FF}_V^{(i)}(\ycox)\bigr)d\gamma
+O_\epsilon\bigl(X^{1+\epsilon}\Y^{-1/3}\bigr)
\\[.2in]&=&
\displaystyle\nu(L)\cdot\Vol\bigl(
\FF_{\SL_3}\cdot \widetilde{\FF}_V^{(i)}(\ycox)\bigr)
+O_\epsilon\bigl(X^{1+\epsilon}\Y^{-1/3}\bigr).
\end{array}
\end{equation}
Finally, we note that by Proposition \ref{propsmfd}, the multiset
$\FF_{\SL_3}\cdot \smash{\widetilde{\FF}_V^{(i)}}(\ycox)$ contains exactly
$m_i\,\kappa$ representatives of an
$\MB(\Z)\times\SL_3(\Z)$-orbit $v\in L^\gen\cap V(\R)^{(i)}$. Dividing
the first and last terms of \eqref{eqsmmtavg1} by
$m_i\,\kappa$ thus yields Theorem \ref{thmsmqc}.
\end{proof}

\vspace{-.005in}\noindent
{\bf Proof of Theorem~\ref{thqsubcount}:} Theorem
\ref{thqsubcount} now follows immediately by breaking the intervals $[1,X]$ and
$[1,cX^\delta]$ into dyadic ranges, and then applying
Theorem \ref{thmsmqc} to each pair of dyadic ranges. $\Box$

\subsection{Uniformity estimates and squarefree sieves}\label{subsecsubunif}

In this subsection, we count $\MB(\Z)$-orbits on $U(\Z)$ and
$\MB(\Z)\times\SL_3(\Z)$-orbits on $V(\Z)$ of bounded height and index
satisfying certain infinite sets of congruence conditions.

\begin{nota}
\label{sec:3.26}
{\em
For  a large collection $S=(S_p)_p$ of cubic local specifications, 
let $U(\Z)_S$ denote the set of elements $f\in U(\Z)$ such that $f$
belongs to $S_p$ for all $p$, and let $V(\Z)_S$ denote the set
of elements in $V(\Z)$ whose cubic resolvent forms are in
$U(\Z)_S$.
}\end{nota}
We prove the following theorem:

\begin{theorem}\label{thsubsieve}
For a set $L\subset U(\Z)$ $($resp.\ $L\subset V(\Z))$ defined via
congruence conditions, let $\nu(L)$ denote the volume of the closure
of $L$ in \smash{$U(\widehat{\Z})$} $($resp.\ \smash{$V(\widehat{\Z}))$}. Then 
\begin{equation*}
\begin{array}{rcl}
\displaystyle \smash{N^{\pm}(U(\Z)^\maxx_S;\ycox)}
&=&\displaystyle
\;\;\;\;\displaystyle 
\;\,\smash{\nu\bigl(U(\Z)^\maxx_S\bigr)}\,\Vol\bigl(
\FF_U^\pm(\ycox)\bigr)+o(X^{5/6}\Y^{2/3});
\\[.05in]
\displaystyle N^{(i)}(V(\Z)^\maxx_S;\ycox)&=&
\displaystyle \frac{1}{m_i
}\, \nu\bigl(V(\Z)^\maxx_S\bigr)
\,\Vol\bigl(\FF_{\SL_3}\cdot 
\,{\FF}_V^{(i)}(\ycox)\bigr)
+o(X^{5/6}\Y^{2/3}).
\end{array}
\end{equation*}
\end{theorem}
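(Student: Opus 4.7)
The plan is to extend the finite-congruence asymptotics of Proposition \ref{propcountsmf} and Theorem \ref{thmsmqc} to the infinite collection of local conditions defining $S$ via a standard squarefree sieve. The key observation is that since $S$ is \emph{large}, its condition at all but finitely many primes $p$ contains every element with $p^2\nmid \Delta(\cdot)$. Hence the difference between ``imposing $S_p$ at every prime'' and ``imposing $S_p$ only at primes $p\leq Y_0$'' is detected entirely by $p^2$-divisibility of the discriminant for some $p>Y_0$.

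First I would truncate: for a parameter $Y_0$ (to be sent to infinity after $X$), let $S^{(Y_0)}$ impose the conditions $S_p$ only at primes $p\leq Y_0$ together with the finitely many primes at which $S_p$ is stricter than $p^2\nmid \Delta$. The resulting set $U(\Z)_{S^{(Y_0)}}$ (resp.\ $V(\Z)_{S^{(Y_0)}}$) is cut out by finitely many congruence conditions, so Proposition \ref{propcountsmf} (resp.\ Theorem \ref{thmsmqc}) applies verbatim. Moreover, $\nu(U(\Z)_{S^{(Y_0)}})\to \nu(U(\Z)_S)$ and $\nu(V(\Z)_{S^{(Y_0)}})\to \nu(V(\Z)_S)$ as $Y_0\to\infty$, by the usual product formula together with the hypothesis that the boundary of each $S_p$ has measure zero.

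The main obstacle is the uniformity tail estimate. For $U$, one needs
$$\sum_{p>Y_0}\,\#\bigl\{f\in \FF_U^\pm(\ycox)\,:\,p^2\mid\Delta(f)\bigr\}\;=\;o\bigl(X^{5/6}\Y^{2/3}\bigr)$$
uniformly as $Y_0\to\infty$, and analogously for $V$. For the cubic case, the density of $\{p^2\mid\Delta(f)\}$ in $U(\Z_p)$ is $O(1/p^2)$, so Proposition \ref{davlem} yields a per-prime bound of the form $\ll X^{5/6}\Y^{2/3}/p^2 + O(X^{5/6})$; one sums these over $Y_0<p\leq X^{1/2}$, while primes $p>X^{1/2}$ are handled by an elementary overcount using that any integer of size at most $O(X)$ has only $O(\log X)$ such prime squares dividing it. For the quartic case, the situation is considerably more delicate because of the cusp, but the argument proceeds in parallel with the uniformity estimate of \cite[\S 3]{dodqf}, now adapted to the two-parameter setting $(X,\Y)$ of Theorem \ref{thmsmqc}: the density-$O(1/p^2)$ bound combined with the averaging-and-cusp-cutoff machinery of that theorem yields a per-prime contribution of size $\ll X^{5/6}\Y^{2/3}/p^{2-\epsilon}$ from orbits with $a_{11}\neq 0$, while the cusp contribution $a_{11}=0$ is already $o(X^{5/6}\Y^{2/3})$ by the bounds on $a_{11}=0$ lattice points established in the proof of Theorem \ref{thmsmqc}. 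Summing over $p>Y_0$ then yields a tail of size $\ll_\epsilon X^{5/6}\Y^{2/3}/Y_0^{1-\epsilon}$, which combined with the truncated main term completes the proof.
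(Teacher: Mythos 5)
Your high-level framework — truncate the local conditions to finitely many primes, apply Proposition~\ref{propcountsmf} and Theorem~\ref{thmsmqc}, and control the tail — is exactly the squarefree-sieve structure the paper follows. The gap is in the tail estimate, which is the entire substance of the theorem, and your argument for it is not correct.

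You estimate the tail by treating ``$p^2\mid\Delta$'' as a single condition with density $O(1/p^2)$ and applying Proposition~\ref{davlem}. But Davenport's error term does not scale with the density of the congruence class: partitioning $\{p^2\mid\Delta(f)\}$ into $O(p^6)$ translates of $p^2U(\Z)$ and counting each one produces a per-translate error governed by the \emph{projections} of the box, and this yields the bound $\ll X^{5/6}\Y^{2/3}/p^2$ only when $p^2$ is smaller than the shortest side of the box, i.e., $p^2 \ll X^{1/2}/\Y$, so $p\ll X^{1/8}$. For $X^{1/8}\ll p\ll X^{1/2}$ the geometry-of-numbers error swamps the putative main term. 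Summing your claimed $O(X^{5/6})$ error over all primes $Y_0<p\le X^{1/2}$ gives $O(X^{4/3}/\log X)$, which is far larger than the target $o(X^{5/6}\Y^{2/3})\ll o(X)$. The same defect afflicts your quartic argument: the error $O_\epsilon(X^{5/6+\epsilon}\Y^{1/3})$ in Theorem~\ref{thmsmqc} is \emph{independent} of the density of $L$, so it does not improve to $X^{5/6}\Y^{2/3}/p^{2-\epsilon}$ as you claim, and summing it over $p$ up to $X^{1/2}$ is fatal.

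What is actually needed — and what the paper proves in Propositions~\ref{propsmusgs} and~\ref{propunifsub2} — is a decomposition of the divisibility $p^2\mid\Delta$ into $\W_p^{(1)}$ (``mod $p$ reasons'': totally/extra ramified) and $\W_p^{(2)}$ (``mod $p^2$ reasons'': nonmaximal), with separate, genuinely different arguments. The $\W_p^{(1)}$ piece is handled by the geometric (Ekedahl) sieve of \cite[Theorem~3.3]{geosieve}, not by prime-by-prime Davenport counts. The $\W_p^{(2)}$ piece in the range $p\gg X^{1/8}$ is handled by passing from a nonmaximal ring to the larger order $C_1$ containing it with index $p$, which has discriminant $\ll X/M^2$, and then bounding the number of pairs $(C_1,\alpha)$ with $\alpha$ of bounded height via the skewness machinery of Lemma~\ref{lemcountint}, Proposition~\ref{propskringscount}, and Corollary~\ref{corcountint}. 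Neither of these ideas appears in your proposal, and without them the sieve does not close.
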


To prove the first claim of Theorem~\ref{thsubsieve}, we must impose
infinitely many congruence conditions on $U(\Z)$. Namely, we must
impose the congruence conditions of maximality at every prime as well as the congruence conditions forced by $\Sigma$.  For this, we require 
estimates on the number of $\MB(\Z)$-orbits in $U(\Z)$ having bounded height
and index with discriminant divisible by the square of \pagebreak some 
prime $p>M$.   For a prime $p$ and an element $f\in U(\Z)$, note that
$p^2\mid\Delta(f)$ precisely when the cubic ring corresponding to $f$
is {totally ramified} (i.e., has splitting
type $(1^3)$) at~$p$ or is non-maximal~at~$p$. 

Similarly, to prove the second claim of Theorem~\ref{thsubsieve}, we
require estimates on the number of $\MB(\Z)\times\SL_3(\Z)$-orbits on
$V(\Z)$ having bounded height and index with discriminant divisible by
the square of some prime $p>M$. This time, note that
$p^2\mid\Delta(v)$ for $(A,B)\in V(\Z)$ precisely when $(A,B)$
corresponds to a quartic ring that is either {\bf extra ramified}
(i.e., has splitting type $(1^31)$, $(1^21^2)$, $(2^2)$, or $(1^4)$)
at~$p$ or is nonmaximal at $p$.

\begin{nota}
\label{sec:3.28}
{\em
For a prime $p$, let
$\smash{\W_p^{(1)}}(U)$ denote the set of integer-coefficient binary cubic
forms with splitting type \smash{$(1^3)$} at $p$, and $\smash{\W_p^{(2)}}(U)$
the set of integer-coefficient binary cubic forms that are
nonmaximal at $p$.  Similarly, let \smash{$\W_p^{(1)}(V)$} denote the set of
elements in $V(\Z)$ with splitting type \smash{$(1^31)$, $(1^21^2)$,
$(2^2)$, or $(1^4)$} at $p$, and $\smash{\W_p^{(2)}}(V)$ the set of
elements in $V(\Z)$ that are nonmaximal at~$p$. Finally, set
$\W_p(U):=\smash{\W_p^{(1)}}(U)\cup \smash{\W_p^{(2)}}(U)$ and
$\W_p(V):=\smash{\W_p^{(1)}}(V)\cup \smash{\W_p^{(2)}}(V)$.

In the language of \cite[\S1.5]{geosieve}, the sets \smash{$\W_p^{(i)}(U)$}
and \smash{$\W_p^{(i)}(V)$} consist of elements in $U(\Z)$ and $V(\Z)$,
respectively, whose discriminants are divisible by $p^2$ for ``mod
$p^i$ reasons''.}
\end{nota}

We begin by bounding the number of $n$-monogenized cubic rings
(resp.\ quartic rings with $n$-mon\-ogenized cubic resolvent rings) that
are totally ramified (resp.\ extra ramified) at some prime~$p>M$. 

\begin{proposition}\label{propsmusgs}
For any positive real numbers $X$, $\Y$, and $M$ with $\Y\ll X^{1/4}$, we have
\begin{equation*}
\begin{array}{rcl}
\displaystyle
\sum_{p>M}N^{\pm}(\smash{\W_p^{(1)}}(U);\ycox)
&=&\displaystyle
O_\epsilon\bigl({X^{5/6}\Y^{2/3}}/{M^{1-\epsilon}}\bigr)
+O_\epsilon(X^{5/6+\epsilon});
\\[.215in]
\displaystyle\sum_{p>M}
N^{(i)}(\smash{\W_p^{(1)}}(V);\ycox)&=&
\displaystyle O_\epsilon\bigl({X^{5/6}\Y^{2/3}}/{M^{1-\epsilon}}\bigr)
+O(X^{5/6}\Y^{1/3}). \\[-.05in]
\end{array}
\end{equation*}
\end{proposition}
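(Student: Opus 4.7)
The plan is to give the argument for the sum over $\W_p^{(1)}(U)$ in full; the sum over $\W_p^{(1)}(V)$ will follow by the same method, with Theorem~\ref{thmsmqc} replacing Proposition~\ref{propcountsmf} and the coefficient bounds \eqref{sfbounds} for $s(f)$ replacing \eqref{eqFUYX}.

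First I would compute the local density. A form $f \in U(\Z)$ has splitting type $(1^3)$ at $p$ exactly when $f \equiv a(x-\alpha y)^3 \pmod{p}$ for some $\alpha \in \P^1(\F_p)$ (or $f \equiv dy^3 \pmod p$), a codimension-two condition cutting out $p^2 - 1$ residues out of $p^4$; hence $\nu_p(\W_p^{(1)}(U)) = O(1/p^2)$. Since splitting type $(1^3)$ at $p$ forces $p^2 \mid \Delta(f)$ and the forms counted satisfy $|\Delta(f)| \ll X$, only primes $p \leq c_0 X^{1/2}$ contribute.

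Next I would split the sum at a threshold $P_0 \asymp T^{1/3}$. For primes $M < p \leq P_0$, applying Proposition~\ref{propcountsmf} with $L = \W_p^{(1)}(U)$ and summing, the main terms yield $O_\epsilon(X^{5/6} T^{2/3}/M^{1-\epsilon})$ via $\sum_{p > M} p^{-2} \ll M^{-1+\epsilon}$, while the error terms sum to $O(\pi(P_0) \cdot X^{5/6}/T^{1/3}) = O_\epsilon(X^{5/6+\epsilon})$ by the choice of $P_0$. For primes $P_0 < p \leq c_0 X^{1/2}$, Proposition~\ref{propcountsmf} is too coarse, so instead I would count lattice points directly in the fundamental domain. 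After using the $\MB(\Z)$-action to shift the triple root to $0 \in \F_p$ (yielding the congruences $p \mid b, c, d$), the coefficient bounds \eqref{eqFUYX} together with summation over the $O(p)$ original residue classes for the triple root give a count of
\[
O(T) \cdot O(p) \cdot O(T/p+1) \cdot O(X^{1/3}/(pT^{1/3})+1) \cdot O(X^{1/2}/(pT)+1).
\]
The leading expansion term reproduces $O(X^{5/6} T^{2/3}/p^2)$, and the subleading cross terms remain to be controlled.

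The main obstacle lies in the large-prime bookkeeping: each of the eight expanded cross terms is a monomial in $p, T, X$, and summing each over $P_0 < p \leq c_0 X^{1/2}$ requires that the resulting exponents of $X$ and $T$ collapse into $O_\epsilon(X^{5/6+\epsilon})$. The key input making this work is the height--index relation built into \eqref{eqFUYX}, together with the hypothesis $T \ll X^{1/4}$; only under these do the most dangerous cross terms (those arising from several of the ``$+1$'' factors) satisfy the required bound. For the $V$-case, the analogous local density $\nu_p(\W_p^{(1)}(V)) = O(1/p^2)$ follows from the geometric characterization in Remark~\ref{geom} (the set $\{A = B = 0\} \subset \P^2(\F_p)$ being non-reduced is a codimension-two condition), and the rest of the argument transports verbatim; the slightly weaker error $O(X^{5/6} T^{1/3})$ comes from the extra $\SL_3$-Siegel-domain integration inherent to Theorem~\ref{thmsmqc}.
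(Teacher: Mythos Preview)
The paper's proof is a one-line invocation of the geometric (Ekedahl) sieve \cite[Theorem~3.3]{geosieve} together with the counting results of \S\S\ref{subsecsubcub}--\ref{subsecsubquar}; that theorem directly bounds the number of lattice points in a region that reduce into a fixed codimension-$\geq 2$ subscheme (here the triple-root locus $\{I=J=0\}$) modulo some prime $p>M$. Your hands-on small/large-prime split is a reasonable alternative in spirit, but the large-prime bookkeeping has a real gap.

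In your displayed product for the $U$-case, the cross term coming from all three ``$+1$'' factors is $O(Tp)$; summed over primes $p\ll X^{1/2}$ this is $O(TX/\log X)$, which at $T\asymp X^{1/4}$ is $\asymp X^{5/4}/\log X\gg X^{5/6+\epsilon}$. Even after the natural tightening $O(p)\cdot O(T/p+1)\to O(T)$ (for $p\nmid 3a$ the pair $(a,b)$ already determines the triple root $\alpha\equiv -b/(3a)\pmod p$, so one should not also sum over $\alpha$), the worst cross term is $T^2$ per prime, still summing to $T^2X^{1/2}/\log X\asymp X/\log X$. The hypothesis $T\ll X^{1/4}$ controls the single-``$+1$'' terms but not this one. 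What is missing is exactly the mechanism of the geometric sieve: a single fibering over $(a,b)$ cannot work uniformly for all large primes, and one must exploit the full codimension-$2$ condition (equivalently, that $p\mid I(f)$ \emph{and} $p\mid J(f)$) by varying the fibering across prime ranges, or by invoking \cite{geosieve} directly. The same issue recurs in the $V$-case.
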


\begin{proof}
Theorem~\ref{propsmusgs} follows immediately by 
\cite[Theorem 3.3]{geosieve} and our counting results
(Theorems~\ref{thsubcount} and \ref{thqsubcount}) of the previous two subsections.
\end{proof}

Next, we obtain bounds on the number of orbits on $U(\Z)$ and $V(\Z)$
of bounded height and index that correspond to nonmaximal rings. 

\begin{nota}
\label{sec:3.30}
{\em
For a cubic number field $K$, we define the following {\bf height} function $H$ on its ring of integers $\O_K$: 
for $\beta\in\O_K$,
let
$H(\beta):=\max_v\bigl\{|\beta'|_v\bigr\},$
where $v$ ranges over the archimedean valuations of $K$ and $\beta'$
denotes the unique $\Z$-translate of $\beta$ such that the absolute trace ${\rm
  Tr}(\beta')\in\{0,1,2\}$.
}\end{nota}

Let $\alpha\in \O_K$. Since either $\alpha'$, $\alpha'-1/3$, or $\alpha'-2/3$ satisfies the equation
\begin{equation}\label{ijeq}
x^3 - (I(f)/3) x - J(f)/27  = 0, 
\end{equation}  
we have the estimate
\begin{equation}\label{htest}
H(\alpha)\ll 
\max\{|I(f)|^3,J(f)^2/4\}^{1/6} = (n^{2}H(f))^{1/6}=\ind(f)^{1/3}H(f)^{1/6}. 
\end{equation}
Indeed, if $H(\alpha)\gg \ind(f)^{1/3}H(f)^{1/6}$, then the left-hand side of (\ref{ijeq}) could not equal zero as the $x^3$-term would dominate the other two terms.

\begin{defn}
\label{sec:3.31}
{\em Let $C$ be a nondegenerate cubic ring, and consider $C$ embedded as a
lattice in $C\otimes\R\cong\R^3$ with covolume $\sqrt{\Delta(C)}$.
Let $1\leq\ell_1(C)\leq\ell_2(C)$ denote the three successive minima
of~$C$. We define the {\bf skewness} of a cubic ring $C$ to be}
\begin{equation*}
{\rm sk}(C):=\ell_2(C)/\ell_1(C).
\end{equation*}
\end{defn}

As in \cite[Lemma 4.7]{SSW}, the number of integers $\alpha\in C$ with
bounded height can be controlled by the discriminant and skewness of
$C$. \pagebreak
\begin{lemma}\label{lemcountint}
Let $C$ be a cubic ring with discriminant $D=\Delta(C)$ and skewness $Z={\rm sk}(C)$. Then
the number of elements $\alpha\in C\setminus\Z$ with ${\rm
  Tr}(\alpha)\in\{0,1,2\}$ and $H(\alpha)< H$ is 
\begin{equation*}
\ll\left\{
\begin{array}{ccl}
  0 &\mbox{if}& H< \ell_1(C)\asymp D^{1/4}/Z^{1/2};\\
  HZ^{1/2}/D^{1/4} &\mbox{if}& \ell_1(C)< H<
  \ell_2(C)\asymp D^{1/4}Z^{1/2};\\
  H^2/D^{1/2} &\mbox{if}& \ell_2(C)< H.
\end{array}\right.
\end{equation*}
\end{lemma}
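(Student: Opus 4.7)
The plan is to recast the count as a lattice-point count in the rank-$2$ quotient lattice $\bar C := C/\Z\cdot 1$ sitting inside the quotient space $\bar V := (C\otimes \R)/\R\cdot 1$. Equip $C\otimes\R$ with the archimedean max-norm $\|\beta\|:=\max_v|\beta|_v$, and give $\bar V$ the induced quotient norm. Each nonzero class $\bar\alpha\in\bar C$ contains a unique lift $\alpha\in C\setminus\Z$ with $\mathrm{Tr}(\alpha)\in\{0,1,2\}$, and for this distinguished lift $H(\alpha)$ agrees with $\|\bar\alpha\|_{\bar V}$ up to a bounded multiplicative constant (adjusting the $\R\cdot 1$-component by an integer changes the trace by $3$, so the translate minimizing the max-norm is essentially the one with trace in $\{0,1,2\}$). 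So the cardinality we want is the number of nonzero points of $\bar C$ in the ball $B_H:=\{\bar x\in\bar V:\|\bar x\|_{\bar V}<H\}$, which is a centrally symmetric (up to an $O(1)$ shift) convex body of $2$-dimensional volume $\asymp H^2$.

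Next I would verify that the successive minima of $\bar C$ in $\bar V$ are $\asymp \ell_1(C)$ and $\asymp \ell_2(C)$. Since $1\in C$ realizes the first successive minimum of $C$ up to a constant, vectors realizing $\ell_1(C)$ and $\ell_2(C)$ are linearly independent from $\R\cdot 1$ and project to vectors of comparable norm in $\bar C$; conversely, any short vector in $\bar C$ lifts to a short vector of $C$ by choosing the representative with trace in $\{0,1,2\}$. The covolume of $C$ in $C\otimes\R$ is $\asymp\sqrt{D}$, so Minkowski's second theorem gives $\ell_1(C)\ell_2(C)\asymp\sqrt{D}$, and combining with $Z=\ell_2(C)/\ell_1(C)$ yields
\[
\ell_1(C)\asymp D^{1/4}/Z^{1/2},\qquad \ell_2(C)\asymp D^{1/4}Z^{1/2},
\]
as stated.

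With the successive minima in hand, the bound on $\#(\bar C\cap B_H)\setminus\{0\}$ follows from the standard three-regime lattice-point estimate in $\R^2$. If $H\ll\ell_1(C)$, the only point is $0$ (which we excluded), giving $0$. If $\ell_1(C)\ll H\ll\ell_2(C)$, every lattice point in $B_H$ lies on the one-dimensional sublattice generated by a first-minimum vector, contributing $\asymp H/\ell_1(C)=HZ^{1/2}/D^{1/4}$ points. If $H\gg\ell_2(C)$, the full two-dimensional count gives $\asymp H^2/(\ell_1(C)\ell_2(C))\asymp H^2/\sqrt{D}$ points. These three regimes match the three cases of the lemma. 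The main obstacle is the first step: confirming that the quotient norm $\|\bar\alpha\|_{\bar V}$ and the height $H(\alpha)$ of the normalized lift agree up to bounded constants uniformly across the totally real and complex signatures, and that the successive minima of $\bar C$ are not off by more than a bounded factor from those of $C$. Both are elementary once the trace-adjustment step above is made precise, and the remainder of the proof is classical Minkowski geometry of numbers.
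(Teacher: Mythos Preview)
Your proposal is correct and follows essentially the same approach as the paper. Both arguments reduce to counting lattice points in a rank-$2$ slice (you via the quotient $\bar C=C/\Z$, the paper via trace-normalized representatives in $C$ itself), derive $\ell_1(C)\asymp D^{1/4}/Z^{1/2}$ and $\ell_2(C)\asymp D^{1/4}Z^{1/2}$ from $\ell_1\ell_2\asymp\sqrt{D}$ and the definition of skewness, and then read off the three regimes from standard geometry of numbers; the paper phrases the third regime as a volume-over-covolume estimate in the trace-slab of $C\otimes\R$, which is equivalent to your $H^2/(\ell_1\ell_2)$.
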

\begin{proof}
The definition of ${\rm sk}(C)$ and the fact that
$\ell_1(C)\ell_2(C)\asymp\sqrt{D}$ imply that $\ell_1(C)\asymp
D^{1/4}/Z^{1/2}$ and $\ell_2(C)\asymp D^{1/4}Z^{1/2}$. When
$H<\ell_1(C)$, there are no elements $\alpha\in C\setminus\Z$ with
$H(\alpha)<H$. When $\ell_1(C)<H<\ell_2(C)$, the only elements
$\alpha\in C\setminus\Z$ with ${\rm Tr}(\alpha)\in\{0,1,2\}$ are
(appropriately translated) multiples of the element in $C$
of length $\ell_1(C)$. When $H>\ell_2(C)$, the
number of such $\alpha$ is then bounded by the volume of the region
$\{\theta\in C\otimes\R:H(\theta)<H \mbox{ and } \Tr(\theta)\in\{0,1,2\}\}$
divided by $\sqrt{D}$, the covolume of $C$.
\end{proof}

\vspace{-.085in}
\begin{proposition}\label{propskringscount}
The number of cubic orders $C$ $($resp.\ pairs $(Q,C)$, where $Q$ is a
quartic order and $C$ is a cubic resolvent order of $Q)$ satisfying
$D\leq|\Delta(C)|<2D$ and ${\rm sk}(C)\gg Z$ is $O(D/Z)$.
\end{proposition}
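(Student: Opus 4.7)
My plan is to reduce both counts to lattice-point counts in boxes of volume $\asymp D/Z_0$ (for each dyadic range of skewness $Z_0 \geq cZ$), via a canonical choice of short generator that $n$-monogenizes $C$ with small $n$; Davenport's lattice-point lemma (Proposition~\ref{davlem}) then applies directly.

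Fix a cubic order $C$ with $D \leq |\Delta(C)| < 2D$ and ${\rm sk}(C) \in [Z_0, 2Z_0]$; Definition~\ref{sec:3.31} gives $\ell_1(C) \asymp D^{1/4}/Z_0^{1/2}$ and $\ell_2(C) \asymp D^{1/4}Z_0^{1/2}$. Applying Lemma~\ref{lemcountint} at scale $H = c_0\ell_1(C)$ for a fixed constant $c_0 > 0$, the set of $\omega \in C \setminus \Z$ with $\Tr(\omega) \in \{0,1,2\}$ and $H(\omega) \leq c_0\ell_1(C)$ has cardinality $O(1)$ and is nonempty (it contains the trace-normalized shortest element of $C \setminus \Z$). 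For any such $\omega$, the pair $(C,\omega)$ is an $n$-monogenized cubic order with $n = [C:\Z[\omega]] \asymp \ell_1(C)^3/\sqrt{D} \asymp D^{1/4}/Z_0^{3/2}$ (using $\Delta(\Z[\omega]) = \prod_{i<j}(\omega_i-\omega_j)^2 \ll |\omega|^6$); by~\eqref{htest}, the associated binary cubic form $f$ satisfies $H(f) \asymp \ell_1(C)^6/n^2 \asymp D$.

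In the $\MB(\Z)$-fundamental domain, combining these with $|I(f)| \asymp \sqrt{D}/Z_0$ (which follows from applying ${\rm sk}(C) \asymp Z_0$ to the Hessian covariant of $f$) and $|\Delta(f)| \asymp D$, the form $f = ax^3+bx^2y+cxy^2+dy^3$ is confined to a box with $a \asymp D^{1/4}/Z_0^{3/2}$, $0 \leq b < 3a$, $|c| \asymp D^{1/4}Z_0^{1/2}$, $|d| \asymp D^{1/4}Z_0^{3/2}$, of volume $\asymp D/Z_0$. Proposition~\ref{davlem} yields $O(D/Z_0 + D^{3/4}Z_0^{1/2})$ integer points; the error term is absorbed for $Z_0 \leq c_2 D^{1/6}$, while for $Z_0 > c_2 D^{1/6}$ the constraint $n \geq 1$ combined with $n \ll D^{1/4}/Z_0^{3/2}$ forces the count to vanish. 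Dividing by the $O(1)$ multiplicity from Lemma~\ref{lemcountint} and summing the geometric series over dyadic $Z_0 \geq cZ$ gives the bound $O(D/Z)$ on cubic orders.

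For pairs $(Q,C)$, the identical reduction associates to each pair at most $O(1)$ triples $(Q,(C,\omega))$ with $n$-monogenized cubic resolvent in the same parameter range. Theorem~\ref{thmsmqc} applied with $X \asymp D$ and $\Y \asymp D^{1/4}/Z_0^{3/2}$ (summed dyadically) bounds these triples by $O(D/Z_0)$ per range, giving $O(D/Z)$ overall. The main technical input is the calibration of coefficient bounds yielding box volume exactly $D/Z_0$; everything else is a routine application of Lemma~\ref{lemcountint} and Proposition~\ref{davlem}.
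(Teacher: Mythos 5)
The paper disposes of this proposition in one line, by citing the proofs of \cite[Theorem~4.1, Proposition~4.4]{SSW}; your attempt to make the argument self-contained via a canonical short-vector $n$-monogenization is in a reasonable spirit, but as written it has a genuine gap in the middle step. You assert that $n = [C:\Z[\omega]] \asymp \ell_1(C)^3/\sqrt{D}\asymp D^{1/4}/Z_0^{3/2}$, but the justification you offer (\,$\Delta(\Z[\omega]) = \prod_{i<j}(\omega_i-\omega_j)^2 \ll |\omega|^6$\,) only yields the \emph{upper} bound $n \ll D^{1/4}/Z_0^{3/2}$. The lower bound is false: the shortest trace-normalized $\omega$ has all conjugates of size $\ll \ell_1$, but two of them may cluster at distance $\asymp Z_0 \ll \ell_1$, making $\prod|\omega_i-\omega_j| \asymp \ell_1^2 Z_0 \asymp \sqrt{D}$ and hence $n=1$, even when $Y_0 := D^{1/4}/Z_0^{3/2}$ is large (this happens precisely when $Z_0 \ll D^{1/6}$). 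Consequently $H(f)\asymp D$ is also unjustified --- \eqref{htest} gives only $H(f)\gg \ell_1^6/n^2 \geq D$ --- and the form $f$ need not lie in your box: when $a=n$ is small, the correct constraint from $|I(f)| = |b^2-3ac| \asymp D^{1/2}/Z_0$ is $|c|\ll D^{1/2}/(Z_0 a)$, which exceeds your stated bound $|c|\ll D^{1/4}Z_0^{1/2}$ by a factor of $Y_0/a$. So your box genuinely undercounts.

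The bound $O(D/Z_0)$ does still come out of your framework, but only after letting $a$ range over $[1, Y_0]$ with $a$-dependent coefficient ranges and using both invariant constraints: for each $a$, the $b$-range has length $3a$, the constraint $|I|\asymp D^{1/2}/Z_0$ confines $c$ to an interval of length $\asymp D^{1/2}/(Z_0 a)$ (so $b$ and $c$ contribute $\asymp \ell_1^2$ jointly, independent of $a$), and $|\Delta|\asymp D$ together with $|I|\asymp\ell_1^2$ confines $d$ to intervals of total length $\asymp D/\ell_1^3 = D^{1/4}Z_0^{3/2}$; summing over $a\le Y_0$ gives $\ll Y_0\cdot\ell_1^2\cdot D/\ell_1^3 = D/Z_0$. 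None of this bookkeeping is in your write-up, and in particular your appeal to Theorem~\ref{thmsmqc} with $X\asymp D$, $\Y\asymp D^{1/4}/Z_0^{3/2}$ for the $(Q,C)$ case is not valid: that theorem counts \emph{all} orbits with $H\asymp X$, $\ind\asymp\Y$, which is a far larger set than the orbits with discriminant $\asymp D$ and skewness $\asymp Z_0$; if you instead sum it over the genuinely occurring dyadic ranges (with $X\asymp\ell_1^6/\Y^2$ coupled to $\Y$), the main term is $\asymp D^{5/4}/Z_0^{5/2}$, which overshoots $D/Z_0$ whenever $Z_0\ll D^{1/6}$. So both parts of the proposition need the more careful accounting above, or an appeal to the cited results in \cite{SSW} as the paper does.
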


\begin{proof}
Proposition~\ref{propskringscount} follows from the proofs of
\cite[Theorem 4.1 and Proposition~4.4]{SSW}.
\end{proof}

\vspace{-.085in}
\begin{corollary}\label{corcountint}
The number of pairs $(C,\alpha)$, where $C$ is a cubic order
$($resp.\ triples $(Q,C,\alpha)$, where $Q$ is a quartic order and $C$
is a cubic resolvent order of $Q)$ satisfying $|\Delta(C)|<X$,
$\alpha\in C\setminus\Z$ with ${\rm Tr}(\alpha)\in\{0,1,2\}$, and
$H(\alpha)\ll H$ is $O_\epsilon(H^2X^{1/2+\epsilon})$.
\end{corollary}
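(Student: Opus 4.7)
The plan is to combine Lemma~\ref{lemcountint} with Proposition~\ref{propskringscount} via a double dyadic decomposition in the discriminant $D=|\Delta(C)|$ and the skewness $Z={\rm sk}(C)$. Concretely, for each pair of dyadic cells $D\leq|\Delta(C)|<2D$ and $Z\leq{\rm sk}(C)<2Z$ with $D\leq X$ and $Z\geq 1$, Proposition~\ref{propskringscount} bounds the number of cubic orders $C$ (resp.\ pairs $(Q,C)$) by $O(D/Z)$, while Lemma~\ref{lemcountint} bounds the number of admissible $\alpha$ in a single such $C$ by one of three quantities, according to how $H$ compares to $\ell_1(C)\asymp D^{1/4}/Z^{1/2}$ and $\ell_2(C)\asymp D^{1/4}Z^{1/2}$.

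First I would discard the cells where $H<\ell_1(C)$, which contribute nothing, and split the remainder into a \emph{middle regime} ($\ell_1\leq H<\ell_2$, equivalently $Z\geq\max(D^{1/2}/H^2,H^2/D^{1/2})$) and a \emph{top regime} ($H\geq\ell_2$, equivalently $1\leq Z\leq H^2/D^{1/2}$). In the middle regime, the per-cell contribution is at most $(D/Z)\cdot HZ^{1/2}/D^{1/4}=HD^{3/4}Z^{-1/2}$; summing this over all admissible dyadic $Z$ (a geometric series in $Z^{-1/2}$) is dominated by its value at the smallest allowed $Z$, which a brief check shows to equal either $H^2D^{1/2}$ (when $D\geq H^4$) or $D$ (when $D<H^4$). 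In the top regime, the per-cell contribution is $(D/Z)\cdot H^2/D^{1/2}=H^2D^{1/2}/Z$; the harmonic sum over dyadic $1\leq Z\leq H^2/D^{1/2}$ produces a factor of $O(\log X)$, yielding $O(H^2D^{1/2}\log X)$ per dyadic $D$.

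Next I would sum over dyadic $D\leq X$. In the middle regime, the two sub-cases each peak at $D\asymp X$: the first contributes $H^2X^{1/2}$ directly, and the second contributes $\min(X,H^4)\leq (X\cdot H^4)^{1/2}=H^2X^{1/2}$. In the top regime, the sum over dyadic $D$ of $H^2D^{1/2}\log X$ is again geometric in $D^{1/2}$ and dominated by $D\asymp\min(X,H^4)$, producing at worst $O(H^2X^{1/2}\log X)$. Absorbing the logarithm into $X^\epsilon$ yields the stated bound $O_\epsilon(H^2X^{1/2+\epsilon})$.

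The quartic case is handled identically: the second assertion of Proposition~\ref{propskringscount} furnishes the same $O(D/Z)$ bound on pairs $(Q,C)$ with prescribed dyadic discriminant and skewness of $C$, and the number of $\alpha$'s over any such pair depends only on $C$ (via Lemma~\ref{lemcountint}). The main bookkeeping obstacle is tracking which dyadic cells are populated and identifying the minimal allowed $Z$ in the middle regime (where one must separately treat $H<D^{1/4}$ and $H>D^{1/4}$); once these are pinned down, all the remaining sums collapse to geometric or logarithmic series whose leading term is of order $H^2X^{1/2}$.
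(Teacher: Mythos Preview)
Your proposal is correct and follows essentially the same approach as the paper: a double dyadic decomposition in $D=|\Delta(C)|$ and $Z={\rm sk}(C)$, combining Lemma~\ref{lemcountint} with Proposition~\ref{propskringscount}, and then summing. The paper is slightly more terse---it bundles your middle and top regimes into the single combined bound $H^2/D^{1/2}+HZ^{1/2}/D^{1/4}$ (valid whenever $Z\gg D^{1/2}/H^2$), multiplies by $O(D/Z)$, observes each dyadic $(D,Z)$-cell contributes $O(H^2D^{1/2})$, and absorbs the logarithmic number of cells into the $X^\epsilon$---but the substance is the same.
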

\begin{proof}
We break up the discriminant range $[0,X]$ of $C$ into $O(\log X)$
dyadic ranges. For each such dyadic range $[D,2D]$, we break up the
range of possible skewness of $C$ into dyadic ranges $[Z,2Z]$, where
clearly $Z\ll D^{1/2}$. For a cubic order $C$ with $|\Delta(C)|\asymp
D$ and ${\rm sk}(C)\asymp Z$, Lemma \ref{lemcountint} implies that the
number of elements $\alpha\in C\setminus\Z$ with ${\rm
  Tr}(\alpha)\in\{0,1,2\}$ and $H(\alpha)\ll H$ is bounded by
\begin{equation*}
  \ll\left\{ \begin{array}{cl}
    \displaystyle H^2/D^{1/2}+HZ^{1/2}/D^{1/4}
    &\mbox{if } \displaystyle Z\gg D^{1/2}/H^2;\\[.02in]
    0 & \mbox{otherwise}.
  \end{array}\right.
\end{equation*}
Adding up over the $O(D/Z)$ orders with discriminant and skewness in
this range, gives us the following bound on the number of pairs $(C,\alpha)$
(resp.\ triples $(Q,C,\alpha)$):
\begin{equation*}
  \ll\left\{ \begin{array}{cl}
    \displaystyle D^{1/2}H^2/Z+HD^{3/4}/Z^{1/2}
    &\mbox{if } \displaystyle Z\gg D^{1/2}/H^2;\\[.02in]
    0 &\mbox{otherwise.}
  \end{array}\right.
\end{equation*}
In either case, the bound is $O(H^2D^{1/2})$. Adding up over
the dyadic ranges yields the result.
\end{proof}

We are now ready to prove the other required uniformity estimate.

\begin{proposition}\label{propunifsub2}
For any positive real numbers $X$, $\Y$, and $M$ with $\Y\ll X^{1/4}$, we have
\begin{equation*}
\begin{array}{rcl}
\displaystyle
\sum_{p>M}N^{(i)}(\smash{\W_p^{(2)}}(U);\ycox)&=&
\displaystyle
O_\epsilon\bigl({X^{5/6}\Y^{2/3}}/{M^{1-\epsilon}}\bigr);
\\[.215in]
\displaystyle\sum_{p>M}N^{(i)}(\smash{\W_p^{(2)}}(V);\ycox)&=&
\displaystyle O_\epsilon\bigl({X^{5/6}\Y^{2/3}}/{M^{1-\epsilon}}\bigr).\\[-.05in]
\end{array}
\end{equation*}
\end{proposition}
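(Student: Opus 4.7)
The plan is to prove Proposition~\ref{propunifsub2} via the embedding sieve, following the approach of \cite{dodqf} and \cite{SSW}. For the first bound, suppose $(C,\alpha)$ is an $n$-monogenized cubic ring with $H(f)\in[X,2X)$, $\ind(f)\in[\Y,2\Y)$, and $C$ nonmaximal at a prime $p>M$. Pass to the overring $C^\star\supsetneq C$ with $[C^\star:C]=p$; the cases $[C^\star:C]=p^k$ for $k\geq 2$ are lower-order and absorbed by iteration. The identity $4I(f)^3-J(f)^2=27\,\ind(f)^2\,\Delta(f)$ together with Definition~\ref{sec:3.4} yields $|\Delta(C)|=|\Delta(f)|\ll H(f)\leq 2X$, hence $|\Delta(C^\star)|=|\Delta(C)|/p^2\ll X/p^2$. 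The estimate \eqref{htest} gives $H(\alpha)\ll \Y^{1/3}X^{1/6}$, and $[C^\star:\Z[\alpha]]=[C^\star:C]\cdot[C:\Z[\alpha]]=pn$ shows that $p\mid[C^\star:\Z[\alpha]]$.

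Applying Corollary~\ref{corcountint} to the pairs $(C^\star,\alpha)$ with these bounds gives a preliminary count of
\[
O_\epsilon\bigl((\Y^{1/3}X^{1/6})^2\cdot(X/p^2)^{1/2+\epsilon}\bigr)\;=\;O_\epsilon\bigl(\Y^{2/3}X^{5/6+\epsilon}/p^{1+2\epsilon}\bigr)
\]
per prime $p$. The extra divisibility condition $p\mid[C^\star:\Z[\alpha]]$ imposes a single linear congruence on $\bar\alpha\in C^\star/\Z$ modulo~$p$, restricting $\alpha$ to an index-$p$ sublattice of $C^\star$ and providing the missing $1/p$ factor. Incorporating this saving refines the per-prime bound to $O_\epsilon(\Y^{2/3}X^{5/6+\epsilon}/p^{2-\epsilon})$, and summing over $p>M$ yields $O_\epsilon(X^{5/6}\Y^{2/3}/M^{1-\epsilon})$, as claimed. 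The second bound, for $V(\Z)$, is handled analogously: a nonmaximal quartic ring $Q$ at $p$ admits an overring $Q^\star\supsetneq Q$ with $[Q^\star:Q]=p$ and $|\Delta(Q^\star)|\ll X/p^2$, and the quartic analogue of Corollary~\ref{corcountint} together with the same mod-$p$ divisibility condition on $\alpha$ supplies the matching per-prime estimate.

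The main obstacle is justifying the extra $1/p$ factor cleanly. This amounts to refining Lemma~\ref{lemcountint} so as to count elements $\alpha\in C^\star$ of bounded height whose image $\bar\alpha\in C^\star/\Z$ lies in a prescribed index-$p$ sublattice: since the sublattice has the expected density $1/p$, the count should shrink by $1/p$, but one must track all three regimes of $H(\alpha)$ relative to the successive minima $\ell_1(C^\star)$ and $\ell_2(C^\star)$ appearing in Lemma~\ref{lemcountint}. The higher-order cases $[C^\star:C]=p^k$ with $k\geq 2$ contribute a geometric series in $p^{-k(2-\epsilon)}$ which sums to the same order as the $k=1$ term and so does not affect the overall bound.
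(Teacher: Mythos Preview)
Your claimed ``extra $1/p$ factor'' does not work, and without it the argument collapses. The condition $p\mid[C^\star:\Z[\alpha]]$ you invoke is not a constraint: since $\alpha$ came from $C$ with $[C^\star:C]=p$, one has $[C^\star:\Z[\alpha]]=p\cdot[C:\Z[\alpha]]$ automatically. What you actually need is that $\bar\alpha$ lies in $\bar C$, the image of $C/\Z$ inside $C^\star/\Z$, an index-$p$ sublattice (and this is a lattice condition, not a single linear congruence). But counting $\alpha$ with $H(\alpha)\ll H$ in that sublattice is precisely counting $\alpha\in C$ with $H(\alpha)\ll H$, where $C$ is a cubic order of discriminant $p^{2}|\Delta(C^\star)|\ll X$; applying Corollary~\ref{corcountint} to such pairs $(C,\alpha)$ gives $O_\epsilon(H^{2}X^{1/2+\epsilon})$ with no $p$ at all. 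The sublattice restriction simply undoes the discriminant gain from passing to $C^\star$, so the argument is circular. Concretely, in the middle regime $\ell_1(C^\star)<H<\ell_2(C^\star)$ of Lemma~\ref{lemcountint}, if the shortest vector of $C^\star/\Z$ happens to lie in $\bar C$ (which nothing prevents), restricting to $\bar C$ gives no reduction whatsoever, and this regime contributes at leading order in the proof of Corollary~\ref{corcountint}. You also silently drop the $X^\epsilon$ when passing to the final bound.

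The paper sidesteps both issues by splitting into small and large primes. For $p\ll X^{1/8}$ (resp.\ $p\ll X^{1/24}$ on the $V$ side), the set $\W_p^{(2)}$ is a union of $O(p^{6})$ translates of $p^{2}U(\Z)$, and since the longest coordinate direction has length $\gg p^{2}$, Proposition~\ref{davlem} yields a direct per-prime bound $O(X^{5/6}\Y^{2/3}/p^{2})$; summing over $p>M$ already gives $O(X^{5/6}\Y^{2/3}/M)$ in this range. One may therefore assume $M\gg X^{1/8}$ (resp.\ $M\gg X^{1/24}$). For such $M$, the overring map $(C,\alpha)\mapsto(C_1,\alpha)$ sends the \emph{entire union} $\bigcup_{p>M}$ into the single set $\{(C_1,\alpha):|\Delta(C_1)|\ll X/M^{2},\ H(\alpha)\ll\Y^{1/3}X^{1/6}\}$ with fibers of bounded size; a single application of Corollary~\ref{corcountint} then gives $O_\epsilon(X^{5/6+\epsilon}\Y^{2/3}/M)$, and now $M\gg X^{1/24}$ lets the $X^\epsilon$ be absorbed into $M^{O(\epsilon)}$. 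The point you are missing is to apply the overring trick \emph{once} with discriminant cutoff $X/M^{2}$, rather than per prime with cutoff $X/p^{2}$.
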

\begin{proof}
If a prime $p$ satisfies $p\ll X^{1/8}$ in the first 
sum, then we have the following upper bound on the corresponding summand:
\begin{equation}\label{smallp}
  N^{(i)}(\smash{\W_p^{(2)}}(U);\ycox)=\#\bigl\{\FF_U^\pm(\ycox)
  \cap\smash{\W_p^{(2)}}(U)\bigr\}\ll {X^{5/6}\Y^{2/3}}/{p^2}. \pagebreak 
\end{equation}
This can be seen by simply partitioning $\smash{\W_p^{(2)}}(U)$ into $O(p^6)$
translates of $p^2 U(\Z)$ and counting integer points of bounded
height in each translate using Proposition~\ref{davlem}, noting that
the last variable $d$ takes a range of length at least $X^{1/2}/\Y\gg
X^{1/4}\gg p^2$. The sum of the bound \eqref{smallp} over all primes
$p$ with $M < p < X^{1/8}$ is less than the bound of 
Proposition~\ref{propunifsub2}.  We may thus assume that $M\gg
X^{1/8}$ for the purpose of proving the first estimate of 
the proposition. Similarly, since the variable $b_{33}$ takes a range
of at least $X^{1/6}/\Y^{1/3}\gg X^{1/12}$ in the proof of Theorem
\ref{thmsmqc}, we may assume that $M\gg X^{1/24}$ for the purpose of
proving the second estimate.

Let $f$ be an irreducible element of
\begin{equation}\label{equniftemp}
\bigcup_{p>M}\bigl\{\FF_U^\pm(\ycox)\cap\smash{\W_p^{(2)}}(U)\bigr\}.
\end{equation}
Then $f$ corresponds to a pair $(C,\alpha)$, where $C$ is a cubic ring
and $\alpha$ is an element of $C$. Furthermore, $C$ is nonmaximal at
some prime $p>M$. Let $C_1$ be the (unique) order containing $C$ with
index $p$. The treatment of the case $n=3$ in
\cite[\S4.2]{geosieve} implies that $C_1$ comes from at most $3$ such
rings $R$. It thus follows from \eqref{htest} that the set
\eqref{equniftemp} maps into the set
\begin{equation}\label{eqKal}
  \bigl\{(C_1,\alpha):|\Delta(C_1)|\ll X/M^2,\;H(\alpha)\asymp
  X^{1/6}\Y^{1/3}\bigr\},
\end{equation}
where $K$ is a cubic field and $\alpha$ is an element in $\O_K$ with
${\rm Tr}(\alpha)\in\{0,1,2\}$. Moreover, the sizes of the fibers of
this map are bounded by $3$. Therefore, we have
\begin{equation}\label{equnifRal}
  \sum_{p>M}N^{(i)}(\smash{\W_p^{(2)}}(U);\ycox)\ll
  \#\bigl\{(C_1,\alpha):|\Delta(C_1)|
  \ll X/M^2,\;H(\alpha)\asymp X^{1/6}\Y^{1/3}\bigr\}.
\end{equation}

Similarly, let $v\in V(\Z)$ be an element contributing to the count of
the left-hand side of the second line of the proposition. Then the
cubic resolvent of $v$ belongs to \eqref{equniftemp}, and thus $v$
corresponds to a triple $(Q,C,\alpha)$, where $Q$ is a quartic ring,
$C$ is a cubic resolvent of $Q$ nonmaximal at a prime $p>M$, and
$\alpha$ is an element of $C$. Let $C_1$ be as in the previous paragraph. The treatment
of the case $n=4$ in \cite[\S4.2]{geosieve} implies that the set of
quartic rings with resolvent $C$ maps to the set of quartic rings with
resolvent $C_1$, where the sizes of the fibers of this map are bounded
by $6$. Therefore, 
\begin{equation}\label{equnifQRal}
  \sum_{p>M}N^{(i)}(\smash{\W_p^{(2)}}(V);\ycox)\ll
  \#\bigl\{(Q_1,C_1,\alpha):|\Delta(C_1)|
  \ll X/M^2,\;H(\alpha)\asymp X^{1/6}\Y^{1/3}\bigr\},
\end{equation} 
where $Q_1$ is a quartic ring with resolvent $C_1$ and $\alpha\in C_1$
has trace in $\{0,1,2\}$.

Corollary \ref{corcountint} implies that the right-hand sides of \eqref{equnifRal} and \eqref{equnifQRal} are bounded by
\begin{equation*}
O_\epsilon\bigl({X^{5/6+\epsilon}\Y^{2/3}}/{M}\bigr).
\end{equation*}
This concludes the proof of the lemma, since $M\gg X^{1/24}$.
\end{proof}

\vspace{-.005in}\noindent
{\bf Proof of Theorem~\ref{thsubsieve}:} 
Theorem \ref{thsubsieve} follows immediately from the counting results
of Proposition \ref{propcountsmf} and Theorem~\ref{thmsmqc}, in
conjunction with the tail estimates in Propositions~\ref{propsmusgs}
and \ref{propunifsub2}, by applying a standard squarefree sieve. See 
\cite[\S8.5]{MAJ} for an identical proof. $\Box$

\subsection{Local mass formulas}\label{seclmsub}

To obtain the volumes in Theorem~\ref{thsubsieve},
we prove certain
mass formulas relating \'etale quartic and cubic algebras over $\Q_p$.
Let $K_3$ be an \'etale cubic extension of $\Q_p$. Let $K_4$ be an
\'etale quartic extension of $\Q_p$ with cubic resolvent $K_3$ that 
corresponds to the unramified quadratic extension $K_6/K_3$ \pagebreak
under the bijection of Theorem \ref{thHeilbronPID}. By 
Remark~\ref{geom}, the \'etale $\Q_p$-algebras $K_3$, $K_4$, and~$K_6$ correspond to sets~$\PP$, $\cQ$,
and~$\LL$, respectively, equipped with actions of~$G_{\Q_p}$. The 
automorphism groups $\Aut(K_3)$, $\Aut(K_4)$, and~$\Aut(K_6)$ can be
identified with the groups of $G_{\Q_p}$-equivariant permutations of
$\PP$, $\cQ$, and~$\LL$, respectively.
A $G_{\Q_p}$-equivariant permutation of $\cQ$ induces $G_{\Q_p}$-equivariant
permutations of~$\PP$ and~$\LL$. 

\begin{nota}
\label{sec:3.36}
{\em 
Let $\Aut_{K_3}(K_4)$ denote the subgroup
of $\Aut(K_4)$ consisting of automorphisms of $K_4$ that induce the
trivial automorphism of $K_3$ (equivalently, the subgroup consisting of Galois
equivariant permutations of $\cQ$ that induce 
the trivial permutation of~$\PP$). 
}\end{nota}
We have the following result.

\begin{theorem}\label{thmmassmain}
Let $K_3$ be an \'etale cubic extension of $\Q_p$, and let $\RR(K_3)$
denote the set of \'etale non-overramified quartic extensions $K_4$ of
$\Q_p$, up to isomorphism, with resolvent $K_3$. Then 
\begin{equation*}
\displaystyle\sum_{K_4\in\RR(K_3)}\frac{1}{|\Aut_{K_3}(K_4)|}
=1.
\end{equation*}
\end{theorem}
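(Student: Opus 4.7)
The strategy is to push the sum through the bijection of Theorem~\ref{thHeilbronPID}, reducing the claim to a count of unramified $K_6/K_3$'s with square-norm discriminant. By that theorem, $|\RR(K_3)|$ equals the number of isomorphism classes of \'etale unramified quadratic extensions $K_6/K_3$ whose discriminant has square norm in $\Z_p^\times$. Writing $K_3 = \prod_{i=1}^k L_i$ as a product of fields, the unramified quadratic extensions of $K_3$ are the $2^k$ algebras $\prod L_i'$ with $L_i' \in \{L_i \times L_i,\,L_i^{(2)}\}$, and the square-norm condition is the single parity constraint that an even number of $i \in J := \{i : e_i \text{ odd}\}$ satisfy $L_i' = L_i^{(2)}$. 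Since $K_3$ is cubic, $\sum_i e_i f_i = 3$ is odd, forcing $J \neq \emptyset$; hence this parity constraint halves the count, giving $|\RR(K_3)| = 2^{k-1}$.

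The next step is to show that $|\Aut_{K_3}(K_4)|$ is the same for every $K_4 \in \RR(K_3)$ and depends only on $K_3$. By Notation~\ref{sec:3.36}, $\Aut_{K_3}(K_4)$ is the subgroup of $G_{\Q_p}$-equivariant permutations of $\cQ$ inducing the trivial permutation of $\PP'$. The ambient subgroup of $S_4$ fixing each of the three partitions in $\PP'$ is the Klein four-group $V_4$, which is normal and abelian in $S_4$. Thus $\Aut_{K_3}(K_4)$ is the centralizer in $V_4$ of the image $H_4 \subset S_4$ of Galois, which depends only on $H_3 := \bar H_4 \subset S_4/V_4 = S_3$. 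Under the natural identification of $V_4\setminus\{e\}$ with $\PP$ provided by Remark~\ref{geom}, this $H_3$-action coincides with the Galois action on the three roots of the resolvent cubic, so the number of fixed points equals $\alpha := \#\{i : L_i = \Q_p\}$. Including the identity element, this yields $|\Aut_{K_3}(K_4)| = 1 + \alpha$.

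To finish, observe that for cubic $K_3$ the partition $(d_1,\ldots,d_k)$ of $3$ must be one of $(3)$, $(2,1)$, or $(1,1,1)$; direct inspection gives $(k,\alpha)\in\{(1,0),(2,1),(3,3)\}$ and in each case the identity $2^{k-1} = 1+\alpha$ holds. Therefore
\[
\sum_{K_4 \in \RR(K_3)} \frac{1}{|\Aut_{K_3}(K_4)|} \;=\; \frac{|\RR(K_3)|}{1+\alpha} \;=\; \frac{2^{k-1}}{1+\alpha} \;=\; 1.
\]
The main obstacle is the aut-group computation in the second step: threading through the Galois-equivariant combinatorics on $\cQ$, $\PP$, and $\PP'$ to show that $|\Aut_{K_3}(K_4)|$ depends only on $K_3$ and equals $1+\alpha$, together with the small but essential coincidence $2^{k-1}=1+\alpha$ special to cubic \'etale algebras over $\Q_p$. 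This coincidence is exactly what matches the ``halving'' produced by the square-norm discriminant condition on the $K_6$ side with the ``decrease'' of the automorphism group from $\Aut_{K_3}(K_6) = (\Z/2)^k$ to $\Aut_{K_3}(K_4) = 1+\alpha$ on the $K_4$ side, and is the reason (\ref{eqmasstriv}) specializes to give the clean mass $1$ here.
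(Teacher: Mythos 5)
Your proof is correct, and it takes a genuinely different route from the paper's. The paper establishes an isomorphism $\Aut_{K_3}(K_4)\cong\Aut^+_{K_3}(K_6)$, the index-$2$ subgroup of $\Aut_{K_3}(K_6)$ consisting of even permutations of $\LL$; combined with the observations that the square-norm condition selects exactly half of the unramified quadratic extensions $K_6/K_3$ and that all of them have isomorphic automorphism groups, the factor of $\tfrac12$ in the count cancels the factor of $2$ from the index, and the mass reduces directly to \eqref{eqmasstriv} with no case analysis and no closed formula for either side. Your route instead makes both sides explicit in terms of the factorization $K_3=\prod_{i=1}^k L_i$: you count $|\RR(K_3)|=2^{k-1}$ (correctly noting that $J\neq\emptyset$ because $\sum e_if_i=3$ is odd), identify $\Aut_{K_3}(K_4)$ with the centralizer $C_{V_4}(H_4)$, observe that since $V_4$ is abelian this depends only on the image $H_3\subset S_4/V_4\cong S_3$ and hence only on $K_3$, compute $|C_{V_4}(H_4)|=1+\alpha$ via the $H_3$-equivariant identification $V_4\setminus\{e\}\cong\PP'\cong\PP$, and finally verify $2^{k-1}=1+\alpha$ for the three shapes $(3)$, $(2,1)$, $(1,1,1)$. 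Both arguments are sound, and your closing remark correctly diagnoses the relationship between them: the paper cancels the $\tfrac12$ against the index $2$ before ever writing down a formula, while you compute both quantities independently and reconcile them at the end. The tradeoff is that your version yields the explicit formula $|\Aut_{K_3}(K_4)|=1+\alpha$ (equivalently $2^{k-1}$, which the paper's isomorphism also gives) as a byproduct, at the cost of a final case check that the paper's cancellation renders unnecessary; the paper's version is shorter and exhibits more transparently \emph{why} the mass is $1$ rather than merely \emph{that} it is.
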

\begin{proof}
Recall that $\Aut_{K_3}(K_6)$ is the subgroup of $\Aut(K_6)$
consisting of automorphisms of $K_6$ that fix every element in $K_3$.
Let $\smash{\Aut_{K_3}^+(K_6)}$ denote the index $2$ subgroup of
$\Aut_{K_3}(K_6)$ consisting of even permutations of $\LL$. We may
check that the map from Galois-equivalent permutations of $\cQ$ to the
Galois-equivalent permutations of $\LL$ induces an isomorphism
$\Aut_{K_3}(K_4)\cong\smash{\Aut^+_{K_3}(K_6)}$.

Exactly half of the unramified quadratic extensions $K_6/K_3$ have discriminant whose norm is a square
in $\Z_p^\times$. Moreover, all of these quadratic extensions $K_6/K_3$ have isomorphic
automorphism groups. Therefore, we have
\begin{equation*}
\displaystyle\sum_{K_4\in\RR(K_3)}\frac{1}{|\Aut_{K_3}(K_4)|}
\,=\,
\displaystyle\frac{1}{2}\sum_{\substack{[K_{6}:K_3]=2\\{\rm unramified}}}
\frac{1}{|\Aut_{K_3}^+(K_6)|}
\,=\,
\sum_{\substack{[K_{6}:K_3]=2\\{\rm unramified}}}
\frac{1}{|\Aut_{K_3}(K_6)|}=1,
\end{equation*}
where the last equality follows from \eqref{eqmasstriv}.
\end{proof}

Finally, we translate Theorem \ref{thmmassmain} into the language of
binary cubic forms.

\begin{defn}
\label{sec:3.38}
{\em 
For $f\in U(\Z_p)$, define the {\bf local mass}
$\Mass_p(f)$ of $f$ by 
\begin{equation}\label{eqmasssub}
\Mass_{p}(f):=
\sum_{v\in\frac{\Res^{-1}(f)}{\SL_3(\Z_p)}}\frac1{\#\Stab_{\SL_3(\Z_p)}(v)},\\[-.05in]
\end{equation}}
where \smash{$\frac{\Res^{-1}(f)}{\SL_3(\Z_p)}$} is a
set of representatives for the action of $\SL_3(\Z_p)$ on $\Res^{-1}(f)$.
\end{defn}

Since the stabilizers in $M(\Q_p)$ and $G(\Q_p)$ of maximal elements in $U(\Z_p)$ and $V(\Z_p)$ 
actually lie in $M(\Z_p)$ and $G(\Z_p)$, respectively, we obtain the following consequence of Theorems
\ref{thpid} and~\ref{thmmassmain}:\!\!\!
\begin{corollary}\label{cortotmass}
Suppose $f\in U(\Z_p)$ corresponds to a maximal
cubic ring over~$\Z_p$. Then 
\vspace{-.05in}
\begin{equation*}
\displaystyle \Mass_p(f)
=1.
\end{equation*}
\end{corollary}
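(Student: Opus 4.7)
The plan is to reparametrize the sum defining $\Mass_p(f)$ into exactly the sum appearing in Theorem~\ref{thmmassmain}, using Theorem~\ref{thpid}(b) over $R=\Z_p$ together with a short discriminant argument forcing maximality.

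First, I would apply Theorem~\ref{thpid}(b) in its basis-remembering form (as in Theorem~\ref{thqrpar}), which identifies elements of $V(\Z_p)$ bijectively with data $((Q,\omega_Q),(C,\omega_C))$ consisting of a quartic ring $Q$ and its cubic resolvent $C$, each equipped with a basis modulo $\Z_p$. Because $\Res(v)$ depends only on $(C,\omega_C)$ via the Delone--Faddeev bijection, $\Res^{-1}(f)$ is precisely the set of such data with $(C,\omega_C)=(C_0,\omega_0)$ the fixed pair attached to $f$. Since the subgroup $\SL_3(\Z_p)\subset G(\Z_p)$ acts on $V(\Z_p)$ only through $\omega_Q$, quotienting yields a bijection between $\SL_3(\Z_p)$-orbits on $\Res^{-1}(f)$ and isomorphism classes of pairs $(Q,\iota)$, where $\iota\colon C(Q)\xrightarrow{\sim}(C_0,\omega_0)$ identifies the cubic resolvent of $Q$ with $C_0$ and isomorphism means a ring isomorphism of $Q$ compatible with $\iota$. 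Under this bijection, Theorem~\ref{thpid}(b) further identifies $\Stab_{\SL_3(\Z_p)}(v)$ with the subgroup $\Aut_{C_0}(Q):=\{\sigma\in\Aut(Q):\sigma|_{C_0}=\mathrm{id}\}$, since the $\SL_3$ factor of $G$ is cut out inside $\Aut(Q)$ by the condition $g_2=1$, i.e., by triviality of the induced automorphism of the cubic resolvent.

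Next, maximality of $C_0=\mathcal{O}_{K_3}$ forces $Q$ to be a maximal order in a non-overramified \'etale quartic $K_4:=Q\otimes\Q_p$. The relation $\Delta(Q)=\Delta(C_0)=\Delta(K_3)$, together with $\Delta(Q)=[\mathcal{O}_{K_4}:Q]^2\,\Delta(K_4)$, gives
\[
[\mathcal{O}_{K_4}:Q]^2\,\Delta(K_4)=\Delta(K_3);
\]
since the index has nonnegative $p$-adic valuation and overramification would strictly raise $v_p(\Delta(K_4))$ above $v_p(\Delta(K_3))$, this forces $K_4$ to be non-overramified with $\Delta(K_4)=\Delta(K_3)$ and $Q=\mathcal{O}_{K_4}$. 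Hence each $\SL_3(\Z_p)$-orbit on $\Res^{-1}(f)$ corresponds to an element $K_4\in\RR(K_3)$ equipped with its implicit resolvent identification (matching the $K_3$-algebra bookkeeping used in the proof of Theorem~\ref{thmmassmain}), and $\Aut_{C_0}(Q)=\Aut_{K_3}(K_4)$.

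Assembling the two steps,
\[
\Mass_p(f)=\sum_{v\in\SL_3(\Z_p)\backslash\Res^{-1}(f)}\frac{1}{|\Stab_{\SL_3(\Z_p)}(v)|}=\sum_{K_4\in\RR(K_3)}\frac{1}{|\Aut_{K_3}(K_4)|}=1
\]
by Theorem~\ref{thmmassmain}. The main subtlety I anticipate is the bookkeeping of equivalence on $\RR(K_3)$: the $\SL_3(\Z_p)$-orbit picture retains the resolvent identification $\iota$ (unlike $G(\Z_p)$-orbits, which would further quotient by the $\GL_2$-action on $\omega_C$), so ``up to isomorphism'' in $\RR(K_3)$ must be read in the $K_3$-algebra sense furnished by Theorem~\ref{thHeilbronPID}. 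Verifying that this matches the $\SL_3(\Z_p)$-equivalence is the principal check; everything else is immediate from the parametrization theorem and the discriminant identity.
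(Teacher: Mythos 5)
Your proof is correct and uses the same ingredients as the paper's terse argument: the paper deduces the corollary directly from Theorems~\ref{thpid} and~\ref{thmmassmain}, with only the remark that stabilizers of maximal elements in $M(\Q_p)$ and $G(\Q_p)$ lie in $M(\Z_p)$ and $G(\Z_p)$. Your more detailed account---matching $\SL_3(\Z_p)$-orbits on $\Res^{-1}(f)$ with pairs $(Q,\iota)$, using the discriminant identity to force $Q=\O_{K_4}$ non-overramified, and identifying $\Stab_{\SL_3(\Z_p)}(v)$ with $\Aut_{K_3}(K_4)$---simply unpacks what that one-line observation leaves implicit.
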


\subsection{Volume computations and proof of Theorem~\ref{thsubmon}}
\label{subsecsubvol}

In this subsection, we prove Theorem \ref{thsubmonloc}, from which
Theorem \ref{thsubmon} will be shown to follow.  To compute the
volumes of sets in $V(\R)$ and $V(\Z_p)$, we use the following
Jacobian change of variables.

\pagebreak

\begin{proposition}\label{propjacsub}
Let $K$ be $\R$ or $\Z_p$, let $\smash{|\cdot|}$ denote the usual normalized absolute
value on $K$, and let $s:U(K)\to V(K)$ be a continuous map
such that $\Res(s(f))=f$, for each $f\in U(K)$. Then there
exists a constant $\J\in\Q^\times$, independent of $K$ and $s$,
such that for any measurable function $\phi$ on $V(K)$, we have:
\begin{equation}\label{eqjacsub}
  \begin{array}{rcl}
 \!\! \! \displaystyle\int_{\SL_3(K)\cdot s(U(K))}\!\!\!\phi(v)dv&\!\!\!=\!\!\!&
|\J|\!\displaystyle\int_{f\in U(K)}\displaystyle\int_{g\in \SL_3(K)}
    \!\!\phi(g\cdot s(f))\omega(g) df,\\[0.25in]
\displaystyle\int_{V(K)}\!\!\phi(v)dv&\!\!\!=\!\!\!&
|\J|\!\displaystyle\int_{\substack{f\in U(K)\\ \Disc(f)\neq 0}}\!
\displaystyle\sum_{v\in\!\!\textstyle{\frac{\Res^{-1}(f)}{\SL_3(K)}}}
\!\!\!\frac{1}{\#\Stab_{\SL_3(\Z_p)}(v)}\int_{g\in \SL_3(K)}\!\!\!\phi(g\cdot v)\omega(g)df,\\[-.05in]
  \end{array}
\end{equation} 
where \smash{$\frac{\Res^{-1}(f)}{\SL_3(K)}$} is a
set of representatives for the action of $\SL_3(K)$ on $\Res^{-1}(f)$.
\end{proposition}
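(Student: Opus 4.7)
The proposition is a standard Jacobian change of variables result for the $\SL_3$-equivariant resolvent map $\Res: V(K) \to U(K)$; my plan has three stages.

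First, the second formula in \eqref{eqjacsub} is a consequence of the first: partition $V(K) \setminus \{\Disc = 0\}$ into the $\SL_3(K)$-orbits constituting $\Res^{-1}(f)$ as $f$ ranges over $U(K)$, apply the first formula on each orbit (with a local section), and weight by $1/\#\Stab_{\SL_3(K)}(v)$ to correct for non-freeness of the $\SL_3(K)$-action.

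For the first formula, I would consider the smooth map
\begin{equation*}
\Phi: U(K) \times \SL_3(K) \to V(K), \qquad (f, g) \mapsto g \cdot s(f).
\end{equation*}
Its source and target both have dimension $12 = 4 + 8$, and since $\Res \circ s = \mathrm{id}$ forces $d\Phi$ to be surjective, $\Phi$ is a local diffeomorphism off the discriminant locus. The standard change of variables then produces a Jacobian $J(f, g)$ with $dv = J(f, g)\, \omega(g)\, df$ on the image. The crux is to show that $J(f, g) = |\J|$ is a constant independent of $(f, g)$, and I would argue in two parts. For $g$-independence: by $\SL_3(K)$-equivariance $\Phi(f, g_0 g) = g_0 \cdot \Phi(f, g)$, combined with the fact that $\SL_3(K)$ acts on $V(K)$ preserving Lebesgue measure (on each symmetric-matrix factor, $A \mapsto gAg^t$ has Jacobian $\det(g)^{n+1} = \det(g)^4 = 1$ on $\SL_3$) and the left-invariance of $\omega$, we obtain $J(f, g) = J(f, 1) =: J(f)$. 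For $f$-independence: I would compute $J(f_0)$ directly at a generic $f_0 \in U(K)$ using the explicit section $t$ from equation \eqref{tsection}. The resulting determinant is polynomial in the coefficients of $f_0$; the compatibility of the $\GL_2$-action on $U$ (equation \eqref{gl2action}) with the corresponding $G$-action on $V$ (equation \eqref{Gaction}) forces this polynomial to be homogeneous of degree zero, hence a constant.

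Finally, $\J$ is rational and independent of $K$ because all the maps are defined polynomially over $\Z$, and hence the Jacobian calculation yields the same rational number regardless of whether $K = \R$ or $K = \Z_p$. Independence of $s$ follows because any two sections $s, s'$ differ by a measurable function $h: U(K) \to \SL_3(K)$ with $s'(f) = h(f) \cdot s(f)$; substituting $g \mapsto g\, h(f)^{-1}$ in the inner Haar integral and using right-invariance of $\omega$ on $\SL_3(K)$ shows the two right-hand sides coincide. The main obstacle will be cleanly establishing the $f$-independence of $J(f)$: the explicit determinant computation with the section $t$ is tedious but tractable, and the scaling/symmetry argument must be arranged carefully so that no additional factors of $|\Delta(f)|$ enter the final Jacobian. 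A slicker alternative is to invoke the general theory of prehomogeneous vector spaces applied to $(\SL_3, V)$ with invariant quotient $U$, which guarantees the constancy of such Jacobians in this setting.
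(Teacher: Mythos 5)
The paper does not give an independent proof: it simply cites \cite[Propositions 3.11--3.12 and Remark 3.14]{BhSh}, which establish the analogous Jacobian formula for $\PGL_2$ acting on binary quartic forms, and you are reconstructing that argument for $\SL_3$ acting on $V$ --- so the strategy is the intended one. A few links in your sketch are, however, too loose. For $f$-independence, asserting that ``compatibility of the group actions forces the Jacobian polynomial to be homogeneous of degree zero'' is not by itself an argument; the standard route is to observe that $J(f)$ is a relative invariant for the $G(K)$-action on $U$, hence equals $c\cdot\Delta(f)^k$ for some $c$ and $k$, and then pin down $k=0$ by balancing the $\GL_2\times\GL_3$-weights of $df$, $dv$, and $\omega(g)$ (or, as you suggest as a fallback, by a direct determinant computation with the section $t$ of \eqref{tsection}). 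Your $s$-independence argument posits $s'(f)=h(f)\cdot s(f)$ with $h(f)\in\SL_3(K)$, which need not exist: over $\R$ with $\Delta(f)>0$ the fiber $\Res^{-1}(f)$ splits into four distinct $\SL_3(\R)$-orbits (Lemma~\ref{lemsubmonos}), so two sections can land in different orbits. This is harmless in the end, but only because once $J$ is known to be a pointwise constant on $\{\Delta\ne 0\}$, independence of $s$ is automatic and the $h(f)$ device is unnecessary --- you should not lean on it. Finally, to justify that $\Phi$ is a local diffeomorphism you need more than $\Res\circ s=\mathrm{id}$: that gives a $4$-dimensional complement to $\ker(d\Res)$, but you must also note that stabilizers in $\SL_3(K)$ of points with nonzero discriminant are finite (order $2$ or $4$ by Lemma~\ref{lemsubmonss}), so the orbit map $g\mapsto g\cdot s(f)$ is an immersion of full rank $8$, and the two contributions together span the $12$-dimensional tangent space of $V(K)$.
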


\begin{proof}
Proposition~\ref{propjacsub} follows immediately from the proofs of \cite[Propositions 3.11 and
  3.12]{BhSh} (see also \cite[Remark 3.14]{BhSh}). 
\end{proof}

\vspace{-.085in}
\begin{corollary}\label{corjacsub}
Let $S_p$ be an open and closed subset of $U(\Z_p)$ such that the
boundary of $S_p$ has measure $0$ and every element in $S_p$ is
maximal. Consider the set $\smash{V(\Z)_{S,p}:=\Res^{-1}(S_p)}$.
Then 
\begin{equation}
\Vol(V(\Z)_{S,p})=|\J|_p\Vol(\SL_3(\Z_p))\Vol(S_p).
\end{equation}
\end{corollary}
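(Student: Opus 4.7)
The plan is to apply the second formula of Proposition~\ref{propjacsub} with $K=\Z_p$ to the characteristic function $\phi$ of $V(\Z)_{S,p}=\Res^{-1}(S_p)$. The left-hand side of that formula is then precisely $\Vol(V(\Z)_{S,p})$, so the problem reduces to evaluating the right-hand side.

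The key observation I would use is that $\phi$ is $\SL_3(\Z_p)$-invariant: membership in $\Res^{-1}(S_p)$ depends only on the cubic resolvent form, and the resolvent map is $\SL_3$-invariant by construction. Consequently, for any representative $v\in \Res^{-1}(f)/\SL_3(\Z_p)$, the inner integral over $\SL_3(\Z_p)$ reduces to
\[
\int_{g\in \SL_3(\Z_p)} \phi(g\cdot v)\,\omega(g)\;=\;\phi(v)\,\Vol(\SL_3(\Z_p)),
\]
and moreover $\phi(v)=\mathbf{1}_{f\in S_p}$ depends only on $f=\Res(v)$, not on which orbit representative one picks. Pulling this $f$-dependent constant outside the orbit sum then converts the sum $\sum_{v}\tfrac{1}{\#\Stab_{\SL_3(\Z_p)}(v)}$ into precisely the local mass $\Mass_p(f)$ of Definition~\ref{sec:3.38}.

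To finish, I would invoke the hypothesis that every $f\in S_p$ corresponds to a maximal cubic ring over $\Z_p$, so that Corollary~\ref{cortotmass} yields $\Mass_p(f)=1$ throughout the support of the integrand. The right-hand side of Proposition~\ref{propjacsub} thus collapses to
\[
|\J|_p\cdot \Vol(\SL_3(\Z_p))\int_{f\in S_p} df \;=\; |\J|_p\,\Vol(\SL_3(\Z_p))\,\Vol(S_p),
\]
which is the desired identity. The measure-zero locus $\Disc(f)=0$ excluded in Proposition~\ref{propjacsub} makes no contribution, since $S_p$ meets this locus in a set of measure zero (maximal étale $\Z_p$-algebras have nonzero discriminant). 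No serious obstacle arises; the only point requiring care is aligning the $\SL_3(\Z_p)$-invariance of $\phi$ with the orbit sum in the Jacobian formula so that the local mass appears, after which Corollary~\ref{cortotmass} does all the work.
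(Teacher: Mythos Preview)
Your proof is correct and follows essentially the same approach as the paper: set $\phi$ equal to the characteristic function of $V(\Z)_{S,p}$ in the second identity of Proposition~\ref{propjacsub}, recognize the resulting orbit sum as $\Mass_p(f)$, and then apply Corollary~\ref{cortotmass}. The paper's proof is terser but the logic is identical.
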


\begin{proof}
We set $\phi$ to be the characteristic function of $V(\Z)_{S,p}$ in
the second line of \eqref{eqjacsub} to obtain
\begin{equation*}
\Vol(V(\Z)_{S,p})=|\J|_p\Vol(\SL_3(\Z_p))\int_{f\in S_p}\Mass_p(f)df.
\end{equation*}
The result then follows from Corollary \ref{cortotmass}.
\end{proof}

\vspace{.025in}\noindent
{\bf Proof of Theorem \ref{thsubmonloc}:} Let $\Sigma$ be
associated to a large collection $S=(S_p)_p$ of local specifications,
where we may assume that every element $f(x,y)\in S_p$ is maximal. For
any finite set $T$ of $n$-monogenized cubic fields, let \smash{$\Avg(\Cl_2,T)$}
(resp.\ \smash{$\Avg(\Cl^+_2,T)$}) denote the average size of the $2$-torsion in
the class group (resp.\ narrow class group) of the fields in $T$.
Let \smash{$F^\pm_\Sigma(\cdeltaint,X)$} denote the set of elements \smash{$(K,\alpha)\in F_\Sigma(\cdeltaint,X)$} with
$\pm\Delta(K)>0$. By Theorem
\ref{thsubsieve} and Corollary~\ref{corjacsub}, 
\begin{equation*}
\begin{array}{rcl}
\displaystyle\Avg(\Cl_2,{F_\Sigma^+(\cdeltaint,X)})
&=&\displaystyle
1+\frac{\frac1{m_0}\nu(V(\Z)_S)\,\Vol(\FF_{\SL_3}\cdot\FF_{V}^{(0)}(\cdeltaint,X))}
{\Vol(\FF_{U}^{+}(\cdeltaint,X))\,\nu(V(\Z)_S)}+o(1)
\\[.175in]&=&\displaystyle
1+\frac{\frac{1}{m_0}|\J|\Vol(\FF_{\SL_3})
\prod_p\bigl[|\J|_p\Vol(\SL_3(\Z_p))\Vol(S_p)\bigr]}{\prod_p\Vol(S_p)}
  +o(1)
\\[.175in]\displaystyle
&=&\displaystyle 1+\frac{1}{m_0}+o(1)\;=\;\frac{5}{4}+o(1),
\end{array}
\end{equation*}
since $\Vol(\FF_{\SL_3})\cdot\prod_{p}\Vol(\SL_3(\Z_p))$ is equal to
$1$, the Tamagawa number of $\SL_3$. Similarly, 
\begin{equation*}
\begin{array}{rcccl}
\displaystyle\Avg(\Cl_2,{F_\Sigma^-(\cdeltaint,X)})
&=&\displaystyle 1+\frac{1}{m_1}+o(1)&=&\displaystyle\frac{3}{2}+o(1);\\[.15in]
\displaystyle\Avg(\Cl^+_2,{F_\Sigma^+(\cdeltaint,X)})
&=&\displaystyle 1+\frac{1}{m_0}+\frac{1}{m_{2+}}+\frac{1}{m_{2-}}+\frac{1}{m_{2\#}}+o(1)
&=&2+o(1). \qquad\Box
\end{array}
\end{equation*}

\medskip\noindent
{\bf Proof of Theorem \ref{thsubmon}:} 
Theorem~\ref{thsubmon} now follows immediately from Theorem~\ref{thsubmonloc} by letting $\Sigma_p$ consist of all pairs $(\mathcal K_p,\alpha_p)$, where $\mathcal K_p$ is an \'etale cubic extension of $\Q_p$ satisfying the splitting conditions prescribed in Theorem~3. 
$\Box$

\section{The mean number of $2$-torsion elements in the class
  groups of $n$-monogenized cubic fields ordered by height and fixed $n$}\label{secn}

In this section we fix a positive integer $n$ throughout, and prove
Theorem \ref{main_theorem} by computing, for such a {\it fixed} $n$, the
average size of the $2$-torsion subgroups in the class groups of
$n$-monogenized cubic fields when these fields are ordered by height.

In \S\ref{subsecncub}, we determine asymptotics for the number of
$n$-monogenized cubic rings of bounded height. In
\S\ref{subsecnquar}, we determine asymptotics for the number of
quartic rings having an $n$-monogenic cubic resolvent of bounded
height. In \S\ref{subsecnunif}, we prove uniformity estimates that
enable us to impose conditions of maximality on these counts of
cubic and quartic rings. The leading constants for these asymptotics
are expressed as products of volumes of sets in $U_n(R)$ and $V(R)$,
where $R$ ranges over $\R$ and $\Z_p$ for all primes $p$. In
\S\ref{seclmn}, we prove certain mass formulas relating \'etale
quartic and cubic algebras over $\Q_p$. Finally, in
\S\ref{subsecnlocal}, we use these mass formulas to compute the
necessary local volumes in order to prove our main results.

\subsection{The number of $n$-monogenized cubic rings of
  bounded height}\label{subsecncub}

In this subsection, we determine the number of primitive
$n$-monogenized cubic rings having bounded height that are orders in
$S_3$-number fields. Specifically, we prove the following result.

\begin{theorem}\label{thcubringcount}
Let $\smash{N_3^{+}}(n,X)$ $($resp.\ $\smash{N_3^{-}}(n,X))$ denote the number of
isomorphism classes of $n$-monogenized cubic orders in $S_3$-cubic fields with positive
$($resp.\ negative$)$ discriminant and height bounded by $X$. Then 
\begin{itemize}
\item[{\rm (a)}]
  $N_3^{+}(n, X)=\displaystyle\frac{8}{135n^{1/3}}X^{5/6}+O_\epsilon(X^{1/2+\epsilon});$
\item[{\rm (b)}]
  $N_3^{-}(n, X)=\displaystyle\frac{32}{135n^{1/3}}X^{5/6}+O_\epsilon(X^{1/2+\epsilon}).$
\end{itemize}
\end{theorem}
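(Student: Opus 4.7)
By Theorem~\ref{thcubringpar}, isomorphism classes of $n$-monogenized cubic rings are in bijection with $M(\Z)$-orbits on $U_n(\Z)$; since the generator of $M(\Z)$ sends $b \mapsto b + 3n$ in the coordinates $f(x,y) = nx^3 + bx^2y + cxy^2 + dy^3$, a fundamental domain for the action of $M(\Z)$ on $U_n(\R)$ is $\{f \in U_n(\R) : 0 \leq b < 3n\}$. I would therefore count the integer points $(b, c, d) \in \{0, \ldots, 3n-1\} \times \Z^2$ with $\pm\Delta(f) > 0$ and $H(f) < X$, and then subtract the contribution from those $(b, c, d)$ for which the resulting cubic ring fails to be an order in an $S_3$-cubic field.

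For the main count, I would apply Proposition~\ref{davlem} (Davenport) for each of the $3n$ values of $b$. The height bound $H(f) < X$ forces $|c| \ll n^{-1/3} X^{1/3}$ and $|d| \ll n^{-1} X^{1/2}$, so Davenport's error (the maximum codimension-one projection volume) is $O(X^{1/2})$ per $b$, hence $O(X^{1/2})$ overall since $n$ is fixed. To compute the main term I would change variables from $(c, d)$ to $(I, J) = (b^2 - 3nc,\, -2b^3 + 9nbc - 27n^2 d)$, which is affine with Jacobian $81 n^3$. The region becomes $\{|I| \leq n^{2/3} X^{1/3},\; |J| \leq 2nX^{1/2},\; \pm(4I^3 - J^2) > 0\}$ in the $(I, J)$-plane. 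Direct integration shows that the $\Delta>0$ area equals $\int_{0}^{n^{2/3}X^{1/3}} 4I^{3/2}\,dI = \tfrac{8}{5}\, n^{5/3} X^{5/6}$, while the $\Delta<0$ area, split by the sign of $I$, equals $4\, n^{5/3}X^{5/6} + \tfrac{12}{5}\, n^{5/3}X^{5/6} = \tfrac{32}{5}\, n^{5/3}X^{5/6}$. Dividing by $81 n^3$ and multiplying by $3n$ yields the asserted main terms $\tfrac{8}{135}\, n^{-1/3} X^{5/6}$ and $\tfrac{32}{135}\, n^{-1/3} X^{5/6}$, respectively.

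Finally, I would verify that $n$-monogenized cubic rings not lying in an $S_3$-cubic field contribute only $O_\epsilon(X^{1/2+\epsilon})$. These split into two types: (i) those coming from reducible $f \in U_n(\Z)$ (non-domain rings), and (ii) those coming from forms whose splitting field is the cyclic group $C_3$. For (i), any rational root $p/q$ of $f$ satisfies $q \mid n$, so there are $O(1)$ options for $(p, q)$, and the factorization then determines $c$ from $b, d$, giving a count of $O(X^{1/2})$ analogously to Lemma~\ref{lemsubnongencub}. For (ii), the number of $C_3$-cubic fields of discriminant up to $X$ is $O(X^{1/2})$ by standard estimates, and each such field admits $O_\epsilon(X^\epsilon)$ monogenizers of bounded height. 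The main obstacle is executing the uniformity bound in (i) cleanly; since $n$ is fixed, however, there is no delicate interplay between $n$ and $X$, so the bookkeeping should be routine, and the resulting error is absorbed into the stated $O_\epsilon(X^{1/2+\epsilon})$.
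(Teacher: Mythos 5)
Your main-term computation is correct and is essentially the paper's proof, reorganized: the paper (Proposition~\ref{thbccount}) first integrates explicitly in the $(c,d)$-coordinates for $b=0$ and then shears to general $b$, while you change variables directly to $(I,J)$ with Jacobian $81n^3$; both routes compute the same volume, and the Davenport error is handled identically. Your two constants $\tfrac{8}{135}n^{-1/3}$ and $\tfrac{32}{135}n^{-1/3}$ check out.

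The only soft spot is the tail estimate, which the paper handles via Lemma~\ref{lred411} (citing \cite[Lemmas~21--22]{MAJ}): for fixed $b$ and $d$, the coefficient $c$ of a reducible or $C_3$ form is pinned down up to $O_\epsilon(X^\epsilon)$ choices, giving $O_\epsilon(X^{1/2+\epsilon})$ directly. Your version of (i) overstates the rigidity: a rational root $p/q$ of $f(x,1)=nx^3+bx^2+cx+d$ has $q\mid n$ but $p\mid d$, so the number of candidate roots is $O_\epsilon(X^\epsilon)$ (divisor bound), not $O(1)$; this still lands at $O_\epsilon(X^{1/2+\epsilon})$, but the ``$O(1)$ options for $(p,q)$'' claim is false as written. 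Your version of (ii) is more problematic: you must count $n$-monogenized $C_3$-\emph{orders}, not just $C_3$-fields, and the assertion that each $C_3$-field contributes only $O_\epsilon(X^\epsilon)$ elements $\alpha$ of height $\ll X^{1/6}$ is not generally true for a fixed totally real field of small discriminant (the per-field lattice-point count can be as large as $X^{1/3}/\sqrt{\disc(K)}$); what saves the estimate is that the sum over $C_3$-fields of discriminant $\ll X$ is small, or more cleanly, that one can argue at the level of binary cubic forms (the discriminant-is-a-square constraint) as the paper does. Since you flag the bookkeeping as routine, this is a gap of justification rather than strategy, but the per-field ``$O_\epsilon(X^\epsilon)$ monogenizers'' claim should not be asserted without a proof that actually uses the monogenicity/index constraint.
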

Theorem \ref{thcubringcount} is proved by
using the parametrizations of \S\ref{subsecparamrings} in conjunction
with a count of elements in $U_n(\Z)$ having bounded height.

\begin{nota}
\label{sec:4.2}
{\em
For any ring $R$ and integers $n$ and $b$, let $U_{n,b}(R)\subset
U_n(R)$ denote the set of binary cubic forms $f(x,y)\in U_n(R)$ such that the
$x^2y$-coefficient of $f$ is $b$. For a subset $S\subset U_{n}(\R)$,
let $S^\pm$ denote the set of elements $f\in S$ with
$\pm\Delta(f)>0$. For a subset $L\subset U_{n}(\Z)$, let $N^\pm(L;n,X)$
denote the number of generic $\MB(\Z)$-equivalence classes $f$ in $L^\pm$
such that $H(f)<X$.
}\end{nota}

\begin{lemma}\label{lred411}
The number of non-generic $\MB(\Z)$-orbits in $U_{n}(\Z)$ of
height less than $X$ is $O_\epsilon(X^{1/2+\epsilon})$.
\end{lemma}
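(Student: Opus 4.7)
The plan is to count directly in a fundamental domain for the $M(\Z)$-action on $U_n(\Z)$. By Theorem~\ref{df2} (or equivalently, the discussion preceding it), the set $\bigsqcup_{0 \le b < 3n} U_{n,b}(\Z)$ is such a fundamental domain, and since $n$ is fixed, only $O_n(1)$ values of $b$ arise. For an element $f(x,y) = nx^3 + bx^2y + cxy^2 + dy^3$ in this domain, the height bound $H(f) < X$ forces $|I(f)| \ll X^{1/3}$ and $|J(f)| \ll X^{1/2}$, and since $I(f) = b^2 - 3nc$ and $J(f) = -2b^3 + 9nbc - 27n^2 d$ with $b$ bounded, this translates into the coefficient bounds $|c| \ll X^{1/3}$ and $|d| \ll X^{1/2}$.

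The core of the argument is then a direct adaptation of the reasoning in the proof of Lemma~\ref{lemsubnongencub}, which in turn refers to \cite[Lemmas 21--22]{MAJ}: whenever $a = n$, $b$, and $d$ are all nonzero, the triple $(n, b, d)$ determines the coefficient $c$ up to $O_\epsilon(X^\epsilon)$ possibilities for non-generic $f$. For reducible $f$, Gauss's lemma forces any rational root $p/q$ to satisfy $q \mid n$ and $p \mid d$; since $n$ is fixed and $|d| \ll X^{1/2}$, the divisor bound gives $O_\epsilon(X^\epsilon)$ choices of root, each of which determines $c$ given $b$. For irreducible $f$ with square discriminant (the $C_3$ case), the discriminant $\Delta(f)$ becomes a cubic polynomial in $c$ with coefficients depending on $n, b, d$, and requiring $\Delta(f)$ to be a perfect square imposes a sparse condition whose count is handled as in \cite{MAJ}.

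Combining these, the contribution of non-generic $f$ with $d \ne 0$ is at most
\[
O_n(1) \cdot O(X^{1/2}) \cdot O_\epsilon(X^\epsilon) \;=\; O_\epsilon(X^{1/2+\epsilon}),
\]
where the factors correspond respectively to the range of $b$, the range of $d$, and the number of $c$ per $(b,d)$. The remaining case $d = 0$ must be handled separately: here $f(x,y) = x(nx^2 + bxy + cy^2)$ is automatically reducible, and the count of admissible $c$ (with $|c| \ll X^{1/3}$) contributes only $O_n(X^{1/3})$ forms, which is absorbed into the main error term.

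The main obstacle is verifying that the MAJ dichotomy genuinely applies in the present setting with the leading coefficient pinned at $n$: for the reducible forms, this is straightforward via Gauss's lemma, but for the $C_3$ case one must check that the divisor-counting argument still yields an $O_\epsilon(X^\epsilon)$ bound when $a = n$ is fixed rather than varying. Since the MAJ argument never uses $a$ as a free parameter (it bounds $c$ given \emph{any} admissible $(a, b, d)$), fixing $a = n$ only restricts the family and the same bound applies; this is the step requiring the most care, but it introduces no new difficulty.
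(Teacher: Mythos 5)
Your proposal follows essentially the same route as the paper's proof: reduce to the fundamental domain $b\in[0,3n)$, extract the coefficient bounds $|c|\ll X^{1/3}$, $|d|\ll X^{1/2}$ from the height condition, and invoke the divisor-counting dichotomy of \cite[Lemmas 21--22]{MAJ} (reducible vs.\ $C_3$) to show that the pair $(b,d)$ pins down $c$ to $O_\epsilon(X^\epsilon)$ choices. You are slightly more careful than the paper in isolating the degenerate case $d=0$ and in noting that fixing $a=n$ only restricts the family considered in MAJ; these are welcome clarifications rather than differences of method.
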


\begin{proof}
An $\MB(\Z)$-orbit on $U_n(\Z)$ has a unique representative with
$x^2y$-coefficient in $[0,3n)$.  Let $f(x)=nx^3+bx^2y+cxy^2+dy^3$ be
  such an element with height less than $X$. Then we have the
  bounds $c\ll X^{1/3}$ and $d\ll X^{1/2}$. Furthermore, the proofs of
  \cite[Lemmas 21, 22]{MAJ} imply that if $f(x,y)$ is either reducible
  or corresponds to an order in a $C_3$-number field, then the value
  of $d$ determines the value of $c$ up to $O_\epsilon(X^\epsilon)$
  choices, proving the lemma.
\end{proof}

\vspace{-.085in}
\begin{proposition}\label{thbccount}
  Let $b\in[0,3n)$ be an integer, and
  let $L\subset U_{n,b}(\Z)$ be defined by finitely many congruence
  conditions. Then
\begin{itemize}
\item[{\rm (a)}] $N^+(L;n,X)
  =\displaystyle\frac{8}{405n^{4/3}}\nu(L)X^{5/6}+O_\epsilon(X^{1/2+\epsilon}),$ \pagebreak
\item[{\rm (b)}] $N^-(L;n,X)
  =\displaystyle\frac{32}{405n^{4/3}}\nu(L)X^{5/6}+O_\epsilon(X^{1/2+\epsilon}),$
\end{itemize}
where $\nu(L)$ denotes the volume of the closure of $L$ in
$U_{n,b}(\widehat{\Z})$.
\end{proposition}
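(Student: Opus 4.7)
The plan is to reduce this to a lattice point count in the $(c, d)$-plane using Davenport's lemma. A direct computation shows that $\MB(\Z)$ acts on $U_n(\Z)$ by $\left(\begin{smallmatrix} 1 & 0\\ k & 1 \end{smallmatrix}\right) \cdot f(x, y) = f(x+ky, y)$, which shifts the $x^2y$-coefficient from $b$ to $b + 3nk$. Hence the stratum $U_{n, b}(\Z)$ with $b \in [0, 3n)$ contains exactly one representative from each $\MB(\Z)$-orbit on $U_n(\Z)$, and $N^\pm(L; n, X)$ equals the number of generic elements of $L^\pm$ with height less than $X$. Since Lemma~\ref{lred411} bounds the non-generic contribution by $O_\epsilon(X^{1/2+\epsilon})$, it suffices to estimate $\#\{f \in L^\pm : H(f) < X\}$ up to this error.

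The volume computation is the heart of the argument. Define
\[
R^\pm(n, b, X) := \{(c, d) \in \R^2 : \pm\Delta(f) > 0,\; H(f) < X\},
\]
where $f(x,y) = nx^3 + bx^2 y + cxy^2 + dy^3$. I would pass to coordinates $(I, J)$ defined by $I = b^2 - 3nc$ and $J = -2b^3 + 9nbc - 27 n^2 d$, whose Jacobian determinant is $81 n^3$, giving $dc\, dd = (81 n^3)^{-1} dI\, dJ$. The region becomes
\[
\{(I, J) : |I|^3 < n^2 X,\; J^2 < 4 n^2 X,\; \pm(4I^3 - J^2) > 0\}.
\]
For $\Delta > 0$, positivity of $4I^3 - J^2$ forces $I > 0$, and the cusp bound $|J| < 2 I^{3/2}$ subsumes the constraint $J^2 < 4n^2 X$; integration yields $\int_0^{n^{2/3} X^{1/3}} 4 I^{3/2}\, dI = \frac{8}{5} n^{5/3} X^{5/6}$. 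For $\Delta < 0$, I split into $I < 0$ (for which $J^2 < 4n^2 X$ is the only constraint, contributing $4 n^{5/3} X^{5/6}$) and $I > 0$ with $2 I^{3/2} < |J| < 2 n X^{1/2}$ (contributing $\frac{12}{5} n^{5/3} X^{5/6}$), totaling $\frac{32}{5} n^{5/3} X^{5/6}$. Dividing by $81 n^3$ gives $\Vol(R^+(n, b, X)) = \frac{8}{405 n^{4/3}} X^{5/6}$ and $\Vol(R^-(n, b, X)) = \frac{32}{405 n^{4/3}} X^{5/6}$, matching the claimed main terms.

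Finally, I would apply Davenport's lemma (Proposition~\ref{davlem}) to count integer points of $L$ lying in $R^\pm(n, b, X)$. Writing $L$ as a finite union of translates of a sublattice of $U_{n, b}(\Z) \cong \Z^2$ of modulus $M$, summing the conclusion of Davenport's lemma over these translates gives main term $\nu(L) \cdot \Vol(R^\pm(n, b, X))$ plus an error $O(X^{1/2})$, where the implied constant depends on $M$ and where the error bound uses that the $c$- and $d$-coordinate projections of $R^\pm$ have lengths $O(X^{1/3})$ and $O(X^{1/2})$ respectively (these follow directly from $|I| < n^{2/3} X^{1/3}$ and $|J| < 2n X^{1/2}$ with $n, b$ fixed). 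Combining this with the non-generic bound from Lemma~\ref{lred411} yields the total error $O_\epsilon(X^{1/2+\epsilon})$, completing the proofs of (a) and (b). The main subtlety is the case analysis in the $(I, J)$-volume for $\Delta < 0$, where the region is the union of two non-convex pieces; otherwise the argument is a routine combination of Davenport's lemma with the uniform bound on non-generic orbits.
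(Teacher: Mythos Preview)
Your proposal is correct and follows essentially the same approach as the paper: reduce to a lattice point count via Lemma~\ref{lred411} and Proposition~\ref{davlem}, then compute the volume of the height-bounded region. The only difference is cosmetic: the paper computes the volume directly in $(c,d)$-coordinates for $b=0$ and then observes that the unipotent shift $g(x,y)\mapsto g(x+by/(3n),y)$ has Jacobian~$1$, whereas you pass to the invariant coordinates $(I,J)$, which handles all $b$ uniformly. Both routes give the same constants and the same error term.
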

\begin{proof}
By Lemma \ref{lred411}, it suffices to estimate the number of elements
in $L^\pm$ that have height bounded by $X$. By Proposition
\ref{davlem}, this is equal to $\nu(L)$ times the volume of
$\smash{U_{n,b}(\R)^\pm_{H<X}}$, up to an error of $O(X^{1/2})$, since the
largest projection of $\smash{U_{n,b}(\R)^\pm_{H<X}}$ is onto the
$y^3$-coefficient of elements in $U_{n,b}(\R)$ and this projection
has size $O(X^{1/2})$.

The volume of ${U_{n,0}(\R)^+_{H<X}}$ is 
\begin{equation}
  \int_{c=0}^{X^{1/3}/(3n^{1/3})}
  \int_{\smash{d=\frac{-2c^{3/2}}{3^{3/2}n^{1/2}}}}^{\smash{\frac{2c^{3/2}}{3^{3/2}n^{1/2}}}}dc\,dd=
  \int_{c=0}^{X^{1/3}/(3n^{1/3})}\frac{4c^{3/2}}{3^{3/2}n^{1/2}}dc=
  \frac{8}{405n^{4/3}}X^{5/6},
\end{equation}
and the volume of ${U_{n,0}(\R)^-_{H<X}}$ is 
\begin{equation}
\frac{8}{81n^{4/3}}X^{5/6}-\Vol(U_{n,0}(\R)^+_{H<X})=\frac{32}{405n^{4/3}}X^{5/6}.
\end{equation}

To compute the volumes of $\smash{U_{n,b}(\R)^\pm_{H<X}}$, note that there
exists a bijective unipotent invariant-preserving map 
$\smash{U_{n,0}(\R)^\pm_{H<X}} \to \smash{U_{n,b}(\R)^\pm_{H<X}}$ defined by $g(x,y)\mapsto g(x+by/(3n),y)$, and the Jacobian determinant of this map is
equal to $1$. Thus the volume of $\smash{U_{n,b}(\R)^\pm_{H<X}}$ is equal to that of
$\smash{U_{n,0}(\R)^\pm_{H<X}}$, concluding the proof of Proposition \ref{thbccount}.
\end{proof}

\vspace{.025in}\noindent
{\bf Proof of Theorem \ref{thcubringcount}:} Theorem \ref{thcubringcount} follows from the parametrization of
$n$-monogenized cubic orders in Theorem~\ref{thcubringpar}, together
with Proposition~\ref{thbccount} applied to $U_{n,b}(\Z)$ for each $b$
with $0\leq b<3n$ and then summing over these $b$. $\Box$

\subsection{The number of quartic rings having $n$-monogenized cubic resolvent rings of
bounded height}\label{subsecnquar}

In this subsection, we determine asymptotics for the number of pairs
$(Q,(C,\alpha))$, where $Q$ is an order in an $S_4$-quartic field and
$(C,\alpha)$ is an $n$-monogenized cubic resolvent ring of $Q$ having
bounded height. This requires determining asymptotics for the number
of $\MB(\Z)\times\SL_3(\Z)$-orbits $(A,B)$ on $V(\Z)$ of bounded
height that satisfy $\det(A)=n/4$. Fixing the determinant of $A$
imposes a cubic polynomial condition on elements in $V(\R)$; however, 
counting integer points on a cubic hypersurface is in general a
difficult question. 

Instead, we proceed as follows. 
We use the action of $\SL_3(\Z)$ to
transform any element $(A,B)\in V(\Z)$, with $\det(A)=n/4$, to an
element $(A',B')$, where $A'$ belongs to a fixed finite set of
representatives for the action of $\SL_n(\Z)$ on integer-coefficient ternary quadratic forms of determinant~$n/4$.

\begin{nota}
\label{sec:4.5}
{\em For a ring $R$ and an element $n\in R$, let $\mathcal{Q}_n(R)$ denote the space of ternary quadratic forms $A$ with coefficients in~$R$ 
such that $4\det(A)=n$. 
For a fixed $A$, let $V_{A}(R)$ denote the set of pairs $(A,B)\in V(R)$
  where $B$ is arbitrary. If $A\in\mathcal{Q}_n(R)$,
then the resolvent map $\Res$ sends
$V_A(R)$ to $U_n(R)$. For a fixed $b\in R$, let
$V_{A,b}(R)$ denote the subset of $V_A(R)$ mapping to 
$U_{n,b}(R)$ under~$\Res$. The set $V_{A,b}(R)$ is defined by linear conditions on $V_A(R)$, 
since $b$ is linear in the coefficients of $B$.
}\end{nota}

Instead of counting $\MB(\Z)\times\SL_3(\Z)$-orbits $(A',B')$
with $\det(A')=n/4$, it suffices to count $\SO_A(\Z)$-orbits on
$V_A(\Z)$, where $A$ ranges over a fixed set of representatives for
$\SL_3(\Z)\backslash\mathcal{Q}_n(\Z)$. We note that these representations
are different $\Z$-forms of the same representation over $\C$. 
\pagebreak

\begin{rem}{\em 
One such $\Z$-form of the representation of $\SO_A(\Z)$ on $V_A(\Z)$ 
is the representation of $\PGL_2$ on the space of binary
quartic forms. Indeed, when $n=1$, the set $\mathcal{Q}_1(\Z)$ consists of
a single $\SL_3(\Z)$-orbit, and one of the representatives of this
orbit is the $3\times3$ symmetric matrix $A_1$ given~by
\begin{equation}\label{eqA1}
A_1:=\left( \begin{array}{ccc}  &  &
    1/2 \\  & -1 &  \\ 1/2 &  &
  \end{array} \right).
\end{equation}
The algebraic group $\PGL_2$ is isomorphic to $\SO_{A_1}$ via the map
\begin{equation}\label{eqtwistedaction1}
\begin{array}{rcl}
\tau:\PGL_2(\Z)&\rightarrow&\SL_3(\Z),\;\; \mbox{given explicitly
  by}\\[.05in] {\left(\begin{array}{cc} a & {b} \\ c &
    d \end{array}\right)}&\mapsto&
\displaystyle\frac{{1}}{ad-bc}{\left(\begin{array}{ccc} {d^2} & {cd} &
    {c^2} \\ {2bd} & {ad+bc} & {2ac} \\ {b^2} & {ab} & {a^2}
\end{array}\right)}.
\end{array}
\end{equation}
Furthermore, we have a map from the space of binary quartic forms to
the space of pairs $(A_1,B)$ given by
\begin{equation}\label{vtow}
\phi: ax^4+bx^3y+cx^2y^2+dxy^3+ey^4\mapsto \left(
\left[ \begin{array}{ccc}\phantom0 & \phantom0 & 1/2 \\ \phantom0 & -1
    & \phantom0 \\ 1/2 & \phantom0 & \phantom0 \end{array} \right],
\left[ \begin{array}{ccc} a & b/2 & 0 \\ b/2 & c & d/2 \\ 0 & d/2 &
    e \end{array} \right]\right).
\end{equation}
The above two maps (with the latter map slightly modified) are
considered in work of Wood~\cite{MWt}, who then shows that the
representation of $\PGL_2$ on the space of binary quartic forms is
isomorphic over $\Z$ to the representation of
$\MB(\Z)\times\SO_{A_1}(\Z)$ on the space $V_{A_1}$.  
}\end{rem}

Asymptotics for
the number of $\PGL_2(\Z)$-orbits on the set of integer-coefficient binary
quartic forms with bounded height were determined in~\cite{BhSh}.  In
this section, we determine analogous asymptotics for other
$\Z$-forms of this representation. As a consequence, we prove the
following~theorem.

\begin{theorem}\label{thqnmccountnr}
For $i\in\{0,1,2\}$, let $N^{(i)}_4(n,X)$ denote the number of isomorphism classes of pairs
$(Q,(C,\alpha))$, where $Q$ is an order in a quartic $S_4$-field
having $i$ complex embeddings, and $(C,\alpha)$ is an $n$-monogenized
cubic resolvent ring of $Q$ with height bounded by $X$. Then
\begin{equation*}
N_4^{(i)}(n,X)=\frac{3n}{m_i}\sum_{A\in\SL_3(\Z)\backslash\mathcal{Q}_n(\Z)}\Vol(\FF_A\cdot \RF{i}(X))+o(X^{5/6}),
\end{equation*} 
where $\FF_A$ is a fundamental domain for the
action of $\SO_{A}(\Z)$ on $\SO_A(\R)$, and the sets $\RF{i}(X)$ are
defined in 
$\S\ref{sssnqred}$ prior to Proposition~$\ref{propfundset}$. 
\end{theorem}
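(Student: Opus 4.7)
The plan is to deduce Theorem~\ref{thqnmccountnr} from the parametrization in Corollary~\ref{qparcor2}, reorganized so as to make the $\SO_A(\Z)$-orbit counts the fundamental combinatorial object, and then to apply an averaging argument in the style of \cite{BhSh} and \cite{dodqf} to each such orbit count in turn.

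First I would reformulate the problem. By Corollary~\ref{qparcor2}, the pairs $(Q,(C,\alpha))$ being counted correspond to $M(\Z)\times\SL_3(\Z)$-orbits on the generic locus of $V_n(\Z)=\{(A,B)\in V(\Z):4\det(A)=n\}$ whose associated resolvent $\Res(A,B)\in U_n(\Z)$ has bounded height, and whose splitting type at infinity corresponds to signature index $i$. To fix the $M(\Z)$-ambiguity, pass to representatives whose $x^2y$-coefficient lies in $[0,3n)$; there are $3n$ such choices of $b$, contributing the factor $3n$ in the statement. To fix the $\SL_3(\Z)$-ambiguity on the $A$-coordinate, choose once and for all a set of representatives for $\SL_3(\Z)\backslash\mathcal{Q}_n(\Z)$ (a finite set by the classical theory of ternary quadratic forms of fixed determinant). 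For each such representative $A$ and each $b\in[0,3n)$, the problem reduces to counting $\SO_A(\Z)$-orbits on generic elements of $V_{A,b}(\Z)\cap V(\R)^{(i)}$ whose resolvent has height less than $X$.

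Next I would carry out reduction theory and the averaging argument on each $(A,b)$-piece. Using the section $s$ of Proposition~\ref{propsmfssize} (adapted to $V_A(\R)$ rather than all of $V(\R)$), one builds a fundamental domain $\FF_A\cdot \RF{i}(X)$ for $\SO_A(\Z)$ acting on the relevant height-bounded locus of $V_{A,b}(\R)^{(i)}$, with the stabilizer factor $m_i$ from Lemma~\ref{lemsubmonss} recording over-counting. I would then count lattice points in $g\cdot \FF_A\cdot \RF{i}(X)$ for $g$ varying in a small open set of $\SO_A(\R)$, average over that set, and fold the average over a Siegel fundamental domain $\FF_A\subset \SO_A(\R)$ as in the proof of Theorem~\ref{thmsmqc}. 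Applying Davenport's estimate (Proposition~\ref{davlem}) fiberwise converts the lattice-point count, in the main body of the fundamental domain, to the volume $\Vol(\FF_A\cdot \RF{i}(X))$ plus a boundary error that is $o(X^{5/6})$ after summing over the finitely many $A$ and the $3n$ values of $b$.

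The main obstacle is the cusp analysis for the action of $\SO_A(\Z)$ on $V_A(\R)$. In $\FF_A$ there is a noncompact ``going to the cusp'' direction; along it the fundamental domain has arbitrarily small values of the distinguished coordinate of~$B$, so Davenport's estimate cannot be applied directly to that region. One has to show that the contribution of the cuspidal portion to generic (i.e., $S_4$) $(A,B)$ with $H(\Res(A,B))<X$ is $o(X^{5/6})$. I would handle this by the classical strategy used in~\cite{BhSh} for binary quartic forms (of which the $n=1$ case is literally an instance via the map $\tau$ and $\phi$ of~\eqref{eqtwistedaction1}--\eqref{vtow}): show that integer points $(A,B)$ deep in the cusp have $B$ with a small-sized distinguished coefficient that forces $\Res(A,B)$ to be reducible or otherwise non-generic, together with the non-$S_4$ estimate analogous to Lemma~\ref{lemsubnongencub} applied on the $B$-side. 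Lemma~\ref{lred411} and Lemma~\ref{lemsubnongencub} (applied on the resolvent side) control the remaining non-generic contributions. Combining the averaging computation with these cusp and non-generic bounds, and summing over the $\SL_3(\Z)$-orbit representatives $A$ of $\mathcal{Q}_n(\Z)$ and over $b\in[0,3n)$, yields the claimed formula, with the factor $3n/m_i$ emerging naturally from the $b$-range and the generic stabilizer sizes.
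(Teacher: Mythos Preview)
Your proposal is correct and follows essentially the same route as the paper: reduce via Corollary~\ref{qparcor2} to a sum over $b\in[0,3n)$ and over $\SL_3(\Z)$-representatives $A\in\mathcal{Q}_n(\Z)$, then count $\SO_A(\Z)$-orbits on $V_{A,b}(\Z)$ by an averaging argument with cusp control (this is exactly the content of the paper's Theorem~\ref{thisotcount}). One minor refinement the paper makes that you gloss over is the dichotomy on $A$ over~$\Q$: when $A$ is anisotropic over~$\Q$ the quotient $\SO_A(\Z)\backslash\SO_A(\R)$ is compact and no cusp analysis is needed, while when $A$ is isotropic over~$\Q$ one first conjugates to the standard form $A_n$ of~\eqref{eqan} and works in a genuine Siegel domain there.
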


In \S\ref{sssnqred},
we construct fundamental domains for the action of $\SO_A(\Z)$ on
$V_{A,b}(\R)$ for any $3\times3$ symmetric matrix $A$ having
determinant $n/4$, and any $b$ with $0\leq b<3n$. In~\S\ref{countsoava}, using these
fundamental domains, we count the number of generic $\SO_A(\Z)$-orbits
on $V_{A,b}(\Z)$ having bounded height. 
Summing over $A\in
\SL_3(\Z)\backslash\mathcal{Q}_n(\Z)$ and $0\leq b<3n$ will then immediately 
yield Theorem~\ref{thqnmccountnr}.

\subsubsection{Reduction theory}\label{sssnqred}
Let $A$ be the Gram matrix of an integer-coefficient ternary quadratic form with
$\det(A)=n/4$. In this subsubsection, we construct finite covers of
fundamental domains for the action of $\SO_A(\Z)$ on $V_{A,b}(\R)$. We
start by constructing fundamental sets for the action of $\SO_A(\R)$
on $V_{A,b}(\R)$.

\begin{cons}
\label{sec:4.8}
{\em
First, suppose that $A$ is isotropic over $\R$. If $A=A_1$, where $A_1$ is defined in
\eqref{eqA1}, then by the discussion above we may construct our
fundamental sets $\smash{\RF{i}}$, for 
$i\in\{0,1,2+,2-\}$, using analogous fundamental sets constructed
for the action of $\PGL_2$ on binary quartic forms. Namely, let
\begin{equation*}
  {R}_1^{(i)}:= \phi\bigl(\R_{>0}\cdot L^{(i)}\bigr),
\end{equation*}
where $\phi$ is defined in \eqref{vtow} and the sets $L^{(i)}$ are
defined in {\rm \cite[Table 1]{BhSh}}. If $A$ is a
general integer-coefficient ternary quadratic form that is isotropic over $\R$, 
then we simply translate the sets $\smash{R_1^{(i)}}$ using the group action
of $\GL_3(\R)$ on the space of ternary quadratic forms. Namely, let
$g_3\in\GL_3(\R)$ be the element such that
$g_3 \cdot A_1:=g_3A_1g_3^t=A$, and let
\begin{equation*}
{R}^{(i)}:=g_3. \smash{\phi\bigl(\R_{>0} \cdot L^{(i)}\bigr)}.
\end{equation*}
Then, for $A$ isotropic, $i\in\{0,1,2+,2-\}$, and any integer $b\in[0,3n)$, let
\begin{equation}\label{firstRF}
\smash{\RF{i}}:=\bigl\{(A,c_BA+B): (A,B)\in {R}^{(i)}\bigr\},
\end{equation}
where $c_B$ is the unique real number such that 
$\Res(A,c_BA+B)$ has $x^2y$-coefficient $b$.

Next, suppose that $A$ is anisotropic over $\R$. We begin with the
case $b=0$. Let
\begin{equation*}
\smash{\FF^{(2\#)}_{V_{A,0}}}:=\bigl\{
g_3\cdot({\rm Id},cB_f):c\in\R_{>0},\;f\in U_{n,0}(\R)^+,\;H(f)=1
  \bigr\},
\end{equation*}
where $g_3\in\GL_3(\R)$ is the matrix taking the identity matrix ${\rm Id}$ to
$A$, and $B_f$ corresponds to the diagonal matrix whose entries are
the (real) roots of $f(x,1)$ in ascending order. For a general integer
$b\in [0,3n)$, let
\begin{equation}\label{secondRF}
\smash{\RF{2\#}}:=\bigl\{(A,B+bA/3):(A,B)\in \smash{\FF^{(2\#)}_{V_{A,0}}}\bigr\}.
\end{equation}
If {$A$ is isotropic over $\R$ and
$i=2\#$, or $A$ is anisotropic over $\R$ and $i\in\{0,1,2+,2-\}$}, then let 
\begin{equation}\label{fourthRF}
\smash{\RF{i}=\emptyset.}
\end{equation}
Let 
\begin{equation}\label{thirdRF}
\smash{\RF{2}}:=\smash{\RF{2+}}\cup \smash{\RF{2-}}\cup \smash{\RF{2\#}}.
\end{equation}
Finally,
for $X>0$, let $\smash{\RF{i}(X)}$ denote the
subset of elements in $\smash{\RF{i}}$ having height less than $X$.
}\end{cons}

\begin{proposition}\label{propfundset}
If $A$ is isotropic over $\R$, then the sets $\smash{\RF{0}}$, \smash{$\RF{1}$},
and $\smash{\RF{2}}=\smash{\RF{2+}}\cup \smash{\RF{2-}}$ are fundamental sets for the
action of ${\SO_A(\R)}$ on $\smash{V_{A,b}(\R)^{(0)}}$, $\smash{V_{A,b}(\R)^{(1)}}$, and
$\smash{V_{A,b}(\R)^{(2)}}$, respectively. 
If~$A$ is anisotropic over $\R$, then the set \smash{$\RF{2\#}$} is a
fundamental set for the action of $\SO_A(\R)$ on~\smash{$V_{A,b}(\R)$}.
Moreover, for $i\in\{0, 1, 2, \smash{2\#},2+, 2-\}$, all entries of elements in $\smash{\RF{i}(X)}$ are~$O(X^{1/6})$.
\end{proposition}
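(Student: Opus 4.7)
The plan is to treat the isotropic and anisotropic cases for $A$ separately, as they require quite different ideas.

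For $A$ isotropic over $\R$, I would transport the reduction theory for the $\PGL_2$-action on binary quartic forms. The explicit maps $\tau\colon\PGL_2\to\SO_{A_1}$ of \eqref{eqtwistedaction1} and $\phi$ of \eqref{vtow} (the content of Wood's $\Z$-form result cited above) identify the $\PGL_2$-representation on binary quartic forms with the $\MB\times\SO_{A_1}$-representation on $V_{A_1}$, where $\MB$ acts by $(A_1,B)\mapsto(A_1,B+kA_1)$. Since the $\MB$- and $\SO_{A_1}$-actions on $V_{A_1}$ commute, $\SO_{A_1}(\R)$-orbits on any $\MB(\R)$-transversal in $V_{A_1}(\R)$ are in canonical bijection with $\PGL_2(\R)$-orbits on binary quartic forms. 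The slice $V_{A_1,b}(\R)$ is such a transversal: the identity $\Res(A_1,B+kA_1)(x,y)=\Res(A_1,B)(x-ky,y)$ shows that within each $\MB$-orbit there is a unique value of $k$ making the $x^2y$-coefficient of the resolvent equal to $b$, which defines $c_B$. Thus, starting from the sets $L^{(i)}$ of \cite[Table~1]{BhSh}, scaling by $\R_{>0}$ to range over all heights, applying $\phi$, and then shifting by $B\mapsto B+c_B A_1$ yields a fundamental set for $\SO_{A_1}(\R)$ on $V_{A_1,b}(\R)^{(i)}$. Conjugation by $g_3$ with $g_3\cdot A_1=A$ finally transports this to $\RF{i}$.

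For $A$ anisotropic over $\R$, $A$ is necessarily definite, so $\SO_A(\R)$ is compact. Choosing $g_3\in\GL_3(\R)$ with $g_3\cdot{\rm Id}=A$ intertwines $\SO_A(\R)$ with $\SO_3(\R)$, reducing the problem to the conjugation action of $\SO_3(\R)$ on symmetric matrices. The spectral theorem provides a canonical diagonalization $({\rm Id},\operatorname{diag}(\mu_1,\mu_2,\mu_3))$ with $\mu_1\leq\mu_2\leq\mu_3$; uniqueness follows because the stabilizer of a diagonal matrix with distinct entries in $\SO_3(\R)$ is the subgroup of determinant-one sign-flip matrices, which acts trivially on diagonal entries. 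The constraint $(A,g_3\operatorname{diag}(\mu_i)g_3^t)\in V_{A,0}$ translates to $\sum\mu_i=0$, and writing $\operatorname{diag}(\mu_i)=cB_f$ with $f\in U_{n,0}(\R)^+$ of height one uniquely determines $c>0$ by the height normalization. The shift by $bA/3$ then carries this fundamental set for $V_{A,0}$ to one for $V_{A,b}$.

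For the bound $O(X^{1/6})$ on entries, the core observation is that the $\R_{>0}$-scaling parameter $c$ rescales the resolvent so that $H(\Res)\asymp c^6$, hence $H<X$ forces $c\ll X^{1/6}$. The entries of the base fundamental element scale linearly with $c$, and a direct analysis of the shift confirms it also contributes at most $O(X^{1/6})$: in the isotropic case this is because $c_B=(b_0-b)/(3n)$ with $b_0$ itself $O(X^{1/6})$, and in the anisotropic case trivially, as $b\in[0,3n)$ is bounded. The main technical obstacles will be verifying cleanly, in the isotropic case, the transversality of $V_{A_1,b}$ to the $\MB$-orbits and the commutativity of the $\MB$- and $\SO_{A_1}$-actions on $V_{A_1}$ (so that pushing the $L^{(i)}$ through $\phi$ and shifting yields the correct fundamental set), and in the anisotropic case, confirming that the ascending-order $\SO_3(\R)$-diagonalization is indeed uniquely achievable within $\SO_3$ rather than just within ${\rm O}_3$.
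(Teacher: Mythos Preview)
Your proposal is correct and follows essentially the same approach as the paper: the spectral theorem for the anisotropic case, the $\PGL_2$ fundamental sets $L^{(i)}$ from \cite{BhSh} transported via $\phi$ and $\tau$ for the isotropic case, and degree-$6$ homogeneity of $H$ for the $O(X^{1/6})$ bound. One small slip worth flagging: conjugation by $g_3$ scales the resolvent by $(\det g_3)^2=n$, so shifting into $V_{A_1,b}$ \emph{before} applying $g_3$ lands you in $V_{A,nb}$ rather than $V_{A,b}$; the paper's construction (and your argument, once corrected) applies $g_3$ first and shifts afterward.
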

\begin{proof}
When $A$ is anisotropic over $\R$, $A$ can be translated via $\GL_3(\R)$ into the
identity matrix~${\rm Id}$.  In that case, every element in $\smash{V_{{\rm Id},0}(\R)}$ has splitting type $(22)$
over $\R$. Furthermore, by the spectral theorem, every element in
$\smash{V_{{\rm Id},0}(\R)}$ is $\SO_3(\R)$ equivalent to $({\rm Id},B)$,
where $B$ is diagonal and $b_{11}<b_{22}<b_{33}$. Then $\smash{\RF{2\#}}$ gives the desired fundamental set. 
By \cite[\S2.1]{BhSh}, the $\smash{\RF{i}}$ are fundamental sets for $i\neq(2\#)$.

Regarding the heights of elements in $\smash{\RF{i}}$,
note that the set of elements in $\smash{\RF{i}}$ having height~$1$ have
absolutely bounded coefficients. The final assertion now follows since
$H$ is a homogeneous function on $V_{{\rm Id},0}(\R)$ of degree $6$.
\end{proof}
\vspace{-.085in}
\begin{theorem}\label{thfunddom}
Let $\FF_A$ be a fundamental domain for the action of $\SO_A(\Z)$ on
$\SO_A(\R)$. Then $\FF_A\cdot \smash{\RF{i}}$ is an $m_i$-fold cover of a
fundamental domain for the action of $\SO_A(\Z)$ on
$V(\R)^{(i)}$.
\end{theorem}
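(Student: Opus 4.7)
The plan is to use a standard unfolding argument relating the $\SO_A(\R)$-action on $V_{A,b}(\R)^{(i)}$ to the $\SO_A(\Z)$-action on $\SO_A(\R)$. Fix a generic $v \in V_{A,b}(\R)^{(i)}$ (that is, one outside the measure-zero locus where either the $\SO_A(\R)$-orbit fails to meet $\RF{i}$ uniquely or the $\SO_A(\Z)\backslash\SO_A(\R)$ decomposition fails to be unique). The count of representatives of the $\SO_A(\Z)$-orbit of $v$ in $\FF_A \cdot \RF{i}$ is exactly the number of triples $(\gamma, h, u) \in \SO_A(\Z) \times \FF_A \times \RF{i}$ satisfying $(\gamma h)\cdot u = v$, and we must show this count equals $m_i$.

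First, by Proposition \ref{propfundset}, the set $\RF{i}$ is a fundamental set for the $\SO_A(\R)$-action on $V_{A,b}(\R)^{(i)}$, so there is a unique $u \in \RF{i}$ lying in the $\SO_A(\R)$-orbit of $v$; fix it. The set of $g \in \SO_A(\R)$ with $g \cdot u = v$ is then a coset of $\Stab_{\SO_A(\R)}(v)$. The key observation is that $\Stab_{\SO_A(\R)}(v)$ coincides with $\Stab_{\SL_3(\R)}(v)$: any $g \in \SL_3(\R)$ fixing the pair $(A,B)$ already satisfies $gAg^t = A$ and hence lies in $\SO_A(\R)$. By Lemma \ref{lemsubmonss}, together with the classification of real orbits in Lemma \ref{lemsubmonos} and the conventions in \eqref{eqmi}, this stabilizer has order exactly $m_i$. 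Thus there are precisely $m_i$ elements $g \in \SO_A(\R)$ satisfying $g \cdot u = v$. Finally, since $\FF_A$ is a fundamental domain for the $\SO_A(\Z)$-action on $\SO_A(\R)$, each such $g$ admits a unique decomposition $g = \gamma h$ with $\gamma \in \SO_A(\Z)$ and $h \in \FF_A$. Hence there are exactly $m_i$ triples $(\gamma, h, u)$ mapping to $v$, which is the desired $m_i$-fold cover statement.

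The main obstacle lies in the anisotropic case $i = 2\#$, where $\RF{2\#}$ is constructed via the spectral theorem by diagonalizing $B$ with eigenvalues in ascending order; here one must confirm that the generic $\SO_A(\R)$-stabilizer really has size $m_{2\#} = 4$ rather than, say, $2$ or $8$, and that the chosen section is a genuine fundamental set (not a multi-cover), so that the equality of stabilizer sizes computed via Lemma \ref{lemsubmonss} applies uniformly across the five splitting types $i \in \{0,1,2+,2-,2\#\}$. A secondary, lower-order concern is confirming that the measure-zero loci where uniqueness fails (orbit boundaries, degenerate discriminants, eigenvalue collisions) do not contribute to the cover-multiplicity assertion, which is automatic once the statement is interpreted in the sense of Lebesgue measure on $V_{A,b}(\R)^{(i)}$.
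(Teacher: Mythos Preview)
Your proposal is correct and follows essentially the same approach as the paper: both arguments hinge on the observation that $\Stab_{\SO_A(\R)}(v)=\Stab_{\SL_3(\R)}(v)$ (since any $g\in\SL_3(\R)$ fixing $(A,B)$ already preserves $A$), then invoke Lemma~\ref{lemsubmonss} for the stabilizer size $m_i$ and Proposition~\ref{propfundset} for the fundamental-set property of $\RF{i}$. The paper's proof is simply a terse three-line version of your unfolding argument; your concern about the anisotropic case $i=2\#$ is unnecessary, since Lemma~\ref{lemsubmonss} computes the $\SL_3(\R)$-stabilizer uniformly from the sign of $\Delta(A,B)$ and Proposition~\ref{propfundset} already asserts that $\RF{2\#}$ is a genuine fundamental set.
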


\begin{proof}
If $(A,B)\in V(\R)^{(i)}$, then by \pagebreak
Lemma \ref{lemsubmonss} the stabilizer of $(A,B)$ in
$\SL_3(\R)$ has size $m_i$. Since every element of this stabilizer
must belong to $\SO_A(\R)$, it follows that the size of
the stabilizer  of $(A,B)$  in $\SO_A(\R)$ is also $m_i$. The result now
follows from Proposition \ref{propfundset}.
\end{proof}

We will describe explicit constructions of $\FF_A$ in the next subsubsection.

\subsubsection{Counting $\SO_A(\Z)$-orbits on $V_A(\Z)$}\label{countsoava}

In this subsection, we fix positive integers $n$ and $b\in[0,3n)$.

\begin{nota}\label{N:4.11}{\em
Let $A$ be an integer-coefficient ternary quadratic form of
determinant $n/4$.  For a subset $\L\subset V_{A,b}(\Z)$ defined by
congruence conditions and $i\in\{0,1,2,2\pm,2\#\}$, let
$N^{(i)}(\L;A,X)$ denote the number of generic $\SO_A(\Z)$-orbits on
$\L^{(i)}$ having height less than~$X$. Let $\nu(\L)$ denote the
volume of the closure of $\L$ in $V_{A,b}(\widehat{\Z})$.
}\end{nota}

\begin{theorem}\label{thisotcount}
Let $\L\subset V_{A,b}(\Z)$ be defined
by finitely many congruence conditions, and let $\FF_A$ denote a
fundamental domain for the action of $\SO_A(\Z)$ on $\SO_A(\R)$.
Then 
\begin{equation*}
N^{(i)}(\L;A,X)=\frac{1}{m_i}\nu(\L)\Vol(\FF_A\cdot \smash{\RF{i}(X)})+o(X^{5/6}),
\end{equation*}
where $i=2\#$ if $A$ is isotropic over $\R$ and 
$i\in\{0,1,2+,2-\}$ if $A$ is anisotropic over $\R$. 
\end{theorem}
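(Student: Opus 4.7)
The plan is to adapt Bhargava's averaging technique, as carried out for the $\SL_3(\Z)$-action in \S\ref{subsecsubquar}, to the action of $\SO_A(\Z)$ on $V_{A,b}(\Z)$. By Theorem~\ref{thfunddom},
\begin{equation*}
m_i\cdot N^{(i)}(\L;A,X) \;=\; \#\bigl(\FF_A\cdot \RF{i}(X)\cap\L^\gen\bigr),
\end{equation*}
so it suffices to show that the right-hand side equals $\nu(\L)\Vol(\FF_A\cdot\RF{i}(X)) + o(X^{5/6})$. The argument splits naturally into two cases according to whether $A$ is isotropic or anisotropic over~$\R$.

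For anisotropic $A$ (so $i = 2\#$), the real group $\SO_A(\R)$ is compact, so $\FF_A$ is bounded. By Proposition~\ref{propfundset}, every coordinate of elements in $\FF_A\cdot\RF{2\#}(X)$ is $O(X^{1/6})$, and since $B$ has only five free coefficients once $b$ is fixed, the region has volume $\asymp X^{5/6}$ with codimension-$1$ projections of volume $O(X^{2/3})$. Davenport's lattice-point estimate (Proposition~\ref{davlem}) then applies directly and yields an error of $O(X^{2/3}) = o(X^{5/6})$, completing this case.

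For isotropic $A$ (so $i\in\{0,1,2+,2-\}$), the fundamental domain $\FF_A$ has a cusp, since $\SO_A$ becomes isomorphic to $\PGL_2$ over $\R$ and hence has $\R$-rank one; Davenport's proposition cannot be applied directly. I would choose $\FF_A$ to lie inside a Siegel set for $\SO_A(\R)$ parametrized by Iwasawa coordinates, and employ the standard averaging identity: for a fixed bounded open $G_0\subset\SO_A(\R)$ of positive measure,
\begin{equation*}
\#\bigl(\FF_A\cdot\RF{i}(X)\cap\L^\gen\bigr) = \frac{1}{\Vol(G_0)}\int_{\gamma\in\FF_A}\#\bigl(\gamma G_0\cdot\RF{i}(X)\cap\L^\gen\bigr)\,d\gamma.
\end{equation*}
Swapping the order of integration lets me apply Davenport's proposition on each slice $\FF_A\gamma\cdot\RF{i}(X)$ and then integrate the resulting local counts over the Siegel coordinates of $\FF_A$.

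The main obstacle is the cusp analysis. Following the treatment of the case $A = A_1$ in \cite[\S2]{BhSh}---which proceeds via the isomorphism $\PGL_2\cong\SO_{A_1}$ and the map $\phi$ of~\eqref{vtow}---together with the cuspidal arguments of \cite[\S3]{dodqf} already adapted in~\S\ref{subsecsubquar}, I would show that generic lattice points $(A,B)$ in the deep cusp of $\FF_A$ are forced to have a distinguished coefficient of $B$ vanish, and that such lattice points either correspond to non-generic rings (hence are excluded from $\L^\gen$) or contribute at strictly smaller order than $X^{5/6}$. After cutting off the Siegel integration at an appropriate power of $X$ and subtracting these vanishing contributions, the remaining integral yields the main term $\nu(\L)\Vol(\FF_A\cdot\RF{i}(X))$ with the required error $o(X^{5/6})$. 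For a general isotropic $A$ not equal to $A_1$, the cusp analysis reduces to the $A_1$ case upon conjugating by a $\GL_3(\R)$-element carrying $A_1$ to~$A$, with the altered integral structure on $V_{A,b}(\Z)$ absorbed into the local volume factor~$\nu(\L)$.
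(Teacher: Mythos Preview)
Your overall architecture---compact case by a direct Davenport count, non-compact case by averaging plus cusp-cutting---matches the paper, but the dichotomy you draw is the wrong one, and this produces a genuine gap in the cuspidal case.

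The split between ``easy'' and ``cuspidal'' is governed by isotropy of $A$ over $\Q$, not over $\R$: cusps in $\SO_A(\Z)\backslash\SO_A(\R)$ arise from positive $\Q$-rank, not positive $\R$-rank. When $A$ is anisotropic over $\Q$ (even if isotropic over $\R$), the quotient is compact, $\FF_A$ may be chosen bounded, and the direct argument you wrote for $i=2\#$ applies verbatim for every relevant $i$. Your assertion that $\FF_A$ ``has a cusp'' whenever $A$ is $\R$-isotropic is false in this subcase.

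The substantive gap is in the genuinely cuspidal ($\Q$-isotropic) case. You propose reducing to $A_1$ by a $\GL_3(\R)$-conjugation and absorbing the change of lattice into $\nu(\L)$. That is fine for the main term but destroys the cusp-cutting step. After a generic real conjugation the transported lattice is not commensurable with $V_{A_1,b}(\Z)$, so the $b_{11}$-coordinate of lattice points has no bounded denominator and the inference ``$|b_{11}|\ll t^{-4}X^{1/6}$ small $\Rightarrow b_{11}=0$'' fails; and even granting $b_{11}=0$, the common zero $[1{:}0{:}0]$ of $A_1$ and $B$ need not correspond to a rational point for the original pair $(A,B)$, so you cannot conclude non-genericity. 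The paper instead conjugates by $g_A\in\SL_3(\Q)$ to the fixed form $A_n$ of~\eqref{eqan} (Construction~\ref{sec:4.12}); the transported lattice is then commensurable with $V_{A_n,b}(\Z)$, giving bounded denominators, so deep in the cusp one obtains $b_{11}=0$ exactly, after which the shape of $A_n$ yields the common rational zero $[1{:}0{:}0]\in\P^2(\Q)$ and hence reducibility (Lemma~\ref{lemredisot1}). The $\Q$-rationality of the conjugation is not cosmetic---it is precisely what makes both halves of the cusp argument work.
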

Since the truth of Theorem~\ref{thisotcount} is independent of the
choice of $\FF_A$, we begin by constructing convenient fundamental
domains $\FF_A$ for the action of $\SO_A(\Z)$ on $\SO_A(\R)$.

\begin{cons}
\label{sec:4.12}
{\em
Suppose that $A$ is anisotropic over $\Q$. Then we may
choose $\FF_A$ to be a compact fundamental domain for the left action of
$\SO_A(\Z)$ on $\SO_A(\R)$. By Theorem~\ref{thfunddom}, the
multiset $\FF_A\cdot \smash{\RF{i}}$ is an $m_i$-fold cover of a fundamental
domain for the action of $\SO_A(\Z)$ on~$\smash{V_{A,b}(\R)^{(i)}}$. 

Next, suppose that $A$ is isotropic over $\Q$. Then there 
exists $g_A\in\SL_3(\Q)$ such that $\smash{g_AAg_A^t}$ is
\begin{equation}\label{eqan}
A_n:=\left( \begin{array}{ccc}  &  &
    1/2 \\  & -n &  \\ 1/2 &  &
  \end{array} \right).
\end{equation}
For $F=\R$ or $\Q$, consider the maps
\begin{equation}\label{eqAtoAn}
\begin{array}{rcl}
\sigma_A:V_{A,b}(F)&\to& \smash{V_{A_n,b}(F)},\\[.05in]
\sigma_G:\SO_A(F)&\to&\smash{\SO_{A_n}(F)},
\end{array}
\end{equation}
given by $\smash{\sigma_A(A,B)=(A_n,g_ABg_A^t)}$ and
$\smash{\sigma_G(g)=g_Agg_A^{-1}}$. The map $\sigma_A$ is height preserving
and $\sigma_A(g\cdot v)=\sigma_G(g)\cdot\sigma_A(v)$. Let
$\L_n\subset V_{A_n,b}(\R)$ denote the lattice $\sigma_A(\L)$
and $\Gamma\subset\SO_{A_n}(\R)$ the subgroup
$\sigma_G(\SO_A(\Z))$. Then $\Gamma$ is commensurable with
$\SO_{A_n}(\Z)$. By~\cite[Example 2.5]{Borel}, 
there exists a fundamental domain $\FF$ for the action of $\Gamma$ on 
$\SO_{A_n}(\R)$ that is contained in a finite union~$\cup_j g_j
\SS$, where $g_i\in\SO_{A_n}(\Q)$ and $\SS$ is a Siegel domain. We
choose $\SS$ to be $N'T'K$, where
\begin{equation*}\label{nak}
N':= \left\{\left(\begin{array}{ccc} 1 & {} &{}\\ {2nu} & 1 & {}\\ {nu^2} & {u} & {1}\end{array}\right):
        u\in\ [-1/2,1/2] \right\}    , \;\;
T' := \left\{\left(\begin{array}{ccc} t^{-2} & {} & {} \\ {} & 1 & {} \\{} & {} & t^2 \end{array}\right):
       t\geq 1/2 \right\}, \;\;
\end{equation*}
and $K$ is a maximal compact subgroup of $\SO_{A_n}(\R)$. We set
$\FF_A:=\smash{\sigma_G^{-1}(\FF)}$. Then $\FF_A$ is a
fundamental domain for the action of $\SO_A(\Z)$ on $\SO_A(\R)$.
}\end{cons}

\noindent
{\bf Proof of Theorem \ref{thisotcount}:}
First, we consider the case that $A$ is anisotropic.  
By Proposition~\ref{propfundset}, the entries of elements in $\FF_A\cdot \smash{\RF{i}(X)}$ are
each $O(X^{1/6})$.  An argument identical to the proof
of \cite[Lemma 14]{dodpf} implies that the number of non-generic integral points \pagebreak
in $\FF_A\cdot \smash{\RF{i}(X)}$ is 
$o(X^{5/6})$.  Thus, to prove Theorem~\ref{thisotcount}, it suffices to
estimate the number of elements in $\FF_A\cdot \smash{\RF{i}(X)}\cap \L$, and this is immediate 
from Proposition~\ref{davlem}.
 
Next, we assume that $A$ is isotropic.
Theorem \ref{thfunddom} implies that
\begin{equation*}
N^{(i)}(\L;A,X)=
\frac1{m_{i}}\#\bigl(\FF\cdot\sigma_A(\RF{i}(X))\cap\L_n^\gen\bigr),
\end{equation*}
where $\L_n^\gen$ denotes the subset $v\in\L_n$ such that
$\sigma_A^{-1}(v)$ is generic.  Let $G_0$ be an open nonempty bounded
subset of $\SO_{A_n}(\R)$.  Then, by an averaging argument as in
\cite[Theorem 2.5]{BhSh}, we obtain
\begin{equation}\label{eqavg423}
N^{(i)}(\L;A,X)=
\frac{1}{m_{i}\Vol(G_0)}\int_{h\in\FF}
\#\bigl(h G_0\cdot\sigma_A(\smash{\RF{i}}(X))\cap\L_n^{\gen}\bigr)dh,
\end{equation}
where the volume of $G_0$ is computed with respect to any fixed Haar measure
$dh$. 

\begin{lemma}\label{lemredisot1}
If $h=utk\in\SS=N'T'K$ as above, $t\gg X^{1/24}$, and $g\in\SO_{A_n}(\Q)$, then
$$hG_0\cdot\sigma_A(\smash{\RF{i}(X)})\cap g^{-1}\L_n^\gen=\emptyset.$$
\end{lemma}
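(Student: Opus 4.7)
The plan is a standard cusp-cutoff argument: when the diagonal part $t$ of $h$ is large compared to $X^{1/24}$, I will show that the $(1,1)$-entry of the second component of any element in $hG_0 \cdot \sigma_A(\RF{i}(X))$ is forced to vanish once we impose the rationality condition from $g^{-1}\L_n$, and that this vanishing in turn forces the associated quartic algebra to be reducible, hence non-generic.

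First, I would bound the size of the $(1,1)$-coordinate of the second component. Let $v = (A_n, B) \in h G_0 \cdot \sigma_A(\RF{i}(X))$. Writing $v = (utk)\, g_0 \, \sigma_A(A, B_0)$ for some $g_0 \in G_0$ and $(A, B_0) \in \RF{i}(X)$, we have $B = (utk) \, g_0 \, g_A B_0 g_A^t \, g_0^t \, (utk)^t$, since $\sigma_A(A,B_0) = (A_n, g_A B_0 g_A^t)$. By Proposition \ref{propfundset} the entries of $B_0$ are $O(X^{1/6})$, and since $g_A$ is a fixed rational matrix, $g_0 \in G_0$ is bounded, and $k \in K$ is bounded, the entries of $M := k g_0 g_A B_0 g_A^t g_0^t k^t$ are $O(X^{1/6})$ as well. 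The diagonal matrix $t = \mathrm{diag}(t^{-2}, 1, t^2)$ then scales $(tMt^t)_{11} = t^{-4} M_{11} = O(X^{1/6}/t^4)$. Finally, since every element of $N'$ is lower triangular unipotent with $1$'s on the diagonal, a direct calculation gives $(u N u^t)_{11} = N_{11}$ for any symmetric matrix $N$, so $B_{11} = (tMt^t)_{11} = O(X^{1/6}/t^4)$.

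Next, I would use the condition $v \in g^{-1}\L_n$ to obtain a discreteness constraint on $B_{11}$. If $gv \in \L_n = \sigma_A(\L)$, then $gv = (A_n, g B g^t)$ with $g B g^t = g_A B_\L g_A^t$ for some $(A, B_\L) \in \L \subset V(\Z)$; since $g_A$ is a fixed rational matrix, the entries of $g B g^t$ have a uniformly bounded denominator. As $g$ is itself a fixed rational matrix, $B = g^{-1}(g B g^t)(g^{-1})^t$ has entries lying in $\tfrac{1}{M}\Z$ for some positive integer $M = M(\L, g_A, g)$. Combining with the size bound, for $t \gg X^{1/24}$ with implied constant depending on $M$ and $G_0$, we get $|B_{11}| < 1/M$, forcing $B_{11} = 0$. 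Since the Gram entry $(A_n)_{11}$ is also zero, we have $A_n(1,0,0) = B(1,0,0) = 0$, so $[1:0:0]$ is a $\Q$-rational point on both conics defined by $A_n$ and $B$; by the geometric dictionary of Remark \ref{geom}, the étale quartic $\Q$-algebra associated to $gv$ then decomposes as $\Q \times L$ for some étale cubic algebra $L$ and so is not an $S_4$-quartic field. Hence $\sigma_A^{-1}(gv) \in \L$ is not an order in an $S_4$-field, and $v \notin g^{-1}\L_n^\gen$, as required. The only real subtleties are the identity $(uNu^t)_{11} = N_{11}$ (which is exactly what makes the chosen Siegel set $\SS = N'T'K$ work for this argument) and the denominator bookkeeping in the second step.
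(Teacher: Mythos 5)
Your proof is correct and follows essentially the same route as the paper: bound the coefficients of $hG_0\cdot\sigma_A(\RF{i}(X))$ using the Iwasawa coordinates of the Siegel domain so that the $(1,1)$-entry of $B$ is $O(X^{1/6}/t^4)$, then use the bounded denominators of the lattice $g^{-1}\L_n$ to force $B_{11}=0$ once $t\gg X^{1/24}$, and finally observe that the common rational zero $[1:0:0]$ of $A_n$ and $B$ makes the associated quartic reducible, hence non-generic. The one thing you spell out more explicitly than the paper is the exact identity $(uNu^t)_{11}=N_{11}$ for $u$ lower-triangular unipotent (the paper only notes that the entries of $u$ and $k$ are bounded and lists the resulting bounds $b_{11}\ll t^{-4}X^{1/6}$, etc.); this is a slight sharpening but not a different argument.
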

\begin{proof}
The entries of elements in $G_0\cdot\sigma_A(\smash{\RF{i}(X)})$ are all
$O(X^{1/6})$. Since $h=utk\in\SS$, the entries of $u$ and $k$ are
bounded. Hence the six coefficients of elements in
$hG_0(\sigma_A(\smash{\RF{i}(X)})$, considered as a subset of $V_{A_n}(\R)$, satisfy:
\begin{equation}\label{eqcoeffbdbiAn}
b_{11}\ll t^{-4}X^{1/6};\;\;
b_{12}\ll t^{-2}X^{1/6};\;\;
b_{13},b_{22}\ll X^{1/6};\;\;
b_{23}\ll t^{2}X^{1/6};\;\;
b_{33}\ll t^{4}X^{1/6}.
\end{equation}
The subset $\smash{V_{A_n,b}(\R)}$ of $V_{A_n}(\R)$ is cut out by one linear
equation, involving only $b_{13}$ and $b_{22}$. Hence $b_{11},b_{12},b_{22},b_{23},b_{33}$ form a complete set of
variables for $\smash{V_{A_n,b}(\R)}$, and elements in
$hG_0(\sigma_A(\smash{\RF{i}}(X))$ satisfy the same
coefficient bounds as in \eqref{eqcoeffbdbiAn}.

Since $g^{-1}\L_n$ is a lattice commensurable to $\smash{V_{A_n,b}}(\Z)$, the
denominators of elements in $g^{-1}\L$ are absolutely
bounded. Therefore, if $t\gg X^{1/24}$, then elements $(A_n,B)\in
hG_0\cdot\sigma_A(\smash{\RF{i}(X)})\cap g^{-1}\L_n$ satisfy $b_{11}=0$, and
are reducible since $A_n$ and $B$ have a common zero in
$\P^2(\Q)$.
\end{proof}

\vspace{-.1in}
\begin{lemma}\label{lemredisot2}
We have $\displaystyle
\int_{\substack{h=g_iutk\in\FF\\ t\ll X^{1/24}}}
\#\bigl(h G_0\cdot\sigma_A(\RF{i}(X))\cap(\L_n\setminus\L_n^{\gen})\bigr)dh=o(X^{5/6}).$
\end{lemma}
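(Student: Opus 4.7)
The plan is to bound the non-generic contribution by decomposing $\L_n \setminus \L_n^\gen$ into algebraic sub-loci and applying Davenport's lattice-point estimate (Proposition~\ref{davlem}) to each. A pair $(A_n, B) \in V_{A_n,b}(\Z)$ is non-generic precisely when the associated quartic ring fails to be an order in an $S_4$-quartic field, which happens when either (a) the conics $A_n = 0$ and $B = 0$ share a rational point in $\P^2(\Q)$ (reducible pairs), or (b) the corresponding \'etale quartic algebra has Galois group strictly smaller than $S_4$, i.e., $V_4$, $C_4$, $D_4$, or a reducible algebra.

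For each such sub-locus, I would follow the template of \cite[Lemma 14]{dodpf}, which establishes the analogous bound in the compact case where $A$ is anisotropic over $\Q$. By Theorem~\ref{thqrpar}, each sub-locus is cut out in $V_{A_n,b}(\R)$ by additional polynomial relations on the coefficients $b_{ij}$ of $B$; consequently, one of the five free coefficients $b_{11}, b_{12}, b_{22}, b_{23}, b_{33}$ can be expressed, up to $O_\epsilon(X^\epsilon)$ choices, as an algebraic function of the other four (here $b_{13}$ is already determined by the linear condition defining $V_{A_n,b}$). Combined with the coefficient bounds \eqref{eqcoeffbdbiAn} and Proposition~\ref{davlem}, this yields a pointwise count at each fixed $h = utk$ that is a factor of at least $X^{1/6-\epsilon}$ smaller than the generic count.

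Integrating this pointwise savings over the truncated Siegel range $t \in [1/2, X^{1/24}]$ against the Haar measure on $\FF$ then yields a total of $o(X^{5/6})$. The main obstacle will be uniformity across sub-loci: the $t$-dependence of the five coefficient ranges in \eqref{eqcoeffbdbiAn} varies from $t^{-4}X^{1/6}$ to $t^{4}X^{1/6}$, so one must carefully choose which coefficient to eliminate in each sub-locus so that the resulting $t$-integral remains polynomially bounded by a small power of $X^{1/24}$, which is then absorbed by the $X^{-1/6+\epsilon}$ coefficient savings. The remaining integrations over the unipotent $u$ and compact $k$ components of the Iwasawa decomposition $h = utk$ contribute only bounded factors, and the finite union $\cup_j g_j \SS$ containing $\FF$ is handled by summing over the finitely many $g_j$'s. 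Once the correct coefficient to eliminate is identified in each case by a direct algebraic analysis of the defining equations of the relevant sub-loci (as in \cite[\S3]{dodqf} and \cite[\S8]{MAJ}), the remaining estimates reduce to routine applications of Proposition~\ref{davlem} in each coefficient box.
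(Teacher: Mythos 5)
Your proposal takes essentially the same route as the paper: the paper's proof of this lemma is a one-line citation, ``identical to that of \cite[Lemma~14]{dodpf}'', and your sketch is a reasonable unpacking of how such an argument runs (decompose the non-generic locus into algebraic sub-loci, eliminate a coefficient up to $O_\epsilon(X^\epsilon)$ choices, apply Proposition~\ref{davlem}, then integrate over the Siegel range).

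Two small corrections to your analysis, though, which somewhat change the picture of where the difficulty lies. First, the claim that the pointwise count at each fixed $h$ is uniformly a factor of $X^{1/6-\epsilon}$ smaller than the generic count is not true near $t\approx X^{1/24}$: if the sub-locus forces you to eliminate $b_{11}$, whose range is $t^{-4}X^{1/6}\asymp 1$ there, the elimination provides essentially no pointwise savings at all. Second, and more importantly, the ``main obstacle'' you identify --- carefully choosing which coefficient to eliminate so that the $t$-integral stays small --- is not actually an obstacle given the truncation $t\ll X^{1/24}$. Whichever of the five free coordinates $b_{11},b_{12},b_{22},b_{23},b_{33}$ the defining equations of the sub-locus allow you to eliminate (and this is dictated by the equations, not a free choice), the corresponding $t$-powers in the remaining four-dimensional volume, integrated against the Haar measure factor $t^{-2}\,d^\times t$ over $1\ll t\ll X^{1/24}$, contribute at most $X^{1/12}$; the worst case (eliminating $b_{11}$) yields a total of $O_\epsilon(X^{4/6+1/12+\epsilon})=O_\epsilon(X^{3/4+\epsilon})$, and the other cases are even better. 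So the lemma does not rest on a delicate choice of eliminated coordinate; the cusp cutoff at $t\ll X^{1/24}$ together with the $t^{-2}$ weight makes every choice work. Beyond these expository points, the proposal is sound and matches the paper's citation-based proof. (Also, a small omission: the list of proper transitive subgroups of $S_4$ should include $A_4$ alongside $V_4$, $C_4$, $D_4$.)
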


\begin{proof} 
The proof is identical to that of \cite[Lemma 14]{dodpf}.
\end{proof}
 
By Lemma~\ref{lemredisot1}, the integral on the right-hand side of
\eqref{eqavg423} can be restricted to $h\in\FF'$, where
$\FF'=\{h=g_intk\in\FF:t\ll X^{1/24}\}$. Lemma
\ref{lemredisot2} implies that replacing $\smash{\L_n^\gen}$ by $\L_n$ on the right-hand side of 
\eqref{eqavg423} introduces an error of at most $o(X^{5/6})$.

Using Proposition \ref{davlem} to estimate
\smash{$\#\bigl(hG_0\sigma_A(\smash{\RF{i}(X)})\cap\L_n\bigr)$} for $i=0$, $1$,
$2+$, and $2-$, we~obtain
\begin{equation*}
\begin{array}{rcl}
N^{(i)}(\L;A,X)&=&\displaystyle\frac1{m_i\Vol(G_0)}
\int_{h\in\FF'}\#\bigl(hG_0\sigma_A(\RF{i}(X))\cap\L_n\bigr)dh+o(X^{5/6})\\[.15in]
&=&\displaystyle\frac1{m_i\Vol(G_0)}\int_{h\in\FF'}
\Vol'(hG_0\cdot\sigma_A(\RF{i}(X)))+O(t^4X^{4/6})dh+o(X^{5/6}),
\end{array}
\end{equation*}
where the volumes $\Vol'$ of sets in $V_{A_n,b}(\R)$ are computed with
respect to Euclidean measure normalized so that $\L_n$ has covolume
$1$.

The measure $dn\,d^\times t\,dk/t^2$ is a Haar measure on
$\SO_{A_n,b}(\R)$. \pagebreak The integral over
$\FF$ of $t^4$ is $O(X^{1/12})$ while the volume of
$\FF\setminus\FF'$ is $O(X^{-1/12})$. It follows that
\begin{equation*}
\begin{array}{rcl}
N^{(i)}(\L;A,X)&=&\displaystyle\frac{1}{m_i\Vol(G_0)}\int_{h\in\FF}
\Vol'(hG_0\cdot\sigma_A(\RF{i}(X)))dh+O(X^{3/4})+o(X^{5/6})\\[.15in]
&=&\displaystyle\frac{1}{m_i}\Vol'(\FF\cdot\sigma_A(\RF{i}(X)))+o(X^{5/6})\\[.15in]
&=&\displaystyle\frac{1}{m_i}\nu(\L)\Vol(\FF_A\cdot \RF{i}(X))+o(X^{5/6}),
\end{array}
\end{equation*}
where the second equality follows from the analogue of Proposition~\ref{propjacsub} for $V$, stated as Proposition~\ref{propjac} in
\S\ref{subsecnlocal} below, and the volume of $\FF_A\cdot \smash{\RF{i}}(X)$ is
computed with respect to Euclidean measure on $V_{A,b}(\R)$ normalized
so that $V_{A,b}(\Z)$ has covolume $1$. We have proven 
Theorem~\ref{thisotcount}. $\Box$

\medskip
\noindent
{\bf Proof of Theorem \ref{thqnmccountnr}:} By Corollary~\ref{qparcor2}, we have
\begin{equation*}
N^{(i)}_4(n,X)
\!=\!\!\!
\sum_{A\in\SL_3(\Z)\backslash\cQ_n(\Z)}\sum_{b=0}^{3n-1}N^{(i)}(V_{A,b}(\Z);A,X)
\!=\!
\frac{3n}{m_i}\sum_{A\in\SL_3(\Z)\backslash\cQ_n(\Z)}\!\!\!
\Vol(\FF_A\cdot \RF{i}(X))+o(X^{5/6}),
\end{equation*}
where the final equality follows from Theorem \ref{thisotcount}. $\Box$

\subsection{Uniformity estimates and squarefree sieves}\label{subsecnunif}

As before, $n\geq 1$ is a fixed integer. Throughout this subsection, we
also fix an integer $b\in[0,3n)$. We begin with the following definition.
\begin{defn}
\label{sec:4.15}
 {\em A collection $S=(S_p)_p$, where $S_p$ is
  an open and closed subset of $U_{n,b}(\Z_p)$ whose boundary has
  measure $0$, is called a {\bf collection of cubic local
    specifications}. Such a collection $S$ is
  {\bf large} if, for all but finitely many primes $p$, the set $S_p$
  contains all elements $f\in U_{n,b}(\Z_p)$ with
  $p^2\nmid\Delta(f)$. We associate to $S$ the set
  $U_{n,b}(\Z)_S \subset U_{n,b}(\Z)$ of integer-coefficient binary cubic forms,
  where $f\in U_{n,b}(\Z)_S $ if and only if $f\in S_p$ for all primes
  $p$. We also associate to $S$ the set $V_{A,b}(\Z)_S\subset
  V_{A,b}(\Z)$, where $v\in V_{A,b}(\Z)_S$ if $\Res(v)\in S_p$ for all
  primes $p$.}
\end{defn}

In this subsection, we deduce the following theorem.

\begin{theorem}\label{countingpairsfinal}
  Let $S$ be a large collection of local specifications on $U_{n,b}$, and 
   let $A$ be an integer-coefficient ternary cubic form with
  determinant $n/4$. For $i\in \{0,1,2\#,2+,2-\}$, we have
\begin{equation*}
\begin{array}{rcl}
\displaystyle N^\pm(U_{n,b}(\Z)_S;n,X)&=&\;\;\;\;\;
\displaystyle\nu(U_{n,b}(\Z)_S)\Vol(U_{n,b}(\R)^\pm(X))+o(X^{5/6});\\[.065in]
\displaystyle N^{(i)}(V_{A,b}(\Z)_S;A,X)&=&
\displaystyle\frac{1}{m_i}\nu(V_{A,b}(\Z)_S)\Vol(\FF_A\cdot
\RF{i}(X))+o(X^{5/6}).
\end{array}
\end{equation*}
\end{theorem}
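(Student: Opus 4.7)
The plan is to apply a standard squarefree sieve, with the finite-congruence counts of Proposition~\ref{thbccount} and Theorem~\ref{thisotcount} supplying the main term and with uniformity estimates controlling the tail contributions from infinitely many primes. The structure mirrors the proof of Theorem~\ref{thsubsieve} in~\S\ref{subsecsubunif}, now specialized to the fixed-$n$ setting where the auxiliary index variable $\Y$ used there is held constant.

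First I would reduce to finitely many congruence conditions. Because $S = (S_p)_p$ is large, there is a finite set $T$ of primes such that for every $p \notin T$ the complement $U_{n,b}(\Z_p) \setminus S_p$ (respectively $V_{A,b}(\Z_p) \setminus \Res^{-1}(S_p)$) is contained in $\W_p^{(1)}(U) \cup \W_p^{(2)}(U)$ (respectively $\W_p^{(1)}(V) \cup \W_p^{(2)}(V)$). For a parameter $M \geq \max T$, let $S^{(M)}$ denote the truncation of $S$ imposing the conditions of $S_p$ only at primes $p \leq M$. Then $U_{n,b}(\Z)_{S^{(M)}}$ and $V_{A,b}(\Z)_{S^{(M)}}$ are defined by finitely many congruence conditions, and Proposition~\ref{thbccount} and Theorem~\ref{thisotcount} apply directly to yield the respective main terms with $\nu(U_{n,b}(\Z)_{S^{(M)}})$ and $\nu(V_{A,b}(\Z)_{S^{(M)}})$ in place of the desired densities; these truncated densities converge to $\nu(U_{n,b}(\Z)_S)$ and $\nu(V_{A,b}(\Z)_S)$ as $M \to \infty$ by the large-collection hypothesis.

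The main work is to establish the tail bounds
\[
\sum_{p > M} N^\pm\bigl(\W_p^{(j)}(U) \cap U_{n,b}(\Z);\,n, X\bigr) \;+\; \sum_{p > M} N^{(i)}\bigl(\W_p^{(j)}(V) \cap V_{A,b}(\Z);\,A, X\bigr) \;=\; o_{M \to \infty}(X^{5/6}),
\]
uniformly in $X$, for each $j \in \{1, 2\}$. For $j = 1$, I would combine the geometric sieve inequality of \cite[Theorem~3.3]{geosieve} with Theorems~\ref{thcubringcount} and~\ref{thqnmccountnr}, exactly as in Proposition~\ref{propsmusgs}. For $j = 2$, I would follow the strategy of Proposition~\ref{propunifsub2}: a form $f \in \W_p^{(2)}(U) \cap U_{n,b}(\Z)$ with $H(f) < X$ corresponds to an $n$-monogenized cubic ring $(C, \alpha)$ whose underlying ring $C$ sits in an overorder $C_1$ of index $p$, so $|\Delta(C_1)| \ll X/p^2$, while $H(\alpha) \ll X^{1/6}$ by~\eqref{htest} since $n$ is fixed. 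Corollary~\ref{corcountint} then bounds the number of such pairs $(C_1, \alpha)$ (respectively triples $(Q_1, C_1, \alpha)$ in the $V$-case) by $O_\epsilon(X^{5/6 + \epsilon}/p)$, which sums over $p > M$ to $O_\epsilon(X^{5/6 + \epsilon}/M)$; the range of small primes must be handled separately by direct volume estimates as in~\eqref{smallp}.

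The main obstacle will be the $j = 2$ tail bound for $V_{A,b}$: one must verify that the map sending a nonmaximal pair $(A, B)$ to the triple $(Q_1, C_1, \alpha)$ attached to the overorder $C_1$ has uniformly bounded fiber size, which follows from the $n = 4$ analysis in \cite[\S4.2]{geosieve}. Once both tail estimates are in hand, a routine inclusion-exclusion --- truncate at $p \leq M$, apply the finite-condition count, absorb the tail into $o(X^{5/6})$, and send $M \to \infty$ after $X \to \infty$ --- yields both formulas of the theorem simultaneously.
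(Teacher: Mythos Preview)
Your proposal is correct and follows essentially the same route as the paper: factor out a tail estimate (the paper states this separately as Proposition~\ref{propunif}) by splitting $\W_p$ into $\W_p^{(1)}$ and $\W_p^{(2)}$, handle $\W_p^{(1)}$ via the geometric sieve and $\W_p^{(2)}$ via the overorder argument of Proposition~\ref{propunifsub2}, and then run the standard squarefree sieve. Two small citation adjustments: for the $j=1$ step the paper invokes \cite[Theorem~3.5]{geosieve} together with the proofs of Proposition~\ref{thbccount} and Theorem~\ref{thisotcount} (the specific-$A$ count, rather than the aggregated Theorem~\ref{thqnmccountnr}), and for $j=2$ the paper simply applies Proposition~\ref{propunifsub2} directly with $\Y=X^\epsilon$ rather than reproving it.
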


\noindent Note that we have $\nu(U_{n,b}(\Z)_S)=\prod_p\Vol(S_p)$. The
determination of $\nu(V_{A,b}(\Z)_S)$ in terms of $S$ is more subtle,
and we devote most of the next subsection to it.

To prove Theorem \ref{countingpairsfinal}, we require the following tail estimate.

\begin{proposition}\label{propunif}
For a prime $p$, let $\W_p(U_{n,b})$ $($resp.\ $\W_p(V_{A,b}))$ denote the set of elements in
$U_{n,b}(\Z)$ $($resp.\ $V_{A,b}(\Z))$ whose discriminants are divisible by $p^2$.
Then for any $M>0$, we
have
\begin{equation}\label{eqnunif}
\begin{array}{rcl}
\displaystyle\sum_{p> M}N(\W_p(U_{n,b});n,X)&=&\displaystyle O_\epsilon(X^{5/6+\epsilon}/M)+O(X^{1/3}),\\[.2in]
\displaystyle\sum_{p> M}N(\W_p(V_{A,b});A,X)&=&\displaystyle O_\epsilon(X^{5/6+\epsilon}/M)+O(X^{19/24}),
\end{array}
\end{equation}
where the implied constants are independent of $M$ and $X$.
\end{proposition}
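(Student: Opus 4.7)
The proof follows the strategy of Propositions~\ref{propsmusgs} and~\ref{propunifsub2} in the setting of fixed index $n$. In both the $U_{n,b}$ and $V_{A,b}$ settings, decompose $\W_p = \W_p^{(1)} \cup \W_p^{(2)}$, where $\W_p^{(1)}$ consists of elements whose corresponding cubic (resp.\ quartic) ring is totally ramified (resp.\ extra ramified) at $p$---the ``mod $p$'' reasons for $p^2 \mid \Delta$---while $\W_p^{(2)}$ consists of elements whose corresponding ring is nonmaximal at $p$ (``mod $p^2$'' reasons). It suffices to bound each sum separately.

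For the $\W_p^{(1)}$ contributions, the key input is that the totally (resp.\ extra) ramified elements at $p$ form a subset of density $O(1/p^2)$ in $U_{n,b}(\Z_p)$ (resp.\ $V_{A,b}(\Z_p)$). Combining this with Bhargava's geometric sieve \cite[Theorem 3.3]{geosieve} and our counting inputs (Proposition~\ref{thbccount} and Theorem~\ref{thisotcount}) yields the $O_\epsilon(X^{5/6+\epsilon}/M)$ main term, together with the stated edge terms $O(X^{1/3})$ and $O(X^{19/24})$. These edge terms account for primes $p$ so large that $p^2$ exceeds the shortest coordinate range of the relevant fundamental box---of size $O(X^{1/3}) \times O(X^{1/2})$ for $U_{n,b}(\Z)$, and five-dimensional with each coordinate bounded by $O(X^{1/6})$ for $V_{A,b}(\Z)$ by Proposition~\ref{propfundset}---so that a direct coefficient count replaces the $1/p^2$ density saving.

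For the $\W_p^{(2)}$ contributions, we use the overring embedding argument of Proposition~\ref{propunifsub2}. An element of $\W_p^{(2)}(U_{n,b})$ corresponds to an $n$-monogenized cubic ring $(C,\alpha)$ with $C$ nonmaximal at $p$; taking the unique overring $C_1 \supset C$ of index $p$ yields a map $(C,\alpha) \mapsto (C_1,\alpha)$ with fibers of size at most $3$ (by \cite[\S 4.2]{geosieve}) and with $|\Delta(C_1)| \ll X/p^2 \leq X/M^2$. Since $n$ is fixed, equation~\eqref{htest} gives $H(\alpha) \ll X^{1/6}$. Since each pair $(C, \alpha)$ contributes to at most one prime in the sum, Corollary~\ref{corcountint} bounds the total count by
\[
  O_\epsilon\bigl((X^{1/6})^2 \cdot (X/M^2)^{1/2+\epsilon}\bigr) = O_\epsilon\bigl(X^{5/6+\epsilon}/M^{1-\epsilon}\bigr),
\]
which is absorbed into $O_\epsilon(X^{5/6+\epsilon}/M)$. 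The $V_{A,b}$ case is handled by the same argument applied to triples $(Q_1, C_1, \alpha)$, using the $6$-to-$1$ bound of \cite[\S 4.2]{geosieve} on the quartic side.

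The main technical obstacle will be the bookkeeping required to extract the precise edge terms. In particular, for $V_{A,b}$, the five-dimensional lattice has coordinate ranges that depend on the Iwasawa coordinates of $\SO_A(\R)$, so a careful dyadic decomposition---analogous to the one carried out in the second half of the proof of Proposition~\ref{propunifsub2}---is needed to verify that the total boundary contribution sums to $O(X^{19/24})$ rather than something weaker.
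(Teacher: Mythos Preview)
Your proposal is correct and matches the paper's approach: decompose $\W_p=\W_p^{(1)}\cup\W_p^{(2)}$, handle $\W_p^{(1)}$ via the geometric sieve together with Proposition~\ref{thbccount} and Theorem~\ref{thisotcount}, and handle $\W_p^{(2)}$ via the overring-embedding argument of Proposition~\ref{propunifsub2}. Two minor points: the paper cites \cite[Theorem~3.5]{geosieve} (not~3.3) in the fixed-$n$ setting, and it dispatches $\W_p^{(2)}$ in one line by invoking Proposition~\ref{propunifsub2} with $\Y=X^\epsilon$---your anticipated dyadic bookkeeping for the $O(X^{19/24})$ edge term is unnecessary, as that term drops out directly from the geosieve application combined with the error analysis already carried out in the proof of Theorem~\ref{thisotcount}.
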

\begin{proof}
We write $\W_p(U_{n,b})$ (resp.\ $\W_p(V_{A,b})$) as the disjoint
union $\smash{\W_p^{(1)}}(U_{n,b})\cup \smash{\W_p^{(2)}}(U_{n,b})$ 
(resp.\ $\smash{\W_p^{(1)}}(V_{A,b})\cup \smash{\W_p^{(2)}}(V_{A,b})$), where
$\smash{\W_p^{(1)}}(U_{n,b})$ (resp.\ $\smash{\W_p^{(1)}}(V_{A,b})$) consists of
elements in $\W_p(U_{n,b})$ (resp.\ $\W_p(V_{A,b})$) whose
discriminants are divisible by $p$ for mod $p$ reasons (in the sense
of \cite[\S1.5]{geosieve}). The bounds of \eqref{eqnunif}, with
$\W_p(U_{n,b})$ and $\W_p(V_{A,b})$ replaced by $\smash{\W_p^{(1)}}(U_{n,b})$
and $\smash{\W_p^{(1)}}(V_{A,b})$, respectively, follow from \cite[Theorem
  3.5]{geosieve} in conjunction with the proofs of Theorems
\ref{thbccount} and \ref{thisotcount}.

The proof of \eqref{eqnunif}, with $\W_p(U_{n,b})$ and $\W_p(V_{A,b})$ replaced by
$\smash{\W_p^{(2)}}(U_{n,b})$ and $\smash{\W_p^{(2)}}(V_{A,b})$, respectively, follows from
Proposition \ref{propunifsub2} by taking $\Y=X^\epsilon$.
\end{proof}

\vspace{.035in}
\noindent
{\bf Proof of Theorem~\ref{countingpairsfinal}:} Theorem
\ref{countingpairsfinal} is an immediate consequence of the tail
estimate in Proposition~\ref{propunif}, together with an application
of a squarefree sieve identically as in the proof of \cite[Theorem
  2.21]{BhSh}. $\Box$

\subsection{Local mass formulas}\label{seclmn}

To compute the volumes in Theorem \ref{countingpairsfinal} in a manner analogous to those computed in \S\ref{seclmsub}, we prove certain mass formulas relating 
\'etale quartic and cubic algebras over $\Q_p$.

\begin{defn}
\label{sec:4.18}
{\em
Let $n$ be a positive integer, and let $p$ be a prime dividing
$n$. Let $(\O,\alpha)$ be an $n$-monogenized cubic ring over $\Z_p$, where $\O$
is the ring of integers of an \'etale cubic extension $K$ of~$\Q_p$.
We define the {\bf algebra at infinity} $\mathcal A_\infty(\alpha)$ of $(\O,\alpha)$
in two equivalent ways. 

Let $f(x,y)\in U_n(\Z_p)$ be a binary cubic form corresponding to
$(\O,\alpha)$, so that $K=\Q_p[x]/(f(x,1))$.  Then $K=\prod L_i$ as a
product of fields, where $L_i:=\Q_p[x]/(f_i(x,y))$ and $f=\prod f_i$
as a product of irreducible factors over~$\Z_p$. Without loss of
generality, we may assume that the leading coefficient of $f_1$ is
$n$, and that the other $f_i$'s have leading coefficient $1$. (Indeed,
if two factors $f_1$ and $f_2$ both have leading coefficients that are
multiples of $p$, then $\prod f_i$ would not correspond to a maximal
ring by Lemma~\ref{propcondmax}.) Then we define $\mathcal
A_\infty(\alpha)$ to be $L_1$, the factor of $K$ corresponding to
$f_1$.

Equivalently, we can also define $\mathcal A_\infty(\alpha)$
intrinsically in terms of the data $(\O,\alpha)$.  If
$\Z_p[\alpha]\subset \O$ factors as $\Z_p\times S$ for some quadratic
ring $S$ over $\Z_p$, then $K\cong\Q_p\times (S\otimes\Q_p)=L_1\times
L_2$, and we define $\mathcal
A_\infty(\alpha):=L_1\cong\Q_p$. Otherwise, if $\Z_p$ is not a factor
of $\Z_p[\alpha]$, then we define $\monKal$ as the (unique) factor
$L_1$ of $K$ that is a ramified field extension of $\Q_p$.
}\end{defn}

\begin{rem}{\em  
To see the equivalence of the two definitions of the algebra at
infinity $\mathcal A_\infty(\alpha)$, note that if
$f(x,y)=\prod_{i\geq1} f_i(x,y)$ as in Definition \ref{sec:4.18}, then
the characteristic polynomial of $\alpha$ is $g(x)= (f_1(x,n)/n)
\prod_{i> 1} f_i(x,n)$ when expressed as a product of monic
polynomials. If $f_1(x,y)$ has degree $1$, then $\O$ has a factor of $\Z_p$
corresponding to $f_1(x,y)$; this is because $f_1(x,n)/n$ is a linear
factor of $g(x)$ sharing no common root modulo $p$ with
$\prod_{i>1}f_i(x,n)\equiv x^2\pmod{p}$. Otherwise, $f_1(x,y)$ has
degree at least $2$ with a root at infinity modulo $p$. It follows
that $f_1(x,y)$ corresponds to a ramified factor of~$K$. Hence our two
definitions of $\mathcal A_\infty(\alpha)$ agree.}
\end{rem}

\begin{nota}
\label{sec:4.19}
{\em For an \'etale cubic extension $K_3$ of $\Q_p$, let $\RR(K_3)$
  denote again the set of \'etale non-overramified quartic extensions
  of $\Q_p$, up to isomorphism, with cubic resolvent $K_3$.  For an
  $n$-monogenized \'etale cubic extension $(K_3,\alpha)$ of $\Q_p$,
  let $\RR^+(K_3,\alpha)$ (resp.\ $\RR^-(K_3,\alpha)$) consist of
  those $K_4\in\RR(K_3)$ such that $\monKal$ splits (resp.\ stays
  inert) in the unramified quadratic extension $K_6/K_3$ corresponding
  to~$K_4$.  }\end{nota}
\begin{theorem}\label{thpartialmass}
Let $(K_3,\alpha)$ be an $n$-monogenized \'etale cubic extension of
$\Q_p$. If $(K_3,\alpha)$ is sufficiently ramified, then
\begin{equation}\label{firsteq}
  \sum_{K_4\in\RR^+(K_3,\alpha)}\smash{\frac{1}{|\Aut_{K_3}(K_4)|}}=1;\quad
  \sum_{K_4\in\RR^-(K_3,\alpha)}\smash{\frac{1}{|\Aut_{K_3}(K_4)|}}=0.
\end{equation}
Otherwise, 
\begin{equation}\label{secondeq}
  \sum_{K_4\in\RR^+(K_3,\alpha)}\smash{\frac{1}{|\Aut_{K_3}(K_4)|}}=\frac12; \quad
  \sum_{K_4\in\RR^-(K_3,\alpha)}\smash{\frac{1}{|\Aut_{K_3}(K_4)|}}=\frac12.
\end{equation}
\end{theorem}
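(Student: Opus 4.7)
The plan is to follow the strategy of the proof of Theorem~\ref{thmmassmain} while carefully tracking the extra data supplied by $\monKal$. Write $K_3 = \prod_{i=1}^{k} L_i$ as a product of fields. An unramified quadratic extension $K_6/K_3$ is determined by a subset $S = S(K_6) \subseteq \{1,\ldots,k\}$, where $i \in S$ precisely when $L_i$ is inert in $K_6/K_3$. Let $e_i$ denote the ramification index of $L_i/\Q_p$, and set $I := \{i : e_i\ \text{odd}\}$. Theorem~\ref{thHeilbronPID} combined with the square-norm criterion preceding it identifies $\RR(K_3)$ with the collection of subsets $S$ satisfying $|S \cap I| \equiv 0 \pmod{2}$. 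Writing $\monKal = L_{i_0}$, the subset $\RR^+(K_3,\alpha)$ corresponds to those admissible $S$ with $i_0 \notin S$, and $\RR^-(K_3,\alpha)$ to those with $i_0 \in S$.

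By the same argument used in the proof of Theorem~\ref{thmmassmain}, $|\Aut_{K_3}(K_4)| = |\Aut^+_{K_3}(K_6)| = 2^{k-1}$, so the two partial mass sums equal $2^{1-k}$ times the number of admissible $S$ with $i_0 \notin S$ and $i_0 \in S$, respectively. A direct analysis of the parity constraint then falls into three subcases: (i) if $i_0 \notin I$, the constraint is independent of whether $i_0 \in S$, so each count is $2^{k-2}$; (ii) if $i_0 \in I$ and $|I|\geq 2$, the constraint reduces to a parity condition on $|S \cap (I\setminus\{i_0\})|$ and again splits evenly as $2^{k-2}$ each; (iii) if $|I|=1$ with $I = \{i_0\}$, the constraint forces $i_0 \notin S$, giving counts $(2^{k-1}, 0)$. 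Multiplying by $2^{1-k}$ yields partial masses $(\tfrac{1}{2},\tfrac{1}{2})$ in cases (i) and (ii), and $(1, 0)$ in case (iii), matching the two alternatives of the theorem.

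It then suffices to show that case (iii) is equivalent to $(K_3,\alpha)$ being sufficiently ramified. Since $\monKal$ is only defined when $p \mid n$, the leading coefficient of any $f \in U_n(\Z_p)$ representing $(K_3,\alpha)$ vanishes modulo $p$, and a short inspection (via Lemma~\ref{propcondmax}, or directly by noting that three Galois-conjugate roots of an unramified cubic cannot include the point at infinity of $\bar f$) rules out $K_3$ being an unramified cubic field. The remaining cubic $K_3$'s with $|I|=1$ are: a totally ramified cubic field---where $k=1$ and $i_0 = 1$ automatically, matching case (a) of sufficient ramification; and $K_3 = \Q_p \times F$ with $F$ a ramified quadratic---where $I = \{1\}$ with $L_1 = \Q_p$, and the condition $i_0 = 1$ translates via Definition~\ref{sec:4.18} into $\Z_p[\alpha] = \Z_p \times \O$, namely case (b). The main obstacle is precisely this final matching step, and in particular the intrinsic characterization of $\monKal$ in terms of whether $\Z_p[\alpha]$ has a $\Z_p$-factor; the combinatorial count itself is routine.
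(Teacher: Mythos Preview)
Your argument is correct and follows essentially the same approach as the paper's proof: both reduce to the square-norm criterion (that $K_6\in\RR(K_3)$ iff an even number of factors with odd ramification index are inert), identify $\RR^\pm$ with the condition on $\monKal$, and then split into the case where $\monKal$ is forced to split versus the case where it is not. The paper handles the ``not sufficiently ramified'' case by producing a factor $F\neq\monKal$ with odd ramification and invoking a symmetry to match $\RR^+$ with $\RR^-$, whereas you carry out the explicit subset count; and the paper argues the ``sufficiently ramified'' implication directly from the definition rather than via your case (iii), but these are presentational differences rather than different ideas.
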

\begin{proof}
First, assume that $(K_3,\alpha)$ is sufficiently ramified;
then, by definition, either:
\begin{itemize}
\item[{\rm (a)}] $K_3$ is a totally ramified cubic extension of $\Q_p$, in
  which case $\monKal=K_3$; or
\item[{\rm (b)}] $K_3=\Q_p\times F$, where $F$ is a ramified quadratic
  extension of $\Q_p$, and $\monKal=\Q_p$.
\end{itemize}
Let $K_6$ be an unramified extension of $K_3$ such that
$N_{K_3/\Q_p}\Delta(K_6/K_3)$ is a square in $\Z_p^\times$. In Case (a) above,
$K_6/K_3$ must split for $N_{K_3/\Q_p}\Delta(K_6/K_3)$ to be a
square. In Case (b), note that $\smash{N_{F/\Q_p}\Delta(F'/F)}$ is a square for
every unramified extension $F'$ of $F$; hence, for
$N_{K_3/\Q_p}\Delta(K_6/K_3)$ to be a square, the component $\Q_p$ of $K_3$
must split in $K_6$. Thus (\ref{firsteq}) follows from Theorem \ref{thmmassmain}.

Next assume that $(K_3,\alpha)$ is not sufficiently ramified. Then $K_3$
has a factor $F\neq \monKal$ that is not a ramified quadratic extension of $\Q_p$. 
Now \smash{$N_{F/\Q_p}\Delta(F'/F)$} is a square for an unramified quadratic extension $F'/F$ 
if and only if it is split; hence the sizes of $\RR^+(K_3,\alpha)$ and $\RR^-(K_3,\alpha)$ are
equal. Since the automorphism group of every $K_4\in \RR(K_3)$ is the
same, (\ref{secondeq}) follows.
\end{proof}

\vspace{-.1in}
\begin{defn}
\label{sec:4.21}
{\em
Let $p$ be a prime. A ternary quadratic form $A\in\mathcal{Q}_n(\Z_p)$ is said
to be {\bf good at~$p$} if the conic in $\P^2(\overline{\F}_p)$ given as the 
zero set of the reduction of $A$ modulo $p$ is either smooth or a union of two distinct 
lines.  In the latter case, if each line is defined over $\F_p$,
then we define $\kappa_p(A):=1$ and say that $A$ is {\bf residually
  hyperbolic}. Otherwise, the two lines are a pair of conjugate lines
each defined over $\F_{p^2}$; we then define $\kappa_p(A):=-1$ and
say that $A$ is~{\bf residually~nonhyperbolic}.}
\end{defn}

\begin{lemma}\label{lem:quadlemma}
Let $p$ be a fixed prime and $n$ a fixed integer. 
\begin{itemize}
\item[{\rm (a)}] If $p\nmid n$, then $\mathcal{Q}_n(\Z_p)$ is a single $\SL_3(\Z_p)$-orbit. 
\item[{\rm (b)}] If $p\mid n$, then the set of good elements in
  $\mathcal{Q}_n(\Z_p)$ breaks up into two $\SL_3(\Z_p)$-orbits consisting of
  elements having $\kappa_p=1$ and $\kappa_p=-1$, respectively.
\end{itemize}
Finally, if $A$ is a ternary quadratic form over $\Z_p$ that is not
good at $p$, then for every ternary quadratic form $B$ over $\Z_p$,
$\Res(A,B)$ corresponds to a ring that is nonmaximal at $p$.
\end{lemma}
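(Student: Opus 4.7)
The plan is to treat all three parts through the single lens of the reduction $\bar A := A \bmod p$, together with a Hensel-type lifting. The rank of $\bar A$ as an $\F_p$-quadratic form determines the case: part~(a) corresponds to $\bar A$ nondegenerate (rank $3$), part~(b) to $\bar A$ of rank exactly~$2$ (which is exactly the ``good'' case when $p \mid n$), and part~(c) to $\bar A$ of rank~$\leq 1$.

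First I would establish the $\F_p$-classification. For odd primes~$p$, nondegenerate ternary quadratic forms over $\F_p$ of fixed determinant form a single $\GL_3(\F_p)$-orbit, and since $-I$ fixes every symmetric form this coincides with a single $\SL_3(\F_p)$-orbit, yielding~(a). Rank-$2$ ternary quadratic forms over $\F_p$ factor projectively as a union of two distinct lines; these either both lie in $\P^2(\F_p)$ or form a Galois-conjugate pair over $\F_{p^2}$, a dichotomy preserved by $\SL_3(\F_p)$. Standard representatives in each class can be taken to be $x_1 x_2$ and $x_1^2 - \alpha x_2^2$ (with $\alpha$ a nonsquare), giving exactly two $\SL_3(\F_p)$-orbits and hence~(b). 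To lift these $\F_p$-orbits to $\SL_3(\Z_p)$-orbits on $\mathcal{Q}_n(\Z_p)$, I would use that the orbit map $\SL_3 \to \mathcal{Q}_n$, $g \mapsto g A_0 g^t$, is smooth at each good $A_0$ (its differential is surjective modulo the Lie algebra of the smooth stabilizer $\SO_{\bar A_0}$), so Hensel's lemma applies. The prime $p = 2$ is handled analogously via the classical Jordan decomposition of $\Z_2$-lattices with prescribed determinant.

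For part~(c), $\bar A$ has rank $\leq 1$, and after an $\SL_3(\Z_p)$-change of basis we may assume every entry of the Gram matrix of~$A$ lies in $p\Z_p$ except possibly the $(1,1)$-entry. A direct cofactor computation then gives $v_p(\det A)\geq 2$ and $v_p\bigl(\mathrm{adj}(A)_{ij}\bigr)\geq 1$ for every $i,j$. Since the $x^3$- and $x^2y$-coefficients of $f(x,y) = 4 \det(Ax - By)$ are
\begin{equation*}
a = 4 \det(A) \qquad \text{and} \qquad b = -4\,\mathrm{tr}\bigl(\mathrm{adj}(A)\,B\bigr),
\end{equation*}
we conclude $p^2 \mid a$ and $p \mid b$. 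The twisted action of $\gamma = \mathrm{diag}(1/p, 1) \in \GL_2(\Q_p)$ (with $\det\gamma = 1/p$) then produces an integer-coefficient binary cubic form with discriminant $p^{-2}\Delta(f)$, corresponding to a strictly larger cubic order containing the original; hence by Lemma~\ref{propcondmax} the cubic ring attached to~$f$ is nonmaximal at~$p$.

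The main obstacle is making the Hensel-lifting step rigorous for parts~(a) and~(b): one must verify that the orbit map is smooth at the given $\F_p$-point, equivalently that the stabilizer $\SO_{\bar A}$ is smooth there. This holds precisely when $\bar A$ has rank~$\geq 2$, which is exactly why the ``good at~$p$'' hypothesis is indispensable; for rank-$\leq 1$ reductions the stabilizer degenerates and the lifting genuinely fails, which is why the behavior in part~(c) is qualitatively different and must instead be extracted from the Delone--Faddeev nonmaximality criterion.
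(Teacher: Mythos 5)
Your proof is correct but takes a genuinely different route from the paper's for parts (a) and (b) at odd primes. The paper's argument is entirely explicit: it diagonalizes $A$ over $\Z_p$ (via Cassels), proves the auxiliary fact that a unit-determinant diagonal binary form $[u_1,u_2]$ is $\SL_2(\Z_p)$-equivalent to $[1,u_1u_2]$, and then reaches the two normal forms $[1,1,n/4]$ and $[1,m,n/(4m)]$ (with $m$ a unit nonsquare), which it distinguishes by residual hyperbolicity. You instead classify the reductions $\bar A$ over $\F_p$ and lift $\SL_3(\F_p)$-equivalences to $\SL_3(\Z_p)$-equivalences by a Hensel/smoothness argument, observing that both the hypersurface $\mathcal{Q}_n$ and the stabilizer $\SO_{\bar A}$ are smooth precisely when $\bar A$ has rank $\geq 2$, i.e., exactly on the good locus. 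Both proofs fall back on the Jordan decomposition of $\Z_2$-lattices at $p=2$ (the Hensel calculus you sketch does not transfer cleanly there, so you are right to defer). Your approach cleanly explains, in a single stroke, why goodness is the precise dividing line — it is the smooth locus of the orbit map — whereas the paper's diagonalization is more elementary and self-contained and also exhibits explicit representatives of each orbit, which the paper subsequently uses. Two small points worth tightening in your write-up: (i) the phrase ``nondegenerate ternary quadratic forms over $\F_p$ of fixed determinant form a single $\GL_3(\F_p)$-orbit'' should say that forms with determinant in a fixed square class form a single $\GL_3(\F_p)$-orbit, after which the $-I$ trick (using $\det(-g)=-\det(g)$ in odd rank) restricts to a single $\SL_3(\F_p)$-orbit on the exact determinant level set; (ii) the smoothness assertion deserves the one-line verification that, for $\bar A$ of rank $2$, the image of $X\mapsto X\bar A+\bar A X^t$ on $\mathfrak{sl}_3(\F_p)$ is exactly the codimension-one hyperplane $\mathrm{tr}(\mathrm{adj}(\bar A)B)=0$, i.e., the tangent space of $\mathcal{Q}_n$ at $\bar A$. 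For the final assertion, your argument and the paper's are essentially the same: you compute that $p^2\mid 4\det(A)$ and $p\mid 4\,\mathrm{tr}(\mathrm{adj}(A)B)$ when $\bar A$ has rank $\leq 1$, and then either invoke Lemma~\ref{propcondmax} directly (as the paper does) or re-derive it via the unipotent substitution $x\mapsto x/p$, which is what you do.
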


\begin{proof}
Suppose $p\neq 2$. Then any $m$-ary quadratic form over $\Z_p$ can be diagonalized via $\SL_m(\Z_p)$-transformations (see, e.g., \cite[Chapter~8,~Theorem~3.1]{Cassels}). Furthermore, a diagonal binary quadratic form $[u_1,u_2]$ over $\Z_p$ with unit determinant (i.e., $u_1u_2\in\Z_p^\times$) is $\SL_2(\Z_p)$-equivalent to the diagonal form $[1,u_1u_2]$. Indeed, if at least one of $u_1$ or $u_2$ is a unit square, then this is immediate; if both $u_1$ and $u_2$ are 
unit nonsquares in $\Z_p$, then it suffices to check 
that a unit nonsquare in $\Z_p$ can be expressed as a sum of two unit squares, and this is true since the smallest nonsquare $m$ in $\Z/p\Z$ is the sum $1+(m-1)$ of two unit squares.

Part (a) now follows because any form $A\in \mathcal{Q}_n(\Z_p)$ with $n$ a unit in $\Z_p$ is 
seen to be $\SL_3(\Z_p)$-equivalent to the diagonal form $[1,1,n/4]$. To deduce Part (b), note that the reduction modulo~$p$ of a good form $A\in\mathcal{Q}_n(\Z_p)$ with $p\mid n$ must
have $\F_p$-rank~2.  Therefore, $A$ is $\SL_3(\Z_p)$-equivalent to either $[1,1,n/4]$ or $[1,m,n/(4m)]$, where $m$ is
a unit nonsquare; these two forms are $\SL_3(\Z_p)$-inequivalent, 
as one is residually hyperbolic while the other is not. 

Next suppose $p=2$. A binary quadratic form over $\Z_2$ with unit discriminant
is $\GL_2(\Z_2)$-equivalent to either $S(x,y):=xy$ or $U(x,y):=x^2+xy+y^2$ (see, e.g., \cite[Chapter~8,~Lemma~4.1]{Cassels}). 
Since a good ternary quadratic form $A$ over $\Z_2$
is not diagonalizable (for otherwise $A$ modulo~2 would be the square of a linear form), it must have a summand that is either $S$ or
$U$. Thus every good form
$A\in\mathcal{Q}_n(\Z_2)$ is equivalent to either $B:=S\oplus [-n]$ or~$B':=U\oplus [n/3]$. It remains to prove that $B$ is $\SL_3(\Z_2)$-equivalent
to $B'$ if and only if $n$ is odd. Since $x^2+xy+y^2$ represents 
every odd squareclass in $\Z_2$, it follows that when $n$
is odd, the form $B'$ represents $0$ and is thus equivalent to~$B$. When $n$ is even, $B$ and $B'$ are inequivalent since $B$
is residually hyperbolic while $B'$ is not. (For quadratic forms of arbitrary dimension, see \cite[Chapter~8]{Cassels} and \cite[Chapter~15,~\S7]{ConwaySloane} for the general theory over $\Z_p$, and \cite[Theorem~2.6]{Hanke_structure_of_massses} for a proof of~(a) over local~fields.)

Finally, if $A$ is a ternary quadratic form over $\Z_p$ that is not good, and $B$ is any ternary quadratic
form over $\Z_p$, then the $x^3$- and $x^2y$-coefficients of
$\Res(A,B)$ are divisible by $p^2$ and $p$, respectively.  The last
assertion of the lemma then follows from \cite[Lemma 2.10]{BBP} (stated below in
Lemma~\ref{propcondmax}), which asserts that a binary cubic form whose
$x^3$-coefficient is divisible by $p^2$ and whose $x^2y$-coefficient
is divisible by $p$ is nonmaximal at $p$.
\end{proof}

The above lemma motivates the definition of the following partial mass
formulas.
\begin{defn}
\label{sec:4.23}
{\em
Let $n$ be a positive integer, let $p$ a prime dividing $n$, and
let $f$ be an element in $U_n(\Z_p)$ corresponding to a maximal
ring. Then we define the {\bf $\kappa$-mass} of $f$ as
\begin{equation*}
  \Mass_p^\pm(f):=\sum_{\substack{(A,B)\in\frac{\Res^{-1}(f)}{\SL_3(\Z_p)}\\\kappa_p(A)=\pm1}}
  \smash{\frac1{\#\Stab_{\SL_3(\Z_p)}(A,B)}}, \vspace{-.1in}
\end{equation*}
where \smash{$\frac{\Res^{-1}(f)}{\SL_3(\Z_p)}$} is a
set of representatives for the action of $\SL_3(\Z_p)$ on
$\Res^{-1}(f)$.}
\end{defn}

\begin{corollary}\label{corpartialmass}
Let $n$ be a positive integer, let $p$ be a prime dividing $n$, and
let $f(x,y)$ be an element in $U_n(\Z_p)$ corresponding to a maximal
ring. Then
\begin{equation*}
\begin{array}{ll}
\displaystyle\Mass^+_p(f)=\,1\mbox{ and }\Mass^-_p(f)=\,0 & \mbox{ if $f$ is sufficiently ramified;}\\[.1in]
\displaystyle\Mass^+_p(f)=\frac12\mbox{ and }\Mass^-_p(f)=\frac12 & \mbox{ otherwise.}
\end{array}
\end{equation*}
\end{corollary}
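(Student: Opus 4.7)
The plan is to apply the parametrization of Theorem~\ref{thpid}(b) together with the geometric dictionary of Remark~\ref{geom} to match the sum defining $\Mass^{\pm}_p(f)$ with the corresponding sum in Theorem~\ref{thpartialmass}.

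Since $f$ corresponds to a maximal cubic ring $\O_{K_3}$ over $\Z_p$, Lemma~\ref{lem:quadlemma} forces every $(A,B)\in\Res^{-1}(f)$ to have $A$ good at~$p$, so $\kappa_p(A)\in\{\pm 1\}$ is well-defined and $\Mass_p(f)=\Mass^{+}_p(f)+\Mass^{-}_p(f)$; this latter sum equals~$1$ by Corollary~\ref{cortotmass}. Moreover, the proof of that corollary (via Theorems~\ref{thpid} and~\ref{thmmassmain}) identifies the $\SL_3(\Z_p)$-orbits on $\Res^{-1}(f)$ with the \'etale non-overramified quartic extensions $K_4\in\RR(K_3)$, in a way that matches $|\Stab_{\SL_3(\Z_p)}(A,B)|=|\Aut_{K_3}(K_4)|$. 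Under Theorem~\ref{thHeilbronPID}, each such $K_4$ is attached to a canonical unramified quadratic extension $K_6=K_6(A,B)$ of $K_3$ whose discriminant has square norm.

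The crux is to establish the geometric identity
\[
\kappa_p(A)=+1 \;\Longleftrightarrow\; \monKal \text{ splits in } K_6(A,B)/K_3.
\]
To prove it, I would use the dictionary of Remark~\ref{geom}: the factor $\monKal$ of $K_3$ corresponds to the irreducible factor $f_1(x,y)$ of $f$ with leading coefficient~$n$, and hence (since $p\mid n$) to a root of $f$ whose reduction modulo~$p$ lies at $(1\!:\!0)\in\P^1(\overline{\F}_p)$. Under the bijection $\PP\to\PP'$, this root maps to the pair of lines cut out by $A$ itself (set $y=0$ in $Ax-By$); these two lines form a Galois-stable pair inside the sextic line set $\LL$, and they are precisely the elements of $\LL$ lying over the place $\monKal$ of $K_3$. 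Since $K_6/K_3$ is unramified, the residual Galois action on this pair controls the splitting at $\monKal$: the two lines are individually defined over the residue field $\F_p$, i.e.\ $\kappa_p(A)=+1$, precisely when $\monKal$ splits in $K_6$, while an $\F_{p^2}$-conjugate pair, i.e.\ $\kappa_p(A)=-1$, corresponds to $\monKal$ remaining inert.

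With this identity in hand, $\Mass^{\pm}_p(f)=\sum_{K_4\in\RR^{\pm}(K_3,\alpha)}1/|\Aut_{K_3}(K_4)|$, and Corollary~\ref{corpartialmass} follows immediately from the two cases of Theorem~\ref{thpartialmass}. The main technical obstacle I anticipate is making the geometric identification rigorous when $\monKal$ is itself a ramified field extension of $\Q_p$: there, the two lines from $A$ collapse modulo~$p$ (into the unique line cut out by the inertia of $\monKal$), so the residual Galois action must be read off via an \'etale lift rather than naively over $\F_p$. This should be tractable because $K_6/K_3$ is unramified---so the splitting at $\monKal$ is determined entirely by the residual Frobenius action on the pair of lines above $\monKal$---but the case-by-case bookkeeping for $\monKal$ a ramified quadratic or totally ramified cubic requires care.
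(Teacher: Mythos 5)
Your proposal is correct and takes essentially the same route as the paper, whose own proof of this corollary is a one-line appeal to exactly the identity you isolate---that $\monKal$ splits in $K_6/K_3$ if and only if $A$ is residually hyperbolic---followed by Theorem~\ref{thpartialmass}. Two small corrections to your geometric elaboration: the root(s) of $f_1$ are not literally $(1\!:\!0)$ but only \emph{reduce} to $(1\!:\!0)$ modulo $p$, so the associated pair of lines is cut out by a form merely congruent to $A$ modulo $p$, not by $A$ itself; and the two lines of $A$ modulo $p$ never collapse when $A$ is good (good forms have rank-two reduction, hence two distinct lines), so in the ramified case what collapses is the collection of $2[\monKal:\Q_p]$ characteristic-zero lines above $\monKal$, which all reduce to that same pair.
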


\begin{proof}
Let $(A,B)$ be an element of $V(\Z_p)$ with
resolvent $f$. Then $(A,B)$ corresponds to an \'etale quartic algebra
$K_4$ with cubic resolvent $K_3$, and $K_4$ yields an unramified
quadratic extension~$K_6/K_3$. It follows that $\monKal$
splits (resp.\ stays inert) in $K_6$ if and only if $A$ is residually
hyperbolic (resp.\ residually nonhyperbolic). 
\end{proof}

\subsection{Volume computations and proof of the main $n$-monogenic theorem (Theorem~\ref{main_theorem})} \label{subsecnlocal}

Let $\Sigma$ denote a large collection of $n$-monogenized cubic fields. We may 
assume that every $(K,\alpha)$ arising from $\Sigma$ satisfies 
$\Tr(\alpha)=b\in[0,3n)$, 
as the general case follows by summing over
  all such~$b$. The large collection $\Sigma$ of $n$-monogenized
  cubic fields gives rise to a large collection $S=(S_p)_p$ of binary 
  cubic forms where $S_p\subset U_{n,b}(\Z_p)$ for every $p$ and every
  $f\in S_p$ is maximal. To compute $\nu(V_{A,b}(\Z)_S)$, we use the
  following Jacobian change of variables.

\begin{proposition}\label{propjac}
Let $K$ be $\R$ or $\Z_p$, let $|\cdot|$ denote the usual absolute
value on $K$, and let $s:U_{n,b}(K)\to V_{A,b}(K)$ be a continuous map
such that $\Res(s(f))=f$, for each $f\in U_{n,b}(K)$. Then there
exists a rational nonzero constant $\J$, independent of $K$ and $s$,
such that for any measurable function $\phi$ on $V_{A,b}(K)$, we have
\begin{equation*}\label{eqjac}
  \begin{array}{rcl}
 \!\! \! \displaystyle\int_{\SO_A(K)\cdot s(U_{n,b}(K))}\!\!\!\phi(v)dv&\!\!\!=\!\!\!&
|\J|\!\displaystyle\int_{f\in U_{n,b}(K)}\displaystyle\int_{g\in \SO_A(K)}
    \!\!\phi(g\cdot s(f))\omega(g) df,\\[0.2in]
\displaystyle\int_{V_{A,b}(K)}\!\!\phi(v)dv&\!\!\!=\!\!\!&
|\J|\!\displaystyle\int_{\substack{f\in U_{n,b}(K)\\ \Disc(f)\neq 0}}\!
\displaystyle\sum_{v\in\!\!\textstyle{\frac{V_{A,b}(K)\cap\Res^{-1}(f)}{\SO_A(K)}}}
\!\!\!\frac{1}{\#\Stab_{\SO_A(\Z_p)}(v)}\int_{g\in \SO_A(K)}\!\!\!\phi(g\cdot v)\omega(g)df,\\[-.05in]
  \end{array}
\end{equation*}
where \smash{$\frac{V_{A,b}(K)\cap\Res^{-1}(f)}{\SO_A(K)}$} is a
set of representatives for the action of $\SO_A(K)$ on
$V_{A,b}(K)\cap\Res^{-1}(f)$.
\end{proposition}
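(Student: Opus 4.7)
My plan is to follow the strategy of \cite[Propositions 3.11 and 3.12]{BhSh}, which establish the analogous identities for the action of $G(K)$ on $V(K)$; the present setting differs only by slicing the $\SL_3$-action by a fixed $A \in \cQ_n(K)$ and restricting to the stabilizer $\SO_A \subset \SL_3$. The central claim is that the map
\begin{equation*}
\Phi: \SO_A(K) \times U_{n,b}(K) \to V_{A,b}(K), \qquad (g, f) \mapsto g \cdot s(f),
\end{equation*}
has Jacobian of constant absolute value $|\J|$, with $\J \in \Q^\times$ independent of $g$, $f$, the section $s$, and the field $K$. A dimension check gives $\dim \SO_A(K) + \dim U_{n,b}(K) = 3 + 2 = 5 = \dim V_{A,b}(K)$, so $\Phi$ is a map between equidimensional spaces and its Jacobian is well-defined. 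Granted this constancy, the first identity is the change-of-variables formula applied to $\Phi$, and the second identity follows by orbit decomposition of the fibers of $\Res$: applying the first identity to a section passing through each $\SO_A(K)$-orbit of $V_{A,b}(K) \cap \Res^{-1}(f)$ and summing, the factor $1/|\Stab_{\SO_A(K)}(v)|$ appears via the usual orbit integration formula $\int_{\SO_A(K)/\Stab(v)} = |\Stab(v)|^{-1} \int_{\SO_A(K)} \omega(g)$.

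The first step is to show $|J_\Phi(g, f)|$ is independent of $g$. I will use the left-equivariance $\Phi(g_0 g, f) = g_0 \cdot \Phi(g, f)$ for $g_0 \in \SO_A(K)$. Since $g_0 \in \SL_3(K)$ acts on the $6$-dimensional space of ternary quadratic forms by $B \mapsto g_0 B g_0^t$ with determinant $\det(g_0)^{4} = 1$, and this action preserves the linear subspace $V_{A,b}(K)$ (because $g_0$ fixes both $A$ and the full cubic resolvent $\Res(A, B)$), the induced linear automorphism of $V_{A,b}(K)$ has determinant of absolute value $1$. Combined with the left-invariance of the Haar measure $\omega$ on $\SO_A(K)$, this yields $|J_\Phi(g_0 g, f)| = |J_\Phi(g, f)|$, so $|J_\Phi|$ descends to a function of $f$ alone.

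The second step is to show $|J_\Phi(e, f)|$ is a nonzero rational constant independent of $f$ and $s$. I will compute the differential
\begin{equation*}
d\Phi_{(e, f)} : \mathfrak{so}_A(K) \oplus T_f U_{n,b}(K) \to T_{s(f)} V_{A,b}(K), \qquad (X, Y) \mapsto X \cdot s(f) + ds_f(Y),
\end{equation*}
whose first summand lies in $\ker d\Res_{s(f)}$ by $\SO_A(K)$-invariance of $\Res$, while the second summand provides a splitting of $d\Res_{s(f)}$ onto the transverse direction. Hence $|J_\Phi(e, f)|$ equals the product of (i) the Jacobian of the infinitesimal $\SO_A(K)$-action on $\Res^{-1}(f) \cap V_{A,b}(K)$ at $s(f)$, and (ii) the Jacobian of $ds_f$ onto the transversal. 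This product is the reciprocal of the fiberwise Jacobian of $\Res$, which is intrinsic to the pair $(\SO_A, \Res)$ and hence independent of the section; a direct computation, parallel to \cite[Proposition 3.11]{BhSh}, will verify that this invariant is a nonzero rational constant.

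The main obstacle is this explicit Jacobian computation. The cleanest route is to reduce (when $A$ is isotropic over $K$) to the case $A = A_n$ from \eqref{eqan} via the $\Q$-rational map $\sigma_A$ of \eqref{eqAtoAn}, which intertwines the $\SO_A$- and $\SO_{A_n}$-actions on $V_A$ and $V_{A_n}$; the $\PGL_2 \cong \SO_{A_1}$ isomorphism of \eqref{eqtwistedaction1} combined with the map $\phi$ of \eqref{vtow} then transports the calculation (modulo the constraint $\det A = n/4$ and the $x^2y$-coefficient normalization by $b$) to the Jacobian identity for binary quartic forms under the $\PGL_2$-action, carried out in \cite[Proposition 3.11]{BhSh} and \cite{MWt}. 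The anisotropic case proceeds similarly after extending scalars to a splitting field, with Galois descent ensuring the resulting $\J$ remains rational. Since all the formulas involved are polynomial with $\Q$-coefficients, $\J$ is automatically independent of $K$.
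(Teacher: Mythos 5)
Your proposal reconstructs, in detail, exactly the argument that the paper invokes: the paper's proof is a one-line citation to the proofs of \cite[Propositions 3.11--3.12]{BhSh} (and \cite[Remark 3.14]{BhSh}), which proceed via the same change-of-variables map $\Phi(g,f) = g\cdot s(f)$, the same left-invariance argument to reduce to $g=e$, the same fiberwise analysis to get constancy in $f$ and independence of $s$, and the same observation that the Jacobian is a polynomial with $\Q$-coefficients hence independent of $K$. Your additional suggestion of transporting the explicit Jacobian computation to the binary-quartic-form/$\PGL_2$ case via $\sigma_A$ and the isomorphism \eqref{eqtwistedaction1} is a reasonable way to carry out the verification, but it is not part of the cited argument and is unnecessary for the statement as proved in the paper.
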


\begin{proof}
   Proposition~\ref{propjac} 
   follows immediately
  from the proofs of \cite[Propositions 3.11--12]{BhSh} (see also
  \cite[Remark~3.14]{BhSh}).
\end{proof}

\vspace{-.085in}
\begin{corollary}\label{corjac}
Let $S_p\subset U_{n,b}(\Z_p)$ be a closed subset whose boundary has
measure $0$. Consider the set $V_{A,b}(\Z)_{S,p}$ defined by
$V_{A,b}(\Z)_{S,p}:=V_{A,b}(\Z_p)\cap\Res^{-1}(S_p).$ Then
\begin{equation}
\Vol(V_{A,b}(\Z)_{S,p})=
\left\{
\begin{array}{ll}
\;\;\,\displaystyle |\J|_p\Vol(\SO_A(\Z_p))\Vol(S_p) &\mbox{when } p\nmid n;
  \\[.05in]
  \displaystyle \frac12|\J|_p\Vol(\SO_A(\Z_p))\Vol(S_p)
  (1+\kappa_p(A)\rho_p(\Sigma)) &\mbox{when } p\mid n.
\end{array}\right.
\end{equation}
\end{corollary}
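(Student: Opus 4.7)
The plan is to apply the second line of Proposition~\ref{propjac} to the characteristic function $\phi = \mathbf{1}_{V_{A,b}(\Z)_{S,p}}$. Since $V_{A,b}(\Z_p)$ is $\SO_A(\Z_p)$-stable and the resolvent map is $\SO_A$-invariant, for any $v \in V_{A,b}(\Z_p)$ the inner integral $\int_{g\in\SO_A(\Z_p)}\phi(g\cdot v)\,\omega(g)$ evaluates to $\Vol(\SO_A(\Z_p))\cdot \mathbf{1}_{\Res(v)\in S_p}$. This yields
\[
\Vol(V_{A,b}(\Z)_{S,p}) = |\J|_p\,\Vol(\SO_A(\Z_p))\int_{f\in S_p}M_p^A(f)\,df,
\]
where $M_p^A(f) := \sum_v \bigl(\#\Stab_{\SO_A(\Z_p)}(v)\bigr)^{-1}$ with the sum running over representatives of the $\SO_A(\Z_p)$-orbits on $V_{A,b}(\Z_p)\cap\Res^{-1}(f)$. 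The problem thus reduces to evaluating $M_p^A(f)$ for $f\in S_p$, which by hypothesis corresponds to a maximal ring.

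To identify $M_p^A(f)$ with the mass formulas of \S\ref{seclmn}, I would use the fact that $\SL_3(\Z_p)$-orbits on $\Res^{-1}(f)$ fiber over $\SL_3(\Z_p)\backslash\mathcal{Q}_n(\Z_p)$ by projection to the first coordinate, and that in each fiber the $\SO_A(\Z_p)$-orbits on $V_{A,b}(\Z_p)\cap\Res^{-1}(f)$ parametrize exactly those $\SL_3(\Z_p)$-orbits whose first coordinate is $\SL_3(\Z_p)$-equivalent to $A$, with stabilizers preserved. When $p\nmid n$, Lemma~\ref{lem:quadlemma}(a) gives a single $\SL_3(\Z_p)$-orbit on $\mathcal{Q}_n(\Z_p)$, hence $M_p^A(f)=\Mass_p(f)=1$ by Corollary~\ref{cortotmass}, which immediately yields the first formula. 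When $p\mid n$, Lemma~\ref{lem:quadlemma}(b) gives exactly two orbits of good forms (indexed by $\kappa_p=\pm1$), while non-good forms cannot contribute since they force nonmaximality by the final assertion of Lemma~\ref{lem:quadlemma}. Hence $M_p^A(f)=\Mass_p^{\kappa_p(A)}(f)$ in the notation of Definition~\ref{sec:4.23}, and Corollary~\ref{corpartialmass} gives
\[
M_p^A(f) = \tfrac{1}{2}\bigl(1+\kappa_p(A)\cdot\mathbf{1}_{\{f\text{ is sufficiently ramified}\}}\bigr).
\]

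Integrating this expression over $f\in S_p$ and using that $\rho_p(\Sigma)$ is, by definition, the volume proportion of sufficiently ramified elements inside $S_p$ (equivalently, inside $\Sigma_p$) then completes the proof. The main obstacle is verifying that the $\pm$ indexing of $\Mass_p^{\pm}$ in Definition~\ref{sec:4.23} matches the residual $\kappa_p(A)=\pm 1$ indexing arising from Lemma~\ref{lem:quadlemma}; this identification is essentially encoded in the geometric picture of Remark~\ref{geom}, since residual hyperbolicity of $A$ modulo $p$ corresponds exactly to the splitting of the factor $\monKal$ in the unramified extension $K_6/K_3$ attached to the quartic ring in Theorem~\ref{thHeilbronPID}, which in turn distinguishes $\RR^+(K_3,\alpha)$ from $\RR^-(K_3,\alpha)$.
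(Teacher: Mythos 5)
Your proposal is correct and follows essentially the same route as the paper: apply the second identity of Proposition~\ref{propjac} to the characteristic function of $V_{A,b}(\Z)_{S,p}$, use Lemma~\ref{lem:quadlemma} to identify the inner sum with $\Mass_p(f)$ (when $p\nmid n$) or $\Mass_p^{\kappa_p(A)}(f)$ (when $p\mid n$), and then integrate via Corollaries~\ref{cortotmass} and~\ref{corpartialmass}. The only cosmetic difference is that you spell out the fibering of $\SL_3(\Z_p)$-orbits over $\SL_3(\Z_p)\backslash\mathcal{Q}_n(\Z_p)$ and the consistency of the $\kappa_p$ indexing with $\RR^{\pm}$, whereas the paper asserts these more tersely (the $\kappa_p$-vs-splitting match is already encoded in the proof of Corollary~\ref{corpartialmass}).
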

\begin{proof}
By Proposition~\ref{propjac} (setting $\phi$ to be the characteristic
function of $V(\Z)_{S,p}$), we have
\begin{equation}\label{eqcorjact1}
\Vol(V_{A,b}(\Z)_{S,p})=|\J|_p\Vol(\SO_A(\Z_p))\int_{f\in S_p}
\sum_{v\in\!\!\textstyle{\frac{V_{A,b}(\Z_p)\cap\Res^{-1}(f)}{\SO_A(\Z_p)}}}
\!\!\!\smash{\frac{1}{\#\Stab_{\SL_3(\Z_p)}(v)}}
df, \vspace{-.065in}
\end{equation}
since the stabilizers of $v$ in $\SO_A(\Z_p)$ and $\SL_3(\Z_p)$ are
the same, being equal to $\SO_A(\Z_p)\cap \SO_B(\Z_p)$. 

If $p\nmid n$, then 
$\mathcal{Q}_n(\Z_p)$ 
is a single $\SL_3(\Z_p)$-orbit by Lemma~\ref{lem:quadlemma}(a). Hence there
is a one-to-one correspondence between the sets $\SO_A(\Z_p)\backslash
(V_{A,b}(\Z_p)\cap\Res^{-1}(f))$ and $\SL_3(\Z_p)\backslash
\Res^{-1}(f)$, and so the integrand on the right-hand side
of \eqref{eqcorjact1} is equal to~$\Mass_p(f)$. 

If $p\mid n$, then 
$\mathcal{Q}_n(\Z_p)$ 
consists of two $\SL_3(\Z_p)$-orbits by Lemma~\ref{lem:quadlemma}(b), having $\kappa_p=1$ and
$\kappa_p=-1$, respectively. Thus there is a one-to-one correspondence
between $\SO_A(\Z_p)\backslash (V_{A,b}(\Z_p)\cap\Res^{-1}(f))$ and the
set of elements $(A_1,B_1)\in\SL_3(\Z_p)\backslash \Res^{-1}(f)$ with
$\kappa_p(A_1)=\kappa_p(A)=\pm1$. Hence 
the integrand on the right-hand side
of \eqref{eqcorjact1} is $\Mass^{\pm}_p(f)$.
By Corollaries~\ref{cortotmass} and \ref{corpartialmass}, we then have
\begin{equation*}
\smash{\int_{f\in S_p}}\Mass_p(f)=1; \qquad \smash{\int_{f\in S_p}}\Mass_p^\pm(f)=\frac{1\pm\rho_p(\Sigma)}{2},
\end{equation*}
as desired. 
\end{proof}

\vspace{-.085in}
\begin{proposition}\label{propgenus}
Let $\GG$ be a genus of elements in $\mathcal{Q}_n(\Z)$ that are good at $p$ for all $p\mid n$. Then 
\begin{equation*}
  \sum_{A\in\SL_3(\Z)\backslash\GG}
  \frac{N^{(i)}(V_{A,b}(\Z)_S;A,X)}{N^\pm(U_{n,b}(\Z)_S;n,X)}
=\frac2{m_i}\prod_{p\mid n}\frac{1+\kappa_p(A)\rho_p(\Sigma)}{2}+o(1),
\end{equation*}
where: $i\in\{0, 2+, 2-, 2\#\}$ if $\pm=+$; $i=1$ if
$\pm=-$ and $A$ is isotropic over $\R$; and $i=2\#$ and $\pm=+$ if $A$ is anisotropic over~$\R$.
\end{proposition}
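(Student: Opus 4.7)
The plan is to reduce the ratio, via Theorem~\ref{countingpairsfinal}, to a sum of local volume ratios; rewrite each such ratio using Corollary~\ref{corjac} at the finite places and Proposition~\ref{propjac} at the archimedean place; and then invoke the Siegel--Tamagawa mass formula for the genus $\GG$.

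By Theorem~\ref{countingpairsfinal}, each summand equals
\begin{equation*}
\frac{1}{m_i}\cdot\frac{\nu(V_{A,b}(\Z)_S)}{\nu(U_{n,b}(\Z)_S)}\cdot\frac{\Vol(\FF_A\cdot\RF{i}(X))}{\Vol(U_{n,b}(\R)^\pm(X))}+o(1),
\end{equation*}
since both main terms are of order $X^{5/6}$.  Because $\nu(U_{n,b}(\Z)_S)=\prod_p\Vol(S_p)$ and every $f\in S_p$ corresponds to a maximal ring, Corollary~\ref{corjac} applies at every prime to give
\begin{equation*}
\frac{\nu(V_{A,b}(\Z)_S)}{\nu(U_{n,b}(\Z)_S)}=\prod_p|\J|_p\,\Vol(\SO_A(\Z_p))\cdot\prod_{p\mid n}\frac{1+\kappa_p(A)\rho_p(\Sigma)}{2}.
\end{equation*}

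For the archimedean ratio, I would apply the second line of Proposition~\ref{propjac} with $\phi$ the characteristic function of $\FF_A\cdot\RF{i}(X)$.  Because $\RF{i}$ is the image of a section $s:U_{n,b}(\R)^\pm\to V_{A,b}(\R)^{(i)}$ constructed in \S\ref{sssnqred}, and because the intersection of any $\SL_3(\R)$-orbit in $V(\R)^{(i)}$ with $V_{A,b}(\R)$ is a single $\SO_A(\R)$-orbit (the $\SL_3$-stabilizer of the first coordinate being precisely $\SO_A$), the $g$-integrand in the formula reduces, for each generic $f$ with $H(f)<X$, to the characteristic function of $\FF_A\cdot\mathrm{Stab}(s(f))$, a set of Haar measure $m_i\Vol(\FF_A)$.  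Canceling this $m_i$ against the stabilizer weight $1/m_i$ in Proposition~\ref{propjac} yields
\begin{equation*}
\Vol(\FF_A\cdot\RF{i}(X))=|\J|_\infty\,\Vol(\FF_A)\,\Vol(U_{n,b}(\R)^\pm(X)).
\end{equation*}

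Assembling the factors and invoking the product formula $|\J|_\infty\prod_p|\J|_p=1$ for $\J\in\Q^\times$ (which holds by Proposition~\ref{propjacsub}), the entire expression becomes
\begin{equation*}
\frac{1}{m_i}\prod_{p\mid n}\frac{1+\kappa_p(A)\rho_p(\Sigma)}{2}\cdot\sum_{A\in\SL_3(\Z)\backslash\GG}\Vol(\FF_A)\prod_p\Vol(\SO_A(\Z_p))+o(1),
\end{equation*}
where I have used that $\kappa_p(A)$ depends only on $A$ modulo $p$ up to $\SL_3(\Z_p)$-equivalence and is therefore a genus invariant for each $p\mid n$.  The Siegel--Tamagawa mass formula for the three-variable special orthogonal group, whose Tamagawa number is $2$, then identifies the remaining sum as $2$, yielding the claimed value.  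The principal obstacle I anticipate is matching the paper's Haar measures on $\SO_A(\R)$ and on $\SO_A(\Z_p)$ with those for which the global mass equals exactly the Tamagawa number $2$; a secondary subtlety is to verify on a case-by-case basis (isotropic vs.\ anisotropic $A$, and each splitting type $i$) that the section $s$ and the set $\RF{i}$ indeed behave as described.
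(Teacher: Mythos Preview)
Your proposal is correct and follows essentially the same route as the paper: apply Theorem~\ref{countingpairsfinal} to each summand, compute the $p$-adic volume ratio via Corollary~\ref{corjac} and the archimedean volume via Proposition~\ref{propjac} (with $\phi$ the characteristic function of $\FF_A\cdot\RF{i}(X)$), and then sum over the genus using the Tamagawa number $2$ of $\SO_3$. Your explicit invocation of the product formula for $|\J|_v$ and your observation that $\kappa_p(A)$ is a genus invariant are exactly the points the paper uses implicitly; the Haar-measure compatibility you flag as a concern is handled because Proposition~\ref{propjac} fixes the same $\omega$ at every place, so the genus mass is automatically computed with respect to Tamagawa measure.
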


\begin{proof}
From Theorem \ref{countingpairsfinal}, we obtain
\begin{equation*}
\frac{\smash{N^{(i)}(V_{A,b}(\Z)_S;A,X)}}{N^\pm(U_{n,b}(\Z)_S;n,X)}=
\frac{\frac{1}{m_i}\nu(\smash{V_{A,b}(\Z)_S})\Vol(\FF_A\cdot \smash{\RF{i}}(X))}{
\nu(U_{n,b}(\Z)_S)\Vol(\smash{U_{n,b}}(\R)^\pm)} +o(1). 
\end{equation*}
We evaluate $\Vol(\FF_A\cdot \smash{\RF{i}}(X))$ and $\nu(V_{A,b}(\Z)_{S,p})$
using Proposition \ref{propjac} (with $\phi$ being the \pagebreak characteristic
function of $\FF_A\cdot \smash{\RF{i}}(X)$) and Corollary \ref{corjac},
respectively, yielding
\begin{equation*}
\begin{array}{rcl}
\displaystyle \frac{\frac{1}{m_i}\nu(V_{A,b}(\Z)_S)\Vol(\FF_A\cdot \smash{\RF{i}}(X))}{
\nu(U_{n,b}(\Z)_S)\Vol(U_{n,b}(\R)^\pm)}&=&
\displaystyle\frac{1}{m_i}\Vol(\FF_A)\prod_{p}\Vol(\SO_A(\Z_p))
\prod_{p\mid n}\frac{1+\kappa_p(A)\rho_p(\Sigma)}{2}.
\end{array}
\end{equation*}
Since 
\begin{equation*}
\sum_{A\in\SL_3(\Z)\backslash\GG}\Vol(\FF_A)\prod_{p}\Vol(\SO_A(\Z_p))=2,
\end{equation*}
the Tamagawa number of $\SO_3$, the proposition follows.
\end{proof}

\noindent
The following lemma
will be used repeatedly in the proof of Theorem~\ref{main_theorem}. 

\begin{lemma}[{\cite[{Lemma~4.17}]{Hanke_structure_of_massses}}]\label{hankelemma}
Suppose $\mathbb T$ is a nonempty finite set, $X_i$ and $Y_i$ are
indeterminates for each $i\in\mathbb T$, and $\mu_N$ is the set
of all $N^{th}$ roots of unity in $\C$. Then for any $c\in\mu_N$, we have
the polynomial~identity
\begin{equation}\label{eqcombeq}
\sum_{\substack{(\epsilon_i)_{i\in\mathbb T}\in\mu_N^{\mathbb T}\\\prod_{i\in\mathbb T}\epsilon_i=c}}
\prod_{i\in \mathbb T}(X_i+\epsilon_i Y_i)=N^{|\mathbb T|-1}\smash{\Bigl[\prod_{i\in\mathbb T}X_i+c\prod_{i\in \mathbb T}Y_i\Bigr]}.
\end{equation}
\end{lemma}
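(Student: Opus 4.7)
The plan is to prove \eqref{eqcombeq} by expanding the product on the left via the distributive law, swapping the order of summation, and then applying orthogonality of $N$th roots of unity. First, I would write
\[
\prod_{i\in\mathbb T}(X_i+\epsilon_i Y_i)=\sum_{S\subseteq\mathbb T}\Bigl(\prod_{i\notin S}X_i\Bigr)\Bigl(\prod_{i\in S}Y_i\Bigr)\prod_{i\in S}\epsilon_i,
\]
where $S$ records the coordinates at which the summand $\epsilon_iY_i$ is chosen in place of $X_i$. Substituting this expansion into the left-hand side of \eqref{eqcombeq} and interchanging the two summations expresses the left-hand side as
\[
\sum_{S\subseteq\mathbb T}\Bigl(\prod_{i\notin S}X_i\Bigr)\Bigl(\prod_{i\in S}Y_i\Bigr)\,C(S),\qquad C(S):=\!\!\sum_{\substack{(\epsilon_i)\in\mu_N^{\mathbb T}\\ \prod_i\epsilon_i=c}}\prod_{i\in S}\epsilon_i.
\]

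The heart of the argument is then to compute $C(S)$ for each $S\subseteq\mathbb T$ and show that only the two extreme subsets $S=\emptyset$ and $S=\mathbb T$ contribute nontrivially. Both easy cases are immediate: for $S=\emptyset$, the inner product is $1$, so $C(\emptyset)$ simply counts the tuples $(\epsilon_i)\in\mu_N^{\mathbb T}$ with $\prod_i\epsilon_i=c$, and this count equals $N^{|\mathbb T|-1}$ since one coordinate is determined by the remaining $|\mathbb T|-1$ free ones. For $S=\mathbb T$, the inner product equals $\prod_i\epsilon_i=c$ identically along the sum, so $C(\mathbb T)=c\,N^{|\mathbb T|-1}$. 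Together, these two terms account for exactly the right-hand side of \eqref{eqcombeq}.

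The main step, and the only place where any real content lies, is showing that $C(S)=0$ for every proper nonempty subset $\emptyset\subsetneq S\subsetneq\mathbb T$. For this, I would fix any index $j\in\mathbb T\setminus S$ and reparametrize the tuples satisfying $\prod_i\epsilon_i=c$ by letting $(\epsilon_i)_{i\neq j}$ range freely over $\mu_N^{\mathbb T\setminus\{j\}}$ while the remaining coordinate $\epsilon_j:=c\,\prod_{i\neq j}\epsilon_i^{-1}$ is forced. Since $j\notin S$, the weight $\prod_{i\in S}\epsilon_i$ is independent of $\epsilon_j$, so the sum factors as $N^{|\mathbb T|-|S|-1}\cdot\bigl(\sum_{\epsilon\in\mu_N}\epsilon\bigr)^{|S|}$, which vanishes by the standard identity $\sum_{\epsilon\in\mu_N}\epsilon=0$ (valid for $N\geq 2$; the case $N=1$ collapses to a trivial tautology). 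Combining the three cases yields the claimed identity, so the only obstacle is the correct bookkeeping of the two nonvanishing contributions against the vanishing ones.
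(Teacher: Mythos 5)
The paper does not supply its own proof of this lemma; it is quoted verbatim from \cite[Lemma~4.17]{Hanke_structure_of_massses}. Your argument is the natural one and is essentially correct for $N\geq 2$: expand by the distributive law indexed by subsets $S\subseteq\mathbb T$, swap summation, and observe that the inner sum $C(S)$ survives only for $S=\emptyset$ (giving $N^{|\mathbb T|-1}$, the number of admissible tuples) and $S=\mathbb T$ (where the weight is identically $c$), while for $\emptyset\subsetneq S\subsetneq\mathbb T$ one fixes $j\notin S$, frees the remaining coordinates, and kills the sum via $\sum_{\epsilon\in\mu_N}\epsilon=0$.

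One small slip in your parenthetical: the case $N=1$ does \emph{not} collapse to a trivial tautology. For $N=1$ we have $\mu_1=\{1\}$ and $c=1$, so the left-hand side is $\prod_{i\in\mathbb T}(X_i+Y_i)$, whereas the right-hand side is $\prod_{i\in\mathbb T}X_i+\prod_{i\in\mathbb T}Y_i$; these already disagree when $|\mathbb T|=2$. The identity as stated therefore requires $N\geq 2$, exactly the regime in which your orthogonality step is valid. Since the paper only invokes the lemma with $N=2$ (and $\mathbb T=T_\odd$, with the empty-$\mathbb T$ case treated by convention), this does not affect anything downstream, but you should either note the hypothesis $N\geq 2$ or drop the misleading aside about $N=1$.
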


\noindent
We will also require the following notation. 
\begin{nota}
\label{sec:4.28}
{\em
For a genus $\GG$ of elements in $\mathcal{Q}_n(\Z)$ that are good at $p$ for all $p$, 
define $\kappa_p(\GG):=\kappa_p(A)$ for 
$A\in\GG$.  Let $T:=T_n$ denote the set of primes $p$ dividing $n$, and let
$T_{\even}:=T_{n,\even}$ (resp.\ $T_{\odd}:=T_{n,\odd}$) denote the set of $p\in T$ such that the
order of $p$ dividing $n$ is even (resp.\ odd).
}\end{nota}

\noindent{\bf Proof of Theorem \ref{main_theorem}:}  Let
\smash{$F^\pm_\Sigma(n,X)$} denote the set of fields $K$ in $F_\Sigma(n,X)$
with $\pm\Delta(K)>0$.  

\vspace{.1in}\noindent
{\bf 
(a):} 
In the case of 2-class groups of totally real cubic fields, 
by Theorem \ref{thclgp1} we have
\begin{equation*}
\begin{array}{rcl}
\Avg(\Cl_2,F_\Sigma^+(n,X))&=&1+\displaystyle
\displaystyle\frac{\#\bigl(\SL_3(\Z)\backslash
  \bigl(V(\Z)^{(0),\gen}\cap\Res^{-1}(U_{n,b}(\Z)_{S,X}^+)\bigr)
  \bigr)}{N^+(U_{n,b}(\Z)_S ;n,X)}+o(1)\\[.175in]
&=&1+\displaystyle
\frac{\displaystyle\sum_{A\in\SL_3(\Z)\backslash\mathcal{Q}_n(\Z)}
  N^{(0)}(V_{A,b}(\Z)_S;A,X)}{N^+(U_{n,b}(\Z)_S;n,X)}+o(1).
\end{array}
\end{equation*}

We claim that there is a (unique) genus $\GG$ of good ternary quadratic forms with
$\kappa_p(\GG)=\epsilon_p$ for all $p\in T$ if and only if the ordered tuple
\smash{$\epsilon\in\{\pm1\}^T$} satisfies
\smash{$\prod_{T_\odd}\epsilon_p=\kappa_\infty(\GG)$}; here we define 
$\kappa_\infty(\GG)$ to be $1$ if the forms in $\GG$ are isotropic over
$\R$ and $-1$ otherwise. 
Indeed, if $c_v(A)$ denotes the Hasse invariant of~$A$ (see~\cite[p.~55]{Cassels}) and $(a,b)_v$
denotes the Hilbert symbol of $a$ and $b$ over $\Q_v$, then one checks~that
\begin{equation}\label{eqkappaHasse}
  \smash{\displaystyle c_p(A)=(-1,-n)_p\cdot\kappa_p(A)^{{\rm ord}_p(n)}}
\end{equation}
for any prime $p$ and any $A\in\mathcal Q_n(\Z)$ that is good at all primes. Since $n$ is positive, we have $\kappa_\infty(A)=-c_\infty(A)$.  Thus the product formula for the Hilbert symbol translates
directly to
\begin{equation*}
\prod_{p\in T_\odd}\kappa_p(A)=\kappa_\infty(A).
\end{equation*}
The claim now follows from 
\cite[Theorem~1.3, p.~77]{Cassels} and \cite[Theorem 1.2, p.~129]{Cassels}.

By Proposition~\ref{propgenus}, we therefore 
have
\begin{equation}\label{eqavg+}
\begin{array}{rcl}
\Avg(\Cl_2,F_\Sigma^+(n,X))&\!=\!&1+\displaystyle\frac{1}{2}
\sum_{\GG:\kappa_\infty(\GG)=1}\displaystyle\prod_{p\mid n}
\frac{1+\kappa_p(\GG)\rho_p(\Sigma)}{2}+o(1)\\[.2in]
&\!=\!&1+\displaystyle\frac12
\sum_{\substack{(\epsilon_p)\in\{\pm1\}^T\\\prod_{T_\odd}\epsilon_p=1}}
\prod_{T}\frac{1+\epsilon_p\rho_p(\Sigma)}{2}+o(1)\\[.3in]
&\!=\!&1+\displaystyle\frac{1}{2}

\sum_{\substack{(\epsilon_p)\in\{\pm1\}^{T_\odd}\\\prod_{T_\odd}\epsilon_p=1}}
\prod_{T_\odd}\frac{1+\epsilon_p\rho_p(\Sigma)}{2}

\prod_{T_{{\rm even}}}\Bigl(
\frac{1+\rho_p(\Sigma)}{2}+\frac{1-\rho_p(\Sigma)}{2}\Bigr)+o(1)\\[.3in]
&\!=\!&1+\displaystyle\frac14\Bigl(1+\prod_{T_\odd}\rho_p(\Sigma)\Bigr)+o(1)\\[.275in]
&\!=\!&\displaystyle{\frac54+\frac{1}{4}\rho(\Sigma)+o(1)},
\end{array}
\end{equation}
where the penultimate equality follows from 
Lemma~\ref{hankelemma} with $N=2$, $c=1$, and $\mathbb T=T_\odd.$

\vspace{.1in}\noindent
{\bf 
(b):} In the case of complex cubic fields, we similarly have
\begin{equation}\label{eqavg-}
\begin{array}{rcl}
\Avg(\Cl_2,F_\Sigma^-(n,X))&=&1+
\displaystyle\frac{\#\Bigl(\SL_3(\Z)\backslash
  \bigl(V(\Z)^{(1),\gen}\cap\Res^{-1}(U_{n,b}(\Z)_{S,X}^-)\bigr)
  \Bigr)}{N^-(U_{n,b}(\Z)_S;n,X)}+o(1)\\[.15in]
&=&1+\displaystyle
\frac{\displaystyle\sum_{A\in\SL_3(\Z)\backslash\mathcal{Q}_n(\Z)}
  N^{(1)}(V_{A,b}(\Z)_S;A,X)}{N^-(U_{n,b}(\Z)_S;n,X)}+o(1)\\[.15in]
&=&1+\displaystyle\frac12\Bigl(1+\prod_{T_\odd}\rho_p(\Sigma)\Bigr)+o(1)\\[.25in]
&=&\displaystyle{\frac32+\frac{1}{2}\rho(\Sigma)+o(1)},
\end{array}
\end{equation}
where we again use 
Lemma~\ref{hankelemma} with $N=2$, $c=1$, and $\mathbb T=T_\odd$. 

\vspace{.1in}\noindent
{\bf 
(c):} 
Finally, in the case of narrow 2-class groups of totally real cubic fields, we have
\begin{equation*}
\Avg(\Cl^+_2,F_\Sigma^+(n,X))\;=\;1+\displaystyle
\frac{\displaystyle\sum_{A\in\SL_3(\Z)\backslash\mathcal{Q}_n(\Z)}
  N^{(0)}(V_{A,b}(\Z)_S;A,X)+N^{(2)}(V_{A,b}(\Z)_S;A,X)}{N^+(U_{n,b}(\Z)_S;n,X)}+o(1).
\end{equation*}
The contributions of $N^{(2+)}(V_{A,b}(\Z)_S;A,X)$ and
$N^{(2-)}(V_{A,b}(\Z)_S;A,X)$ are the same as that of
$N^{(0)}(V_{A,b}(\Z)_S;A,X)$, yielding a total contribution of
\begin{equation*}
\frac34\Bigl(1+\prod_{T_\odd}\rho_p(\Sigma)\Bigr)+o(1)
\end{equation*}
to the average size of $\Cl_2^+$ from the splitting types $0$, $2+$,
and $2-$. On the other hand, since $\kappa_\infty(A)=-1$ for
pairs $(A,B)\in \smash{V(\Z)^{(2\#)}}$, the contribution of
\smash{$N^{(2\#)}(V_{A,b}(\Z)_S;A,X)$} is
\begin{equation*}
\frac14\Bigl(1-\prod_{T_\odd}\rho_p(\Sigma)\Bigr)+o(1),
\end{equation*}
this time using 
Lemma~\ref{hankelemma} with $N=2$, $c=-1$, and $\mathbb T=T_\odd$.  Summing up, we obtain
\begin{equation}\label{eqavgn+}
\Avg(\Cl^+_2,F_\Sigma^+(n,X))=2+\frac{1}{2}\rho(\Sigma)+o(1).
\end{equation}
We have proven Theorem \ref{main_theorem}.
 $\Box$

\subsection{Deduction of Theorems \ref{thmoncubicfields}, \ref{thmain2}, and \ref{thmain1}}\label{subsecnlocal2}

{\bf Proof of Theorem~\ref{thmoncubicfields}:}
Theorem~\ref{thmoncubicfields} follows from Theorem~\ref{main_theorem}
by noting that there are no primes dividing $n=1$ to odd order, and so
$\rho(\Sigma)=1$ for every large collection $\Sigma$. $\Box$

\medskip
\noindent
{\bf Proof of Theorem~\ref{thmain1}:} 
Theorem~\ref{thmain1} follows from
Theorem~\ref{main_theorem} by noting that when $\Sigma_p\subset T_p$
contains no extensions ramified at primes dividing $k$, then
$\rho_p(\Sigma_p)=0$. $\Box$

\medskip
To deduce Theorem \ref{thmain2} from Theorem \ref{main_theorem}, we
determine the probability that an $n$-monogenized cubic field
is sufficiently ramified at a prime dividing $n$.  We use the
following lemmas.

\begin{lemma}\label{lemsuf}
Let $n$ be a positive integer, and let $p$ be a prime dividing
$n$. 
Suppose 
$f(x,y) 
\in U_n(\Z_p)$ 
corresponds
to a maximal $n$-monogenized cubic ring
$(\mathcal{C}_p,\alpha_p)$. Then the pair $(\mathcal{C}_p\otimes\Q_p,\alpha_p)$ is
sufficiently ramified if and only if either:
\begin{itemize}
\item[{\rm (a)}] $f(x,y)$ has a triple root in $\P^1(\F_p);$ or 
\item[{\rm (b)}] $f(x,1)$ has a double root in $\F_p$.
\end{itemize}
\end{lemma}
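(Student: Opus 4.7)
The plan is to translate conditions (a) and (b) of the lemma into explicit congruence constraints on the coefficients of $f(x,y) = nx^3 + bx^2 y + cxy^2 + dy^3$, and then match these against the two clauses in the definition of sufficient ramification. The starting observation is that since $p \mid n$, the reduction $\bar f \equiv y \cdot (bx^2 + cxy + dy^2) \pmod p$ always has $[1{:}0]$ as a root, and by Theorem~\ref{thpid} applied over $\Z_p$ together with maximality, the splitting type of $\mathcal{C}_p$ at $p$ coincides with that of $\bar f$. A short case analysis shows that condition~(a) (triple root in $\P^1(\F_p)$) is equivalent to $b \equiv c \equiv 0 \pmod p$, producing splitting type $(1^3)$; the triple root must be at $[1{:}0]$ since a finite triple root would force $p \nmid n$. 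Condition~(b) (double root of $f(x,1)$ in $\F_p$) is equivalent to $b \not\equiv 0 \pmod p$ and $c^2 - 4bd \equiv 0 \pmod p$, producing splitting type $(1^2 1)$ with the squared root at a finite point. The remaining possibilities are the third $(1^2 1)$ case with the squared root at $[1{:}0]$ (that is, $b \equiv 0$, $c \not\equiv 0 \pmod p$), and the separable cases $(111)$ or $(12)$.

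Next I would pass from $f$ to the characteristic polynomial $g(x) \in \Z_p[x]$ of $\alpha_p$. Since the $n$-monogenizer $\alpha_p$ can be identified with $n \cdot t$ for any root $t$ of $f(t,1) = 0$, Vieta's formulas applied to the scaled roots $nt_1, nt_2, nt_3$ yield $g(x) = x^3 + b x^2 + nc\,x + n^2 d$, which reduces modulo $p$ to $g(x) \equiv x^2(x+b) \pmod p$. The ring $\Z_p[\alpha_p] \cong \Z_p[x]/(g(x))$ becomes the central computational object: clause~(b) of sufficient ramification asks precisely that $\Z_p[x]/(g(x))$ contain a nontrivial idempotent splitting off a copy of $\Z_p$.

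For the forward direction, case~(a) of the lemma immediately gives splitting type $(1^3)$, so $\mathcal{C}_p \otimes \Q_p$ is a totally ramified cubic extension, which is clause~(a) of sufficient ramification. In case~(b), the splitting type is $(1^2 1)$, so $\mathcal{C}_p \otimes \Q_p = \Q_p \times F$ with $F$ a ramified quadratic extension; since $b \not\equiv 0 \pmod p$, the factors $x^2$ and $x + b$ of $\bar g$ are coprime in $\F_p[x]$, and Hensel's lemma produces a coprime factorization $g = g_1 g_2$ in $\Z_p[x]$ with $g_1 \equiv x+b$ and $g_2 \equiv x^2 \pmod p$. This gives $\Z_p[\alpha_p] \cong \Z_p \times \mathcal{O}$ for the quadratic order $\mathcal{O} = \Z_p[x]/(g_2)$; comparison with the maximal order $\mathcal{C}_p = \Z_p \times \mathcal{O}_F$ then identifies $\mathcal{O}$ as an order in $F$, verifying clause~(b) of sufficient ramification.

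The main obstacle is the converse in the $(1^2 1)$ stratum: I need to rule out the residual case with $b \equiv 0 \pmod p$ and $c \not\equiv 0 \pmod p$, in which the hypotheses of the lemma fail but the splitting type of $\mathcal{C}_p$ is still $(1^2 1)$. Here $g(x) \equiv x^3 \pmod p$, so the residue algebra $\F_p[x]/(x^3)$ is local with only trivial idempotents. Since $\Z_p[\alpha_p]$ is a finite $\Z_p$-module and hence $p$-adically complete, standard idempotent lifting (equivalently, the impossibility of a coprime Hensel factorization of $\bar g = x^3$) forces $\Z_p[\alpha_p]$ itself to be local with only trivial idempotents, contradicting any direct-product decomposition $\Z_p[\alpha_p] = \Z_p \times \mathcal{O}$. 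This shows that in this residual case the pair $(\mathcal{C}_p \otimes \Q_p, \alpha_p)$ is \emph{not} sufficiently ramified, and combined with the splitting-type analysis we conclude that sufficient ramification is equivalent to case~(a) or case~(b) of the lemma.
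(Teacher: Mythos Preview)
Your proof is correct and follows essentially the same approach as the paper's: both arguments compute the characteristic polynomial of $\alpha_p$ as $g(x)=x^3+bx^2+ncx+n^2d\equiv x^2(x+b)\pmod p$ and then distinguish the two $(1^21)$ subcases according to whether $p\mid b$, the paper by directly reading off $\Z_p[\alpha_p]\otimes\F_p\cong\F_p[x]/(x^3+bx^2)$ and you via Hensel/idempotent lifting, which amounts to the same thing. Your write-up is somewhat more explicit in translating conditions (a) and (b) into congruences on the coefficients, but the substance is identical.
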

\begin{proof}
If $f(x,y)$ has distinct roots in $\P^1(\overline{\F}_p)$, then
$\mathcal K_p:=\mathcal{C}_p\otimes\Q_p$ is unramified and so $(\mathcal K_p,\alpha_p)$ is not
sufficiently ramified. If $f(x,y)$ has a triple root modulo $p$, then
$\mathcal K_p$ is a totally ramified cubic extension of $\Q_p$, and hence
$(\mathcal K_p,\alpha_p)$ is sufficiently ramified.

We now assume that the reduction of $f(x,y)$ modulo $p$ has a double
root but not a triple root. 
Then the pair $(\mathcal{C}_p,\alpha_p)$ is sufficiently ramified when
$\Z_p[\alpha]\otimes\F_p$ is isomorphic to $\F_p\times\F_p[t]/(t^2)$,
and not sufficiently ramified when $\Z_p[\alpha]\otimes\F_p$ is isomorphic
to $\F_p[t]/(t^3)$. 
Write $f(x,y)=nx^3+bx^2y+cxy^2+dy^3$. Then the
characteristic polynomial of $\alpha_p$ is equal~to
$x^3+bx^2y+nxy^2+n^2dy^3$. Hence 
$\Z_p[\alpha]\otimes\F_p\cong\F_p[x]/(x^3+bx^2)$, and so
$(\mathcal{C}_p,\alpha_p)$ is sufficiently ramified if and only if $p\nmid
b$. Thus, when the reduction of $f(x,y)$ modulo $p$ has a
double root but not a triple root, the pair $(\mathcal{C}_p,\alpha_p)$ is
sufficiently ramified if and only if the reduction of $f(x,1)$ modulo
$p$ has a double root in $\F_p$.
\end{proof}

The next result determines when
$f\in U(\Z)$ corresponds to a cubic ring nonmaximal at $p$.
\begin{lemma}[{\cite[Lemma 2.10]{BBP}}]\label{propcondmax}
  A cubic ring corresponding to a binary cubic form $f(x,y)$ fails
  to be locally maximal at $p$ if and only if either: {\rm (a)} $f$ is a
  multiple of $p$, or {\rm (b)} there is some $\GL_2(\Z)$-transformation of
  $f(x,y)=ax^3+bx^2y+cxy^2+dy^3$ such that $d$ is a multiple of $p^2$
  and $c$ is a multiple of~$p$.
\end{lemma}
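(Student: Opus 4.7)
The plan is to use the Delone--Faddeev--Gan--Gross--Savin parametrization of cubic rings over $\Z_p$ (Theorem~\ref{thpid}(a) with $R = \Z_p$) together with a Hermite normal form enumeration of the potential index-$p$ overrings of $C \otimes \Z_p$, where $C = C_f$ denotes the cubic ring associated to $f$.

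For the ``if'' direction, I would construct overrings explicitly in each case. If condition (a) holds, i.e., $f = pg$ with $g \in U(\Z)$, then one checks from the Delone--Faddeev multiplication relations (see Theorem~\ref{df}) that rescaling the basis $(\omega, \theta)$ of $C/\Z$ to $(\omega/p, \theta/p)$ preserves integrality of $\omega\theta$, $\omega^2$, and $\theta^2$, so that $C_g = \Z + \Z(\omega/p) + \Z(\theta/p)$ is a cubic overring of $C$ of index $p^2$. If condition (b) holds, so that (after absorbing the relevant $\GL_2(\Z)$-transform into $f$) we have $p^2 \mid d$ and $p \mid c$, then the extended twisted action of $\gamma_0 := \mathrm{diag}(1, 1/p)$ gives
\begin{equation*}
\gamma_0 \cdot f \;=\; pa\,x^3 + b\,x^2y + \tfrac{c}{p}\,xy^2 + \tfrac{d}{p^2}\,y^3 \;\in\; U(\Z_p),
\end{equation*}
which by Theorem~\ref{thpid}(a) corresponds to a cubic overring of $C \otimes \Z_p$ of index $p$. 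In either case, $C$ is nonmaximal at $p$.

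For the converse, suppose $C$ is nonmaximal at $p$; then $C \otimes \Z_p$ admits a strictly larger cubic subring $C' \subset C \otimes \Q_p$, and by reducing to an overring of minimal index I may assume $[C' : C \otimes \Z_p] = p$. Choosing compatible $\Z_p$-bases $(\omega,\theta)$ of $(C \otimes \Z_p)/\Z_p$ and $(\omega', \theta')$ of $C'/\Z_p$, the inclusion is encoded by a matrix $\gamma \in M_2(\Z_p)$ with $v_p(\det \gamma) = 1$, uniquely determined up to left-multiplication by $\GL_2(\Z_p)$ (basis changes of $C'/\Z_p$). Hermite normal form provides the $p + 1$ coset representatives $\gamma_0 = \mathrm{diag}(1, p)$ and $\gamma_k = \begin{pmatrix} p & k \\ 0 & 1 \end{pmatrix}$ for $0 \le k < p$. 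The case $\gamma = \gamma_0$ produces condition (b) directly via the calculation above. For $\gamma = \gamma_k$, factor $\gamma_k = \begin{pmatrix} 1 & k \\ 0 & 1 \end{pmatrix} \cdot \mathrm{diag}(p,1)$; the first factor lies in $\MB(\Z) \subset \GL_2(\Z)$, so it can be absorbed into the $\GL_2(\Z)$-translation $x \mapsto x + ky$ of $f$, and the remaining $\mathrm{diag}(p, 1)$ forces $p^2 \mid a$ and $p \mid b$ on the translated form. Applying the $\GL_2(\Z)$-involution swapping $x$ and $y$ converts these into $p^2 \mid d$ and $p \mid c$, again condition (b). The main technical care lies in the Hermite normal form enumeration and the bookkeeping of coefficients under the twisted action, neither of which presents a substantive obstacle.
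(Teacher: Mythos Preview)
The paper does not prove this lemma; it is quoted from \cite{BBP}. Evaluating your argument on its own: the ``if'' direction is correct, but the converse has a real gap. You assume that a nonmaximal $C\otimes\Z_p$ always admits an overring of index exactly~$p$, and your Hermite enumeration of the $p+1$ index-$p$ overlattices then only ever produces condition~(b). You never derive condition~(a), and indeed you cannot along this route, because the minimal overring can have index~$p^2$.

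Concretely, take $f=pg$ with $g\in U(\Z)$ irreducible modulo~$p$. Then $C_g/pC_g\cong\F_{p^3}$, so $C_g$ is maximal at~$p$, while $C_f\otimes\Z_p=\Z_p+p\,C_g$ has index~$p^2$ in~$C_g$. Any ring strictly between them would yield a subring of $\F_{p^3}$ strictly between $\F_p$ and $\F_{p^3}$, and there is none. Thus $C_f$ is nonmaximal yet admits no index-$p$ overring; moreover (b) fails for every $\GL_2(\Z)$-translate of~$f$, since $g$ has no root in~$\P^1(\F_p)$. Only condition~(a) catches this case. One clean repair: rather than reducing to index~$p$, take any overring, put the change-of-basis matrix into Smith normal form $u_1\,{\rm diag}(p^a,p^b)\,u_2$ over~$\Z_p$ with $0\le a\le b$, and absorb $u_2$ into a $\GL_2(\Z)$-translate of~$f$; then $a<b$ gives $p\mid c$ and $p^2\mid d$ as in your computation, while $a=b\ge1$ forces $p\mid f$.
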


\begin{proposition}\label{propdensmax}
The density $\mu_\max(U_n(\Z_p))$ of elements in $U_n(\Z_p)$ that are maximal is given by
\begin{equation*}
\mu_\max(U_n(\Z_p))=\left\{
\begin{array}{cl}
  (p^2-1)/p^2 &\mbox{if } p^2\nmid n;\\[.035in]
  (p-1)^2(p+1)/p^3&\mbox{if } p^2\mid n.
\end{array}\right.
\end{equation*}
\end{proposition}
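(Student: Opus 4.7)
The plan is to combine Lemma~\ref{propcondmax} with a direct computation of the $p$-adic Haar measure on $U_n(\Z_p)\cong\Z_p^3$, parameterized by the lower coefficients $(b,c,d)$. My first step is to reformulate condition~(b) of Lemma~\ref{propcondmax} intrinsically: I would show, using Euler's relation $x\partial_x f + y\partial_y f = 3f$, that the existence of a $\GL_2(\Z_p)$-transform of $f$ with $p\mid c'$ and $p^2\mid d'$ is equivalent to the existence of a point $P\in\P^1(\F_p)$ such that $\bar f\pmod p$ has a root of multiplicity $\geq 2$ at $P$, and $v_p(f(\tilde P))\geq 2$ for some (equivalently, any) primitive lift $\tilde P\in\Z_p^2$ of $P$. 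The direction $\Rightarrow$ uses that under the twisted action the new coefficient $d'$ equals $\det(\gamma)^{-1}f(\tilde P)$ for $\tilde P$ the second row of $\gamma$, and that $p\mid c'$ encodes the vanishing of a derivative of $\bar f$ at $P$; the reverse direction follows by choosing $\gamma$ to send $\tilde P$ to $(0,1)$. The independence of $v_p(f(\tilde P))$ from the choice of primitive lift follows from the multiple-root hypothesis, since both partials of $\bar f$ vanish at $P$ and hence $f(\tilde P + pQ)\equiv f(\tilde P)\pmod{p^2}$.

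The next step is to stratify $U_n(\Z_p)$ by $\bar f\pmod p$, handling the point at infinity $P_\infty=(1:0)$ separately. For each affine $P=(r:1)$ with $r\in\F_p$, the witness set $W_P\subset U_n(\Z_p)$ is cut out by two conditions mod $p$ (vanishing of $\bar f(r,1)$ and $\partial_x\bar f(r,1)$) plus one condition mod $p^2$ (vanishing of $f(r,1)$ modulo $p^2$), giving Haar measure $1/p^3$. For $P_\infty$, one has $v_p(f(1,0))=v_p(n)$, so $W_{P_\infty}$ is empty unless $v_p(n)\geq 2$, in which case it reduces to the single condition $p\mid b$, of measure $1/p$. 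The critical geometric fact is that a nonzero binary cubic form over $\F_p$ has at most one root of multiplicity $\geq 2$, so the events $\{P\text{ is a multiple root of }\bar f\}$ for distinct $P$ are pairwise disjoint on the locus $\{\bar f\not\equiv 0\pmod p\}$.

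The final step is to tally the total nonmaximal density. If $p\nmid n$, then $P_\infty$ is never a root of $\bar f$ and the $\bar f\equiv 0$ stratum is empty; the $p$ affine witness sets are disjoint with total measure $p\cdot(1/p^3)=1/p^2$. If $v_p(n)=1$, then $P_\infty$ still cannot witness; the $\bar f\equiv 0$ stratum has measure $1/p^3$ and is fully nonmaximal by condition~(a) of Lemma~\ref{propcondmax}, while on its complement the affine witnesses are disjoint and contribute $p\cdot\tfrac{p-1}{p^4}=\tfrac{p-1}{p^3}$, again summing to $1/p^2$. If $v_p(n)\geq 2$, then $P_\infty$ contributes additionally, and a parallel tally yields nonmaximal density $(p^2+p-1)/p^3$. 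Subtracting from~$1$ gives the stated formulas. The main obstacle is the bookkeeping of overlaps between the $W_P$'s and the $\{\bar f\equiv 0\}$ stratum---where every point of $\P^1(\F_p)$ is trivially a multiple root and the $W_P$'s overlap nontrivially---but this is cleanly resolved by partitioning $U_n(\Z_p)$ according to whether $\bar f\equiv 0$ or not, exploiting that the $\bar f\equiv 0$ stratum lies entirely in the nonmaximal locus by condition~(a) and so its full measure $1/p^3$ can be added once and for all.
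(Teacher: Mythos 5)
Your proposal is correct and follows essentially the same approach as the paper: both stratify nonmaximality by which point $P\in\P^1(\F_p)$ witnesses it (your witness set $W_P$ is precisely the paper's Condition $\mathrm{C}_r$ with $r\leftrightarrow P$) and then tally densities case-by-case according to $v_p(n)$, arriving at identical local computations. The one thing you add beyond the paper's write-up is a proof, via Euler's relation, of the passage from the $\GL_2(\Z_p)$-transformation criterion of Lemma~\ref{propcondmax}(b) to the pointwise Conditions $\mathrm{C}_r$, a translation the paper asserts without argument.
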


\begin{proof}
We calculate the probability of nonmaximality in $U_n(\Z_p)$ using
conditions (a) and (b) of Lemma~\ref{propcondmax}. Regarding (a), the
probability that an element of $U_n(\Z_p)$ is imprimitive is 0
if $p\nmid n$ and is $1/p^3$ otherwise.
\pagebreak

Now (b) implies that a primitive binary cubic form
$f(x,y)=nx^3+bx^2y+cxy^2+dy^3\in U_n(\Z_p)$ is nonmaximal if and only
if: $p\mid b$ and $p^2\mid n$ {\bf (Condition $\mathrm C_\infty$)}; or for some
$r\in\{0,1,\ldots,p-1\}$, the binary cubic form 
$f(x+ry,y)\!=\!nx^3+b_rx^2y+c_rxy^2+d_ry^3$ satisfies $p\mid c_r$ and $p^2\mid
d_r$ {\bf (Condition~$\mathrm C_r$)}. We determine next the probabilities that
primitivity and nonmaximality occur due to Condition~$\mathrm C_r$ for
$r=0,1,\ldots,p-1,\infty$, and then~sum.
\begin{itemize}
\item
If $p\nmid n$, then for finite $r$, primitivity and Condition $\mathrm C_r$ hold
when $p\mid c_r$ and $p^2\mid d_r$, which has probability $1/p^3$;
Condition $\mathrm C_\infty$ never holds in this case.
\item
If $p\parallel n$, then for finite $r$, primitivity and Condition $\mathrm C_r$
hold when $p\nmid b_r$, $p\mid c_r$, and $p^2\mid d_r$, which has
probability $(p-1)/p\times1/p^3 = (p-1)/p^4$; Condition $\mathrm C_\infty$ never
holds in this case.
\item
If $p^2\mid n$, then for finite $r$, primitivity and Condition $\mathrm C_r$
hold (just as when $p\parallel n$) 
with probability $(p-1)/p^4$;
moreover, primitivity and Condition $\mathrm C_\infty$ hold when $p\mid b$ and either $p\nmid c$ or $p\nmid d$, which occurs with probability $1/p\times
(p^2-1)/p^2=(p^2-1)/p^3$.
\end{itemize}
Thus the probability of nonmaximality is given by: $p\times 1/p^3 = 1/p^2$ if
$p\nmid n$; \;$1/p^3+p\times (p-1)/p^4=1/p^2$ if $p\parallel n$; and
$1/p^2+(p^2-1)/p^3 = (p^2+p-1)/p^3$ if $p^2\mid n$.
\end{proof}

\vspace{-.085in}
\begin{proposition}\label{propdensuf1}
The density $\mu_{\max,\rm{suff}}(U_n(\Z_p))$ of elements in $U_n(\Z_p)$ that are maximal and
sufficiently ramified~is given by
\begin{equation*}
\mu_{\max,\rm{suff}}(U_n(\Z_p))=\left\{
\begin{array}{cl}
  (p-1)/p^2 &\mbox{if } p^2\nmid n;\\[.035in]
  (p-1)^2/p^3&\mbox{if } p^2\mid n.
\end{array}\right.
\end{equation*}
\end{proposition}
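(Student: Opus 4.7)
The plan is to apply Lemma~\ref{lemsuf}, which characterizes a maximal $f \in U_n(\Z_p)$ as sufficiently ramified exactly when $\bar f := f \bmod p$ has a triple root in $\P^1(\F_p)$ or $f(x,1) \bmod p$ has a double root in $\F_p$. I will enumerate the reduction patterns of $\bar f$ giving a repeated root, compute the density of each in $U_n(\Z_p)$, and subtract the non-maximal forms via Lemma~\ref{propcondmax}. The answer depends only on whether $p^2 \mid n$, because when $p \mid n$ the point $[1:0]$ is automatically a root of $\bar f$, every triple root of $\bar f$ must lie at $[1:0]$ (by comparing leading coefficients), and non-maximality at a double root located at $[1:0]$ reduces to the condition $v_p(f(1,0)) = v_p(n) \geq 2$.

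When $p \nmid n$, the repeated roots of $\bar f$ lie in $\F_p \subset \P^1(\F_p)$, so both clauses of Lemma~\ref{lemsuf} are automatic; a direct enumeration yields density $1/p$ of $f$ with a repeated root, and since Lemma~\ref{propcondmax}(b) implies every non-maximal form has a repeated root, subtracting the non-maximal density $1/p^2$ from Proposition~\ref{propdensmax} gives $(p-1)/p^2$. When $p \mid n$, the repeated-root patterns partition into: (A) triple root at $[1:0]$, characterized by $p \mid b$, $p \mid c$, $d \in \Z_p^\times$, of density $(p-1)/p^3$; (B) double root at $[1:0]$ and simple root elsewhere, characterized by $p \mid b$, $c \in \Z_p^\times$, of density $(p-1)/p^2$; and (C) double root at a finite point and simple root at $[1:0]$, characterized by $b \in \Z_p^\times$ and $c^2 \equiv 4bd \pmod p$, of density $(p-1)/p^2$. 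Pattern (B) contributes $0$ to sufficient ramification because neither clause of Lemma~\ref{lemsuf} holds: $\bar f = y^2(cx+dy)$ is not a perfect cube and $f(x,1) \bmod p \equiv cx + d$ is linear. Pattern (A) is entirely maximal when $p \parallel n$ (since $v_p(f(1,0)) = 1 < 2$) and entirely non-maximal when $p^2 \mid n$.

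For pattern (C), choosing a lift $\tilde r \in \Z_p$ of the double root (either $\tilde r = -c/(2b)$ when $c$ is a unit, or $\tilde r = 0$ when $p \mid c$, in which case $f(0,1) = d$), the non-maximality condition $v_p(f(\tilde r, 1)) \geq 2$ cuts out a subset of density $(p-1)/p^3$ within (C) in both cases $p \parallel n$ and $p^2 \mid n$, so pattern (C) contributes $(p-1)/p^2 - (p-1)/p^3 = (p-1)^2/p^3$ to the density of maximal sufficiently-ramified forms. Summing gives $(p-1)/p^3 + (p-1)^2/p^3 = (p-1)/p^2$ when $p \parallel n$ and $(p-1)^2/p^3$ when $p^2 \mid n$, as claimed. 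The main obstacle is the pattern-(C) non-maximality computation: one must verify that the density $(p-1)/p^3$ is stable as $v_p(n)$ changes, which requires separate treatment of the subcases $c \in \Z_p^\times$ and $p \mid c$. In the first, using
\[
f(\tilde r,1) \equiv -\frac{nc^3}{8b^3} + \frac{4bd - c^2}{4b} \pmod{p^2},
\]
the condition refines the defining congruence $c^2 \equiv 4bd \pmod p$ to a congruence modulo $p^2$ whose specific form depends on $v_p(n)$ but whose density in (C1) is $1/p$ in either case; in the second, it simplifies to $p^2 \mid d$. A cross-check that the total non-maximal density from patterns (A), (B), (C), and the forms with $f \equiv 0 \pmod p$ matches Proposition~\ref{propdensmax} (namely $1/p^2$ when $p \parallel n$ and $(p^2+p-1)/p^3$ when $p^2 \mid n$) provides the final consistency.
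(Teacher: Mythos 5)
Your argument is correct and follows essentially the same approach as the paper: both invoke Lemma~\ref{lemsuf} and Lemma~\ref{propcondmax} and enumerate the reduction types of $\bar f$ modulo $p$, computing the maximal density within each. The organizational differences are minor---you partition by the repeated-root ``pattern'' (A)/(B)/(C), whereas the paper sums over $r\in\{0,1,\dots,p-1,\infty\}$ the probability that $f$ is maximal with a multiple root at $r$; and for $p\nmid n$ you shortcut via $\{\text{repeated root}\}\setminus\{\text{nonmaximal}\}$ together with Proposition~\ref{propdensmax}, while the paper counts directly.

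There is, however, one small gap worth closing. Your pattern-(C) nonmaximality computation is carried out with the explicit lift $\tilde r=-c/(2b)$ and the displayed identity for $f(\tilde r,1)$ involving the denominators $4b$ and $8b^3$; these only make sense for $p$ odd. At $p=2$ the defining congruence $c^2\equiv 4bd\pmod p$ forces $2\mid c$, so the subcase ``$c\in\Z_p^\times$'' is empty, and the double root is at $\bar d/\bar b$, which need not be $0$; hence ``$\tilde r=0$, nonmaximality $\iff p^2\mid d$'' is not the correct criterion when $d$ is odd. The final answer is nevertheless correct at $p=2$ (a direct check gives nonmaximal density $1/16+1/16=1/8=(p-1)/p^3$ within (C)), so the conclusion stands, but as written the argument has a hole there. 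The cleanest repair is the one the paper uses implicitly: phrase the nonmaximality condition uniformly as $p\mid c_r$ and $p^2\mid d_r$ for the translated form $f(x+ry,y)$, with $r$ ranging over residues mod $p$, which avoids any explicit formula for the double root and any division by $2$.
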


\begin{proof}
Here we use Lemma \ref{lemsuf} and Lemma~\ref{propcondmax}, and also the same notation and representatives~$r$ for
$\P^1(\F_p)$ as in the proof of Proposition~\ref{propdensmax}.
\begin{itemize}
\item If $p\nmid n$, then for finite $r$, we have that $f$ is maximal
  with a multiple root at $r$ precisely when $p\mid c_r$ and
  $p\parallel d_r$, which has probability $1/p\times
  (p-1)/p^2=(p-1)/p^3$; a multiple root at $\infty$ cannot occur in
  this case.

\item If $p\parallel n$, then for finite $r$, we have that $f$ is
  maximal with a multiple root at $r$ precisely when $p\nmid c_r$,
  $p\mid c_r$ and $p\parallel d_r$, which has probability
  $(p-1)/p\times 1/p\times (p-1)/p^2=(p-1)^2/p^4$; maximality and a
  triple root occur at $\infty$ when $p\mid b$, $p\mid c$, and $p\nmid
  d$, which has probability $1/p \times 1/p \times (p-1)/p=(p-1)/p^3$.

\item If $p^2\mid n$, then for finite $r$, we have that $f$ is maximal
  with a multiple root at $r$ (just as when~$p\parallel n$) with
  probability $(p-1)^2/p^4$; maximality with a multiple root at
  $\infty$ cannot~occur.
\end{itemize}
So the probability that $f$ is maximal and sufficiently ramified is:
$p\times (p-1)/p^3 = (p-1)/p^2$ if $p\nmid n$; \;$p\times
(p-1)^2/p^4+(p-1)/p^3=(p-1)/p^2$ if $p\parallel n$; and $p\times
(p-1)^2/p^4=(p-1)^2/p^3$ if $p^2\mid n$.
\end{proof}

Propositions~\ref{propdensmax} and \ref{propdensuf1} therefore imply the following.

\begin{corollary}\label{propdensuf}
For any positive integer $n$ and prime $p$,
the relative density of sufficiently-ramified elements in $U_n(\Z_p)$ among the
maximal elements in $U_n(\Z_p)$ is $$\frac{\mu_{\max,\rm{suff}}(U_n(\Z_p))}{\mu_\max(U_n(\Z_p))}=\frac1{p+1}.$$
\end{corollary}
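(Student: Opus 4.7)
The plan is straightforward: Corollary~\ref{propdensuf} is a direct arithmetic consequence of Propositions~\ref{propdensmax} and \ref{propdensuf1}, so my proof proposal amounts to carefully combining these two density computations in each of the two cases (depending on whether $p^2\mid n$ or not) and checking that the ratio collapses to $1/(p+1)$ uniformly.

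First I would simply divide the formulas. In the case $p^2\nmid n$, Proposition~\ref{propdensmax} gives $\mu_{\max}(U_n(\Z_p))=(p^2-1)/p^2$ and Proposition~\ref{propdensuf1} gives $\mu_{\max,\mathrm{suff}}(U_n(\Z_p))=(p-1)/p^2$, so the quotient is $(p-1)/(p^2-1)=1/(p+1)$, using the factorization $p^2-1=(p-1)(p+1)$. In the case $p^2\mid n$, Proposition~\ref{propdensmax} gives $\mu_{\max}(U_n(\Z_p))=(p-1)^2(p+1)/p^3$ while Proposition~\ref{propdensuf1} gives $\mu_{\max,\mathrm{suff}}(U_n(\Z_p))=(p-1)^2/p^3$, and the quotient is again exactly $1/(p+1)$. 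That completes the verification in both cases, yielding the stated identity
\[
\frac{\mu_{\max,\mathrm{suff}}(U_n(\Z_p))}{\mu_{\max}(U_n(\Z_p))}=\frac{1}{p+1}.
\]

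There is essentially no obstacle, since the two input propositions have already done all the genuine work: the enumeration of which $(n x^3+bx^2y+cxy^2+dy^3)\in U_n(\Z_p)$ correspond to maximal cubic rings (via Lemma~\ref{propcondmax}) and which among these correspond to sufficiently ramified pairs $(\mathcal C_p,\alpha_p)$ (via Lemma~\ref{lemsuf}). The only mild point worth emphasizing in the write-up is that the two formulas in Proposition~\ref{propdensmax} and Proposition~\ref{propdensuf1} depend on the same case distinction (whether $p^2\mid n$), so the ratio can be computed case-by-case without any interpolation, and the answer is independent of that case distinction and even of whether $p\mid n$ at all. This uniformity in $p$ is exactly what Theorem~\ref{thmain2} requires, since it will feed the factor $1/(p+1)$ into the product defining $\rho(\Sigma)$ for $\Sigma=T_p$, recovering the $1/\sigma(k)$ appearing in parts (a) and (b) of that theorem.
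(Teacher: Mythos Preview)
Your proposal is correct and takes essentially the same approach as the paper, which simply states that the corollary follows from Propositions~\ref{propdensmax} and~\ref{propdensuf1} without writing out the division explicitly. Your case-by-case verification of the quotient is exactly what is intended.
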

\vspace{.05in}

\noindent
{\bf Proof of Theorem~\ref{thmain2}:} 
Theorem~\ref{thmain2} follows immediately from Theorem~\ref{main_theorem} 
and
 Corollary \ref{propdensuf}. $\Box$
\pagebreak

\vspace{.1in}
\addcontentsline{toc}{section}{Index of notation}
\begin{center}
{\bf \large Index of notation} 
\vspace{.05in}
\begin{longtable}{|@{\hspace{.06in}}p{1.13in}@{\hspace{.06in}}|@{\hspace{.06in}}p{4.6in}@{\hspace{.06in}}|@{\hspace{.06in}}p{.37in}@{\hspace{.06in}}|}
\hline
{\bf Notation} & {\bf Description} & {\bf In} \\
\hline
\hline
\endhead
\hline
$A_n$
&
The symmetric antidiagonal matrix with antidiagonal $(1/2,-n,1/2)$.
& 
\hyperref[sec:4.12]{C\ref{sec:4.12}}
\\
\hline
$\mathcal A_\infty(\alpha)$
&
The algebra at infinity of the $n$-monogenized cubic ring $(\O,\alpha)$.
& 
\hyperref[sec:4.18]{D\ref{sec:4.18}}
\\
\hline
$\Aut_{K_3}(K_4)$
&
The subgroup of $\Aut(K_4)$ inducing the trivial element of $\Aut(K_3)$.
&
\hyperref[sec:3.36]{N\ref{sec:3.36}}
\\
\hline$(C,\alpha)$ 
&
An $n$-monogenized cubic ring.
& 
\hyperref[sec:2.5]{D\ref{sec:2.5}}
\\
\hline
$\Cl_2(C)$, \!$\Cl_2^+(C)$,\phantom{\!\!\!|\!\!}  $\Cl_2(C)^*$, \!$\Cl_2^+(C)^*$
&
The 2-torsion subgroups of the class group, the narrow class group, and their respective dual groups.
& 
\hyperref[sec:2.14]{N\ref{sec:2.14}}
\\
\hline
$\Delta(f)$
&
$b^2c^2-4ac^3-4b^3d-27a^2d^2+18abcd$, 
 where $f(x,y)=ax^3+bx^2y+cxy^3+dy^3\in U(R)$.
&
\hyperref[sec:3.4]{D\ref{sec:3.4}}
\\
\hline
$d\gamma$
&
A Haar measure on $\SL_3(\R)$.
& 
\hyperref[sec:3.24]{C\ref{sec:3.24}}
\\
\hline\hline
$F(n,X)$&
The set of $n$-monogenized cubic fields $(K,\alpha)$ with $H(K)<X$. & 
\hyperlink{fnx}{\S1.2}
\\
\hline
$F(\cdeltaint,X)$&
The set of $n$-monogenized cubic fields $(K,\alpha)$ such that $n\leq
cH(K,\alpha)^\delta$, $H(K,\alpha)<X$, 
and $(K\otimes\Q_p,\alpha)\in\Sigma_p$ for all primes~$p$.
&
\hyperref[thsubmon]{T\ref{thsubmon}}
\\
\hline
{$F_\Sigma(n,X)$}&
The set of $n$-monogenized cubic fields $(K,\alpha)$ with local
conds.\ of $\Sigma$
and $H(K,\alpha)<X$. &
\hyperlink{fnx}{\S1.2}
\\
\hline
{$F_\Sigma(\cdeltaint,X)$} &
The set of $n$-monogenized cubic fields $(K,\alpha)$ with local conds.\ of $\Sigma$, $\,n\leq cH(K,\alpha)^\delta$, and $H(K,\alpha)<X$. &
\hyperref[sec:3.1]{D\ref{sec:3.1}}
\\
\hline\hline
$\FF_A$
&
A fundamental domain for the action of $\SO_A(\Z)$ on $\SO_A(\R)$.
& 
\hyperref[sec:4.12]{C\ref{sec:4.12}}
\\
\hline
$\FF_{\SL_3}$ 
&
A fundamental domain for the action of $\SL_3(\Z)$ on $\SL_3(\R)$.
& 
\hyperref[sec:3.22]{C\ref{sec:3.22}}
\\
\hline\hline
{$\FF_U^\pm$} & A fundamental domain for the action of $\MB(\Z)$ on $f\in U(\R)$ with $\pm\Delta(f)>0$ and $\ind(f)>0$. 
&
{\hyperref[sec:3.5]{C\ref{sec:3.5}}}
\\
\hline
$\FF_U^\pm(\cdeltaint,X)$ & The set $\{f\in\FF_U^\pm$ with $\ind(f)\leq cH(f)^\delta$ and $H(f)<X\}$. &
\hyperref[sec:3.5]{C\ref{sec:3.5}}
\\
\hline
$\FF_U^\pm(T;X)$ & The set $\{f\in\FF_U^\pm$ with $T\leq\ind(f)<2T$ and $X\leq H(f)<2X\}$.& 
\hyperref[sec:3.8]{N\ref{sec:3.8}}
\\
\hline
$\widetilde{\FF}_U^\pm(T;X)$  &
$\kappa(T;X)$-fold cover of $\FF_U^\pm(T;X)$. &
\hyperref[sec:3.20]{C\ref{sec:3.20}}
\\
\hline\hline
{$\FF_V^{(i)}(\cdeltaint,X)$}& A fundamental set for the action of $\SL_3(\R)$ on the set of elements
in $V(\R)^{(i)}$ with resolvent in $\FF_U^\pm(\cdeltaint,X)$. &
\hyperref[sec:3.22]{C\ref{sec:3.22}}
\\
\hline
{$\FF_V^{(i)}(T;X)$}& A fundamental set for the action of $\SL_3(\R)$ on the set of elements
in $V(\R)^{(i)}$ with resolvent in $\FF_U^\pm(T;X)$ &
\hyperref[sec:3.22]{C\ref{sec:3.22}}
\\
\hline
$\widetilde{\FF}_V^{(i)}(T;X)$ &
$\kappa(T;X)$-fold cover of $\FF_V^{(i)}(T;X)$. &
\hyperref[sec:3.22]{C\ref{sec:3.22}}
\\ 
\hline\hline
$\FF^{(i)}_{V_{A,b}}$& A fundamental set for the action of $\SO_A(\R)$ on $V_{A,b}(\R)^{(i)}$. & 
\hyperref[sec:4.8]{C\ref{sec:4.8}}
\\
\hline

$\RF{i}(X)$
&
The set of elements in $\RF{i}$ having height less than $X$. 
& 
\hyperref[sec:4.8]{C\ref{sec:4.8}}
\\
\hline\hline
$g_A$ & An element in $\SL_3(\Q)$ such that $g_AAg_A^t=A_n$. & \hyperref[sec:4.12]{C\ref{sec:4.12}} \\ 
\hline
$\GG$
&
A genus of good integer-coefficient ternary quadratic forms. 
& 
\hyperref[sec:4.28]{N\ref{sec:4.28}}
\\
\hline
$G(R)$
&
The subgroup $\{(g_2,g_3)\in\GL_2(R)\times\GL_3(R):\det(g_2)\det(g_3)=1\}$. &
\hyperref[sec:2.8]{N\ref{sec:2.8}}
\\
\hline
$(g_2,g_3)\cdot(A,B)$
&
The action of $(g_2,g_3) \in G(R)$ on $(A,B) \in V(R)$.
& 
\hyperref[sec:2.8]{N\ref{sec:2.8}}
\\
\hline
$\gamma \cdot f(x,y)$
&
The twisted action of $\gamma\in \GL_2$ on a binary cubic form $f(x,y)$.
& 
\hyperref[sec:2.3]{N\ref{sec:2.3}}
\\
\hline
{generic}
&
Corresponds to an order in an $S_3$-cubic field or an $S_4$-quartic field. 
& 
\hyperref[sec:3.10]{D\ref{sec:3.10}}
\\
\hline
{good at $p$}
&
The associated conic in $\P^2(\overline{\F}_p)$  is either smooth or a union of two lines.
&
\hyperref[sec:4.21]{D\ref{sec:4.21}}
\\
\hline
$H(\beta)$
&
$\max_v\bigl\{|\beta'|_v\bigr\}$, where $\beta'-\beta\in\Z$ and the absolute trace ${\rm
  Tr}(\beta')\in\{0,1,2\}$.
&
\hyperref[sec:3.30]{N\ref{sec:3.30}}
\\
\hline
$H(f)$
&
$a^{-2}\max\bigl\{|I(f)|^3,J(f)^2/4\}$,  where $f\in U(\R).$ 
&
\hyperref[sec:3.4]{D\ref{sec:3.4}}
\\
\hline
$I(f)$
&
$b^2-3ac$,  where $f(x,y)=ax^3+bx^2y+cxy^3+dy^3\in U(R)$.
& 
\hyperref[sec:3.4]{D\ref{sec:3.4}}
\\
\hline
$\ind(f)$
&
$a$,  where $f(x,y)=ax^3+bx^2y+cxy^3+dy^3\in U(R)$. 
& 
\hyperref[sec:3.4]{D\ref{sec:3.4}}
\\
\hline
$J(f)$
&
$-2b^3+9abc-27a^2d$,  where $f(x,y)=ax^3+bx^2y+cxy^3+dy^3\in U(R)$. 
& 
\hyperref[sec:3.4]{D\ref{sec:3.4}}
\\
\hline
$\kappa=\kappa(\ycox)$
&
$\bigl\lfloor X^{1/6}/\Y^{2/3}\bigr\rfloor$.
& 
\hyperref[sec:3.20]{C\ref{sec:3.20}}
\\
\hline
$\kappa_p := \kappa_p(A)$
&
(For $A$ having $\F_p$-rank two.) $1$ if residually hyperbolic, $-1$ otherwise. 
&
\hyperref[sec:4.21]{D\ref{sec:4.21}}
\\
\hline
$\kappa_p(\GG)$
&
The $\kappa$-invariant $\kappa_p(A)$ for any $A \in \GG$. 
& 
\hyperref[sec:4.28]{N\ref{sec:4.28}}
\\
\hline
$L^\gen$
&
The set of generic elements in $L$.
& 
\hyperref[sec:3.10]{D\ref{sec:3.10}}
\\
\hline
$L^\pm$
&
The set of elements $f\in L$ with $\pm\Delta(f)>0$. 
& 
\hyperref[sec:3.8]{N\ref{sec:3.8}}
\\
\hline
$L^{(i)}$
&
$L\cap V(\R)^{(i)}$.
& 
\hyperref[sec:3.19]{N\ref{sec:3.19}}
\\
\hline
{large}
&
$S := (S_p)_p$ is large if for all but finitely many~$p$,  the set $S_p$ contains all $f\in U(\Z_p)$ (resp. $(A,B) \in V(\Z_p)$) with $p^2\nmid\Delta(f)$ (resp. $p^2\nmid\Delta(A,B)$).
& 
\hyperref[sec:3.1]{D\ref{sec:3.1}}, \hyperref[sec:4.15]{D\ref{sec:4.15}}
\\
\hline
$\MB(R)$
&
The group of $2 \times 2$ lower triangular unipotent matrices over $R$.
& 
\hyperref[sec:2.6]{N\ref{sec:2.6}}
\\
\hline
$\Mass_{p}(f)$
&
The local mass of binary cubic form $f\in U(\Z_p)$ at the prime $p$.
& 
\hyperref[sec:3.38]{D\ref{sec:3.38}}
\\
\hline
{$\Mass_p^\pm(f)$}
&
The local $\kappa$-mass of the binary cubic form $f\in U(\Z_p)$ at the prime $p$.
& 
\hyperref[sec:4.23]{D\ref{sec:4.23}}
\\
\hline
$m_i$
&
$m_0=m_2=m_{2\pm}=m_{2\#}=4;\quad m_1=2.$
& 
\hyperref[sec:3.22]{C\ref{sec:3.22}}
\\
\hline
$N'$
&
A compact subset of unipotent lower triangular $3 \times 3$ matrices over $\R$.
& 
\hyperref[sec:4.12]{C\ref{sec:4.12}}
\\
\hline\hline
{$N_3^\pm(n,X)$} & Number of $n$-monogenized $S_3$-orders $(C,\alpha)$ with $\pm\Delta(C)>0$ and $H(C,\alpha)<X$. &
\hyperref[thcubringcount]{T\ref{thcubringcount}}
\\
\hline
$N_3^\pm(\cdeltaint,X)$ & Number of $n$-monogenized $S_3$-orders $(C,\alpha)$
with $\pm\Delta(C)>0$, \linebreak $n\leq cH(C,\alpha)^\delta$, and $H(C,\alpha)<X$. &
\hyperref[thsubcount]{T\ref{thsubcount}}
\\
\hline
{$N_4^{(i)}(n,X)$} & Number of pairs $(Q,(C,\alpha))$ where $Q$ is an $S_4$-order, $Q\otimes\R\cong \R^{4-2i}\times\C^i$, 
and $(C,\alpha)$ is an $n$-monogenized cubic resolvent of $Q$ with $H(C,\alpha)\!<\!X$.&
\hyperref[thqnmccountnr]{T\ref{thqnmccountnr}}
\\
\hline
{$N_4^{(i)}(\cdeltaint,X)$} & Number of pairs $(Q,(C,\alpha))$ where $Q$ is an $S_4$-order, $Q\otimes\R\cong \R^{4-2i}\times\C^i$, \linebreak 
and $(C,\alpha)$ is an $n$-monogenized cubic resolvent of $Q$ with \linebreak $n\leq cH(C,\alpha)^\delta$ and $H(C,\alpha)<X$. &
\hyperref[thqsubcount]{T\ref{thqsubcount}}
\\
\hline\hline
{$N^\pm(L;n,X)$}
&
The number of generic $\MB(\Z)$-orbits $f\in L^\pm$ with $H(f)<X$. &
\hyperref[sec:4.2]{N\ref{sec:4.2}}
\\
\hline
$N^{(i)}(L;A,X)$& The number of generic $\SO_A(\Z)$-orbits $v\in L\cap V_A(\Z)^{(i)}$ with $H(v)<X$. &
\hyperref[N:4.11]{N\ref{N:4.11}}
\\
\hline
{$N^{(i)}(L;T;X)$} & The number of generic $\MB(\Z)\times\SL_3(\Z)$-orbits $v\in L^{(i)}$ with \linebreak
$T\leq\ind(v)<2T$ and $X\leq H(v)<2X$. &
\hyperref[N:3.25]{N\ref{N:3.25}}
\\
\hline\hline
{nowhere \linebreak overramified} 
& Not overramified at any finite or infinite place.
& 
\hyperref[sec:2.12]{D\ref{sec:2.12}}
\\
\hline
$\nu(L)$
&
The volume of the closure of $L$ in $U(\widehat{\Z})$.
& 
\hyperref[sec:3.8]{N\ref{sec:3.8}}
\\
\hline
{overramified at $p$}
&
The prime ideal $p\mathbb{Z}$ factors as $P^4$, $P^2$, or $P_1^2 P_3^2$.
& 
\hyperref[sec:2.12]{D\ref{sec:2.12}}
\\
\hline
$\mathcal{Q}_n(R)$
&
The set of ternary quadratic forms with coefficients in $R$ and $4\det=n$.
& 
\hyperref[sec:4.5]{N\ref{sec:4.5}}
\\
\hline
{$\RR(K_3)$}
&
The set of \'etale non-overramified quartic extensions of $\Q_p$ with cubic resolvent $K_3$.
& 
\hyperref[sec:4.19]{N\ref{sec:4.19}}
\\
\hline
{$\RR^+(K_3)$, $\RR^-(K_3)$}
&
The elements of $\RR(K_3)$ that are respectively split or inert in the unramified quadratic extension $K_6/K_3$ corresponding to~$K_4$. 
& 
\hyperref[sec:4.19]{N\ref{sec:4.19}}
\\
\hline
$\Res$ & The resolvent map $V(R)\to U(R)$ defined by $(A,B)\to 4\det(Ax-By)$. &
\hyperref[sec:2.8]{N\ref{sec:2.8}}
\\
\hline
{residually hyperbolic at $p$}
&
The associated conic in $\P^2(\overline{\F}_p)$ is a union of two lines defined over $\F_p$.
&
\hyperref[sec:4.21]{D\ref{sec:4.21}}
\\
\hline
{residually nonhyperbolic at $p$}
&
The \!associated \!conic \!in \!$\P^2(\overline{\F}_p)$ \!is \!a \!union \!of \!two \!lines \!not \!defined over~$\F_p$. 
&
\hyperref[sec:4.21]{D\ref{sec:4.21}}
\\
\hline
$S:=(S_p)_p$
&
A collection of local cubic specifications $S_p$ indexed by primes  $p$.
& 
\hyperref[sec:3.1]{D\ref{sec:3.1}}, \hyperref[sec:4.15]{D\ref{sec:4.15}}
\\
\hline
$\sigma_A$
&
The map $V_{A,b}(F) \to V_{A_n,b}(F)$ given by $(A,B)\mapsto (A_n,g_ABg_A^t)$.
& 
\hyperref[sec:4.12]{C\ref{sec:4.12}}
\\
\hline
$\sigma_G$
&
The map $\SO_A(F) \to \SO_{A_n}(F)$ given by $g\mapsto g_Agg_A^{-1}$.
& 
\hyperref[sec:4.12]{C\ref{sec:4.12}}
\\
\hline
$\Sigma$ 
&
$\Sigma:=(\Sigma_p)_p$.
& 
\hyperref[sec:3.1]{D\ref{sec:3.1}}
\\
\hline
$\Sigma_p$
&
A set of pairs $(\mathcal K_p,\alpha_p)$ where $\mathcal K_p$ is an \'etale cubic extension of 
$\Q_p$ with ring of integers $\O_p$, $\alpha_p$ is an element of $\O_p$ that is primitive in $\O_p/\Z_p$, and the pair 
$(\O_p,\alpha_p)$ corresponds to some $f(x,y)\in S_p$. &
\hyperref[sec:3.1]{D\ref{sec:3.1}}
\\
\hline
${\rm sk}(C)$
&
The skewness of the cubic ring $C$.
& 
\hyperref[sec:3.31]{D\ref{sec:3.31}}
\\
\hline
$T'$
&
A subset of the diagonal $3 \times 3$ matrices over $\R$ with $\det=1$.
& 
\hyperref[sec:4.12]{C\ref{sec:4.12}}
\\
\hline
$T_\even$, $T_\odd$
&
The set of primes $p$ dividing $n$ to even or odd order, respectively. 
& 
\hyperref[sec:4.28]{N\ref{sec:4.28}}
\\
\hline\hline
$U(R)$ & The set of binary cubic forms $f(x,y)$ over $R$. &
\hyperref[sec:2.3]{N\ref{sec:2.3}}
\\
\hline
$U_n(R)$ & The set of binary cubic forms $f(x,y)\in U(R)$ with $x^3$-coefficient $n$.&
\hyperref[sec:2.6]{N\ref{sec:2.6}}
\\
\hline
$U_{n,b}(R)$ & The set of binary cubic forms $f(x,y)\in U_n(R)$ with $x^2y$-coefficient~$b$.&
\hyperref[sec:4.2]{N\ref{sec:4.2}}
\\
\hline
$U(\Z)_S$ &
The set of elements in $U(\Z)$ satisfying the local prescriptions of $S$.&
\hyperref[sec:3.26]{N\ref{sec:3.26}}
\\
\hline
$U_{n,b}(\Z)_S$ &
The set of elements in $U_{n,b}(\Z)$ satisfying the local prescriptions of $S$. &
\hyperref[sec:4.15]{D\ref{sec:4.15}}
\\
\hline\hline
$V(R)$ & The set of pairs of ternary quadratic forms $(A,B)$ over $R$. &
\hyperref[sec:2.8]{N\ref{sec:2.8}}
\\
\hline
$V_A(R)$ & The subset of $(A,B)\in V(R)$ with fixed $A$. &
\hyperref[sec:4.5]{N\ref{sec:4.5}}
\\
\hline
$V_{A,b}(R)$ & The subset of $(A,B)\in V_A(R)$ with resolvent in $U_{n,b}(R)$ for some $n$. &
\hyperref[sec:4.5]{N\ref{sec:4.5}}
\\
\hline
$V(\Z)_S$ & The set of elements $v\in V(\Z)$
such that $\Res(v)\in U(\Z)_S$. &
\hyperref[sec:3.26]{N\ref{sec:3.26}}
\\
\hline
$V_{A,b}(\Z)_S$ & The set of elements $v\in V_{A,b}(\Z)$
such that $\Res(v)\in U_{n,b}(\Z)_S$. &
\hyperref[sec:4.15]{D\ref{sec:4.15}}
\\
\hline\hline
$V(\R)_+$
&
The set of elements $(A, B) \in V(\R)$ with $\det(A) > 0$.
& 
\hyperref[sec:3.14]{D\ref{sec:3.14}}
\\
\hline
$V(\Z)_+$
&
$V(\Z)\cap V(\R)_+$.
& 
\hyperref[sec:3.13]{N\ref{sec:3.13}}
\\
\hline
$V(\R)^{(i)}$
&
A $G(\R)$-orbit in $V(\R)_+$ specified by the index $i\in\{0,1,2,2\#,2+,2-\}$.
&
\hyperref[sec:3.14]{D\ref{sec:3.14}}, \hyperref[sec:3.19]{N\ref{sec:3.19}}
\\
\hline
$V(\Z)^{(i)}$
&
$V(\Z)\cap V(\R)^{(i)}$.
&
\hyperref[sec:3.19]{N\ref{sec:3.19}}
\\
\hline\hline
$\W_p(U)$, \,$\W_p(V)$
&
Subsets of $U(\Z)$, \!$V(\Z)$ where $p^2\mid\Delta$.
&
\hyperref[sec:3.28]{N\ref{sec:3.28}}
\\
\hline
$\W_p^{(i)}(U)$, \!\!$\W_p^{(i)}(V)$
&
Subsets of $U(\Z)$, \!$V(\Z)$ where $p^2\mid \Delta$ for 
``mod $p^i$ reasons". 
&
\hyperref[sec:3.28]{N\ref{sec:3.28}}
\\
\hline
\end{longtable}
\end{center}

\end{document}